\pgfplotsset{compat=1.7}
\numberwithin{equation}{section}
\newcommand{\defeq}{\vcentcolon=}
\newcommand{\eqdef}{=\vcentcolon}
\newcommand{\N}{\mathbb{N}}
\newcommand{\Z}{\mathbb{Z}}
\newcommand{\R}{\mathbb{R}}
\newcommand{\1}{\mathbbm{1}}
\newcommand{\x}{\mathsf{x}}
\newcommand{\y}{\mathsf{y}}
\newcommand{\Prm}{\mathrm{P}}
\newcommand{\Erm}{\mathrm{E}}
\newcommand{\Prmp}{\mathrm{P}_{\!p}}
\newcommand{\Ermp}{\mathrm{E}_{p}}
\newcommand{\Prob}{\mathbb{P}}
\newcommand{\Probi}{\mathrm{P}_{\!\mathit{i}}}
\newcommand{\Probnull}{\mathrm{P}_{\!0}}
\newcommand{\Var}{\mathrm{Var}}
\newcommand{\E}{\mathbb{E}}
\newcommand{\Cluster}{\mathcal{C}}
\newcommand{\eqdist}{\stackrel{\mathrm{law}}{=}}
\newcommand{\F}{\mathcal{F}}
\newcommand{\G}{\mathcal{G}}
\renewcommand{\H}{\mathcal{H}}
\newcommand{\dx}{\mathrm{d}\mathit{x}}
\newcommand{\vel}{\overline{\mathrm{v}}}
\newcommand{\lambdacrit}{\lambda_{\mathrm{c}}}
\newcommand{\pesc}{\mathit{p}_{\mathrm{esc}}}
\newcommand{\domega}{\mathrm{d}\omega}
\newcommand{\nina}[1]{\textcolor{violet}{ #1}}
\newcommand{\auskommentiert}[1]{\textcolor{cyan}{}}
\newtheorem{theorem}{Theorem}[section]
\newtheorem{lemma}[theorem]{Lemma}
\newtheorem{corollary}[theorem]{Corollary}
\newtheorem{proposition}[theorem]{Proposition}
\theoremstyle{definition}
\newtheorem{definition}[theorem]{Definition}
\theoremstyle{remark}
\newtheorem{remark}[theorem]{Remark}
\begin{document}

\title[Regularity of the speed of biased random walk in $1$-dim percolation]{Regularity of the speed of biased random walk in a one-dimensional percolation model}

\author{Nina Gantert}
\address{Fakult\"at f\"ur Mathematik
		Technische Universit\"at M\"unchen,
		85748 Garching bei M\"unchen, Germany.}
\email{gantert@ma.tum.de}		

\author{Matthias Meiners}
\address{Institut f\"ur Mathematik, Universit\"at Innsbruck, 6060 Innsbruck, Austria.}
\email{matthias.meiners@uibk.ac.at}

\author{Sebastian  M\"uller}

\address{ Aix Marseille Univ, CNRS, Centrale Marseille, I2M, Marseille, France.}
\email{sebastian.muller@univ-amu.fr}


\begin{abstract}
We consider biased random walks on the infinite cluster of a conditional bond percolation model on the infinite ladder graph.
Axelsson-Fisk and H\"aggstr\"om established for this model a phase transition for the asymptotic linear speed $\vel$ of the walk.
Namely, there exists some critical value $\lambdacrit>0$ such that $\vel>0$ if $\lambda\in (0,\lambdacrit)$ and $\vel=0$ if $\lambda>\lambdacrit$.

We show that the speed $\vel$ is continuous in $\lambda$ on the interval $(0,\lambdacrit)$
and differentiable on $(0,\lambdacrit/2)$.
Moreover, we characterize the derivative as a covariance.
For the proof of the differentiability of $\vel$ on $(0,\lambdacrit/2)$,
we require and prove a central limit theorem for the biased random walk.
Additionally, we prove that the central limit theorem fails to hold for $\lambda \geq \lambdacrit/2$.

\keywords{Biased random walk \and regularity of the speed \and invariance principle \and ladder graph \and percolation}
\subjclass{MSC 60K37 \and MSC 82B43}
\end{abstract}
\maketitle
\section{Introduction}

As a model for transport in an inhomogeneous medium, one may consider
a biased random walk on an (infinite) percolation cluster.
The bias, whose strength is given
by some parameter $\lambda>0$, favors the walk to move in a
pre-specified direction.  A very interesting phenomenon predicted first
by Barma and Dhar \cite{Barma+Dhar:1983} concerns the (asymptotic) linear speed.
Namely,  it was conjectured that there exists a critical bias
$\lambdacrit$ such that for $\lambda\in (0, \lambdacrit)$ the walk has
positive speed while for $\lambda>\lambdacrit$ the speed is zero. This
conjecture was partly proved by Berger, Gantert and Peres
\cite{Berger+Gantert+Peres:2003} and Sznitman \cite{Sznitman:2003}: they showed that when the bias
is small enough, the walk exhibits a positive  speed, while for large
bias the speed is zero. Eventually, Fribergh and Hammond  proved the
phase transition in \cite{Fribergh+Hammond:2014}.
 
The reason for these two different regimes is that
that the percolation cluster contains traps (or dead
ends) and the walk faces two competing effects.   When the bias becomes
larger the time spent in such traps (peninsulas stretching out in the
direction of the bias) increases while the time spent on the
backbone (consisting of infinite paths in the direction of the bias)
decreases. Once the bias is sufficiently large the expected time the
walk stays in a typical trap is infinite and hence the speed of the walk
is zero.
(In cases where there are no traps, the behaviour is different:
Deijfen and H\"aggstr\"om \cite{Deijfen+H"aggstr"om:2010} constructed an invariant percolation model
on $\Z^2$ such that biased random walk has zero speed for small
$\lambda$ and positive speed when $\lambda$ is large).

The same phenomenon is known 
for biased random
walks on supercritical Galton-Watson trees with leaves, the corresponding phase transition was 
proved by Lyons, Pemantle and Peres \cite{Lyons+Pemantle+Peres:1996}. (The bias is here assumed to point away from the root.)
The Galton-Watson trees with leaves can be interpreted, in some cases, as infinite percolation clusters on a regular tree.
Although the tree case is easier than the lattice $\Z^{d}$, mainly because there is a natural decomposition of the tree in
a backbone and traps, see the textbook of Athreya and Ney \cite[p.\;48]{Athreya+Ney:2004}, there are still many open questions.
For instance, one would like to know if the speed is continuous or differentiable as a function of the bias, and if it is a unimodal function. 

In the case of Galton-Watson trees without leaves, the speed is conjectured to be increasing as a function of the bias.
This conjecture is proved for large enough bias by Ben Arous, Fribergh and Sidoravicius in \cite{BenArous+al:2014}. 
A{\"{\i}}d{\'e}kon gave in \cite{Aidekon:2014} 
a formula for the speed of biased random walks on Galton-Watson trees, which allows to deduce monotonicity for a larger 
(but not the full) range of parameters. The Einstein relation, which relates the derivative of the speed
at the critical parameter with the diffusivity of the unperturbed model,
was derived by Ben Arous, Hu, Olla and Zeitouni in \cite{BenArous+Hu+Olla+Zeitouni:2013}.



In this paper we consider  biased random walk on a one-dimensional
percolation model and study the regularity of the  speed as a function
of the bias $\lambda$. The model was introduced by Axelson-Fisk and 
H\"aggstr\"om \cite{Axelson-Fisk+H"aggstr"om:2009b} as a tractable model that exhibits the
same phenomena as biased random walk on the supercritical percolation model in $\Z^{d}$.
In fact, Axelson-Fisk and  H\"aggstr\"om proved the above phase
transition for this model before the conjecture was settled on $\Z^{d}$.

Even though the model may be considered as one of the easiest
non-trivial models, explicit calculation for the speed could not be
carried out. The main result of our paper is that the speed (for fixed
percolation parameter $p$) is continuous in $\lambda$ on $(0,\infty)$,
see Theorem \ref{Thm:continuity of the speed}.
The continuity of the speed may seem obvious, but to our best knowledge,
it has not been proved for a biased random walk on a percolation cluster, and not even for biased random walk on Galton-Watson trees.
Moreover, we prove that the speed is differentiable in
$\lambda$ on $(0,\lambdacrit/2)$ and we characterize the derivative as the
covariance of a suitable two-dimensional Brownian motion, see Formula \eqref{eq:formula for the speed}.
(We hope to address the derivative at $\lambda =0$ in future work).
The main ingredient of the proof of the latter result is an invariance
principle for the biased random walk,
which holds for $\lambda < \lambdacrit/2$ and fails to hold for $\lambda \geq \lambdacrit/2$.

Let us remark that invariance principles  for random walks on infinite clusters of
supercritical i.i.d.\ percolation on $\Z^{d}$ are known for simple
random walks, see De Masi et al. \cite{DeMasi+al:1989}, Sidoravicius and
Sznitman \cite{Sidoravicius+Sznitman:2004}, Berger and Biskup \cite{Berger+Biskup:2007},
and Mathieu and Piatnitski \cite{Mathieu+Piatnitski:2007}.
The case of  Galton-Watson trees was addressed by Peres and Zeitouni in \cite{Peres+Zeitouni:2008}: they proved a 
quenched invariance principle for biased
random walks on supercritical Galton-Watson trees without leaves.
For biased random walk on percolation clusters on $\Z^{d}$, a central limit theorem was proved for 
$\lambda<\lambdacrit/2$ by Fribergh and Hammond, see \cite{Fribergh+Hammond:2014}.

\section{Preliminaries and main results}	\label{sec:main results}

In this section we give a brief review of the percolation and random walk model studied in this paper.

\subsection{Percolation on the ladder graph.}
Consider the infinite ladder graph $\mathcal{L} = (V,E)$.
The  vertex set $V$ is identified with $\Z \times \{0,1\}$.
Two vertices $v,w \in V$ share an edge
if they are at Euclidean distance one from each other.
In this case we either write ${\langle v,w\rangle \in E}$ or $v\sim w$, and say that $v$ and $w$ are neighbors.
Axelson-Fisk and H\"aggstr\"om \cite{Axelson-Fisk+H"aggstr"om:2009} introduced
a percolation model on this graph that may be labelled \lq\lq i.\,i.\,d.~bond percolation on the ladder graph conditioned on the existence of a bi-infinite path\rq\rq.

Let $\Omega \defeq \{0,1\}^E$.
The elements $\omega \in \Omega$ are called \emph{configurations} throughout the paper.
A path in $\mathcal{L}$ is a finite sequence of distinct edges connecting a finite sequence of neighboring vertices.
Given a configuration $\omega \in \Omega$, we call a path $\pi$ in $\mathcal{L}$ \emph{open} if $\omega(e)=1$ for each edge $e \in \pi$.
For a configuration $\omega$ and a vertex $v \in V$,
$\Cluster_{\omega}(v)$ denotes the connected component in $\omega$ that contains $v$,
i.\,e.,
\begin{equation*}
\Cluster_{\omega}(v)	=	\{w \in V: \text{there is an open path in } \omega \text{ connecting } v \text{ and } w\}.
\end{equation*}
We denote by $\x: V \to \Z$ and $\y:V \to \{0,1\}$ the projections from $V$ to $\Z$ and $\{0,1\}$, respectively.
Hence, for any $v \in V$, $v = (\x(v),\y(v))$.
We call $\x(v)$ the $\x$-coordinate of $v$, and $\y(v)$ the $\y$-coordinate of $v$.
For $N_1, N_2 \in \N$, let $\Omega_{N_1,N_2}$ be the event that
there exists an open path from some $v_1 \in V$ to some $v_2 \in V$ with $\x$-coordinates $-N_1$ and $N_2$, respectively,
and let $\Omega^* \defeq \bigcap_{N_1, N_2 \geq 0} \Omega_{N_1,N_2}$
be the event that there is an infinite path connecting $-\infty$ and $+\infty$.

Denote by $\F$ the $\sigma$-field on $\Omega$ generated by the projections $p_e: \Omega \to \{0,1\}$,
$\omega \mapsto \omega(e)$, $e \in E$.
For $p \in (0,1)$, let $\mu_p$ be the distribution of i.\,i.\,d.\ bond percolation on $(\Omega,\F)$ with $\mu_p(\omega(e)=1)=p$ for all $e \in E$.
The Borel-Cantelli lemma implies $\mu_p(\Omega^*)=0$.
Write $\Prm_{p,N_1,N_2}(\cdot) \defeq \mu_p(\cdot \cap \Omega_{N_1,N_2})/\mu_p(\Omega_{N_1,N_2})$
for the probability distribution on $\Omega$
that arises from conditioning on the existence of an open path from $\x$-coordinate $-N_1$ to $\x$-coordinate $N_2$.
The following result is Theorem 2.1 in \cite{Axelson-Fisk+H"aggstr"om:2009}:

\begin{theorem}
The probability measures $\Prm_{p,N_1,N_2}$ converge weakly as $N_1,N_2 \!\to\! \infty$
to a probability measure $\Prmp^*$ on $(\Omega,\F)$ with $\Prmp^*(\Omega^*)=1$.
\end{theorem}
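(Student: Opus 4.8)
The plan is to establish weak convergence of the family $\{\Prm_{p,N_1,N_2}\}$ by exhibiting an explicit candidate limit and then verifying convergence of the finite-dimensional distributions, i.e.\ convergence of $\Prm_{p,N_1,N_2}(\omega|_F = \eta)$ for every finite edge set $F \subset E$ and every $\eta \in \{0,1\}^F$. Since $\Omega = \{0,1\}^E$ is a product of finite spaces and $\F$ is the product $\sigma$-field, weak convergence on the compact metrizable space $\Omega$ is equivalent to convergence of these cylinder probabilities, so this reduces the theorem to a combinatorial/probabilistic computation. The first step is therefore to write, for fixed $N_1, N_2$ large enough that $F$ is contained in the "strip" between $\x$-coordinates $-N_1$ and $N_2$,
\begin{equation*}
\Prm_{p,N_1,N_2}(\omega|_F = \eta) = \frac{\mu_p(\{\omega|_F = \eta\} \cap \Omega_{N_1,N_2})}{\mu_p(\Omega_{N_1,N_2})},
\end{equation*}
and to understand the asymptotics of both numerator and denominator.

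The key tool I would use is a transfer-matrix / renewal representation for the event $\Omega_{N_1,N_2}$. Reading the ladder column by column (each column at a fixed $\x$-coordinate consists of the two horizontal edges emanating to the right plus the one vertical rung), the indicator of "there is an open left--right crossing of the strip" is a product of local transfer matrices acting on a small finite state space recording which of the two horizontal tracks are currently connected to the left boundary. Because $p \in (0,1)$ is fixed, this transfer matrix is a fixed primitive nonnegative matrix, so Perron--Frobenius gives a dominant eigenvalue $\rho = \rho(p) > 0$ with strictly positive left and right eigenvectors. Consequently $\mu_p(\Omega_{N_1,N_2}) = c(p)\,\rho^{N_1+N_2}(1+o(1))$ as $N_1,N_2 \to \infty$, and more importantly the conditional law "forgets" the precise values of $N_1, N_2$: the distribution of the restriction of $\omega$ to any fixed window, conditioned on the crossing, converges. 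Concretely, one splits the strip as (left part up to $F$) $\times$ ($F$) $\times$ (right part), applies the transfer-matrix factorization to each block, and divides; the boundary contributions $\rho^{N_1}$ and $\rho^{N_2}$ cancel between numerator and denominator, and the leftover boundary eigenvectors converge to the Perron eigenvectors, leaving a well-defined limit
\begin{equation*}
\Prmp^*(\omega|_F = \eta) \defeq \lim_{N_1,N_2\to\infty} \Prm_{p,N_1,N_2}(\omega|_F = \eta).
\end{equation*}
One then checks these limits form a consistent family of finite-dimensional distributions (Kolmogorov extension / the fact that they already arise as a limit of genuine measures makes consistency automatic), defining a probability measure $\Prmp^*$ on $(\Omega,\F)$, and that $\Prm_{p,N_1,N_2} \Rightarrow \Prmp^*$.

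It remains to verify $\Prmp^*(\Omega^*) = 1$. Since $\Omega^* = \bigcap_{N_1,N_2} \Omega_{N_1,N_2}$ is a countable decreasing intersection, it suffices to show $\Prmp^*(\Omega_{N_1,N_2}) = 1$ for each fixed $N_1, N_2$. For this I would use that $\Omega_{M_1,M_2} \subseteq \Omega_{N_1,N_2}$ whenever $M_i \geq N_i$, so $\Prm_{p,M_1,M_2}(\Omega_{N_1,N_2}) = 1$ for all large $M_1,M_2$; since $\Omega_{N_1,N_2}$ depends on only finitely many edges it is a clopen (continuity) set for weak convergence, hence $\Prmp^*(\Omega_{N_1,N_2}) = \lim_{M_1,M_2} \Prm_{p,M_1,M_2}(\Omega_{N_1,N_2}) = 1$. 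Taking the intersection over the countably many pairs $(N_1,N_2)$ gives $\Prmp^*(\Omega^*)=1$.

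I expect the main obstacle to be the bookkeeping in the transfer-matrix step: one must set up the correct finite state space of "left-connectivity patterns" for the ladder, check that the associated nonnegative matrix is primitive (so Perron--Frobenius with a spectral gap applies, giving the clean $1+o(1)$ and, crucially, the cancellation of the $N_1,N_2$-dependent prefactors), and handle the two semi-infinite boundary pieces so that their normalized contributions converge to the Perron eigenvectors rather than oscillating. An alternative that sidesteps the explicit spectral analysis is a direct coupling/regeneration argument — showing that conditionally on a crossing one can find, with probability tending to $1$, a "clean cut" column far to the left of $F$ across which the configuration decouples, so that the influence of $N_1$ (and symmetrically $N_2$) on the law near $F$ vanishes — but in either route the crux is the same: proving asymptotic loss of memory of the boundary conditions.
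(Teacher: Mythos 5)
The paper does not prove this statement; it is quoted verbatim as Theorem~2.1 of Axelson-Fisk and H\"aggstr\"om \cite{Axelson-Fisk+H"aggstr"om:2009} and used as a black box, so there is no internal proof to compare against. That said, your proposal is essentially the argument of the cited reference: the ``left-connectivity pattern'' state space you describe is exactly the process $({\tt T}_i)$ that the present paper introduces in Section~3 (and proves to be a Markov chain in Lemma~\ref{Lem:T_i,omega(E^i)}), the crossing event $\Omega_{N_1,N_2}$ is the event that this chain started at ${\tt 11}$ avoids the absorbing state ${\tt 00}$ for $N_1+N_2$ steps, and the loss of memory of the boundary conditions comes from Perron--Frobenius applied to the substochastic transfer matrix on $\{{\tt 01},{\tt 10},{\tt 11}\}$, which is primitive for $p\in(0,1)$ (it has a loop at ${\tt 11}$ and is irreducible). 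Two small points worth recording to close the argument cleanly: (i) your reduction to cylinder convergence is legitimate because any open crossing of the strip $[-N_1,N_2]$ can be shortcut to one staying inside the strip, so $\Omega_{N_1,N_2}$ really is a finite-dimensional (hence clopen) event, which is also what makes your final step $\Prmp^*(\Omega_{N_1,N_2})=1$ go through; (ii) consistency/Kolmogorov is unnecessary once you observe that $\Omega$ is compact and the $\Prm_{p,N_1,N_2}$ are probability measures, so convergence of cylinder probabilities already forces weak convergence to a probability measure. With those remarks your outline is correct.
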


\begin{center}
\pgfmathsetseed{845133}
\begin{tikzpicture}[thin, scale=0.5,-,
                   shorten >=0pt+0.5*\pgflinewidth,
                   shorten <=0pt+0.5*\pgflinewidth,
                   every node/.style={circle,
                                      draw,
                                      fill          = black!80,
                                      inner sep     = 0pt,
                                      minimum width =4 pt}]

\def \p {0.75}

\foreach \x in {-10,-9,-8,-7,-6,-5,-4,-3,-2,-1,0,1,2,3,4,5,6,7,8,9,10}
\foreach \y in {0,1}
    \node at (\x,\y) {};

\foreach \x in {-10,-9,-8,-7,-6,-5,-4,-3,-2,-1,0,1,2,3,4,5,6,7,8,9}{
\foreach \y in {0,1}{
    \pgfmathparse{rnd}
    \let\dummynum=\pgfmathresult
    \ifdim\pgfmathresult pt < \p pt\relax \draw (\x,\y) -- (\x+1,\y);\fi
  }}
  
\foreach \x in {-10,-9,-8,-7,-6,-5,-4,-3,-2,-1,0,1,2,3,4,5,6,7,8,9,10}{
\foreach \y in {0}{
    \pgfmathparse{rnd}
    \let\dummynum=\pgfmathresult
    \ifdim\pgfmathresult pt < \p pt\relax \draw (\x,\y) -- (\x,\y+1);\fi
  }}

	\draw[densely dotted] (-10.5,1) -- (-10,1);
	\draw (-3,1) -- (-2,1);
	\draw (9,0) -- (10,0);
	\draw[densely dotted] (10,0) -- (10.5,0);
\end{tikzpicture}
\end{center}

Given $\omega \in \Omega^*$, denote by $\Cluster = \Cluster_\omega$ the a.\,s.\ unique infinite open cluster.
Define $\Omega_{\mathbf{0}} \defeq \{\omega \in \Omega^*: \mathbf{0} \in \Cluster\}$ and
$\Prmp(\cdot) \defeq \Prmp^*(\cdot | \Omega_{\mathbf{0}})$ where $\mathbf{0} \defeq (0,0)$.  
The measure $\Prmp$ will serve as the law of the percolation environment
for the random walk which is introduced next.

\subsection{Random walk in the infinite percolation cluster.}

We consider the random walk model introduced by Axelson-Fisk and H\"agg\-str\"om in \cite{Axelson-Fisk+H"aggstr"om:2009b}.
However, in order to be more consistent with other works on biased random walks we will use a different parametrization.
State and trajectory space of the walk are $V$ and $V^{\N_0}$, respectively.
By $Y_n: V^{\N_0} \to V$, we denote the projection from $V^{\N_0}$ onto the $n$th coordinate, $n \in \N_0$.
We equip $V^{\N_0}$ with the $\sigma$-field $\G = \sigma(Y_n: n \in \N_0)$.
Fix $\lambda \geq 0$. Given a configuration $\omega \in \Omega$,
let $P_{\omega,\lambda}$ denote the distribution on $V^{\N_0}$ that makes $Y \defeq (Y_n)_{n \in \N_0}$
a Markov chain on $V$ with initial position $\mathbf{0} \defeq (0,0)$
and transition probabilities
\begin{equation}	\label{eq:P_xi lazy}
p_{\omega,\lambda}(v,w)
=	P_{\omega,\lambda}(Y_{n+1} = w \mid Y_n = v)
=	\frac{e^{\lambda (\x(w)-\x(v))}}{e^{\lambda}+1+e^{-\lambda}} \1_{\{\omega(e)=1\}}
\end{equation}
for $v \sim w$ and
\begin{equation*}	\label{eq:P_xi lazy v to v}
p_{\omega,\lambda}(v,v)	=	P_{\omega,\lambda}(Y_{n+1} = v \mid Y_n = v)	=	1-\sum_{w \sim v} p_{\omega,\lambda}(v,w).
\end{equation*}
We write $P^{\mathbf{0}}_{\omega,\lambda}$ to emphasize the initial position $\mathbf{0}$,
and $P^v_{\omega,\lambda}$ for the distribution of the Markov chain with the same transition probabilities but initial position $v \in V$.
The joint distribution of $\omega$ and $(Y_n)_{n \in \N_0}$
when $\omega$ is drawn at random according to a probability distribution $Q$ on $(\Omega,\F)$
is denoted by $Q \times P^v_{\omega,\lambda} \eqdef \Prob_{Q,\lambda}^v$ where $v$ is the initial position of the walk.
Formally, it is defined by
\begin{equation}	\label{eq:|P_Q}
\Prob_{Q,\lambda}^v(F \times G)	~=~	\int_F P_{\omega,\lambda}^v(G) \, Q(\domega),	\quad	F \in \F,\;G \in \G.
\end{equation}
We fix $p \in (0,1)$ throughout this paper
and write $\Prob_{\lambda}^v$ for $\Prob_{\Prmp,\lambda}^v$
and $\Prob_{\lambda}$ for $\Prob_{\lambda}^{\mathbf{0}}$.
Then \eqref{eq:|P_Q} becomes
\begin{equation}	\label{eq:|P_p}
\Prob_{\lambda}(F \times G)	~=~	\int_F P_{\omega,\lambda}(G) \, \Prmp(\domega)	~=~	\Ermp[\1_{\{\omega \in F\}} P_{\omega,\lambda}(G)]
\end{equation}
where $\Ermp$ denotes expectation with respect to $\Prmp$.
We write $\Prob^*_{\lambda}$ for $\Prob_{\Prmp^*,\lambda}^{\mathbf{0}}$.

\subsection{The random walk revisited.}

We review two results from \cite{Axelson-Fisk+H"aggstr"om:2009b} that are important for the paper at hand.

\begin{proposition}[Proposition 3.1 in \cite{Axelson-Fisk+H"aggstr"om:2009b}]	\label{Prop:recurrence/transience}
The random walk $(Y_n)_{n \in \N_0}$ is recurrent under  $P^{\mathbf{0}}_{\omega,0}$
and transient under  $P^{\mathbf{0}}_{\omega,\lambda}$ for $\lambda \not = 0$, for  $\Prmp$-almost all $\omega$.
\end{proposition}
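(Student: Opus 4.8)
The plan is to prove recurrence at $\lambda = 0$ and transience for $\lambda \neq 0$ by two rather different arguments, both exploiting the quasi-one-dimensional structure of the ladder.

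For the case $\lambda = 0$, the walk is the simple random walk on the infinite cluster $\Cluster_\omega$. First I would note that $\Cluster_\omega$ is a connected, locally finite graph (vertex degrees are at most $3$), so the simple random walk on it is an irreducible, reversible Markov chain. Recurrence is then equivalent to the effective resistance from $\mathbf 0$ to infinity being infinite, when each open edge is assigned unit resistance. The key geometric input is that $\Cluster_\omega$, being a subgraph of the ladder, is ``essentially one-dimensional'': for $\Prmp$-almost every $\omega$ there is a bi-infinite open path, and every vertex of $\Cluster_\omega$ sits within bounded distance of it. More to the point, for each $n$ the set of edges of $\Cluster_\omega$ with one endpoint of $\x$-coordinate $n$ and the other of $\x$-coordinate $n+1$ is a \emph{cutset} separating $+\infty$ from $-\infty$, and it has at most two edges. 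By the series/parallel laws (or directly by Nash-Williams' criterion), the effective resistance across such a cut is at least $1/2$, and since there are infinitely many disjoint such cuts on the positive side (and likewise on the negative side), the total effective resistance from $\mathbf 0$ to $\pm\infty$ is infinite. Hence the walk is recurrent. One should be a little careful that infinitely many of these cuts are genuinely nonempty: this follows because the infinite cluster, on the event $\Omega^*$, meets every $\x$-coordinate, so the cut at level $n$ is nonempty for every $n$; this is where one invokes $\Prmp^*(\Omega^*) = 1$ and the definition of $\Prmp$.

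For $\lambda \neq 0$, assume without loss of generality $\lambda > 0$ (the case $\lambda < 0$ is symmetric by reflecting the $\x$-axis). Here the standard tool is a supermartingale / Lyapunov-function argument. The idea is that the $\x$-coordinate $\x(Y_n)$ has a uniform positive drift to the right whenever the walk is not at a vertex where it is forced to move left; more precisely, from the transition probabilities \eqref{eq:P_xi lazy} one computes that $e^{-\lambda \x(Y_n)}$ is a supermartingale under $P_{\omega,\lambda}$ — indeed, at a vertex $v$ with open neighbours $w$, writing $a = \x(w) - \x(v) \in \{-1,0,1\}$, the quantity $\sum_w p_{\omega,\lambda}(v,w) e^{-\lambda a} + p_{\omega,\lambda}(v,v)$ is at most $1$, because the ladder structure implies $v$ has at most one open neighbour with $a = +1$ and at most one with $a = -1$, and $e^{-\lambda}\cdot e^{\lambda\cdot 1} + e^{\lambda}\cdot e^{\lambda\cdot(-1)} \le e^{-\lambda} + e^{\lambda} + 1$ after normalising. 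Thus $M_n \defeq e^{-\lambda \x(Y_n)}$ is a nonnegative supermartingale, hence converges $P_{\omega,\lambda}$-a.s.; since $M_n$ takes values in a discrete set that accumulates only at $0$ and $+\infty$, and the walk is irreducible on an infinite graph extending to $\x$-coordinate $+\infty$, one concludes $\x(Y_n) \to +\infty$, so in particular the walk visits $\mathbf 0$ only finitely often, i.e.\ it is transient.

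The main obstacle is making the supermartingale argument actually force $\x(Y_n) \to +\infty$ rather than merely $\limsup \x(Y_n) < \infty$ being excluded: one must rule out the walk oscillating or getting stuck in a finite region, and also rule out $\x(Y_n)\to+\infty$ failing because the cluster might locally pinch. The clean way is to first establish transience in the sense of ``$Y_n$ leaves every finite set'' — which follows because $M_n$ converges a.s.\ and, on the event that $Y_n$ returns to a fixed vertex infinitely often, $M_n$ would return to a fixed value infinitely often yet also (by irreducibility and the structure of $\Cluster_\omega$, which contains vertices of arbitrarily large $\x$-coordinate) take arbitrarily small values infinitely often, a contradiction — and then to upgrade ``leaves every finite set'' to ``$\x(Y_n)\to+\infty$'' by observing that the cutset at each level $n$ is finite, so each level is crossed only finitely often, whence the walk is eventually to the right of every level. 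I expect the reflection argument for $\lambda<0$ and the verification that $\Cluster_\omega$ meets every $\x$-coordinate (so that the level cuts are nonempty and the walk's range is genuinely unbounded to the right) to be the routine parts; the delicate point is the dichotomy argument converting a.s.\ convergence of the supermartingale into transience, which I would handle via the optional stopping theorem applied to exit times of large boxes $[-n,n]\times\{0,1\}$.
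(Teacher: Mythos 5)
The paper does not prove this proposition itself; it is cited from Axelson-Fisk and H\"aggstr\"om (Proposition 3.1 in their paper), so there is no proof in this paper to compare against. Assessing your argument on its own terms: the recurrence half ($\lambda=0$) is correct. The cutset at each $\x$-level of the ladder has at most two edges, so Nash--Williams gives infinite effective resistance to both $+\infty$ and $-\infty$ when all conductances are one; the laziness is irrelevant to recurrence. This is a clean and valid argument.

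The transience half, however, has a genuine gap: the claimed supermartingale $M_n=e^{-\lambda\x(Y_n)}$ is \emph{not} a supermartingale. To see this, compute for a vertex $v$ with open right/left/vertical neighbours indicated by $r,l,u\in\{0,1\}$: using \eqref{eq:P_xi lazy}, each open neighbour contributes $p_{\omega,\lambda}(v,w)e^{-\lambda(\x(w)-\x(v))}=1/Z$ with $Z=e^\lambda+1+e^{-\lambda}$, while the holding term is $p_{\omega,\lambda}(v,v)=1-(re^{\lambda}+le^{-\lambda}+u)/Z$; adding up gives
\begin{equation*}
E_{\omega,\lambda}\!\left[e^{-\lambda(\x(Y_{n+1})-\x(Y_n))}\,\middle|\,Y_n=v\right]
~=~ 1+\frac{r(1-e^{\lambda})+l(1-e^{-\lambda})}{Z}.
\end{equation*}
For $\lambda>0$ this is $\le 1$ whenever the right edge is present, but at the deepest vertex of a trap one has $r=0,\ l=1$, so the expression equals $1+(1-e^{-\lambda})/Z>1$. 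Your step ``the ladder structure implies $v$ has at most one open neighbour with $a=+1$ and at most one with $a=-1$'' tacitly assumes the favourable $+1$ edge is present, which is precisely what fails inside traps. Consequently the convergence of $M_n$, and everything you build on it, is not established.

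The standard fix uses the reversibility structure directly rather than a Lyapunov function. The walk is reversible with conductances $C(u,v)=e^{\lambda(\x(u)+\x(v))}$ (up to a constant), so the effective resistance from $\mathbf 0$ to $+\infty$ is at most the series resistance along any single open path to $+\infty$, namely a sum of terms of order $e^{-\lambda(2n+1)}$, which is finite. This yields a positive escape probability for every forwards-communicating starting vertex, cf.\ the paper's bound \eqref{eq:lower bound for escape probability} and the resistance estimates in Lemma \ref{Lem:stepping back}; transience then follows by irreducibility, and the case $\lambda<0$ by reflection as you note. Alternatively, you could observe the walk only at backbone vertices (or at pre-regeneration points) where the supermartingale inequality does hold, but you would then need to argue separately that the excursions into traps are almost surely finite.
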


Define $X_n \defeq \x(Y_n)$, $n \in \N_0$ as the projection on the $\x$-coordinate.
In the biased case, a strong law of large numbers holds for $X_n$:

\begin{proposition}[Theorem 3.2 in \cite{Axelson-Fisk+H"aggstr"om:2009b}]	\label{Prop:SLLN}
For any $\lambda > 0$, there exists a deterministic constant $\vel(\lambda) = \vel(p,\lambda) \in [0,1]$ such that
\begin{equation*}	\textstyle
\frac{X_n}{n}	~\to~	 \vel(\lambda)	\quad	\Prob_{\lambda} \text{-a.\,s.\ as } n \to \infty.
\end{equation*}
Furthermore, there exists a critical value $\lambdacrit = \lambdacrit(p) > 0$ such that
\begin{equation*}
\vel(\lambda) > 0	\text{ for } 0 < \lambda < \lambdacrit
\quad	\text{ and }	\quad
\vel(\lambda) = 0	\text{ for } \lambda \geq \lambdacrit.
\end{equation*}
The critical value $\lambdacrit$ is
\begin{equation}	\label{eq:lambdacrit}	\textstyle
\lambdacrit	~=~	\frac{1}{2} \log\Big(2/\big(1+2p-2p^2-\sqrt{1+4p^2-8p^3+4p^4} \big)\Big).
\end{equation}
\end{proposition}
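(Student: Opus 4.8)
The plan is to build a renewal (regeneration) structure along the $\x$-axis, both for the percolation environment and for the walk, and to read the law of large numbers, the phase transition, and the explicit critical value off this structure. The starting point is the known description of $\Prmp^*$ as a deterministic function of a \emph{finite-state} Markov chain indexed by $\x\in\Z$: the state at $x$ records which of the vertices $(x,0),(x,1)$ lie in the infinite cluster $\Cluster$, which of the incident rung and horizontal edges are open, and enough extra information to enforce the bi-infinite-path constraint. The first step is to single out the \emph{cut columns}, i.e.\ positions $x$ for which exactly one edge of $\Cluster$ joins $\{v\in\Cluster:\x(v)\le x\}$ to $\{v\in\Cluster:\x(v)>x\}$; this edge, call it $e_x$, is necessarily horizontal. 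Being a cut column corresponds to the column chain occupying one of finitely many recurrent states, so $\Prmp^*$-a.s.\ there are infinitely many cut columns on each side of the origin, the gaps between consecutive ones are stationary with exponentially decaying tails, and — the key point — conditionally on the locations of the cut columns the environment pieces strictly between consecutive ones are independent, and identically distributed away from the origin.

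The second step is to promote the cut columns to regeneration times for the walk. Fix $\lambda>0$; by Proposition~\ref{Prop:recurrence/transience} the walk is transient, and since the bias points towards $+\infty$ one checks that $X_n\to+\infty$ $\Prob_\lambda$-a.s. I would then call $n$ a \emph{regeneration time} if there is a cut column $x$ with $X_m\le x$ for all $m<n$, $X_n=x$, $X_{n+1}=x+1$, and $X_m\ge x+1$ for all $m>n$; the last condition means the walk, which traverses $e_x$ at step $n$, never returns to $\{\x\le x\}$. Because every cut edge is eventually crossed for the last time, a Borel--Cantelli/ergodicity argument — each cut column is usable for a regeneration with probability bounded below, sufficiently independently along the environment renewal structure — produces infinitely many regeneration times $\tau_1<\tau_2<\cdots$ $\Prob_\lambda$-a.s. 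Combining the environment renewal structure of the first step with the strong Markov property of the walk, the pairs $(\tau_{i+1}-\tau_i,\;X_{\tau_{i+1}}-X_{\tau_i})$, $i\ge1$, form an i.i.d.\ sequence (the initial block up to $\tau_1$ possibly having a different law), with $1\le X_{\tau_{i+1}}-X_{\tau_i}$ having exponential tails and hence finite mean $d=d(\lambda)\in(0,\infty)$, and $\tau_{i+1}-\tau_i$ having some mean $m=m(\lambda)\in[1,\infty]$.

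With this in hand the law of large numbers is immediate: the strong law for i.i.d.\ sums gives $X_{\tau_n}/n\to d$ and $\tau_n/n\to m$, and sandwiching $X_{\tau_{k_n}}\le X_n\le X_{\tau_{k_n+1}}$, $\tau_{k_n}\le n<\tau_{k_n+1}$ with $k_n=\max\{i:\tau_i\le n\}\to\infty$ yields $X_n/n\to d/m$ when $m<\infty$ and $X_n/n\to0$ when $m=\infty$; this limit is the (deterministic) speed $\vel(\lambda)=d(\lambda)/m(\lambda)$, read as $0$ when $m=\infty$, and in particular $\vel(\lambda)>0\iff m(\lambda)<\infty$. It remains to decide when $m(\lambda)<\infty$. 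Decompose $\tau_{i+1}-\tau_i$ into time spent on the backbone between the two cut columns — which, having exponential tails uniformly over compact $\lambda$-ranges, contributes only a finite amount — and time spent inside the finite dead-end branches (``traps'') hanging off this stretch of backbone. A trap reaching $\x$-depth $k$ is, from the walk's point of view, a finite graph of width at most $2$ on which the bias pushes a distance of order $k$ \emph{into} the trap; by reversibility and the standard Green-function/gambler's-ruin estimate (the per-step forward-to-backward rate ratio being $e^{\lambda}/e^{-\lambda}=e^{2\lambda}$) the expected time spent there before escaping is $\asymp e^{2\lambda k}$ up to polynomial factors, and the expected number of separate entries before the next regeneration is $\asymp1$. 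On the other hand, under $\Prmp^*$ the depth of the traps attached near a cut column has an exponential tail, $\Prmp^*(\mathrm{depth}\ge k)\asymp\rho^k$ for some $\rho=\rho(p)\in(0,1)$. Summing over $k$, the expected trap contribution to $\tau_{i+1}-\tau_i$ is $\asymp\sum_k\rho^k e^{2\lambda k}$, which is finite precisely when $\rho e^{2\lambda}<1$; hence $m(\lambda)<\infty$ if and only if $\lambda<\lambdacrit$, where $\lambdacrit=-\tfrac12\log\rho$, and this is the phase transition.

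Finally, the explicit value of $\lambdacrit$ comes out of the transition probabilities of the conditioned column chain: the trap-generating part of the environment is governed by a $2\times2$ substochastic transfer matrix, and $\rho$ is its Perron--Frobenius eigenvalue, equivalently the smaller root of $\rho^2-(1+2p-2p^2)\rho+p(1-p)=0$, i.e.\ $\rho=\tfrac12\bigl(1+2p-2p^2-\sqrt{1+4p^2-8p^3+4p^4}\bigr)$; substituting into $\lambdacrit=-\tfrac12\log\rho$ reproduces \eqref{eq:lambdacrit}. I expect the main obstacle to be precisely the explicit combinatorics tying the first and last steps together: writing $\Prmp^*$ out as a concrete finite-state chain, correctly identifying the cut-column states and the trap substructure together with its transfer matrix, and extracting the exact eigenvalue $\rho$. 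The other ingredients — the a.s.\ existence of infinitely many regeneration times and the trap escape-time asymptotics — are routine, although in the latter one must keep in mind that traps are width-$2$ objects rather than line segments; this affects only the polynomial prefactor, not the exponential rate $e^{2\lambda}$, so the value of $\lambdacrit$ is unaffected.
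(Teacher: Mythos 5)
Your proposal follows essentially the same route as the paper: build a renewal decomposition of the environment along the $\x$-axis (the paper via pre-regeneration points, i.e.\ columns where $(i,1)$ is isolated, so the cluster passes through the single vertex $(i,0)$; you via the closely related ``cut columns''), promote it to a regeneration structure for the walk à la Sznitman--Zerner, and read off $\vel(\lambda)=\E_\lambda[\rho_2-\rho_1]/\E_\lambda[\tau_2-\tau_1]$ with the phase transition governed by whether the expected time in a typical trap, $\sum_m e^{-2\lambdacrit m}\,e^{2\lambda m}$ up to bounded factors, is finite --- which is precisely how the paper's Proposition~\ref{Prop:SLLN&MZlaw}(a) (via Lemmas~\ref{Lem:iid regeneration times and points}, \ref{Lem:moments of rho and tau}, \ref{Lem:Etau_m} and \ref{Lem:trap cost}) derives Proposition~\ref{Prop:SLLN}. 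The one imprecision worth flagging is your appeal to ``the strong Markov property of the walk'' for the i.i.d.-ness of regeneration blocks: the $\tau_n$ are not stopping times, and the paper explicitly delegates the required quasi-Markov property to the Sznitman--Zerner-type argument behind Lemma~\ref{Lem:iid regeneration times and points}; relatedly, the paper's choice of pre-regeneration points (rather than general cut columns) guarantees the walk's local picture is the same at every regeneration and makes the i.i.d.\ claim clean.
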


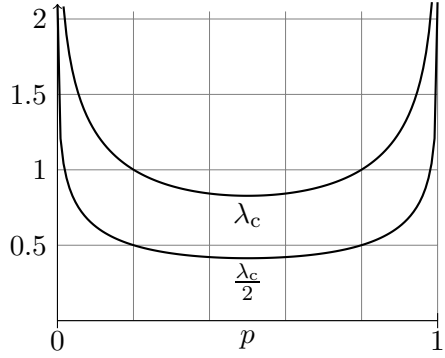
\begin{figure}
\begin{center}
\begin{tikzpicture}[scale=1]
		\draw [help lines] (0,0) grid (5,4.1); 
		\draw [-] (0,0) -- (5,0);
		\draw [->] (0,-0.1) -- (0,4.2);
		\draw [-] (5,-0.1) -- (5,0.1);
		\draw (0,0) node[below]{$0$};
		\draw (2.5,0) node[below]{$p$};
		\draw (5,0) node[below]{$1$};
		\draw (0,1) node[left]{$0.5$};
		\draw (0,2) node[left]{$1$};
		\draw (0,3) node[left]{$1.5$};
		\draw (0,4) node[left]{$2$};
		\draw [thick, domain=0.08:4.925, samples=128] plot(\x, {ln(2/(1+2*(\x/5)-2*(\x/5)^2-sqrt(1+4*(\x/5)^2-8*(\x/5)^3+4*(\x/5)^4)))});
		\draw (2.5,1.7) node[below]{$\lambdacrit$};
		\draw [thick, domain=0.0012:4.999, samples=128] plot(\x, {0.5*ln(2/(1+2*(\x/5)-2*(\x/5)^2-sqrt(1+4*(\x/5)^2-8*(\x/5)^3+4*(\x/5)^4)))});
		\draw (2.5,0.9) node[below]{$\frac{\lambdacrit}{2}$};
\end{tikzpicture}
\vspace{-0.3cm}
\caption{The figure shows $\lambdacrit$ and $\lambdacrit/2$ as functions of $p$.
The critical value $\lambdacrit$ is symmetric around $1/2$, \textit{i.e.}, $\lambdacrit(p) = \lambdacrit(1-p)$.}	
\label{fig:lambdacrit}
\end{center}
\end{figure}

\subsection{Regularity of the speed.}

Our first main result is the following theorem.

\begin{theorem}	\label{Thm:continuity of the speed}
The speed $\vel$ is continuous in $\lambda$ on the interval $(0,\infty)$.
Further, for any $\lambda^* \in (0,\lambdacrit)$ and any $1 < r < \frac{\lambdacrit}{\lambda^*} \wedge 2$,
we have
\begin{equation}	\label{eq:continuity of the speed}
\lim_{\lambda \to \lambda^*} \frac{\vel(\lambda)-\vel(\lambda^*)}{(\lambda-\lambda^*)^{r-1}} ~=~ 0.
\end{equation}
\end{theorem}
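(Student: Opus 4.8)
The plan is to deduce every assertion from the regeneration representation of $\vel$. Since $\lambda>0$ makes $(Y_n)$ transient to the right (Proposition~\ref{Prop:recurrence/transience}) and the infinite cluster $\Cluster$ on the ladder has, $\Prmp$-a.s., infinitely many cut vertices, one constructs in the standard way for ballistic walks regeneration times $0<\tau_1<\tau_2<\cdots$ so that under $\Prob_\lambda$ the successive pieces of trajectory-and-traversed-environment between consecutive $\tau_k$ form an i.i.d.\ sequence (the first piece possibly of a different law). With $D_\lambda=X_{\tau_2}-X_{\tau_1}\in\{1,2,\dots\}$ and $T_\lambda=\tau_2-\tau_1$ the displacement and the duration of a block, the strong law of large numbers and the renewal theorem give $\vel(\lambda)=\E_\lambda[D_\lambda]/\E_\lambda[T_\lambda]$ for $\lambda\in(0,\lambdacrit)$. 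The key further input is a sharp, locally uniform control of the moments of $T_\lambda$: because the time $T_\lambda$ is governed by the escape time from the deepest trap entered in a block — of order $e^{2\lambda d}$ for a trap of depth $d$, such traps occurring along the backbone at rate of order $q^d$ with $q=q(p)\in(0,1)$ given by $\lambdacrit=\tfrac12\log(1/q)$ (cf.~\eqref{eq:lambdacrit}) — one obtains
\[
\E_\lambda\big[T_\lambda^{\,r}\big]<\infty\quad\Longleftrightarrow\quad r<\lambdacrit/\lambda,
\]
uniformly on compacts ($\sup_{\lambda\in K}\E_\lambda[T_\lambda^{\,r}]<\infty$ for compact $K\subset(0,\lambdacrit)$ and $r<\lambdacrit/\max K$), while $D_\lambda$ is bounded by a $\lambda$-independent, exponentially integrable functional of the block, so $\sup_{\lambda\in K}\E_\lambda[D_\lambda^{\,r}]<\infty$ for every $r$, and $\inf_{\lambda\in K}\E_\lambda[T_\lambda]\ge\inf_{\lambda\in K}\E_\lambda[D_\lambda]>0$. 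Continuity on $[\lambdacrit,\infty)$ is then immediate: $\vel\equiv0$ there, and $\vel(\lambda)\to0$ as $\lambda\uparrow\lambdacrit$ because $\E_\lambda[D_\lambda]$ stays bounded whereas $\E_\lambda[T_\lambda]$ diverges (the contribution of traps of each fixed depth is continuous in $\lambda$, and the total behaves like $\sum_d(qe^{2\lambda})^d\to\infty$). It remains to prove \eqref{eq:continuity of the speed} for a fixed $\lambda^*\in(0,\lambdacrit)$ and $1<r<\tfrac{\lambdacrit}{\lambda^*}\wedge2$; note this also yields continuity of $\vel$ on $(0,\lambdacrit)$.

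Fix such $\lambda^*$ and $r$, put $\delta=|\lambda-\lambda^*|$, and let $K$ be a small compact neighbourhood of $\lambda^*$ on which the above bounds hold with this $r$. Since $\E_\lambda[T_\lambda]$ is bounded away from $0$ and $\infty$ on $K$, writing $\vel(\lambda)-\vel(\lambda^*)$ over the common denominator $\E_\lambda[T_\lambda]\E_{\lambda^*}[T_{\lambda^*}]$ reduces \eqref{eq:continuity of the speed} to $|\E_\lambda[D_\lambda]-\E_{\lambda^*}[D_{\lambda^*}]|=o(\delta^{r-1})$ and $|\E_\lambda[T_\lambda]-\E_{\lambda^*}[T_{\lambda^*}]|=o(\delta^{r-1})$. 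To compare block expectations across biases I would couple the two walks in the same environment $\omega$ using one sequence of i.i.d.\ uniforms, so that — by smoothness of $\lambda\mapsto p_{\omega,\lambda}(v,\cdot)$ in \eqref{eq:P_xi lazy} — the trajectories agree up to a decoupling time $\sigma$ with $\Prob(\sigma\le n)\le Cn\delta$, whence a bounded-look-ahead version of the regeneration structure is shared on a long initial stretch. Then truncate at trap depth $M$: for a block whose deepest trap has depth $\le M$, the conditional expectation of $T_\lambda$ given the block is a pole-free rational function of $e^\lambda$ with $\lambda$-derivative at most of order $M$ times that conditional expectation (the extra factor being the trap depth), so these ``light'' blocks contribute $O(M\delta)$ after integrating against the block law and using $\sup_{\lambda\in K}\E_\lambda[T_\lambda]<\infty$; ``heavy'' blocks (depth $>M$) contribute at most $\mathrm{poly}(M)\,\rho^{M}$ with $\rho=qe^{2\bar\lambda}<1$, $\bar\lambda=\lambda\vee\lambda^*$ (since $q^de^{2\bar\lambda d}=\rho^d$); and the block containing $\sigma$ together with the post-decoupling discrepancy are bounded, via Hölder's inequality and the locally uniform $r$-th moment bounds on $T$, by a remainder which — and this is where $r<\tfrac{\lambdacrit}{\lambda^*}\wedge2$ is used — is again $o(\delta^{r-1})$. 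Taking $M\asymp\log(1/\delta)$ makes $O(M\delta)+\mathrm{poly}(M)\rho^{M}=o(\delta^{r-1})$ as well; the analogue for $\E_\lambda[D_\lambda]$ is easier, $D_\lambda$ having locally uniform exponential tails. This proves \eqref{eq:continuity of the speed}.

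The hard part is that last comparison. The regeneration decomposition is intrinsically $\lambda$-dependent — which cut vertices are regeneration points depends on the walk, hence on $\lambda$ — so one cannot simply differentiate under a single integral over a fixed environment space; the coupling keeps the two walks together for $\asymp\delta^{-1}$ steps, but one must still bound how the discrepancy created at the decoupling time propagates into the heavy-tailed trap times accumulated afterwards, where only $\lambdacrit/\lambda^*$ finite moments of $T_{\lambda^*}$ are available, and the renewal/fluctuation estimates involved are naturally controlled at the second-moment level — the source of the cap at $2$, i.e.\ the same threshold $\lambda^*<\lambdacrit/2$ below which the invariance principle proved elsewhere in this paper holds. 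Making this bookkeeping precise, and arranging a bounded-look-ahead regeneration structure so that coupling and depth-truncation interact cleanly, is the technical core of the proof.
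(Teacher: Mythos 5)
Your setup — regeneration blocks $(D_\lambda,T_\lambda)$, $\vel=\E_\lambda[D_\lambda]/\E_\lambda[T_\lambda]$, the moment threshold $\E_\lambda[T_\lambda^r]<\infty\iff r<\lambdacrit/\lambda$ (Lemma~\ref{Lem:moments of rho and tau}), and the divergence of $\E_\lambda[T_\lambda]$ as $\lambda\uparrow\lambdacrit$ while $\E_\lambda[D_\lambda]$ stays bounded — matches the paper's Section~\ref{sec:biased random walk}, and your handling of continuity at $\lambdacrit$ is exactly the paper's. However, the core step — showing $|\E_\lambda[T_\lambda]-\E_{\lambda^*}[T_{\lambda^*}]|=o(\delta^{r-1})$ by coupling the two walks in a common environment and truncating at trap depth $M\asymp\log(1/\delta)$ — has a quantitative gap. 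The decoupling time satisfies $\Prob(\sigma\le n)\lesssim n\delta$, so $\Prob(\sigma\le\tau_2)\lesssim\delta\,\E[\tau_2]=O(\delta)$; but on the decoupled event the $\lambda$-walk's block duration is no longer controlled by the truncation applied to the $\lambda^*$-trajectory (after $\sigma$ the $\lambda$-walk may find and fall into an arbitrarily deep trap), so the best one can do is H\"older: $\E[T_\lambda\1_{\sigma\le\tau_2}]\le\|T_\lambda\|_r\,\Prob(\sigma\le\tau_2)^{(r-1)/r}=O(\delta^{(r-1)/r})$. Since $(r-1)/r<r-1$ for all $r>1$, this is strictly weaker than $o(\delta^{r-1})$. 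Even if one could truncate both walks simultaneously, balancing the heavy-tail error $\rho^M$ against the light-block error $\delta\,e^{2\lambda M}$ with $M=c\log(1/\delta)$ only closes for $r<2-\lambda^*/\lambdacrit$, which is strictly smaller than $\frac{\lambdacrit}{\lambda^*}\wedge2$ for every $\lambda^*\in(0,\lambdacrit)$. So the coupling route, as outlined, does not reach the full range of $r$ claimed in the theorem, and the paragraph you flag as ``the technical core'' is precisely where the argument breaks.

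The paper sidesteps this entirely by never comparing two $\lambda$-dependent regeneration structures across different laws. It writes $\E_\lambda[X_n]=\E_{\lambda^*}[X_n\prod_{j\le n}p_{\omega,\lambda}(Y_{j-1},Y_j)/p_{\omega,\lambda^*}(Y_{j-1},Y_j)]$ (the change of measure \eqref{eq:girsanov annealed}), Taylor-expands the log-density in $\lambda-\lambda^*$ to extract the martingale $M_n^{\lambda^*}$, and then matches scales by letting $n(\lambda-\lambda^*)^r\to\alpha$ simultaneously. The decisive inputs are the Marcinkiewicz--Zygmund strong law $n^{-1/r}(X_n-n\vel)\to0$ and $n^{-1/r}M_n^{\lambda}\to0$ in $L^r$ (Theorem~\ref{Thm:Marcinkiewicz-Zygmund}), the exponential-moment bound for $n^{-1/2}M_n^{\lambda}$ (Proposition~\ref{Prop:sup exp M}), and the uniform bound $n^{-1/r}\sup_{|\lambda-\lambda^*|\le\delta}|\E_\lambda[X_n]-n\vel(\lambda)|\to0$ from Lemma~\ref{Lem:uniform bound X_n-n vel}. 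This is the single construction that lets the full threshold $r<\frac{\lambdacrit}{\lambda^*}\wedge2$ appear, and it has no analogue in your proposal — it is the missing idea.
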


For $\lambda \in (0,\lambdacrit/2)$, we show a stronger statement:

\begin{theorem}	\label{Thm:differentiability of the speed}
The speed $\vel$ is differentiable in $\lambda$ on the interval $(0,\lambdacrit/2)$, and
the derivative is given in \eqref{eq:formula for the speed} below.
\end{theorem}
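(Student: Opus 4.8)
The plan is to express the speed as a ratio of expectations over a regeneration block and to differentiate numerator and denominator by a change of measure. First I would invoke the regeneration structure of the walk (developed for the invariance principle): under $\Prob_\lambda$ there are random times $0\le\tau_0<\tau_1<\tau_2<\cdots$ such that the blocks $(\nu_k,\chi_k)_{k\ge 1}$, with $\nu_k\defeq X_{\tau_k}-X_{\tau_{k-1}}$ and $\chi_k\defeq\tau_k-\tau_{k-1}$, are i.i.d., and the speed of Proposition~\ref{Prop:SLLN} is
\[
\vel(\lambda)=\frac{\E_\lambda[\nu_1]}{\E_\lambda[\chi_1]},
\]
with $\E_\lambda[\chi_1]<\infty$ for $\lambda<\lambdacrit$ and, crucially, $\E_\lambda[\chi_1^2]<\infty$ for $\lambda<\lambdacrit/2$ --- the very integrability underlying the invariance principle. (The initial block attached to $\tau_0$ has a different law but does not affect the speed.) Hence it is enough to prove that $\mu\mapsto\E_\mu[\nu_1]$ and $\mu\mapsto\E_\mu[\chi_1]$ are differentiable at each $\lambda\in(0,\lambdacrit/2)$ and to identify their derivatives.

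Second, I would introduce the score functions $\phi_\mu(v,w)\defeq\partial_\mu\log p_{\omega,\mu}(v,w)$ and the process $W_n\defeq\sum_{i=0}^{n-1}\phi_\lambda(Y_i,Y_{i+1})$. By the explicit form \eqref{eq:P_xi lazy} of the transition probabilities, $\phi_\mu$ is bounded uniformly in the edge and in $\mu$ ranging over a compact neighbourhood of $\lambda$, so $W$ is a martingale with bounded increments, both quenched and under $\Prob_\lambda$. The likelihood ratio of one block is $R_\mu\defeq\prod_{\tau_0\le i<\tau_1}p_{\omega,\mu}(Y_i,Y_{i+1})/p_{\omega,\lambda}(Y_i,Y_{i+1})$; since the law of the environment does not depend on $\mu$, a change of measure gives $\E_\mu[\nu_1]=\E_\lambda[\nu_1 R_\mu]$ and $\E_\mu[\chi_1]=\E_\lambda[\chi_1 R_\mu]$ (with the usual care, because the $\tau_k$ are not stopping times), and $\partial_\mu R_\mu|_{\mu=\lambda}=\varpi_1\defeq W_{\tau_1}-W_{\tau_0}$. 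Moreover $\E_\lambda[\varpi_1]=0$: on one hand $W_n/n\to 0$ $\Prob_\lambda$-a.s.\ by the strong law for martingales with bounded increments, and on the other hand the renewal law of large numbers applied to the i.i.d.\ increments $\varpi_k\defeq W_{\tau_k}-W_{\tau_{k-1}}$ forces $W_n/n\to\E_\lambda[\varpi_1]/\E_\lambda[\chi_1]$.

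The crux is to differentiate $\mu\mapsto\E_\lambda[\chi_1 R_\mu]$ (and $\mu\mapsto\E_\lambda[\nu_1 R_\mu]$) under the expectation. Pointwise $\chi_1(R_\mu-1)/(\mu-\lambda)\to\chi_1\varpi_1$ as $\mu\to\lambda$, so it suffices to establish uniform integrability of this family under $\Prob_\lambda$; here the naive bound $R_\mu\le e^{C|\mu-\lambda|\chi_1}$ is useless, because $\chi_1$ has only polynomially decaying tails. Instead I would write $R_\mu-1=\int_\lambda^\mu R_\eta\,\varpi_1^{(\eta)}\,d\eta$ with $\varpi_1^{(\eta)}\defeq\sum_{\tau_0\le i<\tau_1}\phi_\eta(Y_i,Y_{i+1})$, $|\varpi_1^{(\eta)}|\le C\chi_1$, so that for $\mu$ in a neighbourhood $J=[\lambda-\delta,\lambda+\delta]$ with $\lambda+\delta<\lambdacrit/2$, using Fubini and the change-of-measure identity $\E_\lambda[g(\chi_1)\,R_\eta]=\E_\eta[g(\chi_1)]$,
\[
\E_\lambda\!\Big[\chi_1\,\frac{|R_\mu-1|}{|\mu-\lambda|}\,\1_{\{\chi_1>T\}}\Big]\ \le\ C\sup_{\eta\in J}\E_\eta\big[\chi_1^2\,\1_{\{\chi_1>T\}}\big]\longrightarrow 0\qquad(T\to\infty),
\]
the convergence being uniform in $\mu$ because a polynomial tail bound $\Prob_\eta(\chi_1>t)\le C t^{-\alpha}$ with $\alpha>2$ holds uniformly for $\eta\in J$. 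On $\{\chi_1\le T\}$ the integrand is bounded by a constant for each fixed $T$; together these two facts give the uniform integrability, hence $\partial_\mu\E_\mu[\chi_1]|_{\mu=\lambda}=\E_\lambda[\chi_1\varpi_1]$ and, since $|\nu_1|\le\chi_1$, $\partial_\mu\E_\mu[\nu_1]|_{\mu=\lambda}=\E_\lambda[\nu_1\varpi_1]$ (both finite because $|\varpi_1|\le C\chi_1$ and $\E_\lambda[\chi_1^2]<\infty$). This step is the analytic heart of the argument and the only place where $\lambda<\lambdacrit/2$ enters, through exactly the tail estimate that makes $\E_\lambda[\chi_1^2]$ finite and powers the invariance principle.

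Finally, differentiating $\vel=\E_\lambda[\nu_1]/\E_\lambda[\chi_1]$ and using $\E_\lambda[\varpi_1]=0$ yields
\[
\vel'(\lambda)=\frac{\E_\lambda[\nu_1\varpi_1]-\vel(\lambda)\,\E_\lambda[\chi_1\varpi_1]}{\E_\lambda[\chi_1]}=\frac{\mathrm{Cov}_\lambda\big(\nu_1-\vel(\lambda)\chi_1,\,\varpi_1\big)}{\E_\lambda[\chi_1]} .
\]
To recognise the right-hand side as a covariance of Brownian motions, I would feed the i.i.d.\ block pairs $(\nu_k-\vel(\lambda)\chi_k,\,\varpi_k)$, time-changed by $\sum_k\chi_k$, into the invariance principle: it gives that $n^{-1/2}\big(X_{\lfloor n\,\cdot\,\rfloor}-\lfloor n\,\cdot\,\rfloor\,\vel(\lambda),\;W_{\lfloor n\,\cdot\,\rfloor}\big)$ converges in law to a two-dimensional Brownian motion whose covariance matrix is $\E_\lambda[\chi_1]^{-1}$ times that of $(\nu_1-\vel(\lambda)\chi_1,\,\varpi_1)$; its off-diagonal entry is precisely $\vel'(\lambda)$, which is formula \eqref{eq:formula for the speed}. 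The main obstacle throughout is the interchange of derivative and expectation under the heavy-tailed reweighting $R_\mu$, handled by the Fubini/change-of-measure estimate above; the remainder is bookkeeping around the (standard but delicate) regeneration structure.
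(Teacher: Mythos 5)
Your plan---express $\vel(\lambda)=\E_\lambda[\nu_1]/\E_\lambda[\chi_1]$ from the regeneration structure, change measure on a single block, differentiate under the integral, and identify the result with the Brownian covariance---is a natural and economical route, and the final covariance formula coincides with the paper's \eqref{eq:formula for the speed}. However, the step on which everything rests is not valid as stated: the identity $\E_\mu[\chi_1]=\E_\lambda[\chi_1 R_\mu]$ with $R_\mu=\prod_{\tau_0\le i<\tau_1}p_{\omega,\mu}(Y_i,Y_{i+1})/p_{\omega,\lambda}(Y_i,Y_{i+1})$ is \emph{false}, because $\tau_1$ is not a stopping time. The event $\{\tau_1=n\}$ requires in particular that the walk never return to $Y_n$ after time $n$, and the probability of this no-return event depends on the bias parameter; hence the Radon--Nikod\'ym derivative of $\Prob_\mu$ with respect to $\Prob_\lambda$ on the $\sigma$-field of the first block is not the product $R_\mu$ but carries an additional factor $P^{Y_{\tau_1}}_{\omega,\mu}(\text{no return})/P^{Y_{\tau_1}}_{\omega,\lambda}(\text{no return})$ (and further corrections for earlier candidate regeneration points with larger $\x$-coordinate). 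To see the discrepancy in the simplest instance: $\E_\lambda[\1_{\{\chi_1=1,\,Y_1=v\}}R_\mu]=\Ermp[p_{\omega,\mu}(\mathbf{0},v)\1_{\{v\text{ pre-reg}\}}P^v_{\omega,\lambda}(\text{no return})]$, whereas $\Prob_\mu(\chi_1=1,Y_1=v)$ has the $\mu$-escape probability $P^v_{\omega,\mu}(\text{no return})$ in its place. The parenthetical ``with the usual care'' does not resolve this; it is a genuine obstruction, not a bookkeeping issue, and it infects the subsequent uniform-integrability argument too, since that argument invokes the very same identity $\E_\lambda[g(\chi_1)R_\eta]=\E_\eta[g(\chi_1)]$.

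This is exactly the difficulty that the paper's simultaneous-limit scheme is engineered to sidestep. The paper changes measure only on the deterministic horizon $\{0,\dots,n\}$, where the product Radon--Nikod\'ym derivative \eqref{eq:girsanov annealed} \emph{is} exact, and then sends $\lambda\to\lambda^*$ and $n\to\infty$ jointly with $(\lambda-\lambda^*)^2 n\to\alpha$; the joint CLT (Theorem \ref{Thm:joint CLT}), together with the moment bounds \eqref{eq:sup kappa-moment X} and \eqref{eq:sup exp M}, identifies the limit of the finite-$n$ difference quotient as $\sigma_{12}(\lambda^*)$, while Lemma \ref{Lem:uniform bound X_n-n vel} removes the finite-$n$ error. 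To rescue your block-level approach you would have to replace $R_\mu$ by the genuine Radon--Nikod\'ym derivative on the block $\sigma$-field, which requires differentiating the quenched escape probability in $\mu$ and handling the earlier-candidate corrections; this is a substantive additional analytic task, not a patch, and it is not obvious \emph{a priori} that the extra terms rearrange into $\sigma_{12}$ (even though the paper's proof shows they must). The one-sided SLLN argument for $\E_\lambda[\varpi_1]=0$ and the translation of the block covariance into the Brownian covariance at the end are both fine; the gap is concentrated in the change-of-measure step.
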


The differentiability of $\vel$ at $\lambda=0$ together with the statement $\vel'(0) = \sigma^2$
for the limiting variance $\sigma^2$ of $n^{-1/2} X_n$ under the distribution $\Prob_0$
is the Einstein relation for this model.
We will consider the Einstein relation in a follow-up paper.

\subsection{Sketch of the proof.}

Fix $\lambda^* \in (0,\lambdacrit)$ and let $1 < r < \lambdacrit/\lambda^*$ if $\lambda^* \geq \lambdacrit/2$,
and $r = 2$ if $\lambda^* < \lambdacrit/2$.
In order to prove Theorems \ref{Thm:continuity of the speed} and \ref{Thm:differentiability of the speed},
we show that
\begin{equation*}
\lim_{\lambda \to \lambda^*} \frac{\vel(\lambda)-\vel(\lambda^*)}{(\lambda-\lambda^*)^{r-1}}
=
\begin{cases}
0					&	\text{if } \lambda^* \geq \lambdacrit/2,	\\
\vel'(\lambda^*)	&	\text{if } \lambda^* < \lambdacrit/2.
\end{cases}
\end{equation*}
Since $\vel(\lambda) = \lim_{n \to \infty} \frac1n \E_{\lambda}[X_n]$ by Lebesgue's dominated convergence theorem,
we need to understand the quantity
\begin{equation*}
\frac{\E_{\lambda}[X_n]-\E_{\lambda^*}[X_n]}{n(\lambda-\lambda^*)^{r-1}}
\end{equation*}
as first $n \to \infty$ and then $\lambda \to \lambda^*$.
We follow ideas from \cite{Gantert+al:2012,Mathieu:2015}
and replace the double limit by a suitable simultaneous limit.
For instance, consider the case $\lambda^* < \lambdacrit/2$, i.\,e., $r=2$.
Then the expected difference between $X_n$ under $\Prob_\lambda$ and $\Prob_{\lambda^*}$
is of the order $n(\lambda-\lambda^*) \vel'(\lambda^*)$.
On the other hand, when a central limit theorem for $X_n$ with square-root scaling holds,
the fluctuations of $X_n$ are of order $\sqrt{n}$.
By matching these two scales, that is, $(\lambda-\lambda^*) \approx n^{-1/2}$,
we are able to apply a measure-change argument replacing
$\E_{\lambda}[X_n]$ by an expectation of the form $\E_{\lambda^*}[X_n f_{\lambda,n}]$ for a suitable density function $f_{\lambda,n}$.
In order to understand the limiting behavior of $\E_{\lambda^*}[X_n f_{\lambda,n}]$,
we use a joint central limit theorem for $X_n$ and the leading term in $f_{\lambda,n}$.
In the case $\lambda^* \geq \lambdacrit/2$,
we use Marcinkiewicz-Zygmund-type strong laws for $X_n$ and the leading term in $f_{\lambda,n}$ instead.

\subsection{Functional central limit theorem.}

As mentioned in the preceding paragraph, we will require a joint central limit theorem
for $X_n$ and the leading term of a suitable density. We will make this precise now. 

Fix $\lambda^* \geq 0$ and, for $v \in V$, let $N_{\omega}(v) \defeq \{w \in V: p_{\omega,0}(v,w) > 0\}$.
Notice that $N_{\omega}(v) \not = \varnothing$ even for isolated vertices.
For $w \in N_{\omega}(v)$, the function $\log p_{\omega,\lambda}(v,w)$ is differentiable at $\lambda^*$.
Hence, we can write a first-order Taylor expansion of $\log p_{\omega,\lambda}(v,w)$ as $\lambda\to \lambda^{*}$ in the form
\begin{equation}	\label{eq:Taylor expansion}
\log p_{\omega,\lambda}(v,w)
=\log p_{\omega,\lambda^*}(v,w)
+(\lambda\!-\!\lambda^{*}) \nu_{\omega,\lambda^{*}}(v,w)+(\lambda\!-\!\lambda^{*}) o_{\lambda^{*}}(\lambda\!-\!\lambda^*)
\end{equation}
where $\nu_{\omega,\lambda^{*}}(v,w)$ is the derivative of  $\log p_{\omega,\lambda}(v,w)$ at $\lambda^{*}$
and $o_{\lambda^{*}}(\lambda\!-\!\lambda^*)$ converges to $0$ as $\lambda\to\lambda^{*}$.
Since there is only a finite number of $1$-step transition probabilities,
$o_{\lambda^{*}}(\lambda\!-\!\lambda^*) \to 0$ as $\lambda \to \lambda^*$ uniformly
(in $v$, $w$ and $\omega$).

For all $v$ and all $\omega$, $p_{\omega,\lambda^{*}}(v,\cdot)$ is a probability measure
on $N_{\omega}(v)$ and hence
\begin{equation*}
\sum_{w\in N_{\omega}(v)} \nu_{\omega,\lambda^{*}}(v,w) p_{\omega, \lambda^{*}}(v,w) = 0.
\end{equation*}
Therefore, the sequence $(M^{\lambda^*}_n(\omega))_{n \geq 0}$ defined by $M^{\lambda^*}_0(\omega)=0$ and
\begin{equation}	\label{Mdef}
M^{\lambda^*}_n(\omega)=\sum_{k=1}^{n} \nu_{\omega,\lambda^{*}}(Y_{k-1},Y_{k}),	\quad	  n \in \N
\end{equation}
is a martingale under $P_{\omega,\lambda^{*}}$.
We write $M^{\lambda^*}_n$ for the random variable $M^{\lambda^{*}}_n(\cdot)$ on $\Omega \times V^{\N_0}$
and notice that the sequence $(M^{\lambda^*}_n)_{n \geq 0}$ is also a martingale under the annealed measure $\Prob_{\lambda^{*}}$.

For $t \geq 0$, denote by $\lfloor t \rfloor$ the largest integer $\leq t$.
For $\lambda \geq 0$ and $n \in \N$, put
\begin{equation*}	\textstyle
B_n(t) \defeq \frac{1}{\sqrt{n}}(X_{\lfloor n t \rfloor} - \lfloor n t \rfloor \vel(\lambda)),	\quad	0 \leq t \leq 1.
\end{equation*}
Then $B_n \defeq (B_n(t))_{0 \leq t \leq 1}$ takes values in the Skorokhod space $D[0,1]$
of real-valued right-continuous functions with finite left limits, see e.g.\ \cite[Chap.~3]{Billingsley:1999}.

\begin{theorem}	\label{Thm:joint CLT}
Let $\lambda \in (0,\lambdacrit/2)$. Then
\begin{equation}	\label{eq:joint invariance principle}
(B_n(t),n^{-1/2} M^{\lambda}_{\lfloor nt \rfloor)}		\Rightarrow	(B^{\lambda},M^{\lambda})
\quad	\text{under } \Prob_{\lambda}
\end{equation}
where $\Rightarrow$ denotes convergence in distribution in the Skorokhod space $D[0,1]$
and $(B^{\lambda},M^{\lambda})$ is a two-dimensional centered Brownian motion with covariance matrix
$\Sigma^{\lambda} = (\sigma_{ij}(\lambda))$.
Further,
\begin{equation}	\label{eq:sup kappa-moment X}	\textstyle
\sup_{n \geq 1} \E_{\lambda}[|B_n(1)|^{\kappa}] < \infty
\end{equation}
for some $\kappa = \kappa(\lambda) > 2$.
In particular,
\begin{eqnarray*}
\sigma_{11}(\lambda)	&=& \E_{\lambda}[B^{\lambda}(1)^2]	=		\lim_{n \to \infty} n^{-1} \E_{\lambda}[(X_n-n\vel(\lambda))^2],	\\
\sigma_{22}(\lambda)	&=& \E_{\lambda}[M^{\lambda}(1)^2]	=		\lim_{n \to \infty} n^{-1} \E_{\lambda}[(M^{\lambda}_n)^2],	\\
\sigma_{12}(\lambda)	&=& 
 \E_{\lambda}[B^{\lambda}(1)M^{\lambda}(1)]	=	\lim_{n \to \infty} n^{-1} \E_{\lambda}[(X_n-n\vel(\lambda))M^{\lambda}_n].
\end{eqnarray*}
If $\lambda \geq \lambdacrit/2$, then \eqref{eq:joint invariance principle} fails to hold,
and $B_n$ does not converge in distribution.
\end{theorem}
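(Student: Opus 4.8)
The plan is to establish the functional CLT by the standard regeneration-structure approach available for biased random walk on the ladder, and then to identify the threshold $\lambdacrit/2$ via moment estimates for the regeneration increments. First I would introduce \emph{regeneration times} $0 = \tau_0 < \tau_1 < \tau_2 < \cdots$: roughly, $\tau_k$ is a time at which the walk reaches a new maximal $\x$-coordinate $X_{\tau_k}$ via an edge that is never backtracked, and such that the environment to the left of $Y_{\tau_k}$ is irrelevant for the future. On the ladder graph one can make this very clean by cutting at "pivotal" vertical slabs or single columns that the walk crosses exactly once. Under the annealed measure $\Prob_\lambda$, the blocks $\big((X_{\tau_{k}+j}-X_{\tau_k}, M^\lambda_{\tau_k+j}-M^\lambda_{\tau_k})_{0\le j\le \tau_{k+1}-\tau_k}\big)_{k\ge 1}$ are i.i.d., and $(\tau_1, X_{\tau_1}, M^\lambda_{\tau_1})$ together with the tails $(\tau_{k+1}-\tau_k, X_{\tau_{k+1}}-X_{\tau_k}, M^\lambda_{\tau_{k+1}}-M^\lambda_{\tau_k})$ for $k\ge1$ are also i.i.d. (after the first block). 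This decomposition, and the fact that the speed is $\vel(\lambda) = \E[X_{\tau_2}-X_{\tau_1}]/\E[\tau_2-\tau_1]$, is exactly the structure used by Axelson-Fisk and Häggström, so I would cite or lightly re-derive it.

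The crux is the integrability of the regeneration increments. The key step is to prove: for $\lambda < \lambdacrit/2$ there exists $\kappa = \kappa(\lambda) > 2$ with
\begin{equation*}
\E_\lambda[(\tau_2-\tau_1)^\kappa] < \infty, \qquad \E_\lambda[(X_{\tau_2}-X_{\tau_1})^\kappa] < \infty, \qquad \E_\lambda[|M^\lambda_{\tau_2}-M^\lambda_{\tau_1}|^\kappa] < \infty,
\end{equation*}
whereas $\E_\lambda[(\tau_2-\tau_1)^2] = \infty$ for $\lambda \ge \lambdacrit/2$. The duration $\tau_2-\tau_1$ is dominated by the time the walk spends in the finite "trap" (dead-end peninsula) hanging off the backbone segment between consecutive regenerations; on the ladder these traps have a geometric (in fact, exponentially decaying) size distribution, and the time to exit a trap of depth $\ell$ against the bias is, by a standard gambler's-ruin/Markov-chain computation, of order $e^{2\lambda \ell}$ in expectation with exponential tails at that scale. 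Combining the $\mathrm{Geom}$-type law of the trap depth with the $e^{2\lambda\ell}$ escape time gives that $\tau_2-\tau_1$ has a polynomial tail $\Prob_\lambda(\tau_2-\tau_1 > t) \asymp t^{-\lambdacrit/(2\lambda)}$ (up to constants/logs), so the $\kappa$-th moment is finite exactly when $\kappa < \lambdacrit/(2\lambda)$, which exceeds $2$ precisely when $\lambda < \lambdacrit/2$. I expect this tail computation — pinning down the exponent $\lambdacrit/(2\lambda)$ and getting matching upper and lower bounds — to be the main technical obstacle; it is where the specific value $\lambdacrit/2$ enters, and it must be done carefully using the explicit description of the conditioned percolation measure $\Prmp^*$ and the formula \eqref{eq:lambdacrit}. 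The bounds on $|X_{\tau_2}-X_{\tau_1}|$ and $|M^\lambda_{\tau_2}-M^\lambda_{\tau_1}|$ are easier: both are bounded by a constant times $\tau_2-\tau_1$ (the $\x$-displacement per step is at most $1$, and $|\nu_{\omega,\lambda^*}(v,w)|$ is bounded uniformly), so their moments are controlled by that of the duration.

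Given these moment bounds, the functional CLT \eqref{eq:joint invariance principle} follows from a by-now routine argument: write $X_n - n\vel(\lambda)$ and $M^\lambda_n$ as sums over completed regeneration blocks plus a negligible boundary term, apply Donsker's theorem for i.i.d.\ vector-valued increments with finite (in fact $\kappa$-th, hence second) moments to the block sums indexed by the renewal counting process $k(n) = \max\{k : \tau_k \le n\}$, use the law of large numbers $k(n)/n \to 1/\E_\lambda[\tau_2-\tau_1]$ to perform the time change, and control the last incomplete block via a maximal inequality together with $\tau_2-\tau_1 \in L^\kappa \subset L^2$. The covariance matrix $\Sigma^\lambda$ is then identified by the multivariate CLT: $\sigma_{ij}(\lambda)$ is the appropriate entry of $\E_\lambda[\tau_2-\tau_1]^{-1}$ times the covariance of the centered vector $(X_{\tau_2}-X_{\tau_1} - (\tau_2-\tau_1)\vel(\lambda),\, M^\lambda_{\tau_2}-M^\lambda_{\tau_1})$, which matches the Cesàro-limit expressions stated in the theorem via uniform integrability (here \eqref{eq:sup kappa-moment X} with $\kappa>2$ gives the $L^2$-convergence needed to pass from the process-level limit to the variance formulae). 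Finally, for $\lambda \ge \lambdacrit/2$: since $\E_\lambda[(\tau_2-\tau_1)^2] = \infty$ but $\tau_2-\tau_1$ still lies in the domain of attraction of a stable law (of index $\alpha = \lambdacrit/(2\lambda) \le 2$, or with a logarithmic correction at $\alpha=2$), the correct scaling for $X_n - n\vel(\lambda)$ is $n^{1/\alpha}$ (resp.\ $\sqrt{n\log n}$), strictly larger than $\sqrt n$; hence $B_n(1) = n^{-1/2}(X_n - n\vel(\lambda))$ cannot be tight — one can exhibit divergence either by showing $\Prob_\lambda(|B_n(1)| > A) \to$ a positive limit for every $A$, or by noting that a single large trap contributes a displacement of order $n^{1/\alpha} \gg \sqrt n$ with non-vanishing probability — so $B_n$ fails to converge in distribution, completing the proof.
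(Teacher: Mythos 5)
Your overall strategy — pre-regeneration/regeneration structure, i.i.d.\ blocks, Donsker's theorem for the block sums, time change via the renewal counting process $k(n)$, then passage to variance formulae via uniform integrability — matches the paper's proof. The one genuinely different ingredient is the argument for $\lambda\ge\lambdacrit/2$: you argue directly that the true scaling is $n^{1/\alpha}$ (or $\sqrt{n\log n}$ at $\alpha=2$) so $B_n(1)$ cannot be tight, whereas the paper runs a contradiction argument — assuming \eqref{eq:joint invariance principle}, it transfers the CLT to $n^{-1/2}S_{k(n)}$, and then invokes a converse-CLT result (that tightness of centered i.i.d.\ partial sums at $\sqrt n$ scale forces finite variance) to contradict Lemma~\ref{Lem:moments of rho and tau}(b). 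The paper's route has the virtue of treating all $\lambda\ge\lambdacrit/2$, including the boundary case $\lambda=\lambdacrit/2$ where your stable-index heuristic degenerates to $\alpha=2$ and one would have to argue separately about the $\sqrt{n\log n}$ correction.

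However, there is a real error in your moment analysis, and it is not cosmetic. You assert that $\Prob_\lambda(\tau_2-\tau_1>t)\asymp t^{-\lambdacrit/(2\lambda)}$. The correct exponent is $\lambdacrit/\lambda$, not $\lambdacrit/(2\lambda)$: by Lemma~\ref{Lem:trap cost}(a) the trap length $\ell$ satisfies $\Prmp^*(\ell=m)\propto e^{-2\lambdacrit m}$ (note the factor $2$), while by Lemma~\ref{Lem:Etau_m}(a) the escape time from a trap of length $m$ is of order $e^{2\lambda m}$; combining $e^{2\lambda\ell}>t$ with $\Prob(\ell>m)\sim e^{-2\lambdacrit m}$ gives a tail of order $t^{-\lambdacrit/\lambda}$, and this is exactly what Lemma~\ref{Lem:moments of rho and tau}(b) and Lemma~\ref{Lem:uniform lower and upper tail bounds} record. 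You then write that $\kappa<\lambdacrit/(2\lambda)$ admits $\kappa>2$ ``precisely when $\lambda<\lambdacrit/2$''; that arithmetic is also off (it would give $\lambda<\lambdacrit/4$). So two factor-of-two slips cancel and you land on the right threshold by accident, but the reasoning does not actually derive it. The fix is simple — use the correct trap-length decay rate $e^{-2\lambdacrit m}$, obtain tail exponent $\lambdacrit/\lambda$, and then $\lambdacrit/\lambda>2\iff\lambda<\lambdacrit/2$ — but as written the key estimate justifying both the positive statement and the claimed failure threshold is wrong.

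Two smaller remarks. First, $X_{\tau_2}-X_{\tau_1}=\rho_2-\rho_1$ in fact has \emph{exponential} moments (Lemma~\ref{Lem:moments of rho and tau}(a)); your crude bound $|X_{\tau_2}-X_{\tau_1}|\le\tau_2-\tau_1$ is sufficient for the present purpose, but the sharper fact is needed elsewhere in the paper (e.g.\ for the incomplete-block control in Step~2). Second, the control of the last incomplete block is not just ``a maximal inequality'': the paper uses renewal-theoretic uniform integrability results (e.g.\ \cite[Theorems~1.6.2 and 1.8.1]{Gut:2009}) to control $\rho_{\nu(n)}-\rho_{k(n)}$ and $\tau_{\nu(n)}-\tau_{k(n)}$ in $L^\kappa$; this is routine but should be spelled out if you intend the moment bound \eqref{eq:sup kappa-moment X} rather than only convergence in distribution.
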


We do not only require a moment bound for $B_n(1)$, $n \geq 1$
as given in \eqref{eq:sup kappa-moment X},
but also a similar (but stronger) moment bound for the martingale
$M^{\lambda}_n$ for $\lambda \in (0,\lambdacrit)$.
The result we need is the following:

\begin{proposition}	\label{Prop:sup exp M}
Let $p \in (0,1)$, $\lambda \in (0,\lambdacrit)$.
Then, for every $t > 0$,
\begin{equation}	\label{eq:sup exp M}	\textstyle
\sup_{n \geq 1} \E_{\lambda}[e^{t n^{-1/2} M^{\lambda}_n}] < \infty.
\end{equation}
\end{proposition}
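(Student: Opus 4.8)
The plan is to control the martingale $M^{\lambda}_n$ by a regeneration-structure argument. The key structural fact available for this ladder model is that the biased walk admits regeneration times: there is a sequence $0 = \tau_0 < \tau_1 < \tau_2 < \cdots$ of (annealed) stopping times at which the past and future of the walk and the environment decouple, with $(\tau_{k+1} - \tau_k, X_{\tau_{k+1}} - X_{\tau_k}, M^{\lambda}_{\tau_{k+1}} - M^{\lambda}_{\tau_k})_{k \geq 1}$ i.i.d.\ under $\Prob_{\lambda}$ (and the $k=0$ block independent of the rest). For $\lambda \in (0,\lambdacrit)$ these regeneration times have exponential tails: there is $c > 0$ with $\E_{\lambda}[e^{c \tau_1}] < \infty$, and moreover $\x(Y_{\tau_1}) - \x(Y_{\tau_0})$ has exponential tails as well (the walk cannot backtrack far because of the ladder geometry and the bias). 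Since each increment $\nu_{\omega,\lambda^*}(Y_{k-1},Y_k)$ is bounded by a deterministic constant $C = C(\lambda,\lambdacrit)$, the block increment $|M^{\lambda}_{\tau_{k+1}} - M^{\lambda}_{\tau_k}| \leq C(\tau_{k+1} - \tau_k)$ inherits exponential tails; call the common law $\zeta$, so $\E[e^{s\zeta}] < \infty$ for $s$ in a neighbourhood of $0$.

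The main step is then: write $\rho_n \defeq \max\{k : \tau_k \leq n\}$ for the number of completed regeneration blocks by time $n$. Since $M^{\lambda}$ is a martingale and each block contributes a centered increment, $M^{\lambda}_n = M^{\lambda}_{\tau_0} + \sum_{k=1}^{\rho_n}(M^{\lambda}_{\tau_k} - M^{\lambda}_{\tau_{k-1}}) + (M^{\lambda}_n - M^{\lambda}_{\tau_{\rho_n}})$. The first and last terms are each bounded in absolute value by $C$ times a random time with exponential tails (the overshoot terms), and $\rho_n \leq n$ deterministically. So, up to harmless boundary corrections, $n^{-1/2} M^{\lambda}_n$ is comparable to $n^{-1/2} \sum_{k=1}^{n} \zeta_k$ with $(\zeta_k)$ i.i.d., centered, and exponentially integrable. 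For such a sum, $\sup_n \E[e^{t n^{-1/2} \sum_{k \leq n} \zeta_k}] < \infty$ for every fixed $t$: indeed $\E[e^{t n^{-1/2}\zeta}] = 1 + \tfrac{t^2}{2n}\E[\zeta^2] + O(n^{-3/2})$ uniformly for $n$ large (using exponential integrability to control the Taylor remainder), so $\big(\E[e^{t n^{-1/2}\zeta}]\big)^n \to e^{t^2 \E[\zeta^2]/2}$, hence the supremum over $n$ is finite; small $n$ are handled directly since $\zeta$ is exponentially integrable. The boundary terms are absorbed via Cauchy--Schwarz (or H\"older): $\E_{\lambda}[e^{t n^{-1/2} M^{\lambda}_n}] \leq \big(\E_{\lambda}[e^{3t n^{-1/2} \sum \zeta_k}]\big)^{1/3}\big(\E_{\lambda}[e^{3tC n^{-1/2}\tau_0}]\big)^{1/3}\big(\E_{\lambda}[e^{3tC n^{-1/2}(\text{overshoot})}]\big)^{1/3}$, and each factor is bounded in $n$.

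The step I expect to be the main obstacle is establishing the regeneration structure together with the exponential moment bound $\E_{\lambda}[e^{c\tau_1}] < \infty$ on the full subcritical interval $\lambda \in (0,\lambdacrit)$. On $\Z^d$ this type of estimate is delicate, but on the ladder it should be considerably more tractable: the infinite cluster has a well-defined backbone, traps are finite "peninsulas," and a regeneration occurs whenever the walk crosses a fresh cut-edge of the backbone moving rightward without ever returning. The time to the next such crossing is dominated by a geometric number of excursions into traps, and for $\lambda < \lambdacrit$ the expected trapping time has \emph{exponential} moments of some order (this is precisely the distinction encoded by $\lambdacrit$ versus $\lambdacrit/2$ elsewhere in the paper: at $\lambdacrit/2$ the trapping time loses its second moment, at $\lambdacrit$ it loses its first moment, and below $\lambdacrit$ — after the geometric randomization and using that the number of traps visited before regeneration is itself exponentially tight — one still gets a genuine exponential moment for $\tau_1$). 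Making this last point precise — quantifying the tail of $\tau_1$ in terms of the tails of individual trap-exit times and the geometry of cut-edges on the ladder — is where the real work lies; once it is in place, the rest is the soft i.i.d.-sum computation sketched above. A secondary (but routine) point is verifying that the $k=0$ block, which starts from the conditioned stationary environment $\Prmp$ rather than from a regeneration, also has the required exponential tail; this follows from the same trap estimates applied to the (finite) excursion back to the backbone.
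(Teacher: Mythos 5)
The approach has a fatal gap, and it is exactly where you flagged uncertainty. Your argument hinges on the claim that for $\lambda\in(0,\lambdacrit)$ the regeneration time increment $\tau_2-\tau_1$ has an exponential tail, i.e.\ $\E_{\lambda}[e^{c(\tau_2-\tau_1)}]<\infty$ for some $c>0$. This is false on the entire interval: Lemma~\ref{Lem:moments of rho and tau}(b) of the paper shows that $\E_{\lambda}[(\tau_2-\tau_1)^{\kappa}]<\infty$ if and only if $\kappa<\lambdacrit/\lambda$, so $\tau_2-\tau_1$ has a heavy polynomial tail of finite index $\lambdacrit/\lambda$, and hence no exponential moments whatsoever. (If an exponential moment existed, all polynomial moments would be finite.) Your heuristic that ``below $\lambdacrit$ one still gets a genuine exponential moment'' inverts the logic: losing the $\kappa$th moment at a finite $\kappa$ is precisely the signature of a polynomial tail, which rules out exponential integrability. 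Consequently the block increments $M^{\lambda}_{\tau_{k+1}}-M^{\lambda}_{\tau_k}$, which you bound by $C(\tau_{k+1}-\tau_k)$, do not inherit exponential tails, and the soft i.i.d.\ computation $\big(\E[e^{tn^{-1/2}\zeta}]\big)^n\to e^{t^2\E[\zeta^2]/2}$ is unavailable — the Taylor expansion of $\E[e^{s\zeta}]$ at $s=0$ requires exponential integrability of $\zeta$ to control the remainder, and indeed $\E[e^{s\zeta}]=\infty$ for every $s>0$ here.

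The structure you threw away by blocking is exactly what makes the result easy. The martingale $M^{\lambda}_n=\sum_{k=1}^n\nu_{\omega,\lambda}(Y_{k-1},Y_k)$ has \emph{uniformly bounded increments}: since there are only finitely many distinct one-step transition probabilities, $c_{\lambda}\defeq\sup_{\omega,v,w}|\nu_{\omega,\lambda}(v,w)|<\infty$. Azuma--Hoeffding then gives, for every $x>0$,
\begin{equation*}
\Prob_{\lambda}\big(t\,n^{-1/2}M^{\lambda}_n\geq x\big)\;\leq\;\exp\!\Big(-\frac{x^2}{2\,t^2c_{\lambda}^2}\Big),
\end{equation*}
a Gaussian bound independent of $n$, which immediately yields $\sup_{n\geq1}\E_{\lambda}[e^{tn^{-1/2}M^{\lambda}_n}]<\infty$. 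This is the paper's proof, and it uses neither regeneration times nor any tail control for $\tau_1$. Lumping steps into regeneration blocks destroys the bounded-increment property (a block can last exponentially long in the trap depth) and replaces sub-Gaussian concentration by polynomial tails — strictly worse information. The lesson: for exponential integrability of a normalized martingale, check for bounded increments before reaching for renewal decompositions.
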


\subsection{Marcinkiewicz-Zygmund-type strong laws.}

Even though the central limit theorem for $X_n$ does not hold when $\lambda \geq \lambdacrit/2$,
we can give upper bounds on the fluctuations of $X_n$ around $n \vel(\lambda)$.

\begin{theorem}	\label{Thm:Marcinkiewicz-Zygmund}
Let $p \in (0,1)$, $\lambda \in (0,\lambdacrit)$ and $r < \frac{\lambdacrit}{\lambda} \wedge 2$.
Then
\begin{equation}	\label{eq:Marcinkiewicz-Zygmund}
\frac{X_n-n\vel(\lambda)}{n^{1/r}}	\to 0
\quad	\text{and}	\quad
n^{-1/r} M^{\lambda}_n	\to 0
\quad \Prob_{\lambda}\text{-a.\,s. and in } L^r(\Prob_{\lambda}).
\end{equation}
\end{theorem}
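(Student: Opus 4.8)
The plan is to use the renewal/regeneration structure of the walk: for $\lambda>0$ the walk is transient to $+\infty$, and one can define regeneration times $0 < \tau_1 < \tau_2 < \cdots$ at which the walk, loosely speaking, reaches a fresh column of the ladder that is never revisited, in such a way that the increments $(\tau_{k+1}-\tau_k, X_{\tau_{k+1}}-X_{\tau_k}, M^\lambda_{\tau_{k+1}}-M^\lambda_{\tau_k})_{k\ge 1}$ are i.i.d.\ under $\Prob_\lambda$ and the first block is independent and has comparable tails. (On the ladder graph such regeneration times are elementary to construct; this is standard and is also implicit in \cite{Axelson-Fisk+H"aggstr"om:2009b}.) First I would recall or reconstruct this decomposition and record the key tail estimate: for $\lambda < \lambdacrit$ one has $\Erm_\lambda[\tau_2^{r}] < \infty$ for every $r < \lambdacrit/\lambda \wedge 2$, and likewise $\Erm_\lambda[(X_{\tau_2}-X_{\tau_1})^{r}]<\infty$. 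The moment of $\tau$ is exactly where the constraint $r < \lambdacrit/\lambda$ enters: the time spent in a trap of depth $\ell$ grows like $e^{2\lambda \ell}$ while the depth has geometric tails $\sim c^\ell$, so $\tau$ has a polynomial tail of index $\lambdacrit/\lambda$ (this is precisely the mechanism behind the zero-speed phase at $\lambda\ge\lambdacrit$, and the exponent can be read off from \eqref{eq:lambdacrit}). For the martingale increments, $|\nu_{\omega,\lambda}(v,w)| \le C$ uniformly (only finitely many one-step transition probabilities), so $|M^\lambda_{\tau_{k+1}}-M^\lambda_{\tau_k}| \le C(\tau_{k+1}-\tau_k)$, and hence the $r$-th moment of the martingale block is controlled by that of $\tau_2$.

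Next I would run the classical argument. Write $k(n)$ for the number of regenerations up to time $n$; by the strong law for i.i.d.\ sums, $k(n)/n \to 1/\Erm_\lambda[\tau_2-\tau_1]$ a.s. Decompose
\begin{equation*}
X_n - n\vel(\lambda) = \bigl(X_n - X_{\tau_{k(n)}}\bigr) + \bigl(X_{\tau_{k(n)}} - k(n)\,\Erm_\lambda[X_{\tau_2}-X_{\tau_1}]\bigr) + \bigl(k(n)\,\Erm_\lambda[X_{\tau_2}-X_{\tau_1}] - n\vel(\lambda)\bigr),
\end{equation*}
using $\vel(\lambda) = \Erm_\lambda[X_{\tau_2}-X_{\tau_1}]/\Erm_\lambda[\tau_2-\tau_1]$. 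The middle term, divided by $n^{1/r}$, tends to $0$ a.s.\ by the classical Marcinkiewicz--Zygmund strong law applied to the centered i.i.d.\ sequence $X_{\tau_{k+1}}-X_{\tau_k}$, which has a finite $r$-th moment; the last term is handled the same way after writing $k(n)\Erm_\lambda[\tau_2-\tau_1] - n$ as the regeneration-time overshoot plus a centered i.i.d.\ sum and using $X_{\tau_{k(n)+1}}-X_{\tau_{k(n)}} = o(n^{1/r})$. The boundary term $X_n - X_{\tau_{k(n)}}$ is bounded in absolute value by $X_{\tau_{k(n)+1}} - X_{\tau_{k(n)}}$, the size of a single block straddling $n$; a standard maximal-term estimate (if $\Erm[\xi^r]<\infty$ then $\max_{k\le n}\xi_k = o(n^{1/r})$ a.s.) gives that this is $o(n^{1/r})$ a.s. The same three-term decomposition with $M^\lambda$ in place of $X$ gives $n^{-1/r}M^\lambda_n \to 0$ a.s., noting that $\Erm_\lambda[M^\lambda_{\tau_2}-M^\lambda_{\tau_1}] = 0$ since $(M^\lambda_n)$ is a mean-zero martingale and $\tau_1$, $\tau_2-\tau_1$ are suitable (integrable) stopping times — here one should invoke optional stopping together with the uniform bound $|M^\lambda_n - M^\lambda_{n-1}|\le C$ to justify $\Erm_\lambda[M^\lambda_{\tau_1}]=0$ and the i.i.d.\ centering.

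For the $L^r$ convergence I would upgrade a.s.\ convergence to $L^r$ by proving uniform integrability of $\{|X_n - n\vel(\lambda)|^r/n : n\ge 1\}$ (and the analogue for $M^\lambda$). This follows from a moment bound $\sup_n n^{-1}\Erm_\lambda[|X_n-n\vel(\lambda)|^{r'}] < \infty$ for some $r' > r$ with $r' < \lambdacrit/\lambda\wedge 2$ (choose $r$ slightly below the threshold and $r'$ between $r$ and the threshold) — this in turn is a Marcinkiewicz--Zygmund $L^p$ moment bound for i.i.d.\ sums with finite $r'$-th moment, applied block-wise, plus a bound on the straddling block and on $\Erm_\lambda[k(n)^{r'/?}\cdots]$ via Wald-type / Burkholder inequalities for the randomly-stopped sums. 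The main obstacle, and the step I would spend the most care on, is establishing the sharp tail bound $\Prob_\lambda(\tau_2 - \tau_1 > t) \asymp t^{-\lambdacrit/\lambda}$ (at least the upper bound $\le C t^{-\lambdacrit/\lambda + \varepsilon}$ suffices), i.e.\ identifying the trap-depth distribution under $\Prmp$ and the quenched expected exit time from a trap of given depth, and combining them; everything else is a bookkeeping exercise with the classical strong law. If setting up clean regeneration times turns out to be awkward, an alternative is to bound $X_n - n\vel(\lambda)$ directly by the total time spent in traps encountered before step $n$ and apply Marcinkiewicz--Zygmund to that sum of (dependent but dominable) trapping times, but the regeneration route is cleaner and also yields the $M^\lambda$ statement for free.
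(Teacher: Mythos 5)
Your approach is essentially the paper's: split at regeneration points $\tau_k$, get $\E_\lambda[(\tau_2-\tau_1)^r]<\infty$ for $r<\lambdacrit/\lambda\wedge 2$ via the trap-time vs.\ trap-depth trade-off, apply the classical Marcinkiewicz--Zygmund strong law to the i.i.d.\ blocks, control the straddling block by a maximal-term/renewal-overshoot bound, bound the martingale blocks by $C(\tau_{k+1}-\tau_k)$ using the uniform bound on $\nu_{\omega,\lambda}$, and upgrade to $L^r$ by uniform integrability. The decomposition you write (via $X_{\tau_{k(n)}}$ and the i.i.d.\ centering) is a cosmetic variant of the paper's (via $\rho_{\nu(n)}$); both reduce to the same three terms.

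The one genuine misstep is the justification of the centering $\E_\lambda\bigl[M^\lambda_{\tau_2}-M^\lambda_{\tau_1}\bigr]=0$. You write that $\tau_1$ and $\tau_2-\tau_1$ are ``suitable (integrable) stopping times'' and invoke optional stopping, but the regeneration times $\tau_k$ are \emph{not} stopping times for the walk's filtration --- the paper states this explicitly just before Lemma~\ref{Lem:iid regeneration times and points} and builds a substitute for the strong Markov property precisely because of it. Optional stopping therefore does not apply as claimed. The fact $\E_\lambda[\eta_2]=0$ (with $\eta_k=M^\lambda_{\tau_k}-M^\lambda_{\tau_{k-1}}$) is still true, but it needs a different argument: for instance, $M^\lambda_n/n\to 0$ $\Prob_\lambda$-a.\,s.\ because $(M^\lambda_n)$ is a bounded-increment martingale (Azuma or the martingale SLLN), while the renewal decomposition together with $\tau_n/n\to \E_\lambda[\tau_2-\tau_1]<\infty$ and $M^\lambda_{\tau_n}/n\to\E_\lambda[\eta_2]$ forces $\E_\lambda[\eta_2]/\E_\lambda[\tau_2-\tau_1]=0$. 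With that repair, your outline matches the paper's proof.
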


\subsection{Outline of the proofs.}
We continue with an outline of how the joint central limit theorem is used to derive the regularity of the speed.
First of all, for a fixed percolation configuration $\omega$, we have, by writing the Radon-Nikodym derivative,
\begin{equation}\label{eq:girsanov quenched}
E_{\omega,\lambda}[X_n]
~=~	E_{\omega,\lambda^{*}}\bigg[X_n \prod_{j=1}^{n} \frac{p_{\omega,\lambda}(Y_{j-1}, Y_{j})}{p_{\omega,\lambda^{*}}(Y_{j-1}, Y_{j})}\bigg]
\end{equation}
for $\lambda, \lambda^* \geq 0$.
Integration with respect to $\Prmp$ leads to
\begin{equation}\label{eq:girsanov annealed}
\E_{\lambda}[X_{n}]
~=~	\E_{\lambda^{*}}\bigg[X_{n} \prod_{j=1}^{n} \frac{p_{\omega,\lambda}(Y_{j-1},Y_{j})}{p_{\omega,\lambda^{*}}(Y_{j-1},Y_{j})}\bigg].
\end{equation}

As outlined above, we follow the strategy used in \cite{Mathieu:2015}
and prove the differentiability of $\vel$ in four steps:

\begin{enumerate}	\label{enum:program}
	\item
		We prove the joint central limit theorem, Theorem \ref{Thm:joint CLT}.
	\item
		We prove that, for $\lambda^* \in (0,\lambdacrit/2)$,
		\begin{equation}	\label{eq:supsecondmoment}	\textstyle
		\sup_{n \geq 1} \frac1n \E_{\lambda^*}[(X_n-n \vel(\lambda^*))^{2}] < \infty.
		\end{equation}
	\item
		Using the joint central limit theorem and \eqref{eq:supsecondmoment}, we show that, for $\alpha > 0$,
		\begin{equation}	\label{eq:speed approx by covariance}
		\lim_{\substack{\lambda \to \lambda^*,\\ (\lambda-\lambda^*)^2 n \to \alpha}} \frac{\E_{\lambda}[X_n]-\E_{\lambda^*}[X_n]}{(\lambda-\lambda^*)n}
		~=~ \E_{\lambda^*}[B^{\lambda^*}(1) M^{\lambda^*}(1)]
		~=~ \sigma_{12}(\lambda^*).
		\end{equation}
	\item
		We show that, for any $\lambda^* \in (0,\lambdacrit/2)$,
		\begin{equation}	\label{eq:2nd step}
		\lim_{\substack{\lambda \to \lambda^*,\\ (\lambda-\lambda^*)n \to \infty}}
		\bigg[\frac{\vel(\lambda)-\vel(\lambda^*)}{\lambda-\lambda^*} - \frac{\E_{\lambda}[X_n]-\E_{\lambda^*}[X_n]}{(\lambda-\lambda^*)n}\bigg]
		~=~ 0.
		\end{equation}
\end{enumerate}
Notice that
\eqref{eq:2nd step} and \eqref{eq:speed approx by covariance} imply
\begin{equation}	\label{eq:formula for the speed}
\vel'(\lambda^*) = \lim_{\substack{\lambda \to \lambda^*,\\ (\lambda-\lambda^*)^2 n \to \alpha}}
\frac{\E_{\lambda}[X_n]-\E_{\lambda^*}[X_n]}{(\lambda-\lambda^*)n}
= \E_{\lambda^*}[B^{\lambda^*}(1) M^{\lambda^*}(1)].
\end{equation}
The proof of the continuity of $\vel$ on $[\lambdacrit/2,\lambdacrit)$ follows a similar strategy,
where the use of the central limit theorem is replaced by the use of the Marcinkiewicz-Zygmund-type strong law
for $X_n$ and $M^{\lambda}_n$.

\section{Background on the percolation model}	\label{sec:background}

In this section we provide some basic results on the percolation model.
Most of the material presented here goes back to \cite{Axelson-Fisk+H"aggstr"om:2009b,Axelson-Fisk+H"aggstr"om:2009},
while some results are extensions that are taylor-made for our analysis.

\subsection{The percolation law.}

Let $E^{i,\leq}$ and $E^{i, \geq}$ be
the sets of edges (subsets of $E$),
with both endpoints having $\x$-coordinate $\leq i$ or $\geq i$, respectively.
Further, let $E^{i,<} \defeq E \setminus E^{i, \geq}$ and $E^{i,>} \defeq E \setminus E^{i,\leq}$.
Given $\omega \in \Omega$, we call a vertex $v \in V$ \emph{backwards communicating}
if there exists an infinite open path in $E^{\x(v),\leq}$ that contains $v$.
Analogously, we call $v$ \emph{forwards communicating}
if the same is true with $E^{\x(v),\leq}$ replaced by $E^{\x(v),\geq}$.
Loosely speaking, $v$ is backwards communicating
if one can move in $\omega$ from $v$ to $-\infty$ without ever visiting a vertex with $\x$-coordinate larger than $\x(v)$.
Now define
\begin{equation*}
{\tt T}_i	~\defeq~	\begin{cases}
			{\tt 00}	&	\text{if neither $(i,0)$ nor $(i,1)$ are backwards communicating;}	\\
			{\tt 01}	&	\text{if $(i,0)$ is not backwards communicating but $(i,1)$ is;}	\\
			{\tt 10}	&	\text{if $(i,0)$ is backwards communicating but $(i,1)$ is not;}	\\
			{\tt 11}	&	\text{if $(i,0)$ and $(i,1)$ are backwards communicating.}
			\end{cases}
\end{equation*}
We note that ${\tt T}_i$ is a function of $\omega$. When $\omega$ is drawn from $\Prmp^*$,
then ${\tt T} \defeq ({\tt T}_i)_{i \in \Z}$ is a Markov chain with state space $\{{\tt 10}, {\tt 01}, {\tt 11}\}$,
and the distribution of $\omega$ given ${\tt T}$ takes a simple form.
To describe it, we introduce the notion of \emph{compatibility}.
Let $E^{i} \defeq E^{i,\leq} \setminus E^{i-1,\leq}$.
A local configuration $\eta \in \{0,1\}^{E^{i}}$ is called $\texttt{ab}$-$\mathtt{cd}$-\emph{compatible}
for $\mathtt{ab}, \mathtt{cd} \in \{{\tt 00}, {\tt 10}, {\tt 01}, {\tt 11}\}$
if ${\tt T}_{i-1} = \mathtt{ab}$ and $\omega(E^{i})=\eta$ imply ${\tt T}_{i} = \mathtt{cd}$.

\begin{lemma}	\label{Lem:T_i,omega(E^i)}
Under $\Prmp^*$,
$({\tt T}_i)_{i \in \Z}$ is an irreducible and aperiodic time-homo\-geneous Markov chain.
Further, $({\tt T}_i)_{i \in \Z}$ is reversible and ergodic.
The conditional distribution of $(\omega(E^i))_{i \in \Z}$ given $({\tt T}_i)_{i \in \Z}$ is
\begin{equation}	\label{eq:omega's distribution}
\prod_{i \in \Z} \mathrm{P}_{\!p,{\tt T}_{i-1},{\tt T}_i}
\end{equation}
where, for $\mathtt{ab}, \mathtt{cd} \in \{{\tt 00}, {\tt 10}, {\tt 01}, {\tt 11}\}$,
\begin{equation*}
\mathrm{P}_{\!p,{\tt ab},{\tt cd}}(\{\eta\})
~=~	\frac{\1_{\{\eta \text{ is } {\tt ab}\text{-}{\tt cd} \text{-compatible}\}}}{Z_{p,{\tt ab},{\tt cd}}}	\prod_{e \in E^i} p^{\eta(e)} (1-p)^{1-\eta(e)}
\end{equation*}
with a norming constant $Z_{p,{\tt ab},{\tt cd}}$ such that $\mathrm{P}_{\!p,{\tt ab},{\tt cd}}$ is a probability distribution.
\end{lemma}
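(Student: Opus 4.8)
The plan is to build everything on the decomposition of the configuration $\omega$ according to the "backwards-communicating profile" ${\tt T}_i$, and to verify the Markov property, reversibility and ergodicity by exploiting the finite local structure of the ladder graph together with the translation invariance of $\Prmp^*$. First I would establish that, under $\Prmp^*$, the sequence $({\tt T}_i)_{i \in \Z}$ is a time-homogeneous Markov chain: the key observation is that whether $(i,0)$ or $(i,1)$ is backwards communicating is determined by $\omega$ restricted to $E^{i,\leq}$, and that the "future" ${\tt T}_{i}, {\tt T}_{i+1}, \ldots$ depends on the past only through ${\tt T}_{i-1}$ and the fresh edges $\omega(E^j)$, $j \geq i$. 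Concretely, given ${\tt T}_{i-1} = \mathtt{ab}$, the new value ${\tt T}_i$ is a deterministic function of $\mathtt{ab}$ and the local configuration $\eta = \omega(E^i)$ (this is exactly the content of the compatibility notion), so ${\tt T}$ inherits the Markov property from the product structure of $\mu_p$ restricted to the $E^i$'s. Time-homogeneity follows from the translation invariance of $\Prmp^*$, which in turn is inherited from the weak limit defining $\Prmp^*$ (the conditioning events $\Omega_{N_1,N_2}$ become translation invariant in the limit). The state ${\tt 00}$ is excluded $\Prmp^*$-a.s.\ because on $\Omega^*$ there is an infinite open path to $-\infty$, hence through every $\x$-level at least one vertex is backwards communicating.

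Next I would identify the transition kernel and the conditional law \eqref{eq:omega's distribution}. For the conditional law: conditionally on the whole sequence $({\tt T}_i)_{i \in \Z}$, the blocks $\omega(E^i)$ are independent, because under $\mu_p$ the blocks are independent and the only coupling introduced by the conditioning (on $\Omega^*$ and on ${\tt T}$) is through the constraint that $\eta = \omega(E^i)$ be $\mathtt{T_{i-1}}$-$\mathtt{T_i}$-compatible; within this constraint the conditional weight of $\eta$ is proportional to its $\mu_p$-weight $\prod_{e \in E^i} p^{\eta(e)}(1-p)^{1-\eta(e)}$, which gives $\mathrm{P}_{\!p,{\tt ab},{\tt cd}}$. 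This should be made rigorous by first doing the computation on the finite-volume measures $\Prm_{p,N_1,N_2}$ and passing to the limit, using that ${\tt T}_i$ stabilizes $\Prm_{p,N_1,N_2}$-a.s.\ once $N_1$ is large. Irreducibility and aperiodicity of $({\tt T}_i)$ on $\{{\tt 10}, {\tt 01}, {\tt 11}\}$ I would check by exhibiting, for each ordered pair of states, an explicit finite sequence of compatible local configurations of positive probability realizing the transition (and a self-loop, e.g.\ ${\tt 11} \to {\tt 11}$, for aperiodicity).

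For reversibility, the natural route is a symmetry argument: the map $\omega \mapsto \omega \circ \rho$, where $\rho$ is the reflection $x \mapsto -x$ of the ladder, preserves $\Prmp^*$ (again by translation/reflection invariance of the limit), and it exchanges "backwards communicating" with "forwards communicating". One then checks that the forwards profile $({\tt T}_i^{\rightarrow})$ has the same law as the time-reversal of $({\tt T}_i)$, and separately that the forwards and backwards profiles at a given level are a.s.\ compatible in a way that makes the stationary chain reversible; alternatively, reversibility can be read off directly once the $3\times 3$ transition matrix and its stationary distribution have been computed explicitly from the $Z_{p,\mathtt{ab},\mathtt{cd}}$, by verifying the detailed-balance equations — this is a finite check. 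Ergodicity then follows from irreducibility of a stationary finite-state Markov chain (equivalently, the tail is trivial, so the shift is ergodic). The main obstacle I expect is the careful justification that ${\tt T}$ is genuinely Markov and that the conditional law factorizes as claimed: the subtlety is that "backwards communicating" is a long-range event, so one must argue that conditioning on $\Omega^*$ does not destroy the block independence beyond the compatibility constraint — handling this cleanly via the finite-volume approximation $\Prm_{p,N_1,N_2}$ and a limiting argument, rather than trying to condition on a null event directly, is where the real work lies.
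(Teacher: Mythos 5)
Your plan takes a genuinely different route from the paper. The paper's proof is essentially a citation: Theorems 3.1 and 3.2 and Eq.\ (3.17) of Axelson-Fisk and H\"aggstr\"om (2009) are invoked for the Markov property, time-homogeneity, and the conditional product law, and the remaining claims are read off the explicit $3\times 3$ transition matrix $\mathbf{p}$ and stationary distribution $\pi$ given there (aperiodicity by inspection, reversibility by a finite detailed-balance check, ergodicity from finiteness of the state space plus uniqueness of $\pi$). You instead propose to re-derive the cited material from scratch --- finite-volume approximation $\Prm_{p,N_1,N_2} \to \Prmp^*$, locality of the compatibility constraint, and the product structure of $\mu_p$ --- which is essentially the argument that reference itself carries out. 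This is sound and more self-contained, at the price of the limiting step you rightly flag as the real work.

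One caveat on reversibility. Reflection $\x \mapsto -\x$ does exchange backwards with forwards communication, so together with the reflection invariance of $\Prmp^*$ it yields $({\tt T}_{-i})_i \eqdist ({\tt T}^{\rightarrow}_i)_i$, where ${\tt T}^{\rightarrow}_i$ is the forwards-communicating profile. But reversibility of $({\tt T}_i)$ is precisely the statement $({\tt T}_{-i})_i \eqdist ({\tt T}_i)_i$, so it is \emph{equivalent} to the forwards and backwards profiles having the same law; the reflection merely restates the claim rather than reducing it to something independently known. Your clause that the two profiles are ``a.s.\ compatible at each level'' is too vague to close this gap, so the symmetry route as sketched does not go through. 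Your fallback --- computing $\mathbf{p}$ and $\pi$ explicitly from the $Z_{p,\mathtt{ab},\mathtt{cd}}$ and verifying detailed balance as a finite check --- is exactly what the paper does and is the clean way to finish.
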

\begin{proof}
Theorems 3.1 and 3.2 in \cite{Axelson-Fisk+H"aggstr"om:2009} yield that $({\tt T}_i)_{i \in \Z}$ is a stationary time-homogeneous Markov chain.
Aperiodicity follows from the explicit form of the transition matrix $\mathbf{p}$ on pp.\;1111-1112 of the cited reference.
From this explicit form and the form of the invariant distribution $\pi$ given on p.\;1112 of \cite{Axelson-Fisk+H"aggstr"om:2009}
it is readily checked that $\pi$ and $\mathbf{p}$ are in detailed balance.
Hence, $({\tt T}_i)_{i \in \Z}$ is reversible.
Since the state space $\{{\tt 01}, {\tt 10}, {\tt 11}\}$ is finite, $\pi$ is the unique invariant distribution.
Consequently, $({\tt T}_i)_{i \in \Z}$ is ergodic.
The form of the conditional distribution given in \eqref{eq:omega's distribution} is (3.17) of \cite{Axelson-Fisk+H"aggstr"om:2009}.
\end{proof}

\subsection{Cyclic decomposition.}
Next, we introduce a decomposition of the percolation cluster into i.\,i.\,d.~cycles
originally introduced in \cite{Axelson-Fisk+H"aggstr"om:2009b}.
Cycles begin and end at horizontal levels $i$ such that $(i,1)$ is isolated in $\omega$.
A vertex $(i,0)$ such that $(i,1)$ is isolated in $\omega$ is called a pre-regeneration point.
We let $\ldots, R^{\mathrm{pre}}_{-2}, R^{\mathrm{pre}}_{-1}, R^{\mathrm{pre}}_0, R^{\mathrm{pre}}_1, R^{\mathrm{pre}}_2, \ldots$
be an enumeration of the pre-regeneration points such that
$\x(R^{\mathrm{pre}}_{-2}) < \x(R^{\mathrm{pre}}_{-1}) < 0 \leq \x(R^{\mathrm{pre}}_0) <  \x(R^{\mathrm{pre}}_1) < \x(R^{\mathrm{pre}}_2) \ldots$\,.
\smallskip

\begin{center}
\pgfmathsetseed{845133}
\begin{tikzpicture}[thin, scale=0.5,-,
                   shorten >=0pt+0.5*\pgflinewidth,
                   shorten <=0pt+0.5*\pgflinewidth,
                   every node/.style={circle,
                                      draw,
                                      fill          = black!80,
                                      inner sep     = 0pt,
                                      minimum width =4 pt}]

\def \p {0.5}

\foreach \x in {-10,-9,-8,-7,-6,-5,-4,-3,-2,-1,0,1,2,3,4,5,6,7,8,9,10}
\foreach \y in {0,1}
    \node at (\x,\y) {};

\foreach \x in {-10,-9,-8,-7,-6,-5,-4,-3,-2,-1,0,1,2,3,4,5,6,7,8,9}{
\foreach \y in {0,1}{
    \pgfmathparse{rnd}
    \let\dummynum=\pgfmathresult
    \ifdim\pgfmathresult pt < \p pt\relax \draw (\x,\y) -- (\x+1,\y);\fi
  }}
  
\foreach \x in {-10,-9,-8,-7,-6,-5,-4,-3,-2,-1,0,1,2,3,4,5,6,7,8,9,10}{
\foreach \y in {0}{
    \pgfmathparse{rnd}
    \let\dummynum=\pgfmathresult
    \ifdim\pgfmathresult pt < \p pt\relax \draw (\x,\y) -- (\x,\y+1);\fi
  }}

	\draw[densely dotted] (-10.5,1) -- (-10,1);
	\draw[densely dotted] (-10.5,0) -- (-10,0);
	\draw (-8,1) -- (-7,1);
	\draw (-3,1) -- (-2,1);
	\draw (1,1) -- (2,1);
	\draw (6,0) -- (7,0);
	\draw (9,0) -- (10,0);
	\draw[densely dotted] (10,0) -- (10.5,0);
	\draw[densely dotted] (10,1) -- (10.5,1);	
	\node[draw=none,fill=none] at (0,-0.75) {$\mathbf{0}$};
	\node[draw=none,fill=none] at (6,-0.75) {$R^{\mathrm{pre}}_{0}$};
	\node[draw=none,fill=none] at (8,-0.75) {$R^{\mathrm{pre}}_{1}$};

	\draw (9,1) -- (10,1);
	\draw[white, thick] (-5,1) -- (-4,1);
	\foreach \x in {-10,-9,-8,-7,-6,-5,-4,-3,-2,-1,0,1,2,3,4,5,6,7,8,9,10}
\foreach \y in {0,1}
    \node at (\x,\y) {};
    \node[draw=none,fill=none] at (-5,-0.75) {$R^{\mathrm{pre}}_{-1}$};
\end{tikzpicture}
\end{center}
We denote the subgraph of $\omega$ with vertex set $\{v \in V: a \leq \x(v) \leq b\}$
and edge set $\{e \in E^{a,\geq} \cap E^{b,<}: \omega(e)=1\}$
by $[a,b)$ and call $[a,b)$ a \emph{piece} or \emph{block} (of $\omega$).
The pre-regeneration points split the percolation cluster into blocks
\begin{center}
$\omega_n \defeq [\x(R_{n-1}^{\mathrm{pre}}), \x(R_n^{\mathrm{pre}})),	\quad	n \in \Z.$
\end{center}
The notation suggests that there are infinitely many pre-regeneration points to the left and right of $0$.
This is indeed the case and will be shown below.

Further, we call a piece $[a,b)$ with $a<b$ a \emph{trap piece} (in $\omega$) if it has the following properties:
\begin{itemize}
	\item[(i)]	the vertical edge $\langle(a,0),(a,1)\rangle$ is open, while all other vertical edges in $[a,b+1)$ are closed;
	\item[(ii)]	all horizontal edges in $[a,b)$ are open;
	\item[(iii)]	exactly one of the horizontal edges $\langle(b,i),(b+1,i) \rangle$, $i \in \{0,1\}$ is open.
\end{itemize}
We call $b-a$ the \emph{length} of the trap.
If $i$ is such that $\omega(\langle(b,i),(b+1,i)\rangle) = 1$, the vertex $(b+1,i)$ is called the \emph{trap end}.
In this situation, the induced line graph on the vertices $(a,1-i),\ldots,(b,1-i)$ is called \emph{trap} or \emph{dead end}
and the vertex $(a,1-i)$ is called the \emph{entrance of the trap}.
\begin{center}
\begin{tikzpicture}[thin, scale=0.8,-,
                   shorten >=2pt+0.5*\pgflinewidth,
                   shorten <=2pt+0.5*\pgflinewidth,
                   every node/.style={circle,
                                      draw,
                                      fill          = black!80,
                                      inner sep     = 0pt,
                                      minimum width =4 pt}]
\path[draw] 
       node at (0,0) {}  
       node at (0,1) {} 
       node at (1,0) {} 
       node at (1,1) {} 
       node at (2,0) {} 
       node at (2,1) {} 
       node at (3,0) {} 
       node at (3,1) {}
       node at (4,0) {} 
       node at (4,1) {}
       node at (5,0) {} 
       node at (5,1) {} 
        ; 

    \draw (0,0) -- (0,1) ;
    \draw (0,1) -- (1,1) ;
    \draw (0,0) -- (1,0) ;
    \draw (1,1) -- (2,1) ;
    \draw (1,0) -- (2,0) ;
    \draw (2,0) -- (3,0) ; 
    \draw (2,1) -- (3,1);
    \draw (3,0) -- (4,0) ; 
     \draw (3,1) -- (4,1);
    \draw (4,1) -- (5,1);
        
\begin{scope}[dashed]  
     
      \draw (-0.5,0) -- (-0,0)  ;
      \draw (-0.5,1) -- (-0,1)  ;
      
      \draw (5,0) -- (5,1)  ;
      \draw (5,0) -- (5.5,0)  ;
      \draw (5,1) -- (5.5,1);
     
\end{scope}

\node[draw=none,fill=none] at (0,1.4) {$(a,1)$};
\node[draw=none,fill=none] at (5,-0.4) {$(b\!+\!1,0)$};
\node[draw=none,fill=none] at (6,2) {$\mbox{trap end}$};
\begin{scope}   [->,shorten >=4pt+0.5*\pgflinewidth,
                   shorten <=8pt+0.5*\pgflinewidth,
]
	\draw (6,2) -- (5,1);
	\draw (-1,-1) -- (0,0);
	\node[draw=none,fill=none] at (-1,-1) {$\mbox{trap entrance}$};
 \end{scope}
\end{tikzpicture}
\end{center}
\emph{Non-trap pieces} are pieces $[a,b)$ such that every $v \in [a,b) \cap \Cluster_{\infty}$ is forwards communicating.

We enumerate the traps in $\omega$ as follows.
Let $L_1$ be the trap piece
that belongs to the trap entrance with the smallest nonnegative $\x$-coordinate.
We enumerate the remaining trap pieces such that $L_2$ is the next trap piece to the right of $L_1$ etc.
Analogously, $L_0$ is the first trap piece to the left of $L_1$ etc.

\begin{lemma}	\label{Lem:trap probability given vertical edge}
Under $\Prmp^*$, $(({\tt T}_i,\omega(E^i)))_{i \in \Z}$ is a (time-homogeneous) Markov chain with state space $\{{\tt 01},{\tt 10}, {\tt 11}\} \times \{0,1\}^3$.
Further, there exists a constant $\gamma(p) \in (0,1)$ such that, for every $i \in \Z$,
\begin{equation}	\label{eq:trap probability given vertical edge}
\Prmp^*(T_{i:i+m} \mid \omega(\langle(i,0),(i,1)\rangle)=1)	~=~	\gamma(p) e^{-2 \lambdacrit m},	\quad	m \in \N
\end{equation}
where $T_{a:b}$ denotes the event that $[a,b)$ is a trap piece ($a, b \in \Z$, $a<b$).
When $i \geq 0$, then \eqref{eq:trap probability given vertical edge} also holds with $\Prmp^*$ replaced by $\Prmp$.
\end{lemma}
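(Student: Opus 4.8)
The plan is to derive both assertions from Lemma~\ref{Lem:T_i,omega(E^i)} and then to pin down the decay rate by an explicit computation. For the Markov property, the key points are: $E^i$ consists of exactly three edges (the two horizontal edges joining columns $i-1$ and $i$, and the vertical edge in column $i$), so $\omega(E^i)\in\{0,1\}^3$; the value ${\tt T}_i$ is a deterministic function of the pair $({\tt T}_{i-1},\omega(E^i))$ --- this is precisely the compatibility relation; and, by \eqref{eq:omega's distribution}, conditionally on the chain $({\tt T}_j)_{j\in\Z}$ the blocks $(\omega(E^j))_{j\in\Z}$ are independent with $\omega(E^i)$ depending only on $({\tt T}_{i-1},{\tt T}_i)$. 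Writing $z_j\defeq({\tt T}_j,\omega(E^j))$, a short computation of finite-dimensional distributions then shows that $(z_j)_{j\in\Z}$ is a stationary time-homogeneous Markov chain on $\{{\tt 01},{\tt 10},{\tt 11}\}\times\{0,1\}^3$ whose one-step kernel at $z=(t,\eta)$ depends on $z$ only through its ${\tt T}$-coordinate $t$; in particular the conditional law of the ``future'' $(z_j)_{j>i}$ given $\sigma(z_j:j\le i)$ depends only on ${\tt T}_i$. This last fact is what I will use repeatedly below.

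For the identity \eqref{eq:trap probability given vertical edge} under $\Prmp^*$, I would first reduce the conditioning event to $\{{\tt T}_i={\tt 11}\}$: if the vertical edge in column $i$ is open, then $(i,0)$ and $(i,1)$ are backwards communicating simultaneously or not at all, so ${\tt T}_i\in\{{\tt 00},{\tt 11}\}$, and since ${\tt T}_i\ne{\tt 00}$ under $\Prmp^*$ we get $\{\omega(\langle(i,0),(i,1)\rangle)=1\}\subseteq\{{\tt T}_i={\tt 11}\}$, $\Prmp^*$-almost surely. Next, unwinding the definition of a trap piece, one checks that $T_{i:i+m}=\{\omega(\langle(i,0),(i,1)\rangle)=1\}\cap F$, where $F$ is the event --- measurable with respect to $\sigma(\omega(E^{i+1}),\dots,\omega(E^{i+m+1}))\subseteq\sigma(z_j:j>i)$ --- that $\omega(E^{i+1})=\dots=\omega(E^{i+m})=\eta^*$, with $\eta^*$ the local configuration ``both horizontal edges open, vertical edge closed'', and that $\omega(E^{i+m+1})$ has exactly one of its two horizontal edges open. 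Since $\eta^*$ is ${\tt 11}$-${\tt 11}$-compatible, ${\tt T}_j={\tt 11}$ is forced for all $i\le j\le i+m$ on $\{{\tt T}_i={\tt 11}\}\cap F$. Combining this with the Markov property and time-homogeneity of $(z_j)$ and with $\{\omega(\langle(i,0),(i,1)\rangle)=1\}\subseteq\{{\tt T}_i={\tt 11}\}$ yields
\[
\Prmp^*\big(T_{i:i+m}\mid \omega(\langle(i,0),(i,1)\rangle)=1\big)=\Prmp^*\big(F\mid {\tt T}_i={\tt 11}\big)=\gamma(p)\,(q^*)^{m},
\]
where $q^*\in(0,1)$ is the one-step probability, from state ${\tt T}={\tt 11}$, that the next block equals $\eta^*$ (so that it again lands in state ${\tt 11}$), and $\gamma(p)\in(0,1)$ is the one-step probability, from state ${\tt T}={\tt 11}$, that the next block has exactly one open horizontal edge. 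It then remains to show $q^*=e^{-2\lambdacrit}$ with $\lambdacrit$ as in \eqref{eq:lambdacrit}; I would obtain this by evaluating $q^*=\mathbf{p}({\tt 11},{\tt 11})\,\mathrm{P}_{p,{\tt 11},{\tt 11}}(\eta^*)$ from the explicit transition matrix of $({\tt T}_i)$ on pp.~1111--1112 of \cite{Axelson-Fisk+H"aggstr"om:2009}, together with $\mathrm{P}_{p,{\tt 11},{\tt 11}}(\eta^*)=(1-p)/(3-2p)$ (which follows by listing the four ${\tt 11}$-${\tt 11}$-compatible local configurations). This is the only genuinely computational step, and it is the point at which the closed form \eqref{eq:lambdacrit} enters; conceptually, $2\lambdacrit$ being the trap-length decay rate is exactly what makes $\lambdacrit$ the threshold past which the expected sojourn time in a trap --- which grows like $e^{2\lambda\cdot(\text{trap length})}$ --- ceases to be summable against the trap-length law.

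Finally, for $i\ge0$ with $\Prmp=\Prmp^*(\cdot\mid\mathbf 0\in\Cluster)$, I would show that the extra conditioning on $\{\mathbf 0\in\Cluster\}$ does not change the answer. On $\{\omega(\langle(i,0),(i,1)\rangle)=1\}$ we have ${\tt T}_i={\tt 11}$, so both $(i,0)$ and $(i,1)$ lie in $\Cluster$; a short argument using the width-two structure of the ladder then shows that, $\Prmp^*$-almost surely on this event, $\{\mathbf 0\in\Cluster\}$ coincides with the event that $\mathbf 0$ is joined to $(i,0)$ by an open path using only edges with both endpoints in columns $0,\dots,i$ (any open path from $\mathbf 0$ into $\Cluster$ can be stopped at its first visit to column $i$ and, if necessary, completed through the open vertical edge there). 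In particular this event lies in $\sigma(z_j:j\le i)$ and is contained in $\{{\tt T}_i={\tt 11}\}$, so the Markov property gives $\Prmp^*\big(F\mid\{\omega(\langle(i,0),(i,1)\rangle)=1\}\cap\{\mathbf 0\in\Cluster\}\big)=\Prmp^*(F\mid{\tt T}_i={\tt 11})$, i.e.\ \eqref{eq:trap probability given vertical edge} holds with $\Prmp$ in place of $\Prmp^*$. (For $i<0$ this localisation fails, since $\{\mathbf 0\in\Cluster\}$ then involves columns strictly to the right of $i$, which explains the restriction to $i\ge0$.) I expect the main obstacle to be the explicit identification $q^*=e^{-2\lambdacrit}$, the one place where the combinatorics of the ${\tt T}$-chain must be matched against the algebraic expression in \eqref{eq:lambdacrit}; the remaining delicate points are the two geometric facts above --- that an open vertical edge forces ${\tt T}_i={\tt 11}$ under $\Prmp^*$, and that $\{\mathbf 0\in\Cluster\}$ localises to columns $\{0,\dots,i\}$ on that event --- both of which rely on the ladder having width two and on $\Prmp^*$ being concentrated on configurations with a bi-infinite path.
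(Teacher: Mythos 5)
Your proof follows the same skeleton as the paper's — derive the Markov property of $(z_j)=({\tt T}_j,\omega(E^j))$ from Lemma~\ref{Lem:T_i,omega(E^i)}, reduce the conditioning to $\{{\tt T}_i={\tt 11}\}$, identify the per-block decay factor as $e^{-2\lambdacrit}$ using the explicit transition matrix from \cite{Axelson-Fisk+H"aggstr"om:2009}, and handle $\Prmp$ via Markovianity at time $i$ — but you fill in considerably more detail than the paper, which defers the computation of $\lambda_m(p)$ to \cite[pp.\;3403--3404]{Axelson-Fisk+H"aggstr"om:2009b} and disposes of the $\Prmp$ case in a single sentence. Two remarks. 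First, the clean reduction $\{\omega(\langle(i,0),(i,1)\rangle)=1\}\subseteq\{{\tt T}_i={\tt 11}\}$ $\Prmp^*$-a.s. (vertical edge open forces the two endpoints to be backwards communicating simultaneously, and ${\tt 00}$ is excluded) is exactly the right observation, and your conditional-independence argument using $F\in\sigma(z_j:j>i)$ is a correct and somewhat more explicit version of what the paper's ``Markov property'' remark is gesturing at. Second, a small slip in the $\Prmp$ part: the localisation of $\{\mathbf 0\in\Cluster\}$ on $\{{\tt T}_i={\tt 11}\}$ should be to the event that $\mathbf 0$ is joined to $\{(i,0),(i,1)\}$ by an open path using edges with both endpoints in columns $\le i$ (not ``columns $0,\dots,i$''; the connecting path may well dip into negative columns, e.g.\ when the only open edge out of $\mathbf 0$ points left). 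This does not affect your conclusion, since $\bigcup_{j\le i}E^j=E^{i,\le}$ and so the corrected event is still $\sigma(z_j:j\le i)$-measurable; likewise, ``contained in $\{{\tt T}_i={\tt 11}\}$'' should be read as applying to the intersection with $\{\omega(\langle(i,0),(i,1)\rangle)=1\}$, which is all the Markov-property step actually needs.
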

\begin{proof}
From the last statement in Lemma \ref{Lem:T_i,omega(E^i)},
one infers that $(({\tt T}_i,\omega(E^i)))_{i \in \Z}$ is a Markov chain with state space $\{{\tt 01},{\tt 10}, {\tt 11}\} \times \{0,1\}^3$.
This Markov chain can be thought of as follows. Given all information up to and including time $i-1$,
one can first sample the value ${\tt T}_{i}$ using knowledge of the value of ${\tt T}_{i-1}$ only.
Then, independently of everything sampled before,
one can sample the value of $\omega(E^{i})$ from $\mathrm{P}_{\!p,{\tt T}_{i-1},{\tt T}_{i}}$.
Since $\Prmp^*$ is shift-invariant, it is enough to calculate $\lambda_m(p) \defeq \Prmp^*(T_{0:m} \mid \omega(\langle(0,0),(0,1)\rangle)=1)$.
This can be done as in \cite[pp.\;3403-3404]{Axelson-Fisk+H"aggstr"om:2009b}
and leads to
\begin{equation*}
\lambda_m(p)	=	\gamma(p) \left(\frac{1}{2} \left(1+2p-2p^2-\sqrt{1+4p^2-8p^3+4p^4} \right) \right)^{\!\!m}
=	\gamma(p) e^{-2 \lambdacrit m}
\end{equation*}
where $\gamma(p) = \Prmp^*(C_1 | {\tt T}_0 = {\tt 11}) \in (0,1)$
and $C_1$ is the event that precisely one of the horizontal edges with right endpoint at $\x$-coordinate $1$ is open,
while the other one and the vertical connection between $(1,0)$ and $(1,1)$ are closed.

\noindent
Finally, assume that $i \geq 0$.
Then \eqref{Lem:trap probability given vertical edge} for $\Prmp$
follows from the Markov property under $\Prmp$ at time $i$ for $(({\tt T}_j,\omega(E^j)))_{j \in \Z}$.
\end{proof}

For the formulation of the next lemma, we introduce the shift operators.
For $v \in V$, the shift $\theta^{v}$ is the translation possibly combined with a flip of the $\y$-coordinate
 that maps $v \in V$ to $\mathbf{0}$
and, in general, $w \in V$ to $(\x(w)-\x(v),\y(w)-\y(v))$.
The shift $\theta^{v}$ canonically extends to a mapping on the set of edges
and hence to a mapping on the configuration space $\Omega$.
For convenience, we denote all these mappings by $\theta^{v}$.
The mappings $\theta^{v}$ form a commutative group since $\theta^{v} \theta^{w} = \theta^{v+w}$
where addition $v+w$ is to be understood in $\Z \times \Z_2$. In particular, $(\x(v),1)+(\x(w),1)= (\x(v)+\x(w),0)$.

Next define
\begin{equation*}
E' \defeq \bigcup_{i \in \N} \{0,1\}^{E^{0, \geq} \cap E^{i,<}}
\quad	\text{and}	\quad
E^{\mathbf{0}} \defeq \bigcup_{i \in \N, \, j \in \N_0} \{0,1\}^{E^{-i, \geq} \cap E^{j,<}}.
\end{equation*}
The $\theta^{R^{\mathrm{pre}}_{n-1}} \omega_n$, $n \not = 0$ can be considered as random variables taking values in $E'$,
while $\omega_0$ is a random variable taking values in $E^{\mathbf{0}}$.
Let $C_0$ be the set of finite configurations $\eta \in E^{\mathbf{0}}$
for which $\mathbf{0}$ is on an open path connecting the left and right endpoints with $\y$-coordinate $0$ in $\eta$.
Then $\Prmp(\cdot) = \Prmp^*(\cdot \cap \{\omega_0 \in C_0\}) / \Prmp^*(\omega_0 \in C_0)$.

\begin{lemma}	\label{Lem:pre-regeneration point decomposition}
The following assertions hold true:
\begin{itemize}
	\item[(a)]
		With $\Prmp^*$-probability one, there are infinitely many pre-regeneration points to the right and to the left of zero.
	\item[(b)]
		There exists some $c = c(p) \in (0,1)$ with $\Prmp^*(\x(R^{\mathrm{pre}}_{1})-\x(R^{\mathrm{pre}}_{0}) > k) \leq c^k$ for all $k \in \N_0$.
	\item[(c)]
		Under $\Prmp^*$,
		$((\theta^{R^{\mathrm{pre}}_{n-1}} \omega_n, \x(R^{\mathrm{pre}}_{n})-\x(R^{\mathrm{pre}}_{n-1})))_{n \in \Z \setminus \{0\}}$
		is a family of i.i.d.~random variables independent of $\omega_0$.
\end{itemize}
All assertions also hold with $\Prmp^*$ replaced by $\Prmp$.
Further, the distribution of $$((\theta^{R^{\mathrm{pre}}_{n-1}} \omega_n, \x(R^{\mathrm{pre}}_{n})-\x(R^{\mathrm{pre}}_{n-1})))_{n \in \Z \setminus \{0\}}$$
under $\Prmp$ is the same as under $\Prmp^*$.
\end{lemma}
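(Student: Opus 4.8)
\emph{Plan.} The whole argument reduces to the finite-state Markov chain $Z_i \defeq ({\tt T}_i,\omega(E^i))$, $i\in\Z$, from Lemma~\ref{Lem:trap probability given vertical edge}, and to a single structural fact (the ``pinch property''). First I record that the vertex $(i,1)$ is isolated exactly when the vertical edge at column $i$ together with the two horizontal level-$1$ edges at $(i,1)$ are closed; the first two of these belong to $E^i$ and the third to $E^{i+1}$, so the event $G_i$ that $(i,0)$ is a pre-regeneration point is a deterministic function of $(Z_i,Z_{i+1})$. On $\Omega^*$ the bi-infinite open path crosses column $i$, which on $G_i$ forces it through $(i,0)$; in particular $(i,1)$ is not backwards communicating, and since $({\tt T}_i)$ has state space $\{{\tt 01},{\tt 10},{\tt 11}\}$ this gives ${\tt T}_i={\tt 10}$ on $G_i$. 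The pair process $W_i\defeq(Z_i,Z_{i+1})$ is again a time-homogeneous Markov chain on a finite state space, and $G_i=\{W_i\in\hat G\}$ for a fixed set of states $\hat G$.

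\emph{The pinch property.} I claim that, conditionally on $G_i$, the restriction $\omega|_{E^{i,\leq}}$ is independent of $\omega|_{E^{i,\geq}}$ under $\Prmp^*$, and the conditional law of $\omega|_{E^{i,\geq}}$ does not depend on $i$. The point is that, by the description of the chain $(Z_i)$ in the proof of Lemma~\ref{Lem:trap probability given vertical edge} (one first samples ${\tt T}_i$ from ${\tt T}_{i-1}$, then $\omega(E^i)$ from $\mathrm{P}_{\!p,{\tt T}_{i-1},{\tt T}_i}$) together with Lemma~\ref{Lem:T_i,omega(E^i)}, the transition kernel of $(Z_i)$ out of a state $({\tt T},\eta)$ depends on the state only through ${\tt T}$. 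Since ${\tt T}_i={\tt 10}$ on $G_i$, a short computation using this and the Markov property of $(Z_j)$ shows that, given $G_i$, the ``future'' $(Z_j)_{j>i}$ is independent of the ``past'' $(Z_j)_{j\leq i}$, with a conditional law depending only on the value ${\tt T}_i={\tt 10}$, hence, by shift-invariance of $\Prmp^*$, not on $i$. As $E=E^{i,\leq}\cup E^{i,\geq}$, the only edge common to $E^{i,\leq}$ and $E^{i,\geq}$ is the vertical edge at column $i$, and this edge is deterministically closed on $G_i$, the pinch property follows: once that edge is fixed, $\omega|_{E^{i,\leq}}$ and $\omega|_{E^{i,\geq}}$ are functions of the past and of the future, respectively.

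\emph{Deducing (a)--(c).} Granting the pinch property, part (a) follows from ergodicity: $(\1_{G_i})_{i\in\Z}$ is a factor of the stationary ergodic chain $(W_i)_{i\in\Z}$ (ergodicity of $({\tt T}_i)$, hence of $(Z_i)$ and $(W_i)$, is part of Lemma~\ref{Lem:T_i,omega(E^i)}), and $\Prmp^*(G_0)>0$ since ${\tt 10}$ has positive stationary mass and the corresponding local configuration of $E^0$ and $E^1$ has positive probability; the ergodic theorem and stationarity then give infinitely many pre-regeneration points on each side of $\mathbf{0}$. For part (c) one iterates the pinch property as in \cite{Axelson-Fisk+H"aggstr"om:2009b}: conditionally on the locations of all pre-regeneration points, the blocks between consecutive ones are independent, the block $\omega_0$ containing $\mathbf{0}$ is independent of all the others, and — because the conditional law of $\omega$ to the right of a pre-regeneration column is always the same — the re-centred blocks $\theta^{R^{\mathrm{pre}}_{n-1}}\omega_n$ together with the gaps $\x(R^{\mathrm{pre}}_n)-\x(R^{\mathrm{pre}}_{n-1})$, $n\neq0$, are i.i.d.; the case $n<0$ is handled by reflection, using reversibility of $({\tt T}_i)$. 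For part (b), the pre-regeneration columns are exactly the successive visits of the finite-state Markov chain $(W_i)$ to $\hat G$, which lies in the unique recurrent communicating class (it has positive stationary mass); finiteness of the state space yields $\varepsilon\in(0,1)$ and $m_0\in\N$ with $\Prmp^*(W_j\notin\hat G \text{ for all }j\text{ with }i<j\leq i+m \mid W_i)\leq(1-\varepsilon)^{\lfloor m/m_0\rfloor}$, so the common gap law from (c) has an exponential tail, which is (b). Finally, $\Prmp(\cdot)=\Prmp^*(\cdot\mid\{\omega_0\in C_0\})$ with $\{\omega_0\in C_0\}$ being $\sigma(\omega_0)$-measurable; since $\omega_0$ is independent under $\Prmp^*$ of the family in (c), conditioning alters only the law of $\omega_0$, so (a)--(c) carry over to $\Prmp$ and the law of $(\theta^{R^{\mathrm{pre}}_{n-1}}\omega_n,\x(R^{\mathrm{pre}}_n)-\x(R^{\mathrm{pre}}_{n-1}))_{n\neq0}$ coincides under $\Prmp$ and $\Prmp^*$.

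\emph{Main obstacle.} The delicate point is the pinch property, and especially the passage from ``conditionally on all pre-regeneration points the blocks decouple'' to the unconditional i.i.d.\ statement of (c): this needs a Palm-type bookkeeping keeping track of which block is $\omega_0$ and verifying that summing the conditional product laws over all admissible configurations of pre-regeneration columns reproduces $\Prmp^*$. The two elementary inputs to the pinch property — that ${\tt T}_i={\tt 10}$ on $G_i$, and that the kernel of $(Z_i)$ depends on the current state only through its ${\tt T}$-component — are where the geometry of the ladder graph enters and must be checked with care.
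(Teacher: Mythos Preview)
Your approach is correct and follows essentially the same strategy as the paper: reduce to a finite-state Markov chain for which pre-regeneration columns are return times to a distinguished set of states, then invoke the excursion decomposition. The paper's proof is shorter because of a slicker choice of auxiliary chain: instead of your $W_i=(Z_i,Z_{i+1})$, it uses $({\tt T}_i,\eta_i)$ with $\eta_i=\omega(E^{i-1,>}\cap E^{i+1,<})$, i.e.\ all five edges incident to column $i$. With this choice, the event ``$(i,0)$ is a pre-regeneration point'' is exactly a \emph{single} state of the chain (on $\Omega^*$ the two bottom horizontal edges are forced open and the three edges at $(i,1)$ are closed, with ${\tt T}_i={\tt 10}$). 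Parts (a)--(c) then follow immediately from standard facts about finite irreducible chains: recurrence of accessible states gives (a), geometric return-time tails give (b), and the classical i.i.d.\ cycle decomposition between successive visits to a fixed state gives (c), with reflection/reversibility handling $n<0$.

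Your route via the ``pinch property'' is a valid substitute for the single-state reduction: you compensate for $\hat G$ not being a singleton by exploiting that the transition kernel of $(Z_j)$ depends on the current state only through its ${\tt T}$-component, so that on $G_i$ (where ${\tt T}_i={\tt 10}$ is forced) the conditional future law is uniquely determined. This is correct --- in effect, $G_i\cap\Omega^*=\{Z_i=z^*\}\cap\{\text{upper horizontal in }E^{i+1}\text{ closed}\}$ for a fixed state $z^*$, and the Markov property at time $i$ gives conditional independence of past and future --- but it re-derives by hand what the paper obtains for free from the single-state choice. Your ``Palm-type bookkeeping'' worry is likewise handled automatically in the paper's framework by the standard cycle decomposition of a stationary Markov chain. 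The transfer to $\Prmp$ via conditioning on $\{\omega_0\in C_0\}\in\sigma(\omega_0)$ is identical in both arguments.
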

\begin{proof}
For the proof of this lemma, we consider the following auxiliary stochastic process
$(({\tt T}_i,\eta_i))_{i \in \Z} = (({\tt T}_i,\omega(E^{i-1,>} \cap E^{i+1,<})))_{i \in \Z}$.
At time $i$, it contains the information 
which of the vertices with $\x$-coordinate $i$ are backwards communicating,
encoded by the value of ${\tt T}_i$,
plus the information which edges adjacent to the vertices with $\x$-coordinate $i$ are open, encoded by the value of $\eta_i$. This process is a Markov chain.
Notice that $(({\tt T}_i,\eta_i))_{i \in \Z}$ has a finite state space
and that $(i,0)$ being a pre-regeneration point is equivalent to ${\tt T}_i = {\tt 10}$
and $\eta_i$ taking the particular value displayed in the figure below.
\\~
\begin{center}\begin{tikzpicture}[thin, scale=0.8,-,
                   shorten >=2pt+0.5*\pgflinewidth,
                   shorten <=2pt+0.5*\pgflinewidth,
                   every node/.style={circle,
                                      draw,
                                      fill          = black!80,
                                      inner sep     = 0pt,
                                      minimum width =4 pt}]
\foreach \x in {-1,0,1}
\foreach \y in {0,1}
	\node at (\x,\y) {}; 

	\draw (-1,0) -- (0,0);
	\draw (0,0) -- (1,0);
        
\begin{scope}[dashed]  
	\draw (-1.5,0) -- (-1,0);
	\draw (-1.5,1) -- (-1,1);
	\draw (-1,0) -- (-1,1);
	\draw (1,0) -- (1,1);
	\draw (1,0) -- (1.5,0);
	\draw (1,1) -- (1.5,1);
\end{scope}

\begin{scope}   [->,shorten >=4pt+0.5*\pgflinewidth,
                   shorten <=8pt+0.5*\pgflinewidth]
	\node[draw=none,fill=none] at (0,-0.5) {$(i,0)$};
 \end{scope}
\end{tikzpicture}
\end{center}
As this state is an accessible state for the chain and as the state space is finite, the chain hits it infinitely often, proving (a).
Further, a standard geometric trials argument gives (b).
Assertion (c) follows from the fact that the cycles between successive visits of a given state by the Markov chain
$(({\tt T}_i,\eta_i))_{i \in \Z}$ are i.i.d.
At first, this argument only applies to the cycles $\omega_1,\omega_2,\ldots$
and then extends by reflection ($\Prmp^*$ is symmetric by construction)
also to those that are on the negative half-axis.
The cycle straddling the origin still is independent of the other cycles by the Markov property,
but may have a different distribution.

Finally, one checks that (a), (b) and (c) hold with $\Prmp^*$ replaced by $\Prmp$.
\end{proof}

Using regeneration-time arguments will make it necessary at some points
to use a different percolation law than $\Prmp$ or $\Prmp^*$,
namely, the cycle-stationary percolation law $\Prmp^\circ$,
which is defined below.

\begin{definition}	\label{def:cycle-stationary percolation law}
The \emph{cycle-stationary percolation law} $\Prmp^\circ$
is defined to be the unique probability measure on $(\Omega,\F)$ such that the
cycles $\omega_n$, $n \in \Z$ are i.i.d.\ under $\Prmp^\circ$
and such that each $\omega_n$ has the same law under $\Prmp^\circ$
as $\omega_1$ under $\Prmp^*$.
\end{definition}

\subsection{The traps.}
The biased random walk will pass non-trap pieces in linear time,
while in traps, it will spend more time.
In the next step, we investigate the lengths of traps.
Let $\ell_n$ denote the length of the trap $L_n$, $n \in \Z$.

\begin{lemma}	\label{Lem:trap cost}
~
\begin{itemize}
\item[(a)] Under $\Prmp^*$, $(\ell_n)_{n \not = 0}$ is a family of i.i.d.\ nonnegative random variables independent of $\ell_0$ with $\Prmp^*(\ell_1 = m) = (e^{2 \lambdacrit}-1) e^{-2 \lambdacrit m}$, $m \in \N$.
	\item[(b)]
		There is a constant $\chi(p)$ such that $\Prmp^*(\ell_0 = m) \leq \chi(p) m e^{-2 \lambdacrit m}$, $m \in \N$.
\end{itemize}
\end{lemma}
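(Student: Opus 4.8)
The plan is to read off both parts from the i.i.d.\ cycle decomposition of Lemma~\ref{Lem:pre-regeneration point decomposition} together with the conditional trap-length law of Lemma~\ref{Lem:trap probability given vertical edge}. The starting observation is that every trap piece $[a,b)$ lies inside a single cycle: along the span $\{a,\dots,b\}$ the vertical edge at $a$ is open and all horizontal edges are open (conditions (i)--(ii)), so both rail-vertices at each such level connect back, through the trap entrance, to an infinite path towards $-\infty$; since $\mathtt{T}$ is never $\mathtt{00}$, this forces $\mathtt{T}_j=\mathtt{11}$ for $a\le j\le b$, whereas a pre-regeneration point requires $\mathtt{T}=\mathtt{10}$. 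Hence no cycle boundary falls inside a trap piece, and the ordered list of trap lengths contained in $\omega_n$ is a measurable function of the recentred cycle $\theta^{R^{\mathrm{pre}}_{n-1}}\omega_n$; by Lemma~\ref{Lem:pre-regeneration point decomposition}(c) these lists are i.i.d.\ over $n\ne0$ and independent of $\omega_0$.

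To identify the law of a single trap length, I would use Lemma~\ref{Lem:trap probability given vertical edge}: given that the vertical edge at $a$ is open, a trap starts at $a$ with probability $\sum_{m\ge1}\gamma(p)e^{-2\lambdacrit m}=\gamma(p)/(e^{2\lambdacrit}-1)$, and conditionally on that event the trap has length $m$ with probability $(e^{2\lambdacrit}-1)e^{-2\lambdacrit m}$, which is a probability distribution since $\sum_{m\ge1}(e^{2\lambdacrit}-1)e^{-2\lambdacrit m}=1$. The Markov property of the finite-state chain $((\mathtt{T}_i,\omega(E^i)))_{i\in\Z}$ then upgrades this: conditionally on a trap starting at $a$ and on the configuration strictly to the left of $a$, the trap length still has law $(e^{2\lambdacrit}-1)e^{-2\lambdacrit m}$, independently of that past --- once a trap ``commits'' at $a$, its length is a geometric excursion governed only by the state $\mathtt{11}$. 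Combining this with the cycle decomposition, and using that for $n\ge1$ the event ``$L_n$ has entrance $a$'' is determined by the configuration up to and including the trap at $a$ (and treating $n\le-1$ analogously), gives that $(\ell_n)_{n\ne0}$ is i.i.d.\ with the claimed law and is independent of $\ell_0$.

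For part (b) I would decompose $\Prmp^*(\ell_0=m)=\sum_{a<0}\Prmp^*(L_0 \text{ has entrance } a \text{ and length } m)$. Because a trap piece of length $m$ with entrance $a$ blocks every potential trap entrance in $(a,a+m]$, for $a\in\{-m,\dots,-1\}$ the event $\{L_0 \text{ at } a,\ \ell_0=m\}$ coincides up to a null set with $\{[a,a+m)\text{ is a trap piece}\}$, whose probability equals $\Prmp^*(\omega(\langle(a,0),(a,1)\rangle)=1)\,\gamma(p)e^{-2\lambdacrit m}\le\gamma(p)e^{-2\lambdacrit m}$ by Lemma~\ref{Lem:trap probability given vertical edge}; summing these $m$ terms produces the factor $m$. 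For $a\le-m-1$ the event additionally forces no trap entrance in $\{a+m+1,\dots,-1\}$, which --- since a trap starts with positive probability from every state of the finite Markov chain, uniformly --- has conditional probability at most $\rho^{|a+m|-1}$ for some $\rho(p)\in(0,1)$; summing $\gamma(p)e^{-2\lambdacrit m}\rho^{|a+m|-1}$ over $a\le-m-1$ contributes a constant multiple of $e^{-2\lambdacrit m}$. Adding the two ranges yields $\Prmp^*(\ell_0=m)\le\chi(p)\,m\,e^{-2\lambdacrit m}$.

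The step I expect to be the main obstacle is the passage, in part (a), from i.i.d.\ \emph{bundles} of trap lengths (one bundle per cycle, possibly with several elements) to the i.i.d.-ness of the \emph{individual} trap lengths with the Palm law $(e^{2\lambdacrit}-1)e^{-2\lambdacrit m}$, and in particular verifying that the origin-biasing --- which enters through the size-biased straddling cycle $\omega_0$ and through anchoring the enumeration at $L_1$ --- does not contaminate $(\ell_n)_{n\ne0}$. The size-biased tail $m\,e^{-2\lambdacrit m}$ of $\ell_0$ in part (b) is precisely the residue of that anchoring, and the delicate point is to confirm that this effect is confined to $\ell_0$.
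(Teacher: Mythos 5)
Your approach ultimately arrives at the same engine as the paper for part (a), namely the regeneration of the Markov chain $((\mathtt{T}_i,\omega(E^i)))_{i\in\Z}$ at open vertical edges: under $\Prmp^*$ an open vertical edge at level $i$ forces $\mathtt{T}_i=\mathtt{11}$, so conditionally on that edge being open and on everything to its left, a trap of length $m$ starts there with probability $\gamma(p)e^{-2\lambdacrit m}$, and normalising gives the law in (a). The cycle-decomposition preamble is a correct observation --- a trap piece cannot straddle a pre-regeneration level, since $\mathtt{T}_j=\mathtt{11}$ on its span whereas a pre-regeneration point has $\mathtt{T}=\mathtt{10}$ --- but, as you rightly flag yourself, it is insufficient on its own and in fact dispensable: the enumeration anchored at $L_1$ does not follow cycle boundaries and $L_0,L_1$ may both sit inside $\omega_0$, so i.i.d.\ bundles of trap lengths per cycle do not directly give i.i.d.\ individual lengths. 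The step that actually closes the gap is precisely the Markov-at-open-vertical-edges argument you then invoke (for $n\ge1$, conditionally on the configuration up to the entrance of $L_n$ and on a trap starting there, $\ell_n$ has the displayed law, hence $(\ell_n)_{n\ge1}$ is i.i.d.\ and independent of $(\ell_n)_{n\le0}$; reversibility from Lemma \ref{Lem:T_i,omega(E^i)} handles $n\le-1$), which is also the paper's whole argument for (a). For part (b), your decomposition by the entrance $a$ of $L_0$ --- the range $a\in\{-m,\dots,-1\}$, where $\{L_0\text{ at }a,\ \ell_0=m\}$ coincides with $\{[a,a+m)\text{ is a trap}\}$ and the $m$ summands each contribute at most $\gamma(p)e^{-2\lambdacrit m}$, versus $a\le -m-1$, where additionally no trap entrance may lie in $\{a+m+1,\dots,-1\}$, incurring a geometric penalty that sums to order $e^{-2\lambdacrit m}$ --- is a concrete and correct realisation of what the paper only describes as ``standard arguments'' around the size-biasing heuristic, and in that sense is more detailed than the paper's own proof.
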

\begin{proof}
Each trap begins at an open vertical edge.
By the strong Markov property, $(({\tt T}_i,\omega(E^i)))_{i \in \Z}$ starts afresh at every open vertical edge.
By \eqref{eq:trap probability given vertical edge},
the probability of having a trap of length $m$ following an open vertical edge is proportional to $e^{-2\lambdacrit m}$.
This implies assertion (a).

Assertion (b) is reminiscent of the fact that the distribution of the length of the cycle straddling the origin
in a two-sided renewal process is the size-biasing of the distribution of any other cycle.
This result is not directly applicable, but standard arguments
yield the estimate in (b).
\end{proof}

For later use, we derive an upper bound on the probability under the cycle-stationary percolation law
of the event that a certain piece of the ladder is part of a trap.

\begin{lemma}	\label{Lem:trap cover}
For $k,m \in \N_0$, $m > 0$, let $T'_{k:k+m}$ be the event that the piece $[k,k+m)$ is contained in a trap piece.
Then $\Prmp^{\circ}(T'_{k:k+m}) \leq e^{-2 \lambdacrit m}$.
\end{lemma}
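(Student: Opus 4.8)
The plan is to reduce the statement to the estimate for a single cycle under $\Prmp^*$, which is essentially Lemma \ref{Lem:trap cost}(a) combined with the trap-length tail. Recall that under $\Prmp^\circ$ the cycles $\omega_n$, $n \in \Z$, are i.i.d., each distributed as $\omega_1$ under $\Prmp^*$; in particular, shift-invariance of $\Prmp^\circ$ with respect to the cyclic structure means it suffices to bound $\Prmp^\circ(T'_{k:k+m})$ uniformly in $k$, and by a union over which cycle the piece $[k,k+m)$ falls into, it suffices to bound the probability that a \emph{given} piece of length $m$ lies inside a trap piece. The key observation is that if $[k,k+m)$ is contained in a trap piece $[a,b)$, then in particular all $m$ vertical edges $\langle(j,0),(j,1)\rangle$ for $j \in \{k+1,\dots,k+m\}$ are closed (by property (i) in the definition of a trap piece, only $\langle(a,0),(a,1)\rangle$ is open among the vertical edges in $[a,b+1)$, and $a \leq k$), and moreover the trap piece must have started at some $a \leq k$ and must extend to some $b \geq k+m-1$.

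First I would make the combinatorial decomposition precise: the event $T'_{k:k+m}$ is the disjoint union over $a \leq k$ of the events $\{[a,b) \text{ is a trap piece for some } b \geq k+m\} \cap \{\text{the trap starts at } a\}$, or more simply, $T'_{k:k+m} \subseteq \bigcup_{a \le k} \{\text{there is an open vertical edge at } a \text{ followed by a run of at least } k+m-a \text{ closed vertical edges forming a trap}\}$. Then I would invoke the renewal/Markov structure of $(({\tt T}_i,\omega(E^i)))_{i \in \Z}$ that starts afresh at each open vertical edge (as used in the proof of Lemma \ref{Lem:trap cost}), together with \eqref{eq:trap probability given vertical edge}: conditionally on an open vertical edge at position $a$, the probability that this edge is the left end of a trap piece of length exactly $\ell$ is $\gamma(p) e^{-2\lambdacrit \ell}$, so the probability that it is the left end of a trap piece of length $\ell \geq L$ is $\gamma(p)\sum_{\ell \geq L} e^{-2\lambdacrit\ell} = \gamma(p)\frac{e^{-2\lambdacrit L}}{1-e^{-2\lambdacrit}}$. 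Summing over $a \leq k$, writing $L = k+m-a$ so that $L$ ranges over $\{m, m+1, \dots\}$ as $a$ ranges over $\{k, k-1, \dots\}$, and bounding the (conditional) probabilities of an open vertical edge at each site by $1$, gives an upper bound of the form $\text{const}\cdot \sum_{L \geq m} e^{-2\lambdacrit L} = \text{const}\cdot\frac{e^{-2\lambdacrit m}}{1-e^{-2\lambdacrit}}$.

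The main obstacle is getting the constant down to exactly $1$ rather than merely "some constant times $e^{-2\lambdacrit m}$": the naive geometric-sum bound above loses a factor $\gamma(p)/(1-e^{-2\lambdacrit})$, and a further factor from summing over the starting point $a$. I expect the clean bound follows because of telescoping: if $[k,k+m)$ lies in a trap piece, there is a \emph{unique} such trap piece and a unique starting point $a$, so the events over different $a$ are genuinely disjoint, and one should track the probability more carefully. Concretely, for fixed $a \leq k$ the event "$[a,b)$ is a trap piece for some $b \geq k+m$" requires, among other things, the vertical edges at $a+1, \dots, k+m$ to be closed and the one at $a$ to be open; conditioning successively and using that each vertical edge is closed with probability close to $1$ but the relevant Markovian weights are tuned so that the trap-length law is \emph{exactly} geometric with parameter $e^{-2\lambdacrit}$, one should obtain $\Prmp^\circ(T'_{k:k+m}) \le \Prmp^\circ(\text{vertical edges at } k+1,\dots,k+m \text{ all closed and part of a trap run}) \le e^{-2\lambdacrit m}$ directly, without the lossy union bound. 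I would therefore phrase the argument as: condition on the leftmost open vertical edge at or to the left of $k$, and then the event forces a closed run of length $\geq m$ within the trap, which by the exact geometric law \eqref{eq:trap probability given vertical edge} has conditional probability $\leq e^{-2\lambdacrit m}$; averaging over the conditioning preserves the bound.
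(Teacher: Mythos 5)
Your starting observation is the right one, but the argument you build on it does not close, and the paper in fact takes a genuinely different route from the same starting point. You correctly note that $T'_{k:k+m}$ forces the vertical edges at $k+1,\ldots,k+m$ to be closed; it equally forces the horizontal edges $\langle(j-1,i),(j,i)\rangle$, $i\in\{0,1\}$, $j=k+1,\ldots,k+m$ to be open, and it forces ${\tt T}_k={\tt 11}$ (under $\Prmp^\circ$ there is a bi-infinite open path, so a trap piece must attach to the backbone on its left, making both $(k,0)$ and $(k,1)$ backwards communicating). Hence $T'_{k:k+m}\subseteq\{{\tt T}_k={\tt 11}\}\cap\bigcap_{j=1}^m B_{k+j}$, with $B_j$ exactly the local event ``both horizontals at $j$ open, vertical at $j$ closed'' that you describe. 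The paper then simply runs the Markov chain $(({\tt T}_i,\omega(E^i)))_i$ \emph{forward} from ${\tt T}_k={\tt 11}$: each step $B_{k+j}$ keeps the chain at ${\tt 11}$ and has one-step conditional probability at most $e^{-2\lambdacrit}$ (the same Axelson-Fisk--H\"aggstr\"om algebra that produces Lemma~\ref{Lem:trap probability given vertical edge}), so $\Prmp^*\big(\bigcap_{j=1}^m B_{k+j}\mid {\tt T}_k={\tt 11}\big)\leq e^{-2\lambdacrit m}$, and multiplying by $\Prmp^\circ({\tt T}_k={\tt 11})\leq 1$ gives the clean bound. No localisation of the trap's left endpoint is needed.

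Your decomposition over the trap start $a\leq k$ is the wrong pivot, and the proposed repair does not establish the constant $1$. The naive sum over $a$ produces, as you note, a prefactor of order $\gamma(p)/(1-e^{-2\lambdacrit})^2$, and there is no reason for this to be $\leq 1$: the normalisation $\gamma(p)\sum_{\ell\geq 1}e^{-2\lambdacrit\ell}\leq 1$ only gives $\gamma(p)/(1-e^{-2\lambdacrit})\leq e^{2\lambdacrit}$. The suggested refinement --- condition on the rightmost open vertical edge at position $a\leq k$ and claim the conditional probability is then $\leq e^{-2\lambdacrit m}$ --- is not justified: even for $a=k$, the conditional probability given ``vertical at $k$ open'' that a trap piece of length $\geq m$ starts there is $\gamma(p)e^{-2\lambdacrit m}/(1-e^{-2\lambdacrit})$, which need not be $\leq e^{-2\lambdacrit m}$, and for $a<k$ the conditioning event is different from ``open vertical at $a$'' so \eqref{eq:trap probability given vertical edge} does not apply directly. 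Your disjointness observation is correct, but once you note that each event $\{[a,b)\text{ is a trap piece with }b\geq k+m\}$, $a\leq k$, is contained in $\{{\tt T}_k={\tt 11}\}\cap\bigcap_{j=1}^m B_{k+j}$, the disjoint union telescopes back to exactly the inclusion the paper starts from --- i.e.\ the $a$-decomposition adds nothing, and reintroducing the exact trap-length law with its proportionality constant $\gamma(p)$ only makes the bookkeeping worse.
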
  
\begin{proof}
Notice that $T'_{k:k+m} \subseteq \{{\tt T}_k = {\tt 11}\} \cap \bigcap_{j=1}^m B_{k+j}$
where $B_j$ is the event that $\omega(\langle(j-1,i),(j,i)\rangle) = 1$ for $i=0,1$ and $\omega(\langle(j,0),(j,1)\rangle)=0$, $j \in \Z$.
Hence, arguing as in \cite[pp.\;3403--3404]{Axelson-Fisk+H"aggstr"om:2009b}, we obtain
\begin{equation*}	\textstyle
\Prmp^\circ(T'_{k:k+m})
\leq \Prmp^\circ({\tt T}_k = {\tt 11}) \Prmp^*\big(\bigcap_{j=1}^m B_{k+j} \,\big|\, {\tt T}_k = {\tt 11} \big)
\leq e^{-2 \lambdacrit m}.
\end{equation*}
\end{proof}

\section{Regeneration arguments}		\label{sec:biased random walk}

Throughout this section, we fix a bias $\lambda > 0$.
Hence, under $\Prob_{\lambda}$, $X_n \to \infty$ a.\,s.~as $n \to \infty$.
To deduce a central limit theorem or a Marcinkiewicz-Zygmund-type strong law for $X$,
information is needed about the time the walk spends in initial pieces of the percolation cluster.
To investigate these times, we introduce some additional terminology.

\subsection{The backbone.}

We call the subgraph $\mathcal{B}$ of the infinite cluster induced by all forwards communicating states the \emph{backbone}.
The backbone is obtained from $\Cluster_{\infty}$ by deleting the dead ends of all trap pieces.
Clearly, $\mathcal{B}$ is connected and contains all pre-regeneration points.

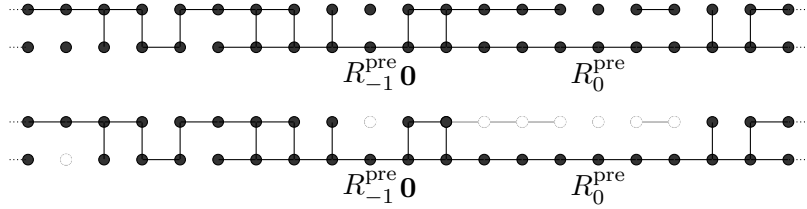
\begin{figure}[h]
\begin{center}
\pgfmathsetseed{216642}
\begin{tikzpicture}[thin, scale=0.5,-,
                   shorten >=0pt+0.5*\pgflinewidth,
                   shorten <=0pt+0.5*\pgflinewidth,
                   every node/.style={circle,
                                      draw,
                                      fill          = black!80,
                                      inner sep     = 0pt,
                                      minimum width =4 pt}]

\def \p {0.5}

\foreach \x in {-10,-9,-8,-7,-6,-5,-4,-3,-2,-1,0,1,2,3,4,5,6,7,8,9,10}
\foreach \y in {0,1}
    \node at (\x,\y) {};

\foreach \x in {-10,-9,-8,-7,-6,-5,-4,-3,-2,-1,0,1,2,3,4,5,6,7,8,9}{
\foreach \y in {1,0}{
    \pgfmathparse{rnd}
    \let\dummynum=\pgfmathresult
    \ifdim\pgfmathresult pt < \p pt\relax \draw (\x,\y) -- (\x+1,\y);\fi
  }}
  
\foreach \x in {-10,-9,-8,-7,-6,-5,-4,-3,-2,-1,0,1,2,3,4,5,6,7,8,9,10}{
\foreach \y in {0}{
    \pgfmathparse{rnd}
    \let\dummynum=\pgfmathresult
    \ifdim\pgfmathresult pt < \p pt\relax \draw (\x,\y) -- (\x,\y+1);\fi
  }}

	\draw	(-9,1) -- (-8,1);
	\draw	(-7,0) -- (-6,0);
	\draw	(-1,0) -- (0,0);
	\draw	(3,0) -- (4,0);
	\draw	(8,0) -- (9,0);

	\node[draw=none,fill=none] at (-1,-0.75) {$R^{\mathrm{pre}}_{-1}$};
	\node[draw=none,fill=none] at (0,-0.75) {$\mathbf{0}$};
	\node[draw=none,fill=none] at (5,-0.75) {$R^{\mathrm{pre}}_{0}$};

	\draw[densely dotted] (-10.5,0) -- (-10,0);
	\draw[densely dotted] (-10.5,1) -- (-10,1);
	\draw[densely dotted] (10,0) -- (10.5,0);
	\draw[densely dotted] (10,1) -- (10.5,1);
\end{tikzpicture}\smallskip

\pgfmathsetseed{216642}
\begin{tikzpicture}[thin, scale=0.5,-,
                   shorten >=0pt+0.5*\pgflinewidth,
                   shorten <=0pt+0.5*\pgflinewidth,
                   every node/.style={circle,
                                      draw,
                                      fill          = black!80,
                                      inner sep     = 0pt,
                                      minimum width =4 pt}]

\def \p {0.5}

\foreach \x in {-10,-9,-8,-7,-6,-5,-4,-3,-2,-1,0,1,2,3,4,5,6,7,8,9,10}
\foreach \y in {0,1}
    \node at (\x,\y) {};

\foreach \x in {-10,-9,-8,-7,-6,-5,-4,-3,-2,-1,0,1,2,3,4,5,6,7,8,9}{
\foreach \y in {1,0}{
    \pgfmathparse{rnd}
    \let\dummynum=\pgfmathresult
    \ifdim\pgfmathresult pt < \p pt\relax \draw (\x,\y) -- (\x+1,\y);\fi
  }}
  
\foreach \x in {-10,-9,-8,-7,-6,-5,-4,-3,-2,-1,0,1,2,3,4,5,6,7,8,9,10}{
\foreach \y in {0}{
    \pgfmathparse{rnd}
    \let\dummynum=\pgfmathresult
    \ifdim\pgfmathresult pt < \p pt\relax \draw (\x,\y) -- (\x,\y+1);\fi
  }}

	\draw	(-9,1) -- (-8,1);
	\draw	(-7,0) -- (-6,0);
	\draw	(-1,0) -- (0,0);
	\draw	(3,0) -- (4,0);
	\draw	(8,0) -- (9,0);
	
	\node[draw=none,fill=none] at (-1,-0.75) {$R^{\mathrm{pre}}_{-1}$};
	\node[draw=none,fill=none] at (0,-0.75) {$\mathbf{0}$};
	\node[draw=none,fill=none] at (5,-0.75) {$R^{\mathrm{pre}}_{0}$};

	\draw[densely dotted] (-10.5,0) -- (-10,0);
	\draw[densely dotted] (-10.5,1) -- (-10,1);
	\draw[densely dotted] (10,0) -- (10.5,0);
	\draw[densely dotted] (10,1) -- (10.5,1);
	
	\draw[white]	(1,1) -- (4,1);
	\draw[white]	(6,1) -- (7,1);
	\node at (1,1) {};
	\node[white] at (-9,0) {};

\foreach \x in {-1,2,3,4,5,6,7}
\foreach \y in {1}
    \node[white] at (\x,\y) {};
\end{tikzpicture}
\end{center}
\caption{The original percolation configuration and the backbone}	\label{Fig:backbone}
\end{figure}

\noindent
Let $(Z_0,Z_1,\ldots)$ be the agile walk corresponding to the walk $(Y_0,Y_1,\ldots)$,
that is, the walk obtained from $(Y_0,Y_1,\ldots)$ by removing all times at which the walk stays put.
Further, let $(Z_0^{\mathcal{B}},Z_1^{\mathcal{B}},\ldots)$ be the walk that is obtained from $(Z_0,Z_1,\ldots)$
by removing all steps in which the walk moves to or from a point outside $\mathcal{B}$.
By the strong Markov property, $(Z_n)_{n \geq 0}$ and $(Z_n^{\mathcal{B}})_{n \geq 0}$
are Markov chains on $\Cluster$ and $\mathcal{B}$, respectively,
under $P_{\omega,\lambda}$ for every $\omega \in \Omega_{\mathbf{0}}$ with $\mathbf{0} \in \mathcal{B}$.

\subsection{Regeneration points and times.}

Let $\mathcal{R}^{\mathrm{pre}} \defeq \{R_n^{\mathrm{pre}}: n \in \N_0\}$
denote the (random) set of all pre-regeneration points 
strictly to the right of $\x$-coordinate $0$.
A member of $\mathcal{R}^{\mathrm{pre}}$ is called a \emph{regeneration point}
if it is visited by the random walk $(Y_n)_{n \geq 0}$ precisely once.
The set of regeneration points will be denoted by $\mathcal{R} \subseteq \mathcal{R}^{\mathrm{pre}}$.
Let $R_0 \defeq \mathbf{0}$ and $R_1, R_2, \ldots$ be an enumeration of the regeneration points with increasing $\x$-coordinates.
Define $\tau_0 \defeq 0$ and, for $n \in \N$,  and let $\tau_n$ be the unique time at which $Y$ visits $R_n$.
Formally, the $\tau_n$ and \nina{$R_n$}, $n \in \N$ are given by:
\begin{equation}
\tau_n	\defeq	\inf\{k > \tau_{n-1}: Y_k \in \mathcal{R}^{\mathrm{pre}}, \, Y_j \not = Y_k \text{ for all } j \not = k \},
\quad	R_n	\defeq	Y_{\tau_n}.	\label{eq:tau_n}
\end{equation}
Since $\lambda>0$, the random walk is transient to the right.
This ensures that the $\tau_n$, $n \in \N_0$ are almost surely finite and form an increasing sequence.
The $\tau_{n}$, $n \in \N$ are no stopping times.
However, there is an analogue of the strong Markov property.
In order to formulate it, let $\rho_n \defeq \x(R_n)$  and denote by
\begin{equation*}
\H_n	~\defeq~	\sigma(\tau_1,\ldots, \tau_n, Y_0, \ldots, Y_{\tau_n}, \, \omega(\langle v,w\rangle):\;\x(v) < \rho_n, \, \x(w) \leq \rho_n)
\end{equation*}
the $\sigma$-field of the walk up to time $\tau_n$ and the environment up to $\rho_n$.
Further, for $e \in E$, let $p_e: \Omega \to \{0,1\}$, $\omega \mapsto \omega(e)$, and
\begin{equation*}
\F_{\geq}	~\defeq~	\sigma(p_{\langle v,w\rangle}:\, \x(v), \x(w) \geq 0).
\end{equation*}

\begin{lemma}	\label{Lem:iid regeneration times and points}	
For every $n \in \N$ and all measurable sets $F\! \in\! \F_{\geq}$, $G\! \in \! \G$,
we have
\begin{align}
\Prob_{\lambda} & ((\theta^{R_n} Y_{\tau_n+k})_{k \geq 0} \in G,\,\theta^{R_n} \omega \in F \mid \H_{n})	\notag	\\
&= \Prob^{\circ}_{\lambda}( (Y_{k})_{k \geq 0} \in G,\,\omega \in F \mid Y_k \neq \mathbf{0} \text{ for all } k \geq 1)	\label{eq:iid regeneration times and points}
\end{align}
where $\Prob^{\circ}_{\lambda} = \Prmp^{\circ} \times P_{\omega,\lambda}$.
In particular, the $(\tau_{n+1}-\tau_n, \rho_{n+1}-\rho_{n})$, $n \in \N$ are i.i.d.~pairs of random variables under $\Prob_{\lambda}$.
\end{lemma}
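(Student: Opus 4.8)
\emph{Proof strategy.}
This is a renewal-type statement, and the plan is to deduce it from the i.i.d.\ cycle decomposition of Lemma~\ref{Lem:pre-regeneration point decomposition} together with the Markov property of the walk. The geometric input is that every pre-regeneration point $(i,0)$ is a cut vertex of $\Cluster$: since $(i,1)$ is isolated, $(i,0)$ is the only vertex of $\Cluster$ in column $i$, and every open path from a vertex with $\x$-coordinate $<i$ to a vertex with $\x$-coordinate $>i$ passes through it. Since $\lambda>0$ forces $X_n\to\infty$ a.s., I would first record the consequences: the walk visits $R^{\mathrm{pre}}_0,R^{\mathrm{pre}}_1,\dots$ in increasing order of their $\x$-coordinates, each at an a.s.\ finite first-hitting time $T_j$; for $\ell<T_j$ one has $\x(Y_\ell)<\x(R^{\mathrm{pre}}_j)$; and $R^{\mathrm{pre}}_j$ is a regeneration point if and only if $\x(Y_\ell)>\x(R^{\mathrm{pre}}_j)$ for all $\ell>T_j$. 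I will also use that there are a.s.\ infinitely many regeneration points (so that $R_n,\tau_n$ are well defined for all $n$), which follows from transience and the positivity of $\Prob^{\circ}_\lambda(Y_k\neq\mathbf{0}\ \forall k\ge1)$.

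It suffices to prove that for all $H\in\H_n$, $F\in\F_{\geq}$ and $G\in\G$,
\[
\Prob_\lambda\bigl(H\cap\{(\theta^{R_n}Y_{\tau_n+k})_{k\ge0}\in G\}\cap\{\theta^{R_n}\omega\in F\}\bigr)
=\Prob_\lambda(H)\,\Prob^{\circ}_\lambda\bigl(Y_\cdot\in G,\ \omega\in F\ \big|\ Y_k\neq\mathbf{0}\ \forall k\ge1\bigr),
\]
and I would do so by decomposing the left-hand side over the countably many values of $(R_n,\tau_n)$. The key point, which takes the place of the strong Markov property (the $\tau_n$ are not stopping times), is that \emph{on the event that $R^{\mathrm{pre}}_j$ is a regeneration point}, for each $i<j$ the point $R^{\mathrm{pre}}_i$ is a regeneration point if and only if the walk does not return to column $\x(R^{\mathrm{pre}}_i)$ during $[T_i,T_j]$ --- after $T_j$ this is automatic since then $\x(Y_\ell)>\x(R^{\mathrm{pre}}_j)>\x(R^{\mathrm{pre}}_i)$. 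Hence, on $\{R_n=v,\tau_n=m\}$ (so that $v=R^{\mathrm{pre}}_j$ and $m=T_j$ for the appropriate $j$), the event $H\cap\{R_n=v,\tau_n=m\}$ factorizes as $\tilde H_{v,m}\cap\mathcal N_{v,m}$, where $\mathcal N_{v,m}\defeq\{\x(Y_{m+k})>\x(v)\text{ for all }k\ge1\}$ depends only on the post-$m$ trajectory while $\tilde H_{v,m}$ depends only on $Y_0,\dots,Y_m$ and on the $\omega$-part of $\H_n$; the second statement uses exactly the observation above, which turns the whole regeneration bookkeeping up to $\tau_n$ into a function of $Y_0,\dots,Y_{\tau_n}$ and the pre-regeneration-point locations.

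Given this factorization, I would condition on $\omega$ and use the ordinary Markov property of the walk at time $m$. On $\mathcal N_{v,m}$ the walk after time $m$ never re-enters column $\x(v)$, hence uses only edges with both endpoints of $\x$-coordinate $\ge\x(v)$, i.e.\ the edges underlying $\theta^{v}\F_{\geq}$; since the unique vertical edge of column $\x(v)$ is deterministically closed (as $v$ is a pre-regeneration point), the edges underlying $\theta^{v}\F_{\geq}$ and the edges underlying the $\omega$-part of $\H_n$ partition the edge set $E$. By Lemma~\ref{Lem:pre-regeneration point decomposition}, conditionally on $\omega$ in the columns $\le\x(v)$ --- equivalently, on the cycles $\omega_0,\dots,\omega_j$ --- the cycles $\omega_{j+1},\omega_{j+2},\dots$ are i.i.d.\ copies of a generic cycle, so $\theta^{v}\omega$ restricted to $\F_{\geq}$ has precisely the law of $\omega$ restricted to $\F_{\geq}$ under $\Prmp^{\circ}$, independently of everything to the left (here one also uses that the two column-$\x(v)$ edges entering the condition ``$v$ is a pre-regeneration point'' from the right are closed $\Prmp^{\circ}$-a.s., as under $\Prmp^{\circ}$ the origin is a.s.\ a pre-regeneration point). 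Combining these facts gives, for each $v,m$,
\[
\Prob_\lambda\bigl(\tilde H_{v,m}\cap\mathcal N_{v,m}\cap\{(\theta^{v}Y_{m+k})_k\in G\}\cap\{\theta^{v}\omega\in F\}\bigr)
=\Prob_\lambda(\tilde H_{v,m})\,\Prob^{\circ}_\lambda\bigl(Y_\cdot\in G,\ \omega\in F,\ Y_k\neq\mathbf{0}\ \forall k\ge1\bigr),
\]
with the second factor independent of $v,m$. Summing over $v,m$ and then specializing to $F=\Omega$, $G=V^{\N_0}$ to evaluate $\sum_{v,m}\Prob_\lambda(\tilde H_{v,m})=\Prob_\lambda(H)/\Prob^{\circ}_\lambda(Y_k\neq\mathbf{0}\ \forall k\ge1)$, one obtains the displayed identity, hence \eqref{eq:iid regeneration times and points}. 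The last assertion then follows at once: $(\tau_{n+1}-\tau_n,\rho_{n+1}-\rho_n)$ is a fixed measurable functional of $\bigl((\theta^{R_n}Y_{\tau_n+k})_k,\theta^{R_n}\omega\bigr)$ whose conditional law given $\H_n$ is, by \eqref{eq:iid regeneration times and points}, the $n$-independent measure on its right-hand side, and $\H_n$ contains $\sigma\bigl((\tau_{k+1}-\tau_k,\rho_{k+1}-\rho_k):1\le k<n\bigr)$.

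The genuine obstacle is the one flagged above: the $\tau_n$ are not stopping times, and everything hinges on the combinatorial observation that, once one conditions on $R^{\mathrm{pre}}_j$ being a regeneration point, the regeneration status of all earlier pre-regeneration points is already decided by the walk up to time $\tau_n$. The remaining work --- verifying measurability of $\tilde H_{v,m}$, the disjointness of the ``past'' and ``future'' edge sets, and the identification of the conditional environment law with $\Prmp^{\circ}$ --- is routine bookkeeping along the lines of \cite{Axelson-Fisk+H"aggstr"om:2009b}.
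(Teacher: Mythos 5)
Your sketch is correct and implements the standard regeneration argument that the paper itself only cites (pointing to Proposition~1.3 of Sznitman--Zerner, 1999) without giving details. The cut-vertex property of pre-regeneration points and the factorization of $\{R_n=v,\tau_n=m\}$ into a past-measurable part $\tilde H_{v,m}$ and the future escape event $\mathcal N_{v,m}$ are exactly the analogues here of the fresh-time/escape-event bookkeeping of that argument, and the rest (ordinary Markov property at time $m$, i.i.d.\ cycle law from Lemma~\ref{Lem:pre-regeneration point decomposition}, summation over $(v,m)$) is the routine renewal calculation.
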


The proof is similar to the proof of Proposition 1.3 in \cite{Sznitman+Zerner:1999},
we refrain from providing details here.
The key result concerning the regeneration times is the following lemma, which is proved in Section \ref{sec:regeneration} below.

\begin{lemma}	\label{Lem:moments of rho and tau}
The following assertions hold:
\begin{itemize}
	\item[(a)]
		For every $\lambda > 0$, there exists some $\varepsilon > 0$ such that $\E_{\lambda}[e^{\varepsilon (\rho_2-\rho_1)}] < \infty$.
	\item[(b)]
		Let $\kappa \geq 1$.
		Then $\E_{\lambda}[(\tau_2-\tau_1)^{\kappa}] < \infty$ iff $\kappa < \frac{\lambdacrit}{\lambda}$.
\end{itemize}
\end{lemma}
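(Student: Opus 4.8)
The plan is to treat the two assertions separately, as they involve different mechanisms. For part (a), the horizontal displacement $\rho_2-\rho_1$ between consecutive regeneration points: by Lemma \ref{Lem:iid regeneration times and points} it suffices to bound the exponential moment of the $\x$-coordinate of the first regeneration point under $\Prob^\circ_\lambda$ (conditioned on never returning to $\mathbf{0}$). First I would note that $\rho_2-\rho_1$ is dominated by the distance one must travel to find a pre-regeneration point that the walk visits exactly once. Two independent sources of geometric-type tails must be controlled: (i) the gaps $\x(R^{\mathrm{pre}}_{n})-\x(R^{\mathrm{pre}}_{n-1})$ between pre-regeneration points, which have exponential tails by Lemma \ref{Lem:pre-regeneration point decomposition}(b); and (ii) the event that, having arrived at a pre-regeneration point, the walk never backtracks across it again. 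Since $\lambda>0$, each time the walk is at a pre-regeneration point it has a probability bounded below (uniformly, using the cycle-stationary structure and the fact that $p_{\omega,\lambda}$ pushes to the right) of never returning to the left of it; a standard geometric-trials argument over successive pre-regeneration points then shows the number of pre-regeneration points inspected before success has an exponential tail. Combining (i) and (ii) via the fact that a geometric sum of exponential-tailed i.i.d.\ increments is again exponential-tailed gives $\E_\lambda[e^{\varepsilon(\rho_2-\rho_1)}]<\infty$ for $\varepsilon$ small.

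For part (b), the temporal increment $\tau_2-\tau_1$: the walk traverses a cycle of backbone length $\rho_2-\rho_1$ (which has exponential tails by part (a), so contributes no obstruction to any polynomial moment), but along the way it may enter the dead ends of the trap pieces contained in that cycle, and the time spent in a trap of length $\ell$ is the dominant contribution. Inside a trap of length $\ell$, the walk performs a biased nearest-neighbour walk on a line segment with the bias pointing \emph{into} the trap (away from the backbone), so the expected exit time grows like $e^{2\lambda \ell}$, and more precisely the $\kappa$th moment of the time spent in a single trap visit of length $\ell$ is of order $e^{2\kappa\lambda \ell}$. By Lemma \ref{Lem:trap cost}(a), a typical trap in the relevant cycle has length $m$ with probability $\asymp e^{-2\lambdacrit m}$. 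Hence the $\kappa$th moment of the trapping time is, up to the number of traps in the cycle (which has exponential tails), comparable to $\sum_m e^{2\kappa\lambda m} e^{-2\lambdacrit m}$, which converges precisely when $\kappa\lambda < \lambdacrit$, i.e.\ $\kappa < \lambdacrit/\lambda$, and diverges otherwise. To make this rigorous I would: first give the upper bound by decomposing $\tau_2-\tau_1$ into the backbone-traversal time plus a sum over trap pieces in the cycle of the total time spent in that trap, using Minkowski's inequality in $L^\kappa$ together with the geometric bound on the number of traps and Lemma \ref{Lem:trap cover}; and second, for the matching lower bound, restrict to the event that the cycle contains (at least) one trap of length $m$ and that the walk enters it, and lower-bound the conditional $\kappa$th moment of the sojourn time there by $c\,e^{2\kappa\lambda m}$, then sum against $\Prmp^\circ(\ell=m)\asymp e^{-2\lambdacrit m}$ to see the series diverges for $\kappa\geq \lambdacrit/\lambda$.

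The main obstacle is the sharp two-sided control of the $\kappa$th moment of the sojourn time of the biased walk in a single dead end of length $\ell$, uniformly in $\ell$. The upper bound can be obtained by comparing to a biased walk on $\N$ and using explicit hitting-time estimates (or Kac-type formulas / electric-network computations), showing the exit time is stochastically dominated by a sum of $O(\ell)$ independent geometric-type excursion times, each with mean $\lesssim e^{2\lambda}$, whose $\kappa$th moment the walk accumulates to $\asymp e^{2\kappa\lambda\ell}$. The lower bound requires showing that with probability bounded away from $0$ the walk, upon entering the trap, goes essentially to the tip before leaving — again a standard gambler's-ruin estimate for the biased walk — so that the sojourn time is at least a constant multiple of $e^{2\lambda\ell}$ with positive probability. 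Once these single-trap estimates are in place, the assembly into the statement for $\tau_2-\tau_1$ is routine, using the i.i.d.\ cycle decomposition and the exponential tail of the number of traps per cycle.
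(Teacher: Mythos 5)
Part (b) of your plan is essentially the paper's argument: decompose $\tau_2-\tau_1$ into backbone time plus time in traps, bound the $\kappa$th moment of the sojourn time in a single trap of length $m$ by $\asymp e^{2\kappa\lambda m}$ (a gambler's-ruin / nearest-neighbour hitting-time computation on a segment), pair this with the trap-length tail $\asymp e^{-2\lambdacrit m}$, and sum. The assembly differs only cosmetically (the paper uses H\"older rather than Minkowski, and packages the lower bound as a tail estimate $\Prob_\lambda(\tau_2-\tau_1\geq k)\geq c k^{-\lambdacrit/\lambda}$), but the structure and the mechanism are the same. Your single-trap estimates and the convergence criterion $\kappa\lambda<\lambdacrit$ are exactly right.

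Part (a), however, has a genuine gap. You propose to write $\rho_2-\rho_1$ as a geometric number of ``trials'' at successive pre-regeneration points, with the gaps between pre-regeneration points contributing exponential-tailed i.i.d.\ increments. The problem is that a failed regeneration trial does not merely advance the walk to the \emph{next} pre-regeneration point: during a failed attempt the walk may wander arbitrarily far to the right before returning, and every pre-regeneration point visited in that excursion is disqualified (it has been visited at least twice). So the horizontal increment between one trial and the next is not a single pre-regeneration gap; it is \emph{the record advance during the failed attempt} plus the distance to the next pre-regeneration point beyond that record. Your argument never bounds this overshoot, and without a bound you cannot conclude that the increments between trials have exponential tails. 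The paper controls the overshoot by splitting into two cases: either the walk overshoots on the backbone and returns, whose probability decays like $e^{-2\lambda m}$ by an effective-resistance estimate (Raleigh monotonicity for the upper bound, Nash--Williams for the lower bound, in the network with conductances $e^{\lambda(\x(u)+\x(v))}$); or the overshoot region is covered by a trap, whose probability decays like $e^{-2\lambdacrit m}$. Some quantitative control of this kind is indispensable, and it is the only genuinely nontrivial step in proving (a); the remainder of your sketch (escape probability bounded below uniformly in the environment, geometric trials, exponential tails for pre-regeneration gaps, closure of exponential tails under geometric compounding) is correct.
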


\subsection{The Marcinkiewicz-Zygmund-type strong law.}

We now give a proof of Theorem \ref{Thm:Marcinkiewicz-Zygmund}
based on Lemmas \ref{Lem:iid regeneration times and points} and \ref{Lem:moments of rho and tau}.
For the reader's convenience, we restate the result here in a slightly extended version.

\begin{proposition}	\label{Prop:SLLN&MZlaw}
Let $p \in (0,1)$.
\begin{itemize}
	\item[(a)]
		If $\lambda > 0$, then
		\begin{equation}	\label{eq:velocity}	\textstyle
		\frac{X_n}{n} \to \frac{\E_{\lambda}[\rho_2-\rho_1]}{\E_{\lambda}[\tau_2-\tau_1]}	~\eqdef~	\vel(\lambda)
		\quad	\Prob_{\lambda}\text{-a.\,s.\ as } n \to \infty.
		\end{equation}
		In particular, $\vel(\lambda) > 0$ iff $\lambda<\lambdacrit$ and $\vel(\lambda) = 0$ iff $\lambda \geq \lambdacrit$.
	\item[(b)]	
		If $\lambda \in (0,\lambdacrit)$ and $1 < r < \frac{\lambdacrit}{\lambda} \wedge 2$, then
		\begin{equation*}	\tag{\ref{eq:Marcinkiewicz-Zygmund}}	\textstyle
		\frac{X_n-n\vel(\lambda)}{n^{1/r}}	\to 0
		\quad	\text{and}	\quad
		n^{-1/r} M^{\lambda}_n	\to 0
		\end{equation*}
		where the convergence in \eqref{eq:Marcinkiewicz-Zygmund} holds $\Prob_{\lambda}$-a.\,s.\ and in $L^r(\Prob_{\lambda})$.
\end{itemize}
\end{proposition}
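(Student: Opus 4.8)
The plan is to decompose the walk along regeneration times and reduce both statements to classical limit theorems for i.i.d.\ sequences, using the moment bounds from Lemma \ref{Lem:moments of rho and tau}. For part (a), I would first record that by Lemma \ref{Lem:iid regeneration times and points} the increments $(\tau_{n+1}-\tau_n, \rho_{n+1}-\rho_n)$, $n \in \N$, are i.i.d., and by Lemma \ref{Lem:moments of rho and tau}(a) both $\E_\lambda[\rho_2-\rho_1]$ and $\E_\lambda[\tau_2-\tau_1]$ are finite (the latter is always finite since $\tau_2-\tau_1 \leq C(\rho_2-\rho_1)$ is false in general, but $\E_\lambda[\tau_2-\tau_1]<\infty$ follows from Lemma \ref{Lem:moments of rho and tau}(b) with $\kappa=1$ as long as $\lambda<\lambdacrit$, and for $\lambda \geq \lambdacrit$ one uses $\E_\lambda[\tau_2-\tau_1]=\infty$, giving $\vel(\lambda)=0$). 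For $\lambda<\lambdacrit$, let $k(n) \defeq \sup\{k \geq 0: \tau_k \leq n\}$ be the number of completed regeneration blocks by time $n$. By the strong law of large numbers, $\tau_k/k \to \E_\lambda[\tau_2-\tau_1]$ and $\rho_k/k \to \E_\lambda[\rho_2-\rho_1]$ a.s., hence $k(n)/n \to 1/\E_\lambda[\tau_2-\tau_1]$ and $\rho_{k(n)}/n \to \E_\lambda[\rho_2-\rho_1]/\E_\lambda[\tau_2-\tau_1]$. Since $X_n$ is sandwiched between $\rho_{k(n)}$ and $\rho_{k(n)+1}$ up to the fluctuation of $X$ within a single block (a block fluctuation of order $\rho_{k(n)+1}-\rho_{k(n)} = o(n)$ a.s.), we get $X_n/n \to \vel(\lambda)$. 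The identification of the sign of $\vel(\lambda)$ is then exactly the statement of Lemma \ref{Lem:moments of rho and tau}(b) at $\kappa=1$.

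For part (b), fix $\lambda \in (0,\lambdacrit)$ and $1<r<\tfrac{\lambdacrit}{\lambda}\wedge 2$. The key point is that $\E_\lambda[(\tau_2-\tau_1)^r]<\infty$ by Lemma \ref{Lem:moments of rho and tau}(b) (since $r<\lambdacrit/\lambda$) and $\E_\lambda[(\rho_2-\rho_1)^r]<\infty$ by Lemma \ref{Lem:moments of rho and tau}(a). The Marcinkiewicz--Zygmund strong law for i.i.d.\ sequences with finite $r$-th moment, $1 \leq r < 2$, then gives
\begin{equation*}
\frac{(\tau_k - k\,\E_\lambda[\tau_2-\tau_1]) }{k^{1/r}} \to 0,
\qquad
\frac{(\rho_k - k\,\E_\lambda[\rho_2-\rho_1])}{k^{1/r}} \to 0
\end{equation*}
both $\Prob_\lambda$-a.s.\ and in $L^r$. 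Writing $X_n - n\vel(\lambda) = (X_n - \rho_{k(n)}) + (\rho_{k(n)} - k(n)\,\E_\lambda[\rho_2-\rho_1]) + \E_\lambda[\rho_2-\rho_1](k(n) - n/\E_\lambda[\tau_2-\tau_1])$ and estimating $k(n) - n/\E_\lambda[\tau_2-\tau_1]$ via the Marcinkiewicz--Zygmund law for $\tau_k$ together with $k(n)/n \to 1/\E_\lambda[\tau_2-\tau_1]$ (so $k(n)^{1/r} \asymp n^{1/r}$), each of the three terms is $o(n^{1/r})$: the middle and third by the displayed convergences, and the first because $|X_n - \rho_{k(n)}| \leq \rho_{k(n)+1}-\rho_{k(n)}$, which is $o(n^{1/r})$ a.s.\ by Borel--Cantelli applied to the tail bound $\E_\lambda[(\rho_2-\rho_1)^r]<\infty$ (or simply $\max_{k \leq Cn}(\rho_{k+1}-\rho_k) = o(n^{1/r})$). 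This gives the first convergence in \eqref{eq:Marcinkiewicz-Zygmund}. The $L^r$ statement follows from uniform integrability of $\{|X_n - n\vel(\lambda)|^r/n\}_{n}$, which in turn follows from the $L^r$-Marcinkiewicz--Zygmund bounds and \eqref{eq:supsecondmoment}-type control; alternatively one proves the $L^r$ convergence directly by bounding $\E_\lambda[|X_n-n\vel(\lambda)|^r]$ through Burkholder-type inequalities applied to the i.i.d.\ block decomposition.

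For the martingale $M^\lambda_n$, I would use the same block decomposition: by the strong Markov property (Lemma \ref{Lem:iid regeneration times and points}), the increments $M^\lambda_{\tau_{n+1}} - M^\lambda_{\tau_n}$ are i.i.d.\ and centered under $\Prob^\circ_\lambda$ (each being a sum of $\tau_{n+1}-\tau_n$ bounded martingale increments $\nu_{\omega,\lambda}(Y_{k-1},Y_k)$, which are uniformly bounded since there are only finitely many one-step transitions). Hence $|M^\lambda_{\tau_{n+1}} - M^\lambda_{\tau_n}| \leq C(\tau_{n+1}-\tau_n)$, so these increments inherit a finite $r$-th moment from $\E_\lambda[(\tau_2-\tau_1)^r]<\infty$; the Marcinkiewicz--Zygmund strong law then gives $M^\lambda_{\tau_{k}}/k^{1/r} \to 0$ a.s.\ and in $L^r$, and the passage from $\tau_{k(n)}$ to $n$ uses $|M^\lambda_n - M^\lambda_{\tau_{k(n)}}| \leq C(\tau_{k(n)+1}-\tau_{k(n)}) = o(n^{1/r})$ as before, together with $k(n)^{1/r} \asymp n^{1/r}$. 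The main obstacle I anticipate is the bookkeeping around the ``boundary'' block straddling time $n$ and around block index $0$ (whose law under $\Prob_\lambda$ differs from the i.i.d.\ blocks), and ensuring the within-block fluctuations of both $X$ and $M^\lambda$ are genuinely negligible at the $n^{1/r}$ scale; this is handled by the exponential tail in Lemma \ref{Lem:moments of rho and tau}(a) for $\rho_2-\rho_1$, which controls $\tau_2-\tau_1$ only polynomially but still suffices for an $o(n^{1/r})$ bound on the single straddling block via a Borel--Cantelli argument at the level of regeneration indices.
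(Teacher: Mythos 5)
Your proposal is correct and follows essentially the same route as the paper: decompose $X_n$ and $M^{\lambda}_n$ along regeneration times, apply the Marcinkiewicz--Zygmund strong law to the i.i.d.\ block increments $(\rho_{k+1}-\rho_k,\tau_{k+1}-\tau_k)$ and $M^{\lambda}_{\tau_{k+1}}-M^{\lambda}_{\tau_k}$, and dispose of the block straddling time $n$ by a Borel--Cantelli/max estimate. One small caveat: \eqref{eq:supsecondmoment} is a second-moment bound valid only for $\lambda<\lambdacrit/2$ and so cannot serve as the uniform-integrability input across all of $(0,\lambdacrit)$; the paper instead obtains the $L^r$-convergence from the randomly-stopped-sum machinery in Gut's monograph (preservation of uniform integrability under stopping times and the $L^r$-law for stopped random walks), which is the cleaner and fully general version of the "\eqref{eq:supsecondmoment}-type control or Burkholder" alternative you gesture at.
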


Part (a) of this proposition implies Proposition \ref{Prop:SLLN}, part (b) implies Theorem \ref{Thm:Marcinkiewicz-Zygmund}.
A different formula for $\vel(\lambda)$ was given in \cite[p.\;3412]{Axelson-Fisk+H"aggstr"om:2009b}.

\begin{proof}
Let $\lambda > 0$. Further, let $r \in (1,\frac{\lambdacrit}{\lambda} \wedge 2)$ if $\lambda < \lambdacrit$, and $r=1$, otherwise.
By Lemmas \ref{Lem:iid regeneration times and points} and \ref{Lem:moments of rho and tau},
$(\rho_{n+1}-\rho_n)_{n \in \N}$ and $(\tau_{n+1}-\tau_n)_{n \in \N}$ are sequences of i.i.d.\ nonnegative random variables
with $\E_{\lambda}[(\rho_{2}-\rho_1)^r]<\infty$, $\E_{\lambda}[(\tau_{2}-\tau_1)^r] < \infty$ if $\lambda < \lambdacrit$
and $\E_{\lambda}[\tau_{2}-\tau_1] = \infty$ if $\lambda \geq \lambdacrit$.
The Marcinkiewicz-Zygmund strong law \cite[Theorems 6.7.1 and 6.10.3]{Gut:2005}
applied to $(\rho_{n+1}-\rho_{1})_{n \in \N_0}$, yields
\begin{equation}	\label{eq:MZ-law rho}
\frac{\rho_n - n\E_{\lambda}[\rho_2-\rho_1]}{n^{1/r}} ~\to~	0	\quad	\Prob_{\lambda} \text{-a.\,s.\ and in } L^r(\Prob_{\lambda}) \text{ as } n \to \infty.
\end{equation}
Analogously, if $\lambda < \lambdacrit$,
\begin{equation}	\label{eq:MZ-law tau}
\frac{\tau_n - n\E_{\lambda}[\tau_2-\tau_1]}{n^{1/r}} ~\to~	0	\quad	\Prob_{\lambda} \text{-a.\,s.\ and in } L^r(\Prob_{\lambda}) \text{ as } n \to \infty,
\end{equation}
while in any case, we have
\begin{equation}	\label{eq:SLLN tau}
\frac{\tau_n}{n} ~\to~	\E_{\lambda}[\tau_2-\tau_1]	\quad	\Prob_{\lambda} \text{-a.\,s.\ as } n \to \infty
\end{equation}
even in the case $\E_{\lambda}[\tau_2-\tau_1]=\infty$.
Define $\vel(\lambda) \defeq \E_{\lambda}[\rho_2-\rho_1]/\E_{\lambda}[\tau_2-\tau_1]$
and $k(n) \defeq \max\{k \in \N_0:\,\tau_{k} \leq n\}$. Clearly, $k(n) \to \infty$ as $n \to \infty$.
Further,
\begin{equation}	\label{eq:SLLN renewal counting}
\frac{k(n)}{n}	~\to~	\frac{1}{\E_{\lambda}[\tau_{2} - \tau_{1}]}	\quad	\Prob_{\lambda}\text{-a.\,s.\ as } n \to \infty
\end{equation}
by the strong law of large numbers for renewal counting processes.
Set $\nu(n) \defeq k(n)+1$.
Then $\nu(n)$ is a stopping time with respect to the canonical filtration of $((\tau_k,\rho_k))_{k \in \N_0}$
and $\nu(n) \leq n+1$.
Hence, the family $(\nu(n)/n)_{n \in \N}$ is uniformly integrable.
Thus \cite[Theorem 1.6.2]{Gut:2009} implies that
\begin{equation}	\label{eq:uirho_nu(n)}
\bigg(\bigg|\frac{\rho_{\nu(n)}-\nu(n) \E_{\lambda}[\rho_2-\rho_1]}{n^{1/r}}\bigg|^r\bigg)_{n \in \N_0}
\quad	\text{is uniformly integrable.}\!\footnote{
Notice that $\rho_1$ may have a different distribution under $\Prob_{\lambda}$
than the other increments $\rho_{n+1}-\rho_n$, $n \in \N$.
However, only minor changes are necessary to apply the results from \cite{Gut:2009}
anyway. This comment applies several times in this proof.}
\end{equation}
We write
\begin{align*}	\label{eq:decomposition for speed2}
&\frac{X_n-n\vel(\lambda)}{n^{1/r}}	\\
&~= \frac{X_n-\rho_{\nu(n)}}{n^{1/r}} + \frac{\rho_{\nu(n)}-\nu(n)\E_{\lambda}[\rho_2-\rho_1]}{n^{1/r}} + 
\frac{\nu(n)\E_{\lambda}[\rho_2-\rho_1] - n \vel(\lambda)}{n^{1/r}}.
\end{align*}
The absolute value of the first summand is bounded by
$(\rho_{\nu(n)}-\rho_{k(n)})/n^{1/r}$, which tends to $0$ $\Prob_\lambda$-a.\,s.\ and in $L^r(\Prob_{\lambda})$
by \cite[Theorem 1.8.1]{Gut:2009}.
The second summand tends to $0$ $\Prob_{\lambda}$-a.\,s.\ and in $L^r(\Prob_\lambda)$
by \eqref{eq:MZ-law rho}, \eqref{eq:SLLN renewal counting} and \eqref{eq:uirho_nu(n)}.
Further, we find that if $\lambda \geq \lambdacrit$, i.\,e., $r=1$,
then the third summand tends to $0$ $\Prob_{\lambda}$-a.\,s.\ by \eqref{eq:SLLN renewal counting}.
If $\lambda \in (0,\lambdacrit)$, then
\begin{equation}	\label{eq:estimate}	\textstyle
\left| \frac{\nu(n)\E_{\lambda}[\rho_2-\rho_1] - n \vel(\lambda)}{n^{1/r}}\right|
\leq\left| \vel(\lambda) \frac{\nu(n)\E_{\lambda}[\tau_2-\tau_1] - \tau_{\nu(n)}}{n^{1/r}}\right| + \frac{\vel(\lambda)(\tau_{\nu(n)}-\tau_{k(n)})}{n^{1/r}}.
\end{equation}
The first summand converges to $0$ $\Prob_{\lambda}$-a.\,s.\ by \eqref{eq:MZ-law tau} and \eqref{eq:SLLN renewal counting}.
A subsequent application of \cite[Theorem 1.6.2]{Gut:2009} guarantees
that this convergence also holds in $L^r(\Prob_{\lambda})$.
The second summand is bounded above by $\vel(\lambda)(\tau_{\nu(n)}-\tau_{k(n)})/n^{1/r}$,
which tends to $0$ $\Prob_\lambda$-a.\,s.\ and in $L^r(\Prob_{\lambda})$
again by \cite[Theorem 1.8.1]{Gut:2009}.

For the proof of the statement concerning $M^{\lambda}_n$ in \eqref{eq:Marcinkiewicz-Zygmund}, recall \eqref{Mdef} and define
\begin{equation*}
\eta_n
\defeq \nu_{\omega,\lambda}(Y_{\tau_{n-1}},Y_{\tau_{n-1}+1}) + \ldots + \nu_{\omega,\lambda}(Y_{\tau_{n}-1},Y_{\tau_{n}})
= M^{\lambda}_{\tau_n}-M^{\lambda}_{\tau_{n-1}}
\end{equation*}
for $n \in \N$.
The $\eta_n$, $n \geq 2$ are i.i.d.\ by Lemma \ref{Lem:iid regeneration times and points}.
There is a constant $C > 0$ such that $\sup_{\omega,v,w} |\nu_{\omega,\lambda}(v,w)| \leq C$.
As a consequence,
\begin{equation*}
|\nu_{\omega,\lambda}(Y_{\tau_{n-1}},Y_{\tau_{n-1}+1})| + \ldots + |\nu_{\omega,\lambda}(Y_{\tau_{n}-1},Y_{\tau_{n}})|
\leq C (\tau_n-\tau_{n-1})
\end{equation*}
for all $n \in \N$. Hence,
\begin{equation*}
M^{\lambda}_{\tau_{k(n)}} - C (\tau_n-\tau_{n-1}) \leq M^{\lambda}_n \leq M^{\lambda}_{\tau_{k(n)}} + C (\tau_n-\tau_{n-1})
\end{equation*}
for all $n \in \N$.
Similar arguments as those used for $X_n - n \vel(\lambda)$ now yield the second limit relation in \eqref{eq:Marcinkiewicz-Zygmund}.
\end{proof}

\subsection{The invariance principle.}

We now give a proof of Theorem \ref{Thm:joint CLT} based on regeneration times.
The same technique has been used e.\,g. in the proofs of Theorem 4.1 in \cite{Rassoul-Agha+Seppalainen:2006}
and Theorem 4.1 in \cite{Sznitman:2000}.

\begin{proof}[Proof of Theorem \ref{Thm:joint CLT}]
Assume that $\lambda \in (0,\lambdacrit/2)$. Then $\vel = \vel(\lambda) > 0$ by Proposition \ref{Prop:SLLN}.
For $n \in \N$, let
\begin{equation*}
\xi_n	~\defeq~	(\rho_{n}-\rho_{n-1}) - (\tau_{n}-\tau_{n-1}) \vel	
~=~	(X_{\tau_{n}}-X_{\tau_{n-1}}) - (\tau_{n}-\tau_{n-1}) \vel
\end{equation*} 
and, as in the proof of Proposition \ref{Prop:SLLN},
\begin{equation*}
\eta_n
\defeq \nu_{\omega,\lambda}(Y_{\tau_{n-1}},Y_{\tau_{n-1}+1}) + \ldots + \nu_{\omega,\lambda}(Y_{\tau_{n}-1},Y_{\tau_{n}})
= M^{\lambda}_{\tau_n}-M	^{\lambda}_{\tau_{n-1}}
\end{equation*}
According to Lemma \ref{Lem:iid regeneration times and points},
$((\xi_n,\eta_n))_{n \geq 2}$ is a sequence of centered 2-dimensional i.i.d.\ random variables.
Due to Lemma \ref{Lem:moments of rho and tau}
and since $\nu_{\cdot,\lambda}(\cdot,\cdot)$ is uniformly bounded,
the covariance matrix $\tilde{\Sigma}^{\lambda}$ of $(\xi_2,\eta_2)$ has finite entries only.
Moreover, $\E_{\lambda}[\xi_{2}^2] > 0$ and $\E_{\lambda}[\eta_2^2] > 0$ since clearly, $\xi_2$ and $\eta_2$ are not a.\,s.\ constant.
Define $S_0 \defeq (0,0)$ and 
\begin{equation}	\label{eq:S_n}
S_{n}	~\defeq~	(\xi_{1},\eta_1)+\ldots+(\xi_n,\eta_n)	~=~	(X_{\tau_n}- \tau_n \vel, M^{\lambda}_{\tau_n})	\quad	n \in \N.
\end{equation}
Since the contribution of the first term $(\xi_1,\eta_1)$ is negligible as $n \to \infty$,
Donsker's invariance principle \cite[Theorem 14.1]{Billingsley:1999} implies that
\begin{equation}	\label{eq:invariance principle S_n}
(n^{-1/2} S_{\lfloor nt \rfloor})_{0 \leq t \leq 1}
~\underset{n \to \infty}{\Rightarrow}~ (\tilde{B}^{\lambda},\tilde{M}^{\lambda})
\end{equation}
in the Skorokhod space $D[0,1]$ for a two-dimensional centered Brownian motion with covariance matrix $\tilde{\Sigma}^{\lambda}$.
For $u \geq 0$, let $k(u) = \max\{k \in \N_0:\,\tau_{k} \leq u\}$.
By monotonicity, $\lim_{n \to \infty} \frac{n}{k(n)} = \E_{\lambda}[\tau_{2} - \tau_{1}]$ $\Prob_{\lambda}$-a.\,s.\ extends to
\begin{equation}	\label{eq:supk(n)/n}
\sup_{0 \leq t \leq 1} \left| \frac{k(nt)}{n} - \frac{t}{\E_{\lambda} [\tau_2-\tau_1]}	\right|	~\underset{n \to \infty}{\to}~	0	\quad	\Prob_{\lambda} \text{-a.\,s.}
\end{equation}
The idea is to use \eqref{eq:supk(n)/n} to transfer \eqref{eq:invariance principle S_n}
to $(n^{-1/2} S_{k(nt)})_{0 \leq t \leq 1}$ (Step 1).
Then we show that the latter process is close to $(B_n,n^{-1/2}M^{\lambda}_n)$
and, thereby, establish the convergence of $(B_n,n^{-1/2}M^{\lambda}_n)$ (Step 2).\smallskip

\noindent
\emph{Step 1:}
As Brownian motion has almost surely continuous paths,
convergence to Brownian motion in the Skorokhod space implies convergence of the finite-dimensional distributions,
see e.g.\ \cite[Section 13]{Billingsley:1999}.
Hence, for $t > 0$, \eqref{eq:invariance principle S_n}, \eqref{eq:supk(n)/n}
and Anscombe's theorem \cite[Theorem 1.3.1]{Gut:2009} imply
\begin{equation*}
n^{-1/2} S_{k(nt)}	~\stackrel{\mathcal{D}}{\to}~	(B^{\lambda}(t),M^{\lambda}(t))		\quad	\text{as } n \to \infty
\end{equation*}
where $(B^{\lambda}(t),M^{\lambda}(t)) = (\E_{\lambda}[\tau_2-\tau_2])^{-1/2} (\tilde{B}^{\lambda}(t),\tilde{M}^{\lambda}(t))$.

Moreover, by inspecting the proof of \cite[Theorem 1.3.1]{Gut:2009},
this convergence can be strengthened to finite-dimensional convergence.
According to \cite[Theorem 13.1]{Billingsley:1999},
in order to prove convergence of $(n^{-1/2} S_{k(nt)})_{0 \leq t \leq 1}$ to $(B^{\lambda},M^{\lambda})$ in the Skorokhod space,
it suffices to check that $((n^{-1/2} S_{k(nt)})_{0 \leq t \leq 1})_{n \geq 1}$ is tight.
To this end, we invoke \cite[Theorem 13.2]{Billingsley:1999},
which yields tightness, once we have verified the conditions of the theorem.
For a function $f:[0,1] \to \R^2$, we write $\| f \|$ for $\sup_{t \in [0,1]} |f(t)|$
where $|f(t)|$ denotes the Euclidean norm of $f(t)$.
Sometimes, we write $\|f(t)\|$ for $\| f \|$.
To verify the first condition of \cite[Eq.\ (13.4) in Theorem 13.2]{Billingsley:1999},
we first notice that \cite[Theorem 14.4]{Billingsley:1999} and Slutsky's theorem imply
\begin{equation}	\label{eq:invariance principle for S_k(n)}
n^{-1/2} S_{\lfloor k(n)t\rfloor}	~\Rightarrow~	(B^{\lambda}(t),M^{\lambda}(t))		\quad	\text{as } n \to \infty.
\end{equation}
Using this and \eqref{eq:supk(n)/n}, we conclude that
\begin{align*}
\lim_{a \to \infty} & \limsup_{n \to \infty} \Prob_{\lambda} \left[\|n^{-1/2} S_{k(nt)}\| \geq a\right]		\\
&=~ \lim_{a \to \infty} \limsup_{n \to \infty} \Prob_{\lambda} \left[\Big(\frac{k(n)}{n}\Big)^{1/2} \max_{j=0,\ldots,k(n)} \frac{|S_j|}{k(n)^{1/2}} \geq a\right]	\\
&=~ \lim_{a \to \infty} \Prob_{\lambda}[\|(B^{\lambda},M^{\lambda})\| \geq a]	~=~	0.
\end{align*}
Turning to the second condition, we need to estimate terms of the form $|S_{k(nt)}-S_{k(ns)}|$ uniformly in $|t-s| \leq \delta$
for some $\delta \in (0,1)$ that will ultimately tend to $0$.
Using the triangular inequality, we obtain
\begin{eqnarray*}
|S_{k(nt)} -S_{k(ns)}|
& \leq &
|S_{k(nt)}-S_{\lfloor nt / \E_{\lambda}[\tau_2-\tau_1]\rfloor}| + |S_{k(ns)}-S_{\lfloor ns / \E_{\lambda}[\tau_2-\tau_1]\rfloor}|	\\
& & + |S_{\lfloor nt / \E_{\lambda}[\tau_2-\tau_1]\rfloor} - S_{\lfloor ns / \E_{\lambda}[\tau_2-\tau_1]\rfloor}|.
\end{eqnarray*}
Since $n^{-1/2} S_{\lfloor nt / \E_{\lambda}[\tau_2-\tau_1]\rfloor}$ converges in distribution on $D[0,1]$ by \eqref{eq:invariance principle S_n},
it is in particular tight and satisfies the second condition of Theorem 13.2 in \cite{Billingsley:1999}.
Therefore, it is enough to consider the first two terms on the right-hand side of the last inequality.
By symmetry, it suffices to consider one of them. Let $\varepsilon > 0$. Then, for arbitrary $c > 0$,
\begin{align*}
& \Prob_{\lambda} \left[n^{-1/2}\|S_{k(nt)}-S_{\lfloor nt / \E_{\lambda}[\tau_2-\tau_1]\rfloor}\| \geq \varepsilon \right]	\\
& \leq \Prob_{\lambda}\left[\left\| k(nt) \! - \! \left\lfloor \frac{nt}{\E_{\lambda} [\tau_2-\tau_1]}	 \right\rfloor\right\| > nc \right]
+ \Prob_{\lambda}\left[ \sup_{j \leq k(n)} \max_{|i-j| \leq nc} \frac1{\sqrt{n}}|S_{j}-S_{i}| \geq \varepsilon \right]\!\!.
\end{align*}
The first term tends to $0$ as $n \to \infty$ for any given $c > 0$ by \eqref{eq:supk(n)/n}.
By \eqref{eq:invariance principle for S_k(n)} and the continuous mapping theorem, the second term tends to
\begin{equation*}
\Prob_{\lambda}\left[ \sup_{0 \leq t \leq 1} \max_{|t-s| \leq c} |(B^{\lambda}(t),M^{\lambda}(t))-(B^{\lambda}(s),M^{\lambda}(s))| \geq \varepsilon \right]
\end{equation*}
which tends to $0$ as $c \to 0$, since Brownian motion is a.\,s.\ continuous (hence, uniformly continuous on compact intervals).
Therefore,
\begin{equation*}
\limsup_{n \to \infty} \Prob_{\lambda} \left[n^{-1/2}\|S_{k(nt)}-S_{\lfloor nt / \E_{\lambda}[\tau_2-\tau_1]\rfloor}\| \geq \varepsilon \right] ~=~	0.
\end{equation*}\smallskip

\noindent
\emph{Step 2:}
With $\| \cdot \|$ denoting the supremum norm of one- or two-dimensional functions, respectively,
the distance between $(B_n(\cdot),n^{-1/2} M^{\lambda}_{\lfloor n \cdot \rfloor)}$ and $S_{k(n \cdot)}$ can be estimated as follows:
\begin{align*}
\|&(B_n(t),M^{\lambda}_{\lfloor nt \rfloor}/\sqrt{n}) - S_{k(nt)}/\sqrt{n}\|	\\
& \leq~
n^{-1/2} \big( \|X_{\lfloor nt \rfloor} - \lfloor nt \rfloor \vel - (X_{\tau_{k(nt)}} - \tau_{k(nt)} \vel)\|
+ \|M^{\lambda}_{\lfloor nt \rfloor} - M^{\lambda}_{\tau_{k(nt)}} \| \big)	\\
& \leq~
n^{-1/2} \big(\|X_{\lfloor nt \rfloor} - X_{\tau_{k(nt)}} \| + \vel \| \tau_{k(nt)} - \lfloor nt \rfloor \| + \|M^{\lambda}_{\lfloor nt \rfloor} - M^{\lambda}_{\tau_{k(nt)}} \|\big).
\end{align*}
Here, for the first term, we find
\begin{equation*}
\|X_{\lfloor nt \rfloor} \! - \! X_{\tau_{k(nt)}} \|
~\leq~	\|X_{\tau_{k(nt)+1}} \! - \! X_{\tau_{k(nt)}}\|
~=~	\max_{j=0,\ldots,k(n)} (\rho_{j+1}\!-\!\rho_j).
\end{equation*}
Thus, for any $\varepsilon > 0$, using $k(n) \leq n$, the union bound and Chebychev's inequality give
\begin{align*}
\Prob_{\lambda}(n^{-1/2} & \|X_{\lfloor nt \rfloor} \! - \! X_{\tau_{k(nt)}} \| \geq \varepsilon)
~\leq~
\Prob_{\lambda}\Big(\max_{j \leq k(n)} (\rho_{j+1}\!-\!\rho_j) \geq \varepsilon \sqrt{n}\Big)	\\
& \leq~
\Prob_{\lambda}(\rho_1 \geq \varepsilon \sqrt{n} / 2) + n \Prob_{\lambda}(\rho_{2}\!-\!\rho_1 \geq \varepsilon \sqrt{n}/2)	\\
& \leq~
\Prob_{\lambda}(\rho_1 \geq \varepsilon \sqrt{n} / 2) + 4 \varepsilon^{-2} \E_{\lambda}[(\rho_{2}\!-\!\rho_1)^2 \1_{\{\rho_{2}\!-\!\rho_1 \geq \varepsilon \sqrt{n}/2\}}]
~\underset{n \to \infty}{\to}~	0.
\end{align*}
The other two terms are treated in a similar manner. Finally, we obtain
\begin{equation*}
\big\|(B_n(t),M^{\lambda}_{\lfloor nt \rfloor}/\sqrt{n}) - n^{-1/2} S_{k(nt)}\big\|	~\underset{n \to \infty}{\to}~	0	\quad	\text{ in } \Prob_{\lambda} \text{-probability.}
\end{equation*}
In view of Theorem 3.1 in \cite{Billingsley:1999}, the convergence of $n^{-1/2} S_{k(nt)}$ in $D[0,1]$
thus implies the convergence of $(B_n(t),M^{\lambda}_{\lfloor nt \rfloor}/\sqrt{n})$ in $D[0,1]$.\footnote{
In fact, one needs to show the above convergence in $\Prob_{\lambda}$-probability with the supremum norm replaced by a metric that induces the Skorokhod topology, for instance, the metric $d^{\circ}$ defined on p.\;125 of \cite{Billingsley:1999}. However, $d^{\circ}(\cdot,\cdot) \leq \|\cdot - \cdot\|$.}

Now we show \eqref{eq:sup kappa-moment X}.
To this end, pick $\kappa > 2$ with $\E_{\lambda}[(\tau_2-\tau_1)^{\kappa}] < \infty$.
The existence of $\kappa$ is guaranteed by Lemma \ref{Lem:moments of rho and tau}.
For $n \in \N$, observe that $\nu(n) \defeq \inf\{j \in \N: \tau_j > n\} = k(n)+1$ is a stopping time
w.r.t.\ the filtration $(\G_k)_{k \in \N_0}$ where $\G_k = \sigma((\rho_j,\tau_j): 1 \leq j \leq k)$.
Further, writing $\|\cdot\|_{\kappa}$ for the $\kappa$-norm w.r.t.\ $\Prob_{\lambda}$,
we infer from Minkowski's inequality that
\begin{align}
\|B_n(1)\|_{\kappa}&
~\leq~
\frac1{\sqrt{n}}\big(\|X_{\tau_{\nu(n)}}-\tau_{\nu(n)} \vel\|_{\kappa} + \|X_{\tau_{\nu(n)}}-X_{n}\|_{\kappa} + \vel \|\tau_{\nu(n)}-n\|_{\kappa}\big)	\notag	\\
& =~
\frac1{\sqrt{n}} \bigg\|\sum_{j=1}^{\nu(n)} \xi_j\bigg\|_{\kappa}
+ \frac1{\sqrt{n}} \|\rho_{\nu(n)}\!-\!\rho_{k(n)}\|_{\kappa} + \frac{\vel}{\sqrt{n}} \|\tau_{\nu(n)}\!-\!\tau_{k(n)}\|_{\kappa}.	\label{eq:three summands}
\end{align}
If $\xi_1, \xi_2, \ldots$ were i.i.d.\ under $\Prob_{\lambda}$,
boundedness of the first summand as $n \to \infty$ would follow from classical renewal theory as presented in \cite{Gut:2009}.
However, we have to incorporate the fact that, under $\Prob_{\lambda}$, $\xi_1$ has a different distribution than the $\xi_j$'s for $j \geq 2$.
Define $\nu'(k) = \inf\{j \in \N_0: \tau_{j+1}-\tau_1 > k\}$ and use Minkowski's inequality to obtain
\begin{equation*}	\textstyle
\big\|\sum_{j=1}^{\nu(n)} \xi_j\big\|_{\kappa}
\leq
\|\xi_1\|_{\kappa}
+\big\|\sum_{j=2}^{\nu'(n-\tau_1)} \xi_j\big\|_{\kappa}.
\end{equation*}
Condition w.r.t.\ $\G_1$ in the second summand to obtain
\begin{eqnarray*}
\E_{\lambda}\bigg[\bigg(\sum_{j=2}^{\nu'(n-\tau_1)} \xi_j\bigg)^{\!\!\kappa}\bigg]
& = &
\E_{\lambda}\bigg[\E_{\lambda}\bigg[\bigg(\sum_{j=2}^{\nu'(n-\tau_1)} \xi_j\bigg)^{\!\!\kappa} \,\bigg|\, \G_1\bigg]\bigg]	\\
& \leq &
\E_{\lambda}\Big[2 B_{\kappa} \E_{\lambda}\big[\big|\xi_2\big|^{\kappa}\big] \E_{\lambda}[\nu'(n-\tau_1)^{\kappa/2} \mid \G_1]\Big]	\\
& \leq &
2 B_{\kappa} \E_{\lambda}\big[\big|\xi_2\big|^{\kappa}\big] \E_{\lambda}[\nu'(n)^{\kappa/2}]
\end{eqnarray*}
where we have used \cite[Theorem 1.5.1]{Gut:2009} for the first inequality and where $B_{\kappa}$ is a finite constant depending only on $\kappa$.
Now take the $\kappa$th root to arrive at the corresponding bounds for the $\kappa$-norm and subsequently divide by $\sqrt{n}$.
Then, using that $n^{-1/2}(\E_{\lambda}[\nu'(n)^{\kappa/2}])^{1/\kappa} = \E_{\lambda}[(\nu'(n)/n)^{\kappa/2}]$
and the uniform integrability of $(\nu'(n)/n)^{\kappa/2}$, $n \in \N$ (see \cite[Formula (2.5.6)]{Gut:2009})
we conclude that the supremum over all $n \in \N$ of the first summand in \eqref{eq:three summands} is finite.
We now turn to the second and third summand in \eqref{eq:three summands}.
First observe that
$\E_{\lambda}[(\rho_2-\rho_1)^{\kappa}]<\infty$ and $\E[(\tau_2-\tau_1)^{\kappa}]<\infty$
by Lemma \ref{Lem:moments of rho and tau}.
Second, notice that
$\frac1n \nu(n) \to (\E_{\lambda}[\tau_2-\tau_1])^{-1}$ a.\,s.\ as $n \to \infty$
by the strong law of large numbers for renewal processes \cite[Theorem 2.5.1]{Gut:2009}
and that $(\frac1n\nu(n))_{n \in \N}$ is uniformly integrable, see \cite[Formula (2.5.6)]{Gut:2009}.
Therefore,
$\lim_{n \to \infty} n^{-1/2} \|\rho_{\nu(n)}\!-\!\rho_{k(n)}\|_{\kappa} = 0$
and $\lim_{n \to \infty} n^{-1/2} \|\tau_{\nu(n)}\!-\!\tau_{k(n)}\|_{\kappa} = 0$
is a consequence of \cite[Theorem 1.8.1]{Gut:2009}.

\noindent
Finally, fix $\lambda \in [\lambdacrit/2,\lambdacrit)$ and assume for a contradiction that \eqref{eq:joint invariance principle} holds.
Then $B_n = n^{-1/2}(X_n-n\vel) \to \sigma B(1)$ in distribution as $n \to \infty$ and, moreover,
\begin{eqnarray}	\label{eq:B_n-S_k(n)}
|B_n - n^{-1/2} S_{k(n)}|
& \leq &
n^{-1/2} \big(|X_n - X_{k(n)}| + \vel |n-\tau_{k(n)}| \big).
\end{eqnarray}
By the arguments given in the proof of Step 2 above, $n^{-1/2} (X_n - X_{k(n)}) \stackrel{\Prob_{\lambda}}{\to} 0$ as $n \to \infty$.
Further, $n-\tau_{k(n)}$ is the age at time $n$ of the (delayed) renewal process $(\tau_k)_{k \in \N_0}$.
By standard results from renewal theory, see e.g.~\cite[Corollary 10.1 on p.~76]{Thorisson:2000},
\begin{equation*}	\textstyle
\Prob_{\lambda}(n-\tau_{k(n)}=j)	~\underset{n \to \infty}{\to}~	\frac{1}{\E_{\lambda}[\tau_2-\tau_1]} \Prob_{\lambda}(\tau_2-\tau_1>j)
\end{equation*}
where $\lambda < \lambdacrit$ guarantees the finiteness of $\E_{\lambda}[\tau_2-\tau_1]$.
(Notice that the fact that $\tau_1$ has a different distribution than the $\tau_{n+1}-\tau_{n}$, $n \geq 1$
has no effect on this result.)
Hence, also $n^{-1/2}(n-\tau_{k(n)}) \to 0$ in $\Prob_{\lambda}$-probability as $n \to \infty$.
From \eqref{eq:B_n-S_k(n)} and Theorem 3.1 in \cite{Billingsley:1999},
we thus conclude that $n^{-1/2} S_{k(n)} \to \sigma B(1)$ in distribution as $n \to \infty$.
In particular, the sequence $(n^{-1/2} S_{k(n)})_{n \geq 1}$ is tight.
From Theorem 3.4 in \cite{Bednorz+Latuszynski+Latala:2008}
(notice that in the theorem, stochastic domination is assumed rather than tightness;
however, it is clear from the proof that tightness suffices),
we conclude that $\E_{\lambda}[\xi_2^2] < \infty$
which, in turn, gives $\E_{\lambda}[(\tau_2-\tau_1)^2] < \infty$.
This contradicts Lemma \ref{Lem:moments of rho and tau}.
\end{proof}

We continue with the proof of Proposition \ref{Prop:sup exp M}:

\begin{proof}[Proof of Proposition \ref{Prop:sup exp M}]
Choose an arbitrary $\theta > 0$.
By the Azuma-Hoeffding inequality \cite[E14.2]{Williams:1991},
with $c_{\lambda} \defeq \sup_{v,w,\omega} |\nu_{\omega,\lambda}(v,w)|$ where the supremum is over all $\omega \in \Omega$ and $v,w \in V$,
we have
\begin{equation*}	\textstyle
\Prob_{\lambda}(t n^{-1/2} M^{\lambda}_n \geq x)	~\leq~	\exp\big(-\frac{1}{2} \frac{x^2n}{t^2nc_{\lambda}^2}  \big)
~=~	\exp\big(-\frac{x^2}{2 t^2 c_{\lambda}^2}  \big)
\end{equation*}
for all $x > 0$.
This finishes the proof of \eqref{eq:sup exp M} because the bound on the right-hand side is independent of $n$.
\end{proof}

\section{Proof of Theorem \ref{Thm:differentiability of the speed}}

We carry out the program described on p\;\pageref{enum:program}.
The first two steps of the program
are contained in Theorem \ref{Thm:joint CLT} (the second step follows from \eqref{eq:sup kappa-moment X}).
We continue with Step 3.
It is based on a second order Taylor expansion for
$\sum_{j=1}^n \log \big(\frac{p_{\omega,\lambda}(Y_{j-1},Y_{j})}{p_{\omega,\lambda^{*}}(Y_{j-1},Y_{j})}\big)$
at $\lambda = \lambda^*$:
\begin{align}
& \textstyle
\sum_{j=1}^n \log \Big(\frac{p_{\omega,\lambda}(Y_{j-1},Y_{j})}{p_{\omega,\lambda^{*}}(Y_{j-1},Y_{j})}\Big)	\notag	\\
& \textstyle
~=	(\lambda\!-\!\lambda^*) M^{\lambda^*}_n	 + \frac{(\lambda\!-\!\lambda^*)^2}{2} \sum_{j=1}^n \! \Big( \!
\frac{p_{\omega,\lambda^*}''(Y_{j-1},Y_{j})}{p_{\omega,\lambda^*}(Y_{j-1},Y_{j})} - \nu_{\omega,\lambda^*}(Y_{j-1},Y_j)^2 \!\Big)	\notag	\\
& \textstyle
\hphantom{~=} + (\lambda\!-\!\lambda^*)^2 \sum_{j=1}^n r_{\omega,\lambda^*,Y_{j-1},Y_j}(\lambda)	\label{eq:Taylor}
\end{align}
where $r_{\omega,\lambda^*,v,w}(\lambda)$ tends to $0$ uniformly in $\omega \in \Omega$ and $v,w \in V$ as $\lambda \to \lambda^*$.
Set
\begin{equation*}	\textstyle
A_{\omega,\lambda^*}(n)
~=~ \frac12 \sum_{j=1}^{n} \Big(\nu_{\omega,\lambda^*}(Y_{j-1},Y_{j})^{2} - \frac{p_{\omega,\lambda^*}''(Y_{j-1},Y_j)}{p_{\omega,\lambda^*}(Y_{j-1},Y_j)}\Big)
\end{equation*}
and 
\begin{equation}	\label{eq:asymptotics R}	\textstyle
R_{\omega,\lambda^*,\lambda}(n)
=	(\lambda-\lambda^*)^2 \sum_{j=1}^{n} r_{\omega,\lambda^*,Y_{j-1},Y_j}(\lambda)
= (\lambda-\lambda^*)^2 \, n \, o(1),
\end{equation}
where $o(1)$ denotes a term that converges (uniformly) to $0$ as $\lambda \to \lambda^*$.

\begin{lemma}	\label{Lem:2nd order term}
Let $\lambda^{*} \in (0,\lambdacrit)$.
\begin{itemize}
	\item[(a)]
		If $\lambda^* \in (0,\lambdacrit/2)$, then
		\begin{equation}	\label{eq:A asymptotics}	\textstyle
		(\lambda-\lambda^*)^2 A_{\omega,\lambda^{*}}(n) \to \frac\alpha2 \E_{\lambda^*}[M^{\lambda^*}\!(1)^2]
		\quad	\Prob_{\lambda^*} \text{-a.\,s.\ and in } L^1(\Prob_{\lambda^*})
		\end{equation}
		if the limit $\lambda \to \lambda^{*}$ and $n\to\infty$ is such that $\lim_{n \to \infty}(\lambda-\lambda^*)^{2}n \eqdef \alpha > 0$.
	\item[(b)]
		If $\lambda^* \in (0,\lambdacrit)$ and $1 < r < \frac{\lambdacrit}{\lambda^*} \wedge 2$, then
		\begin{equation}	\label{eq:A asymptotics again}	\textstyle
		(\lambda-\lambda^*)^2 A_{\omega,\lambda^{*}}(n) \to 0
		\quad	\Prob_{\lambda^*} \text{-a.\,s.\ and in } L^1(\Prob_{\lambda^*})
		\end{equation}
		if the limit $\lambda \to \lambda^{*}$ and $n\to\infty$ is such that $\lim_{n \to \infty}(\lambda-\lambda^*)^{r}n \eqdef \alpha > 0$.
\end{itemize}
Further, $R_{\omega,\lambda^*,\lambda}(n) \to 0$ $\Prob_{\lambda^*}$-a.\,s.\ if the limits $\lambda \to \lambda^{*}$
and $n\to\infty$ are such that $\lim_{n \to \infty}(\lambda-\lambda^*)^{2} n < \infty$.
\end{lemma}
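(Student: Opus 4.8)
The plan is to deduce everything from a strong law of large numbers over regeneration blocks, together with the elementary fact that the second-derivative term has conditional mean zero. First I record a boundedness observation: since there are only finitely many one-step transition probabilities and each of them is bounded away from $0$ whenever positive, the quantities $\nu_{\omega,\lambda^{*}}(v,w)$ and $p_{\omega,\lambda^{*}}''(v,w)/p_{\omega,\lambda^{*}}(v,w)$ are bounded uniformly in $\omega,v,w$; hence the summand in the definition of $A_{\omega,\lambda^{*}}(n)$ is bounded by a constant $C=C(\lambda^{*})$, so that $|A_{\omega,\lambda^{*}}(n)|\le Cn$ deterministically. The assertion about $R_{\omega,\lambda^{*},\lambda}(n)$ is then immediate: by \eqref{eq:asymptotics R}, $|R_{\omega,\lambda^{*},\lambda}(n)|\le (\lambda-\lambda^{*})^2 n\cdot\sup_{\omega,v,w}|r_{\omega,\lambda^{*},v,w}(\lambda)|$, and if $(\lambda-\lambda^{*})^2 n$ stays bounded this tends to $0$ (even deterministically) because $\sup_{\omega,v,w}|r_{\omega,\lambda^{*},v,w}(\lambda)|\to 0$ as $\lambda\to\lambda^{*}$.

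The heart of the proof is the claim
\begin{equation*}
\tfrac1n A_{\omega,\lambda^{*}}(n)~\longrightarrow~\tfrac12\E_{\lambda^{*}}[M^{\lambda^{*}}(1)^2]\qquad\Prob_{\lambda^{*}}\text{-a.s.\ and in }L^1(\Prob_{\lambda^{*}}).
\end{equation*}
For the almost sure convergence I would split $A_{\omega,\lambda^{*}}(n)$ along the regeneration times $\tau_k$ into blocks $\zeta_k\defeq A_{\omega,\lambda^{*}}(\tau_k)-A_{\omega,\lambda^{*}}(\tau_{k-1})$. By Lemma~\ref{Lem:iid regeneration times and points} the $\zeta_k$, $k\ge 2$, are i.i.d., and $|\zeta_k|\le C(\tau_k-\tau_{k-1})$, so $\E_{\lambda^{*}}[|\zeta_2|]\le C\,\E_{\lambda^{*}}[\tau_2-\tau_1]<\infty$ by Lemma~\ref{Lem:moments of rho and tau}(b) (here $\lambda^{*}<\lambdacrit$). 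The renewal-theoretic bookkeeping already carried out in the proof of Proposition~\ref{Prop:SLLN&MZlaw} — the strong law for the i.i.d.\ $\zeta_k$ (the first block being negligible), the renewal strong law $k(n)/n\to 1/\E_{\lambda^{*}}[\tau_2-\tau_1]$, and the negligibility of the incomplete last block $n^{-1}(\tau_{k(n)+1}-\tau_{k(n)})\to 0$ — then yields $\tfrac1n A_{\omega,\lambda^{*}}(n)\to\E_{\lambda^{*}}[\zeta_2]/\E_{\lambda^{*}}[\tau_2-\tau_1]$ a.s.; since $|\tfrac1n A_{\omega,\lambda^{*}}(n)|\le C$, the convergence also holds in $L^1$, so the limit equals $\lim_n n^{-1}\E_{\lambda^{*}}[A_{\omega,\lambda^{*}}(n)]$.

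To identify this number, note that $\sum_{w\in N_{\omega}(v)}p_{\omega,\lambda}(v,w)=1$ for every $\lambda$, whence $\sum_{w}p_{\omega,\lambda^{*}}''(v,w)=0$; consequently, for each fixed $\omega$, the partial sums $\sum_{j=1}^n p_{\omega,\lambda^{*}}''(Y_{j-1},Y_j)/p_{\omega,\lambda^{*}}(Y_{j-1},Y_j)$ form a $P_{\omega,\lambda^{*}}$-martingale started at $0$, so this term contributes $0$ to $\E_{\lambda^{*}}[A_{\omega,\lambda^{*}}(n)]$. On the other hand, the increments of the martingale $M^{\lambda^{*}}$ from \eqref{Mdef} are orthogonal in $L^2$, so $\E_{\lambda^{*}}[(M^{\lambda^{*}}_n)^2]=\sum_{j=1}^n\E_{\lambda^{*}}[\nu_{\omega,\lambda^{*}}(Y_{j-1},Y_j)^2]$. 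Combining the two gives $\E_{\lambda^{*}}[A_{\omega,\lambda^{*}}(n)]=\tfrac12\E_{\lambda^{*}}[(M^{\lambda^{*}}_n)^2]$, and hence, by Theorem~\ref{Thm:joint CLT}, $n^{-1}\E_{\lambda^{*}}[A_{\omega,\lambda^{*}}(n)]\to\tfrac12\sigma_{22}(\lambda^{*})=\tfrac12\E_{\lambda^{*}}[M^{\lambda^{*}}(1)^2]$. For part (b), where $\lambda^{*}$ may exceed $\lambdacrit/2$, one only needs that this limit is finite, which is clear from $\E_{\lambda^{*}}[(M^{\lambda^{*}}_n)^2]\le C^2 n$; Theorem~\ref{Thm:joint CLT} is invoked only for part (a).

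Finally, write $(\lambda-\lambda^{*})^2 A_{\omega,\lambda^{*}}(n)=\big((\lambda-\lambda^{*})^2 n\big)\cdot\tfrac1n A_{\omega,\lambda^{*}}(n)$. In the regime of part (a), $(\lambda-\lambda^{*})^2 n\to\alpha$, so this converges $\Prob_{\lambda^{*}}$-a.s.\ to $\tfrac{\alpha}{2}\E_{\lambda^{*}}[M^{\lambda^{*}}(1)^2]$, and since the prefactor is bounded while $|\tfrac1n A_{\omega,\lambda^{*}}(n)|\le C$, also in $L^1(\Prob_{\lambda^{*}})$. In the regime of part (b), $(\lambda-\lambda^{*})^2 n=(\lambda-\lambda^{*})^{2-r}\cdot(\lambda-\lambda^{*})^r n\to 0\cdot\alpha=0$ because $r<2$, and since $\tfrac1n A_{\omega,\lambda^{*}}(n)$ converges to a finite limit and is uniformly bounded, the product tends to $0$ a.s.\ and in $L^1(\Prob_{\lambda^{*}})$. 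The only genuinely non-routine point is the identification of the limiting constant: one must notice that the second-derivative term averages out — because probabilities sum to one for every $\lambda$ — so that $A_{\omega,\lambda^{*}}(n)$ is, in the mean, exactly half the second moment of the martingale $M^{\lambda^{*}}_n$, whose linear growth is precisely what Theorem~\ref{Thm:joint CLT} provides; the rest is the renewal bookkeeping already developed for Proposition~\ref{Prop:SLLN&MZlaw}.
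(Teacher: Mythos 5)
Your proof is correct and follows essentially the same route as the paper: decompose $A_{\omega,\lambda^*}$ along regeneration blocks, apply the strong law and renewal bookkeeping from the proof of Proposition~\ref{Prop:SLLN&MZlaw} to get $\frac1n A_{\omega,\lambda^*}(n)\to\E_{\lambda^*}[\zeta_2]/\E_{\lambda^*}[\tau_2-\tau_1]$ a.s.\ (and in $L^1$ by boundedness), then identify the limit via the vanishing of the $p''/p$ mean, orthogonality of martingale increments, and the variance identification in Theorem~\ref{Thm:joint CLT}. The observation that for part (b) one only needs finiteness of the limit, not its value, is also exactly how the paper avoids invoking the CLT for $\lambda^*\ge\lambdacrit/2$.
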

\begin{proof}
The convergence $R_{\omega,\lambda^*, \lambda}(n) \to 0$ if $\lambda \to \lambda^*$ and $n \to \infty$ such that
$\lim_{n \to \infty}(\lambda-\lambda^*)^{2} n < \infty$
follows immediately from \eqref{eq:asymptotics R}.

We now turn to assertions (a) and (b).
To this end, notice that
$A_{\omega,\lambda^*}(\tau_n) = \sum_{k=1}^n \xi_k$ where
\begin{equation*}
\xi_k	~\defeq~	\frac12 \sum_{j=\tau_{k-1}+1}^{\tau_k}
\bigg(\nu_{\omega,\lambda^*}(Y_{j-1},Y_{j})^{2} - \frac{p_{\omega,\lambda^*}''(Y_{j-1},Y_j)}{p_{\omega,\lambda^*}(Y_{j-1},Y_j)}\bigg),	\quad	k \in \N.
\end{equation*}
The $\xi_k$, $k \geq 2$ are i.i.d.\ by Lemma \ref{Lem:iid regeneration times and points}.
They are further integrable since the summands in the definition are uniformly bounded and $\E_{\lambda^*}[\tau_2-\tau_1]<\infty$.
The strong law of large numbers gives, as $n \to \infty$,
\begin{equation*}
\frac1n A_{\omega,\lambda*}(\tau_n) \to \frac12 \E_{\lambda^*}\bigg[\sum_{j=\tau_{1}+1}^{\tau_2}
\bigg(\nu_{\omega,\lambda^*}(Y_{j-1},Y_{j})^{2} - \frac{p_{\omega,\lambda^*}''(Y_{j-1},Y_j)}{p_{\omega,\lambda^*}(Y_{j-1},Y_j)}\bigg)\bigg]
\quad	\Prob_{\lambda^*}\text{-a.\,s.}
\end{equation*}
Using the sandwich argument from the proof of Proposition \ref{Prop:SLLN&MZlaw}(a), one infers
\begin{equation*}
\frac1n A_{\omega,\lambda*}(n) \to \frac12 \frac{\E_{\lambda^*}\big[\sum_{j=\tau_{1}+1}^{\tau_2}
\big(\nu_{\omega,\lambda^*}(Y_{j-1},Y_{j})^{2} - \frac{p_{\omega,\lambda^*}''(Y_{j-1},Y_j)}{p_{\omega,\lambda^*}(Y_{j-1},Y_j)}\big)\big]}
{\E_{\lambda^*}[\tau_2-\tau_1]}
\quad	\Prob_{\lambda^*}\text{-a.\,s.}
\end{equation*}
In the situation of (b), $(\lambda-\lambda^*)^2$ is of the order $n^{-2/r}$ with $2/r > 1$.
This implies that \eqref{eq:A asymptotics again} holds. 
In the situation of (a), we have $0 < \lambda^* < \lambdacrit/2$.
Since the $\nu_{\omega,\lambda^*}(Y_{j-1},Y_{j})^{2} - p_{\omega,\lambda^*}''(Y_{j-1},Y_j)/p_{\omega,\lambda^*}(Y_{j-1},Y_j)$, $j \in \N$
are bounded by a constant (depending on $\lambda^*$),
 $(\frac1n A_{\omega,\lambda*}(n))_{n \in \N}$ is a bounded sequence.
Thus,
$\E_{\lambda^*}[\lim_{n \to \infty}\frac 1n  A_{\omega,\lambda*}(n)] = \lim_{n \to \infty}\frac 1n \E_{\lambda^*}[A_{\omega,\lambda*}(n)]$
by the dominated convergence theorem,
and hence
\begin{equation*}	\textstyle
\lim_{n \to \infty}\frac 1n  A_{\omega,\lambda*}(n) = \lim_{n \to \infty}\frac 1n \E_{\lambda^*}[A_{\omega,\lambda*}(n)]
\quad \Prob_{\lambda^*}\text{-a.\,s.}
\end{equation*}
The latter limit can be calculated as follows.
For all $v$ and all $\omega$, $p_{\omega,\lambda^*}(v,\cdot)$
is a probability measure on the neighborhood
$N_{\omega}(v) = \{w \in V: p_{\omega,0}(v,w) > 0\}$ of $v$, hence
\begin{equation*}	\textstyle
\sum_{w\in N_{\omega}(v)} p_{\omega, \lambda^{*}}''(v,w) = 0.
\end{equation*}
This implies $E_{\omega,\lambda^*}\!\big[\frac{p_{\omega,\lambda^*}''(Y_{j-1},Y_j)}{p_{\omega,\lambda^*}(Y_{j-1},Y_j)}\big] = 0$
and also $\E_{\lambda^*}\!\big[\frac{p_{\omega,\lambda^*}''(Y_{j-1},Y_j)}{p_{\omega,\lambda^*}(Y_{j-1},Y_j)}\big] = 0$ for all $j \in \N$
and, thus,
\begin{align*}	\textstyle
\lim_{n \to \infty} \frac1n A_{\omega,\lambda^*}(n)
&=	\textstyle
\lim_{n \to \infty} \frac12 \frac1n \sum_{j=1}^{n} \E_{\lambda^*}\big[\nu_{\omega,\lambda^*}(Y_{j-1},Y_{j})^{2}\big]	\\
&=	\textstyle
\frac12 \lim_{n} \Var_{\lambda^*} [n^{-1/2}M^{\lambda^*}_n]
= \frac12 \E_{\lambda^*}[M^{\lambda^*}\!(1)^2]
\ \Prob_{\lambda^*} \text{-a.\,s.}
\end{align*}
where the second equality follows from the fact that the increments of square-integrable martingales are uncorrelated,
and the last equality follows from Theorem \ref{Thm:joint CLT}.
\end{proof}

\begin{proposition}	\label{Prop:3rd step}
Assume that $\lambda^* \in (0,\lambdacrit/2)$ and $\alpha > 0$.
Then
\begin{equation*}	\tag{\ref{eq:speed approx by covariance}}
\lim_{\substack{\lambda \to \lambda^*,\\ (\lambda-\lambda^*)^2n \to \alpha}} \frac{\E_{\lambda}[X_n]-\E_{\lambda^*}[X_n]}{(\lambda-\lambda^*)n}
= \E_{\lambda^*}[B^{\lambda^*}\!(1) M^{\lambda^*}\!(1)]
= \sigma_{12}(\lambda^*).
\end{equation*}
\end{proposition}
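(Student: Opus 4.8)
The starting point is the annealed change-of-measure identity \eqref{eq:girsanov annealed}. Writing $\epsilon \defeq \lambda-\lambda^*$ and $D_n \defeq \prod_{j=1}^{n} p_{\omega,\lambda}(Y_{j-1},Y_j)/p_{\omega,\lambda^*}(Y_{j-1},Y_j)$, it gives
\begin{equation*}
\frac{\E_{\lambda}[X_n]-\E_{\lambda^*}[X_n]}{(\lambda-\lambda^*)n}
~=~ \E_{\lambda^*}\!\left[\frac{X_n}{\epsilon n}\,(D_n-1)\right].
\end{equation*}
Split $X_n = (X_n-n\vel(\lambda^*))+n\vel(\lambda^*)$. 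The Radon--Nikodym relation underlying \eqref{eq:girsanov quenched}, applied to the constant function $1$, yields $E_{\omega,\lambda^*}[D_n]=1$ for every $\omega$, hence $\E_{\lambda^*}[D_n]=1$, so the $n\vel(\lambda^*)$-contribution vanishes identically. Since $B_n(1)=n^{-1/2}(X_n-n\vel(\lambda^*))$ (the process $B_n$ being built with $\vel(\lambda^*)$), we are left with
\begin{equation*}
\frac{\E_{\lambda}[X_n]-\E_{\lambda^*}[X_n]}{(\lambda-\lambda^*)n}
~=~ \E_{\lambda^*}\!\left[\frac{B_n(1)}{\epsilon\sqrt{n}}\,(D_n-1)\right].
\end{equation*}

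The next step is to identify the joint limiting law of $(B_n(1),D_n)$ under $\Prob_{\lambda^*}$. Exponentiating the second-order Taylor expansion \eqref{eq:Taylor} gives $D_n=\exp\!\big(\epsilon M^{\lambda^*}_n-\epsilon^2 A_{\omega,\lambda^*}(n)+R_{\omega,\lambda^*,\lambda}(n)\big)$. We may assume $\lambda>\lambda^*$ (the case $\lambda<\lambda^*$ is entirely symmetric). Along the limit $\lambda\to\lambda^*$, $\epsilon^2 n\to\alpha$, we have $\epsilon\sqrt n\to\sqrt\alpha$, so $\epsilon M^{\lambda^*}_n=(\epsilon\sqrt n)\,(n^{-1/2}M^{\lambda^*}_n)$; combining the joint invariance principle of Theorem \ref{Thm:joint CLT} (taken with bias $\lambda^*\in(0,\lambdacrit/2)$, evaluated at $t=1$), the convergence $\epsilon^2 A_{\omega,\lambda^*}(n)\to\frac\alpha2\E_{\lambda^*}[M^{\lambda^*}(1)^2]=\frac\alpha2\sigma_{22}(\lambda^*)$ from Lemma \ref{Lem:2nd order term}(a), and $R_{\omega,\lambda^*,\lambda}(n)\to 0$, Slutsky's theorem and the continuous mapping theorem give
\begin{equation*}
\left(\frac{B_n(1)}{\epsilon\sqrt n},\,D_n\right)
~\Rightarrow~
\left(\frac{1}{\sqrt\alpha}\,B^{\lambda^*}(1),\,\exp\!\Big(\sqrt\alpha\,M^{\lambda^*}(1)-\tfrac\alpha2\sigma_{22}(\lambda^*)\Big)\right)
\quad\text{under }\Prob_{\lambda^*}.
\end{equation*}

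To pass from this to convergence of expectations, I would establish uniform integrability of $\big(\frac{B_n(1)}{\epsilon\sqrt n}(D_n-1)\big)$ along the limit. As $\epsilon\sqrt n$ is bounded and bounded away from $0$, it suffices to bound $\sup_n\E_{\lambda^*}\!\big[|B_n(1)\,(D_n-1)|^{1+\delta}\big]$ for some $\delta\in(0,1)$. The corrections are uniformly bounded: $|A_{\omega,\lambda^*}(n)|\le C' n$ gives $\epsilon^2|A_{\omega,\lambda^*}(n)|\le C'\epsilon^2 n$, which stays bounded, and $R_{\omega,\lambda^*,\lambda}(n)\to 0$; hence $D_n\le e^{C}\exp\!\big((\epsilon\sqrt n)\,n^{-1/2}M^{\lambda^*}_n\big)$. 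Since $\epsilon\sqrt n$ is bounded, Proposition \ref{Prop:sup exp M} (whose Azuma--Hoeffding proof applies equally to $-M^{\lambda^*}$) shows $\sup_n\E_{\lambda^*}[D_n^q]<\infty$ for every $q>0$. Together with the bound $\sup_n\E_{\lambda^*}[|B_n(1)|^{\kappa}]<\infty$ for some $\kappa>2$ from \eqref{eq:sup kappa-moment X}, and Hölder's inequality with exponents chosen so that $B_n(1)$ enters with a power strictly below $\kappa$, this yields the required bound for some $\delta>0$, hence the uniform integrability. Combined with the continuous-mapping image of the displayed weak convergence, we conclude that the expectation converges to
\begin{equation*}
\E_{\lambda^*}\!\left[\frac{1}{\sqrt\alpha}\,B^{\lambda^*}(1)\Big(\exp\!\big(\sqrt\alpha\,M^{\lambda^*}(1)-\tfrac\alpha2\sigma_{22}(\lambda^*)\big)-1\Big)\right].
\end{equation*}

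Finally, evaluate this Gaussian expectation: since $(B^{\lambda^*}(1),M^{\lambda^*}(1))$ is centered jointly Gaussian, the identity $\E[Y e^{Z}]=\mathrm{Cov}(Y,Z)\,e^{\Var(Z)/2}$ with $Y=B^{\lambda^*}(1)$, $Z=\sqrt\alpha\,M^{\lambda^*}(1)$ gives $\E_{\lambda^*}[B^{\lambda^*}(1)\,e^{\sqrt\alpha\,M^{\lambda^*}(1)}]=\sqrt\alpha\,\sigma_{12}(\lambda^*)\,e^{\alpha\sigma_{22}(\lambda^*)/2}$, while $\E_{\lambda^*}[B^{\lambda^*}(1)]=0$; substituting, the right-hand side above collapses to $\sigma_{12}(\lambda^*)=\E_{\lambda^*}[B^{\lambda^*}(1)M^{\lambda^*}(1)]$, which is in particular independent of $\alpha$, as it must be. The main obstacle is the uniform integrability step: one must control the product of the factor $B_n(1)$ (for which only an $L^{\kappa}$-bound with $\kappa$ slightly above $2$ is available) against the exponentially large density $D_n$, which is exactly where Proposition \ref{Prop:sup exp M}, the higher moment bound \eqref{eq:sup kappa-moment X}, and the standing assumption $\lambda^*<\lambdacrit/2$ (ensuring $\kappa>2$ and the applicability of Lemma \ref{Lem:2nd order term}(a)) come into play.
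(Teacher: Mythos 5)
Your proof is correct and follows the paper's strategy in all essential respects: the annealed change of measure \eqref{eq:girsanov annealed}, the second-order Taylor expansion \eqref{eq:Taylor} producing the martingale $M^{\lambda^*}_n$ and the correction $A_{\omega,\lambda^*}(n)$, the joint invariance principle plus Lemma \ref{Lem:2nd order term}(a) and Slutsky to identify the weak limit, uniform integrability via H\"older's inequality combining \eqref{eq:sup kappa-moment X} with Proposition \ref{Prop:sup exp M}, and finally Gaussian integration by parts. The only difference is a cosmetic reorganization: you center by observing $\E_{\lambda^*}[D_n]=1$ (so the $n\vel(\lambda^*)$ term drops out identically inside a single expectation $\E_{\lambda^*}\big[\tfrac{B_n(1)}{\epsilon\sqrt n}(D_n-1)\big]$), whereas the paper writes the quotient as the difference $\tfrac{\E_{\lambda}[X_n]-n\vel(\lambda^*)}{(\lambda-\lambda^*)n}-\tfrac{\E_{\lambda^*}[X_n]-n\vel(\lambda^*)}{(\lambda-\lambda^*)n}$ and separately argues that the second summand vanishes by the CLT and $\sup_n\E_{\lambda^*}[B_n(1)^2]<\infty$; both variants arrive at the same Gaussian expectation and the same value $\sigma_{12}(\lambda^*)$.
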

\begin{proof}
We have
\begin{equation}	\label{eq:step 3 decomposition}
\frac{\E_{\lambda}[X_n]-\E_{\lambda^*}[X_n]}{(\lambda-\lambda^*)n}
~=~	\frac{\E_{\lambda}[X_n]-n\vel(\lambda^*)}{(\lambda-\lambda^*)n} - \frac{\E_{\lambda^*}[X_n]-n\vel(\lambda^*)}{(\lambda-\lambda^*)n}.
\end{equation}
Regarding the second summand, Theorem \ref{Thm:joint CLT} implies that, under $\Prob_{\lambda^*}$,
\begin{equation*}
\frac{X_n-n\vel(\lambda^*)}{(\lambda-\lambda^*)n}	~=~	\frac{1}{(\lambda-\lambda^*) \sqrt{n}} \frac{X_n-n\vel(\lambda^*)}{\sqrt{n}}
~\to~	\frac{1}{\sqrt{\alpha}}B^{\lambda^*}(1)
\end{equation*}
in distribution as $n \to \infty$.
Further, \eqref{eq:supsecondmoment} implies convergence of the first moment.
Since $B^{\lambda^*}(1)$ is centered Gaussian, this means that the second summand in \eqref{eq:step 3 decomposition}
vanishes as $n \to \infty$.
It remains to show that
\begin{equation}	\label{eq:3rd step-remains to show}
\frac{\E_{\lambda}[X_n]-n\vel(\lambda^*)}{(\lambda-\lambda^*)n}	~\to~	\sigma_{12}(\lambda^*)	\quad
\text{as } \lambda \to \lambda^*, (\lambda-\lambda^*)^2n \to \alpha.
\end{equation}
To this end, we use the Radon-Nikod\'ym derivatives introduced in Section \ref{sec:main results}
and follow the end of the proof of Theorem 2.3 in \cite{Mathieu:2015}.
Indeed, using \eqref{eq:girsanov annealed} and \eqref{eq:Taylor}, we get
\begin{align*}
&\E_{\lambda}[X_n-n\vel(\lambda^*)]
=
\E_{\lambda^*}\bigg[(X_n\!-\!n\vel(\lambda^*))
\exp\bigg(\sum_{j=1}^{n} \log \frac{p_{\omega,\lambda}(Y_{j-1},Y_{j})}{p_{\omega,\lambda^{*}}(Y_{j-1},Y_{j})}\bigg)\bigg]	\\
&=
\E_{\lambda^*}\big[(X_n\!-\!n\vel(\lambda^*))
\exp\big((\lambda\!-\!\lambda^*)M^{\lambda^*}_n - (\lambda\!-\!\lambda^*)^2 A_{\omega,\lambda^*}(n) + R_{\omega,\lambda^*,\lambda}(n)\big)\big].
\end{align*}
Now divide by $(\lambda-\lambda^*)n \sim \sqrt{\alpha n}$ and use Theorem \ref{Thm:joint CLT}, Lemma \ref{Lem:2nd order term},
Slutsky's theorem and the continuous mapping theorem to conclude
\begin{align}
\frac{X_n-n\vel(\lambda^*)}{(\lambda-\lambda^*)n} &
\exp\big((\lambda\!-\!\lambda^*)M^{\lambda^*}_n - (\lambda\!-\!\lambda^*)^2 A_{\omega,\lambda^*}(n) + R_{\omega,\lambda^*,\lambda}(n)\big)	\notag	\\
&\stackrel{\mathcal{D}}{\to}~ \frac{1}{\sqrt{\alpha}} B^{\lambda^*}\!(1) \exp\Big(\sqrt{\alpha} M^{\lambda^*}\!(1) - \frac\alpha2 \E_{\lambda^*}[M^{\lambda^*}\!(1)^2]\Big).		\label{eq:convergence in distribution for 3rd step}
\end{align}
Suppose that along with convergence in distribution, convergence of the first moment holds.
Then we infer
\begin{align*}
\lim  \frac{\E_{\lambda}[X_n]-n\vel(\lambda^*)}{(\lambda-\lambda^*)n}
& =
\frac{1}{\sqrt{\alpha}}
\E_{\lambda^*}\Big[B^{\lambda^*}\!(1) \exp\Big(\sqrt{\alpha} M^{\lambda^*}\!(1) - \frac\alpha2 \E_{\lambda^*}[M^{\lambda^*}\!(1)^2]\Big)\Big]	\\
& =
\E_{\lambda^*}[B^{\lambda^*}\!(1) M^{\lambda^*}\!(1)]
= \sigma_{12}(\lambda)
\end{align*}
where the last step follows from the integration by parts formula for two-dimensional Gaussian vectors\footnote{
There are several proofs of this formula, for instance,
one can consider the bivariate moment generating function $\Phi(s,t) = \E_{\lambda^*}[\exp(sB^{\lambda^*}\!(1) + tM^{\lambda^*}\!(1))]$,
differentiate with respect to $s$ and evaluate at $(s,t)=(0,1)$.}
and the limit is as $\lambda \to \lambda^*, (\lambda-\lambda^*)^2n \to \alpha$.
It remains to show that the family on the left-hand side of \eqref{eq:convergence in distribution for 3rd step} is uniformly integrable.
To this end, use H\"older's inequality to obtain
\begin{align*}
\sup_{\lambda,n} \E_{\lambda^*} \! & \bigg[\bigg|\frac{X_n-n\vel(\lambda^*)}{(\lambda-\lambda^*)n}
e^{(\lambda\!-\!\lambda^*)M^{\lambda^*}_n - (\lambda\!-\!\lambda^*)^2 A_{\omega,\lambda^*}(n) + R_{\omega,\lambda^*,\lambda}(n)}\bigg|^{\frac65}\bigg]	\\
& \leq \sup_{\lambda,n} \E_{\lambda^*}\! \bigg[\bigg|\frac{X_n-n\vel(\lambda^*)}{(\lambda-\lambda^*)n}\bigg|^{2}\bigg]^{\frac{3}{5}}	\\
& \hphantom{\leq} \ \cdot
\sup_{\lambda,n} \E_{\lambda^*} \! \Big[e^{3(\lambda\!-\!\lambda^*)M^{\lambda^*}_n - 3(\lambda\!-\!\lambda^*)^2 A_{\omega,\lambda^*}(n)
+ 3 R_{\omega,\lambda^*,\lambda}(n)}\Big]^{\frac25}\!\!.
\end{align*}
By \eqref{eq:sup kappa-moment X}, the first supremum in the last line is finite.
To show finiteness of the second,
first notice that $(\lambda-\lambda^*)^2 A_{\omega,\lambda^*}(n)$ and $R_{\omega,\lambda^*,\lambda}(n)$ are
(for fixed $\lambda^*$) bounded sequences
when $(\lambda-\lambda^*)^2n$ stays bounded (see the proof of Lemma \ref{Lem:2nd order term} for details),
while $\sup_{\lambda,n} \E_{\lambda^*} \! [e^{3(\lambda\!-\!\lambda^*)M^{\lambda^*}_n}] < \infty$
follows from \eqref{eq:sup exp M}.
\end{proof}

For later use, we state here an analogous result used in the proof of Theorem \ref{Thm:continuity of the speed}.
Since the proof is an adaption of the proof of Proposition \ref{Prop:3rd step}
we refrain from giving the details here and only note that Theorem \ref{Thm:Marcinkiewicz-Zygmund} is used at this point
(instead of the central limit theorem).

\begin{proposition}	\label{Prop:3rd step again}
Assume that $\lambda^* \in (0,\lambdacrit)$ and let $1 < r < \frac{\lambdacrit}{\lambda^*} \wedge 2$.
Then, for arbitrary $\alpha > 0$,
\begin{equation*}	\label{eq:3rd step again}
\lim_{\substack{\lambda \to \lambda^*,\\ n(\lambda-\lambda^*)^r \to \alpha}}
\frac{\E_{\lambda}[X_n]-\E_{\lambda^*}[X_n]}{(\lambda-\lambda^*)^{r-1}n} = 0.
\end{equation*}
\end{proposition}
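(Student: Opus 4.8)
The plan is to adapt the proof of Proposition~\ref{Prop:3rd step}, with the central limit theorem replaced by Theorem~\ref{Thm:Marcinkiewicz-Zygmund}. I would decompose
\[
\frac{\E_{\lambda}[X_n]-\E_{\lambda^*}[X_n]}{(\lambda-\lambda^*)^{r-1}n}
= \frac{\E_{\lambda}[X_n]-n\vel(\lambda^*)}{(\lambda-\lambda^*)^{r-1}n}
- \frac{\E_{\lambda^*}[X_n]-n\vel(\lambda^*)}{(\lambda-\lambda^*)^{r-1}n},
\]
and show that each summand tends to $0$ along the limit $\lambda\to\lambda^*$, $n(\lambda-\lambda^*)^r\to\alpha$. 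The elementary facts to be used repeatedly are that, along this limit, $(\lambda-\lambda^*)^{r-1}n$ is of exact order $n^{1/r}$, that $(\lambda-\lambda^*)^2n\to0$ (since $r<2$ and $n(\lambda-\lambda^*)^r\to\alpha$), and that $|\lambda-\lambda^*|\le Cn^{-1/r}\le Cn^{-1/2}$ for all large $n$ (again because $r\le2$).

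For the second summand, Theorem~\ref{Thm:Marcinkiewicz-Zygmund} applied with bias $\lambda^*$ gives $\E_{\lambda^*}\big[|X_n-n\vel(\lambda^*)|^r\big]=o(n)$, hence $|\E_{\lambda^*}[X_n]-n\vel(\lambda^*)|=o(n^{1/r})$ by Jensen's inequality; dividing by $(\lambda-\lambda^*)^{r-1}n$ yields the claim for this summand.

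For the first summand, I would use the Girsanov identity~\eqref{eq:girsanov annealed} (the Radon--Nikod\'ym density has $\Prob_{\lambda^*}$-mean one) together with the Taylor expansion~\eqref{eq:Taylor} to write
\[
\E_{\lambda}[X_n]-n\vel(\lambda^*)
= \E_{\lambda^*}\!\big[(X_n-n\vel(\lambda^*))\,W_n\big],
\]
where $W_n\defeq\exp\big((\lambda-\lambda^*)M^{\lambda^*}_n-(\lambda-\lambda^*)^2A_{\omega,\lambda^*}(n)+R_{\omega,\lambda^*,\lambda}(n)\big)$. Setting $Z_n\defeq(X_n-n\vel(\lambda^*))\big/\big((\lambda-\lambda^*)^{r-1}n\big)$, the bound $\E_{\lambda^*}\big[|X_n-n\vel(\lambda^*)|^r\big]=o(n)$ and the order of $(\lambda-\lambda^*)^{r-1}n$ give $\E_{\lambda^*}[|Z_n|^r]\to0$. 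Then H\"older's inequality with exponents $r$ and $r'=r/(r-1)$ gives
\[
\left|\frac{\E_{\lambda}[X_n]-n\vel(\lambda^*)}{(\lambda-\lambda^*)^{r-1}n}\right|
= \big|\E_{\lambda^*}[Z_nW_n]\big|
\le \E_{\lambda^*}[|Z_n|^r]^{1/r}\,\E_{\lambda^*}\big[W_n^{r'}\big]^{1/r'},
\]
so it remains to bound $\E_{\lambda^*}\big[W_n^{r'}\big]$ uniformly along the limit. Since $A_{\omega,\lambda^*}(n)$ and $(\lambda-\lambda^*)^{-2}R_{\omega,\lambda^*,\lambda}(n)$ are sums of $n$ uniformly bounded summands and $(\lambda-\lambda^*)^2n\to0$, the quantities $(\lambda-\lambda^*)^2A_{\omega,\lambda^*}(n)$ and $R_{\omega,\lambda^*,\lambda}(n)$ stay bounded, so $W_n^{r'}\le C\exp\big(r'(\lambda-\lambda^*)M^{\lambda^*}_n\big)$ for some constant $C$; since $r'|\lambda-\lambda^*|\le t_0n^{-1/2}$ for large $n$ and a suitable $t_0>0$, the Azuma--Hoeffding estimate underlying Proposition~\ref{Prop:sup exp M}, applied to both $M^{\lambda^*}$ and $-M^{\lambda^*}$, gives $\sup_n\E_{\lambda^*}\big[\exp(r'(\lambda-\lambda^*)M^{\lambda^*}_n)\big]<\infty$. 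This would complete the proof.

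I expect the last step, the uniform bound on the exponential density $W_n$, to be the main obstacle, and it is exactly where the restriction $r<2$ is needed: it forces $|\lambda-\lambda^*|$ to decay at least like $n^{-1/2}$, so the sub-Gaussian bound for $M^{\lambda^*}_n$ from Proposition~\ref{Prop:sup exp M} applies, and it makes $(\lambda-\lambda^*)^2n\to0$, so the Taylor remainder and second-order term remain bounded. The complementary restriction $r<\lambdacrit/\lambda^*$ is what makes Theorem~\ref{Thm:Marcinkiewicz-Zygmund}, and with it $\E_{\lambda^*}[|Z_n|^r]\to0$, available in the first place.
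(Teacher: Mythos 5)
Your proof is correct and follows essentially the same route as the paper's own argument: the same decomposition, the same Girsanov plus Taylor identity for the first summand, Theorem~\ref{Thm:Marcinkiewicz-Zygmund} (via Proposition~\ref{Prop:SLLN&MZlaw}) for the second, and H\"older plus the sub-Gaussian bound behind Proposition~\ref{Prop:sup exp M} (together with the deterministic boundedness of $(\lambda-\lambda^*)^2 A_{\omega,\lambda^*}(n)$ and $R_{\omega,\lambda^*,\lambda}(n)$) to control the density. The only stylistic difference is that you apply H\"older directly with the conjugate pair $(r, r')$ to estimate $\E_{\lambda^*}[|Z_n W_n|]$, using the $L^r$ part of the Marcinkiewicz--Zygmund law, whereas the paper first establishes almost-sure convergence of $Z_n W_n$ to $0$ and then deduces $L^1$ convergence from uniform integrability (proved via H\"older at an intermediate exponent $1<p<r$); your route is marginally more direct but relies on the same ingredients.
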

\auskommentiert{
\begin{proof}
We decompose as in the proof of Proposition \ref{Prop:3rd step}:
\begin{equation}	\label{eq:step 3 again decomposition}
\frac{\E_{\lambda}[X_n]-\E_{\lambda^*}[X_n]}{(\lambda-\lambda^*)^{r-1}n}
~=~	\frac{\E_{\lambda}[X_n]-n\vel(\lambda^*)}{(\lambda-\lambda^*)^{r-1}n} - \frac{\E_{\lambda^*}[X_n]-n\vel(\lambda^*)}{(\lambda-\lambda^*)^{r-1}n}.
\end{equation}
The second summand satisfies
\begin{equation*}
\frac{\E_{\lambda^*}[X_n]-n\vel(\lambda^*)}{(\lambda-\lambda^*)^{r-1}n}
= \frac{\E_{\lambda^*}[X_n]-n\vel(\lambda^*)}{n^{1/r}} \cdot \frac{n^{1/r}}{(\lambda-\lambda^*)^{r-1}n} \to 0
\end{equation*}
by Proposition \ref{Prop:SLLN&MZlaw} and the relation $n(\lambda-\lambda^*)^r \to \alpha$.
We deal with the first summand as in the proof of Proposition \ref{Prop:3rd step}:
\begin{align*}
&\E_{\lambda}[X_n-n\vel(\lambda^*)]
=
\E_{\lambda^*}\bigg[(X_n\!-\!n\vel(\lambda^*))
\exp\bigg(\sum_{j=1}^{n} \log \frac{p_{\omega,\lambda}(Y_{j-1},Y_{j})}{p_{\omega,\lambda^{*}}(Y_{j-1},Y_{j})}\bigg)\bigg]	\\
&=
\E_{\lambda^*}\big[(X_n\!-\!n\vel(\lambda^*))
\exp\big((\lambda\!-\!\lambda^*)M^{\lambda^*}_n - (\lambda\!-\!\lambda^*)^2 A_{\omega,\lambda^*}(n) + R_{\omega,\lambda^*,\lambda}(n)\big)\big].
\end{align*}
From $\lambda\!-\!\lambda^* \sim \alpha^{1/r} n^{-1/r}$ and \eqref{eq:Marcinkiewicz-Zygmund}
we conclude $(\lambda\!-\!\lambda^*)M^{\lambda^*}_n \to 0$ $\Prob_{\lambda}$-a.\,s.
From Lemma \ref{Lem:2nd order term} it follows that also
$(\lambda\!-\!\lambda^*)^2 A_{\omega,\lambda^*}(n) \to 0$ and $R_{\omega,\lambda^*,\lambda}(n) \to 0$ $\Prob_{\lambda^*}$-a.\,s.
On the other hand,
\begin{equation*}
\E_{\lambda^*}\bigg[\bigg|\frac{X_n\!-\!n\vel(\lambda^*)}{n^{1/r}}\bigg|\bigg] \to 0
\end{equation*}
again by \eqref{eq:Marcinkiewicz-Zygmund}. Thus, the claim follows
if we can check that the family
\begin{equation*}
\frac{X_n\!-\!n\vel(\lambda^*)}{n^{1/r}} \cdot 
\exp\bigg(\sum_{j=1}^{n} \log \frac{p_{\omega,\lambda}(Y_{j-1},Y_{j})}{p_{\omega,\lambda^{*}}(Y_{j-1},Y_{j})}\bigg),	\quad	n \in \N, \ \lambda \to \lambda^*
\end{equation*}
is uniformly integrable under $\Prob_{\lambda^*}$ where $\lambda \to \lambda^*$ such that $n(\lambda-\lambda^*)^r \to \alpha$.
Let $1 < p < r$ and use H\"older's inequality to infer
\begin{align*}
\sup_{\lambda,n} \E_{\lambda^*} \! & \bigg[\bigg|\frac{X_n-n\vel(\lambda^*)}{n^{1/r}}
e^{(\lambda\!-\!\lambda^*)M^{\lambda^*}_n - (\lambda\!-\!\lambda^*)^2 A_{\omega,\lambda^*}(n) + R_{\omega,\lambda^*,\lambda}(n)}\bigg|^{p}\bigg]	\\
& \leq \sup_{n} \E_{\lambda^*}\! \bigg[\bigg|\frac{X_n-n\vel(\lambda^*)}{n^{1/r}}\bigg|^{r}\bigg]^{\frac{p}{r}}	\\
& \hphantom{\leq} \ \cdot
\sup_{\lambda,n} \E_{\lambda^*} \! \Big[e^{\frac{r}{r-p}(\lambda\!-\!\lambda^*)M^{\lambda^*}_n - \frac{r}{r-p}(\lambda\!-\!\lambda^*)^2 A_{\omega,\lambda^*}(n)
+ \frac{r}{r-p} R_{\omega,\lambda^*,\lambda}(n)}\Big]^{\frac{r-p}{r}}\!\!.
\end{align*}
The first supremum is finite by Theorem \ref{Thm:Marcinkiewicz-Zygmund}.
The second supremum is finite by the arguments in the end of the proof of Proposition \ref{Prop:3rd step}. 
\end{proof}
}

We complete the fourth step of the program on p.\;\pageref{enum:program}
by proving the following two results.

\begin{lemma}	\label{Lem:uniform bound X_n-n vel}
Let $\lambda^*,\delta > 0$.
\begin{itemize}
	\item[(a)]
		If $[\lambda^*-\delta,\lambda^*+\delta] \subseteq (0,\lambdacrit/2)$,
		then there exists a constant $C(\lambda^*,\delta)$ with
		\begin{equation}	\label{eq:absolute bound for E_lambda[X_n]}
		|\E_{\lambda}[X_n] - n\vel(\lambda)| ~\leq~	C(\lambda^*,\delta)
		\end{equation}
		for all $\lambda \in [\lambda^*-\delta,\lambda^*+\delta]$ and all $n \in \N$.
	\item[(b)]
		If $[\lambda^*-\delta,\lambda^*+\delta] \subseteq (0,\lambdacrit)$ and $1 < r < \frac{\lambdacrit}{\lambda^*+\delta} \wedge 2$,
		then
		\begin{equation}	\label{eq:estimate for E_lambda[X_n]}	\textstyle
		n^{-1/r} \sup_{|\lambda - \lambda^*| \leq \delta} |\E_{\lambda}[X_n] - n\vel(\lambda)| ~\to~	0
		\quad	\text{as } n \to \infty.
		\end{equation}
\end{itemize}
\end{lemma}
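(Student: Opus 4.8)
The plan is to reduce both parts to renewal-theoretic estimates for the regeneration structure of Section~\ref{sec:biased random walk}. Write $\nu(n) \defeq k(n)+1 = \inf\{j \geq 0 : \tau_j > n\}$, so $\tau_{k(n)} \leq n < \tau_{\nu(n)}$. Because each regeneration point $R_j = (\rho_j,0)$ lies on a pre-regeneration level (so $(\rho_j,1)$ is isolated) and is visited by $(Y_m)_{m \geq 0}$ exactly once, the walk occupies $\x$-coordinate $\rho_j$ only at time $\tau_j$; together with transience to $+\infty$ this forces $X_m \geq \rho_j$ for $m \geq \tau_j$ and $X_m \leq \rho_j$ for $m \leq \tau_j$, hence
\begin{equation*}
\rho_{k(n)} ~\leq~ X_n ~\leq~ \rho_{\nu(n)} \qquad \Prob_\lambda\text{-a.\,s.}
\end{equation*}
Consequently $|\E_\lambda[X_n] - n\vel(\lambda)| \leq |\E_\lambda[\rho_{\nu(n)}] - n\vel(\lambda)| + \E_\lambda[\rho_{\nu(n)} - \rho_{\nu(n)-1}]$, and it suffices to bound these two quantities uniformly in $n$ and in $\lambda$ over the given interval.

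I would first evaluate $\E_\lambda[\rho_{\nu(n)}]$ by Wald's identity. By Lemma~\ref{Lem:iid regeneration times and points}, under $\Prob_\lambda$ the pairs $(\tau_k\!-\!\tau_{k-1},\rho_k\!-\!\rho_{k-1})$, $k \geq 2$, are i.i.d.\ with (since $\lambda < \lambdacrit$) finite mean, while $(\tau_1,\rho_1)$ has a possibly different law of finite mean; moreover $\nu(n)$ is a stopping time for this sequence with $\E_\lambda[\nu(n)] \leq n+1$. The version of Wald's identity permitting a differently distributed first term gives $\E_\lambda[\rho_{\nu(n)}] = \E_\lambda[\rho_1] + \E_\lambda[\rho_2\!-\!\rho_1]\,(\E_\lambda[\nu(n)]-1)$ and the analogous identity with $\rho$ replaced by $\tau$. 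Eliminating $\E_\lambda[\nu(n)]-1$, recalling $\vel(\lambda) = \E_\lambda[\rho_2\!-\!\rho_1]/\E_\lambda[\tau_2\!-\!\tau_1]$ (see \eqref{eq:velocity}) and writing $\tau_{\nu(n)} = n + (\tau_{\nu(n)} - n)$ with $0 \leq \tau_{\nu(n)} - n \leq \tau_{\nu(n)} - \tau_{\nu(n)-1}$, one gets
\begin{equation*}
\E_\lambda[\rho_{\nu(n)}] - n\vel(\lambda) ~=~ \E_\lambda[\rho_1] - \vel(\lambda)\,\E_\lambda[\tau_1] + \vel(\lambda)\,\E_\lambda[\tau_{\nu(n)} - n].
\end{equation*}

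It then remains to estimate the length $\tau_{\nu(n)} - \tau_{\nu(n)-1}$ and the $\x$-increment $\rho_{\nu(n)} - \rho_{\nu(n)-1}$ of the regeneration block straddling $n$. Decomposing over the value of $\nu(n)$, using independence of the $\nu(n)$-th increment from $\tau_{\nu(n)-1}$, together with the elementary fact that the discrete renewal mass $\sum_{j \geq 0}\Prob_\lambda(\tau_j = m) \leq 1$ (the integers $\tau_j$ being strictly increasing), one obtains
\begin{align*}
\E_\lambda[\tau_{\nu(n)} - \tau_{\nu(n)-1}] &~\leq~ \E_\lambda[\tau_1 \1_{\{\tau_1 > n\}}] + \E_\lambda\big[(\tau_2\!-\!\tau_1)\min(\tau_2\!-\!\tau_1,\,n\!+\!1)\big], \\
\E_\lambda[\rho_{\nu(n)} - \rho_{\nu(n)-1}] &~\leq~ \E_\lambda[\rho_1 \1_{\{\tau_1 > n\}}] + \textstyle\sum_{k=0}^{n} \E_\lambda\big[(\rho_2\!-\!\rho_1)\1_{\{\tau_2\!-\!\tau_1 > k\}}\big].
\end{align*}
In the setting of (a), where $[\lambda^*\!-\!\delta,\lambda^*\!+\!\delta] \subseteq (0,\lambdacrit/2)$, Lemma~\ref{Lem:moments of rho and tau} supplies $\E_\lambda[(\tau_2\!-\!\tau_1)^{2}] < \infty$ (even a moment of order $>2$), so by Cauchy--Schwarz and a Markov bound on $\Prob_\lambda(\tau_2\!-\!\tau_1 > k)$ the right-hand sides are bounded uniformly in $n$, while $\E_\lambda[\tau_1\1_{\{\tau_1>n\}}] \leq \E_\lambda[\tau_1]$ and $\E_\lambda[\rho_1\1_{\{\tau_1>n\}}] \leq \E_\lambda[\rho_1]$; this gives (a). In the setting of (b) only $\E_\lambda[(\tau_2\!-\!\tau_1)^{r}] < \infty$ is available, but then $\E_\lambda[(\tau_2\!-\!\tau_1)\min(\tau_2\!-\!\tau_1,n\!+\!1)] = O(n^{2-r})$, the $\rho$-sum is $O(\log n)$ (H\"older plus Markov, using the exponential moments of $\rho_2\!-\!\rho_1$ from Lemma~\ref{Lem:moments of rho and tau}), and $\E_\lambda[\tau_1\1_{\{\tau_1>n\}}],\,\E_\lambda[\rho_1\1_{\{\tau_1>n\}}] = o(n^{1/r})$; since $(r-1)^2 > 0$ forces $2-r < 1/r$, all of these are $o(n^{1/r})$, which gives (b).

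Finally, since every bound above is expressed through the moments $\E_\lambda[(\tau_2\!-\!\tau_1)^s]$, $\E_\lambda[(\rho_2\!-\!\rho_1)^s]$, $\E_\lambda[\tau_1^s]$, $\E_\lambda[\rho_1^s]$ (for suitable $s \leq 2$, resp.\ $s \leq r$) together with $\vel(\lambda) \leq 1$ and $\E_\lambda[\tau_2\!-\!\tau_1] \geq 1$, one concludes by taking the supremum over $\lambda \in [\lambda^*\!-\!\delta,\lambda^*\!+\!\delta]$. The one point that requires genuine work --- and which I expect to be the main obstacle --- is to check that these moments are bounded \emph{locally uniformly} in $\lambda$: for the increment moments this is a locally uniform strengthening of Lemma~\ref{Lem:moments of rho and tau}, which can be read off its proof in Section~\ref{sec:regeneration} together with the $\lambda$-free exponential tail bound of Lemma~\ref{Lem:pre-regeneration point decomposition}(b), and the same arguments yield the corresponding bounds for the first-block quantities $\tau_1$ and $\rho_1$. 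The renewal-measure bound needed here is, by contrast, free, since renewal epochs on $\Z$ are distinct.
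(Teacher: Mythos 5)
Your route is essentially the paper's: split off the overshoot past the last regeneration, compute $\E_\lambda[\rho_{\nu(n)}]$ via Wald's identity, and reduce everything to locally uniform moment and tail estimates for $(\rho_1,\tau_1)$ and $(\rho_2\!-\!\rho_1,\tau_2\!-\!\tau_1)$, which are exactly the content of Lemmas~\ref{Lem:uniform exponential regeneration point estimates}, \ref{Lem:uniform regeneration estimates} and \ref{Lem:uniform lower and upper tail bounds}. Two small differences are worth flagging. First, you eliminate $\E_\lambda[\nu(n)]$ between the two Wald identities to obtain the clean exact formula $\E_\lambda[\rho_{\nu(n)}] - n\vel(\lambda) = \E_\lambda[\rho_1] - \vel(\lambda)\E_\lambda[\tau_1] + \vel(\lambda)\E_\lambda[\tau_{\nu(n)}-n]$, whereas the paper derives separate lower and upper bounds; your version is a little tidier algebraically, and the observation that the discrete renewal mass $\sum_j \Prob_\lambda(\tau_j=m)\le 1$ is "free" is a legitimate simplification of the stochastic-domination argument in the proof of Lemma~\ref{Lem:uniform regeneration estimates}(c). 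Second, you bound $\rho_{\nu(n)}-X_n$ by the $\rho$-overshoot $\rho_{\nu(n)}-\rho_{\nu(n)-1}$, while the paper controls $\rho_{\nu(n)}-X_n$ directly through the uniform exponential tail bound of Lemma~\ref{Lem:uniform exponential regeneration point estimates}; both are fine. The one inaccuracy to fix: the sandwich $\rho_{\nu(n)-1}\le X_n$ fails on the event $\{\nu(n)=1\}=\{\tau_1>n\}$, since $R_0=\mathbf{0}$ need not be a pre-regeneration point and $X_n$ can be negative there. This is harmless --- on that event you can bound $|\rho_{\nu(n)}-X_n|=\rho_1-X_n\le \rho_1+n$, so that $\E_\lambda[\1_{\{\tau_1>n\}}(\rho_1-X_n)]\le\E_\lambda[\rho_1]+\E_\lambda[\tau_1]$ by Markov --- but it should be stated, since as written the displayed a.\,s.\ inequality is false. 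With that correction, the argument is sound; the genuine technical work is, as you say, the local uniformity in $\lambda$ of the regeneration estimates, which is supplied by Section~\ref{sec:regeneration}.
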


The first part of the lemma has the following immediate corollary.

\begin{corollary}	\label{Cor:4th step}
Let $\lambda^* \in (0,\lambdacrit/2)$. Then
\begin{equation*}	\tag{\ref{eq:2nd step}}
\lim_{\substack{\lambda \to \lambda^*,\\ (\lambda-\lambda^*)n \to \infty}}
\bigg[\frac{\vel(\lambda)-\vel(\lambda^*)}{\lambda-\lambda^*} - \frac{\E_{\lambda}[X_n]-\E_{\lambda^*}[X_n]}{(\lambda-\lambda^*)n}\bigg]
~=~ 0.
\end{equation*}
\end{corollary}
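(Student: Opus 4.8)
The plan is to reduce Corollary~\ref{Cor:4th step} directly to Lemma~\ref{Lem:uniform bound X_n-n vel}(a) by an elementary algebraic rearrangement. Writing $g_n(\mu) \defeq \vel(\mu) - \tfrac{1}{n}\E_{\mu}[X_n]$ for $\mu > 0$ and $n \in \N$, one rewrites the bracketed quantity as a single difference quotient:
\begin{equation*}
\frac{\vel(\lambda)-\vel(\lambda^*)}{\lambda-\lambda^*} - \frac{\E_{\lambda}[X_n]-\E_{\lambda^*}[X_n]}{(\lambda-\lambda^*)n}
= \frac{g_n(\lambda) - g_n(\lambda^*)}{\lambda-\lambda^*}.
\end{equation*}
So it suffices to show that $g_n(\lambda) - g_n(\lambda^*)$ is $o(|\lambda-\lambda^*|)$ in the prescribed simultaneous limit.

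Next I would use the uniform bound. Since $\lambda^* \in (0,\lambdacrit/2)$, fix $\delta > 0$ small enough that $[\lambda^*-\delta,\lambda^*+\delta] \subseteq (0,\lambdacrit/2)$. For $\lambda$ sufficiently close to $\lambda^*$ we have $\lambda \in [\lambda^*-\delta,\lambda^*+\delta]$, and then Lemma~\ref{Lem:uniform bound X_n-n vel}(a) applied at $\lambda$ and at $\lambda^*$ yields
\begin{equation*}
|g_n(\lambda)| = \tfrac{1}{n}\big|\E_{\lambda}[X_n] - n\vel(\lambda)\big| \leq \frac{C(\lambda^*,\delta)}{n},
\qquad
|g_n(\lambda^*)| \leq \frac{C(\lambda^*,\delta)}{n},
\end{equation*}
for all $n \in \N$. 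Consequently
\begin{equation*}
\left| \frac{g_n(\lambda) - g_n(\lambda^*)}{\lambda-\lambda^*} \right| \leq \frac{2\,C(\lambda^*,\delta)}{n\,|\lambda-\lambda^*|}.
\end{equation*}

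Finally, the constraint $(\lambda-\lambda^*)n \to \infty$ forces $n|\lambda-\lambda^*| \to \infty$, so the right-hand side tends to $0$; this proves the corollary. The only genuine input is the uniform estimate \eqref{eq:absolute bound for E_lambda[X_n]}, which is Lemma~\ref{Lem:uniform bound X_n-n vel}(a) and has already been established, so there is no real obstacle here; the only point requiring a sentence of care is that in the simultaneous limit one must first pass to $\lambda$ inside the fixed compact neighbourhood $[\lambda^*-\delta,\lambda^*+\delta]$ before invoking the lemma, which is automatic because $\lambda \to \lambda^*$.
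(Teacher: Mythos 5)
Your proof is correct and is precisely what the paper intends when it calls Corollary~\ref{Cor:4th step} an ``immediate corollary'' of Lemma~\ref{Lem:uniform bound X_n-n vel}(a): you rewrite the bracket as a difference quotient of $g_n(\mu)=\vel(\mu)-\tfrac1n\E_\mu[X_n]$, bound $|g_n(\lambda)|$ and $|g_n(\lambda^*)|$ uniformly by $C(\lambda^*,\delta)/n$ via \eqref{eq:absolute bound for E_lambda[X_n]}, and conclude since $n(\lambda-\lambda^*)\to\infty$. This is the same argument the paper has in mind.
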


\begin{proof}[Proof of Lemma \ref{Lem:uniform bound X_n-n vel}]
Choose $\delta > 0$ such that $0 < \lambda^*-\delta < \lambda^*+\delta < \lambdacrit$.
We first remind the reader that $\nu(n) = \inf\{j \in \N: \tau_j > n\} = k(n)+1$
is a stopping time with respect to the canonical filtration of $((\rho_j-\rho_{j-1},\tau_j-\tau_{j-1}))_{j \in \N}$.
For $n \in \N$, we decompose $\E_{\lambda}[X_n]$ in the form
\begin{equation}	\label{eq:E_lambda X_n sandwiched}
\E_{\lambda}[X_n]
~=~	\E_{\lambda}[X_{n}- \rho_{\nu(n)}] +\E_{\lambda}[\rho_{\nu(n)}],
\end{equation}
and estimate the two summands on the right-hand side separately.
The first summand in \eqref{eq:E_lambda X_n sandwiched}
is uniformly bounded in $\lambda \in [\lambda^*-\delta,\lambda^*+\delta]$
and $n \in \N_0$ by Lemma \ref{Lem:uniform regeneration estimates}(a).

In order to deal with the second summand, as in the proof of Theorem \ref{Thm:joint CLT},
we define $\nu'(k) = \inf\{j \in \N_0: \tau_{j+1}-\tau_1 > k\}$, $k \in \Z$. Then
\begin{equation*}	\textstyle
\rho_{\nu(n)}	~=~	 \rho_1 + \sum_{j=1}^{\nu'(n-\tau_1)} (\rho_{j+1}-\rho_{j}).
\end{equation*}
Now take expectation with respect to $\Prob_{\lambda}[\,\cdot\,| (\rho_1,\tau_1)]$, use Wald's equation
and then integrate with respect to $\Prob_{\lambda}$ to obtain
\begin{equation}
\E_{\lambda}[\rho_{\nu(n)}]
~=~
\E_{\lambda}[\rho_1] + \E_{\lambda}[\nu'(n-\tau_1)] \E_{\lambda}[\rho_2-\rho_1]	\label{eq:Wald's equation for E_lambda[rho_nu(n)]}.
\end{equation}
We use \eqref{eq:Wald's equation for E_lambda[rho_nu(n)]} to derive a lower bound for $\E_{\lambda}[\rho_{\nu(n)}]$.
For $j=1,\ldots,n$, Wald's equation gives $\E_{\lambda}[\nu'(n-j)] = \E_{\lambda}[\tau_{\nu'(n-j)+1}-\tau_1]/\E_{\lambda}[\tau_2-\tau_1]$.
Thus, the right-hand side of \eqref{eq:Wald's equation for E_lambda[rho_nu(n)]} can be bounded below by
\begin{eqnarray*}
\E_{\lambda}[\rho_{\nu(n)}]
& = &
\E_{\lambda}[\rho_1] + \sum_{j=1}^n \Prob_{\lambda}(\tau_1=j) \E_{\lambda}[\nu'(n-j)] \E_{\lambda}[\rho_2-\rho_1]	\\
& \geq &
\vel(\lambda) \sum_{j=1}^n \Prob_{\lambda}(\tau_1=j) \E_{\lambda}[\tau_{\nu'(n-j)+1}-\tau_1]	\\
& \geq &
\vel(\lambda) \sum_{j=1}^n \Prob_{\lambda}(\tau_1=j) (n-j)	\\
& \geq &
n \vel(\lambda) - \vel(\lambda) \E_{\lambda}[\tau_1] - n\vel(\lambda)\Prob_{\lambda}(\tau_1>n)	\\
& \geq &
n \vel(\lambda) - 2 \E_{\lambda}[\tau_1]
\end{eqnarray*}
where in the last step we have used $\vel(\lambda) \leq 1$ and $n\Prob_{\lambda}(\tau_1>n) \leq \E_{\lambda}[\tau_1]$.
Regarding the upper bound for $\E_{\lambda}[\rho_{\nu(n)}]$, we again use \eqref{eq:Wald's equation for E_lambda[rho_nu(n)]}
to conclude
\begin{eqnarray*}
\E_{\lambda}[\rho_{\nu(n)}]
& \leq &
\E_{\lambda}[\rho_1] + \E_{\lambda}[\nu'(n)] \E_{\lambda}[\rho_2-\rho_1]	\\
& = &
\E_{\lambda}[\rho_1] + \vel(\lambda) \E_{\lambda}[\tau_{\nu'(n)+1}-\tau_1]	\\
& = &
n \vel(\lambda) + \E_{\lambda}[\rho_1] + \vel(\lambda) \E_{\lambda}[(\tau_{\nu'(n)+1}-\tau_1 - n)]	\\
& \leq &
n \vel(\lambda) + \E_{\lambda}[\rho_1] + \E^{\circ}_{\lambda}[(\tau_{\nu(n)}- n) \mid Y_k \neq \mathbf{0} \text{ for all } k \geq 1].
\end{eqnarray*}
The estimates derived above together with Lemma \ref{Lem:uniform regeneration estimates}
yield assertions (a) and (b).
\end{proof}

Apart from the proofs of several lemmas we have referred to,
the proof of Theorem \ref{Thm:differentiability of the speed} is now complete.

\section{Regeneration estimates}	\label{sec:regeneration}

\subsection{The time spent in traps.}

We start by considering the discrete line segment $\{0,\ldots,m\}$ and a nearest-neighbor random walk $(S_n)_{n \geq 0}$
on this set starting at $i \in \{0,\ldots,m\}$ with transition probabilities
\begin{equation*}
\Probi(S_{k+1}=j+1 \mid S_k=j) =	1-\Probi(S_{k+1}=j-1 \mid S_k=j) = \frac{e^{\lambda}}{e^{-\lambda}+e^{\lambda}}
\end{equation*}
for $j=1,\ldots,m-1$ and
\begin{equation*}
\Probi(S_{k+1}=1 \mid S_k=0) = \Probi(S_{k+1}=m-1 \mid S_k=m) = 1.
\end{equation*}
For $i=0$, we are interested in $\tau_m \defeq \inf\{k \in \N: S_k = 0\}$, the time until the first return of the walk to the origin.
The stopping times $\tau_m$ will be used to estimate the time the agile walk $(Z_n)_{n \geq 0}$ spends
in a trap of length $m$ given that it steps into it.

\begin{lemma}	\label{Lem:Etau_m}
In the given situation, the following assertions hold true.
\begin{itemize}
	\item[(a)]
		For each $m \in \N$, we have $\Erm_0 [\tau_m] = 2 \frac{e^{2 \lambda m} -1}{e^{2 \lambda} -1}$.
	\item[(b)]
		For any $\kappa \geq 1$ and every $m \in \N$, we have
		\begin{equation*}	\textstyle
		2^{\kappa} e^{2 \kappa \lambda (m-1)}
		\leq \Erm_0 [\tau_m^{\kappa}] \leq c(\kappa,\lambda) m^{\kappa} e^{2 \kappa \lambda m}
		\end{equation*}
		where $c(\kappa,\lambda)
		= 2^{\kappa-1}(1 + 2(2(\frac{\kappa}{e})^{\kappa} + \Gamma(\kappa\!+\!1))
		(\frac{e^{2\lambda}+1}{e^{2\lambda}-1})^{\kappa})$.
	\item[(c)]
		Assume there is a sequence $G_1, G_2, \ldots$ of independent random variables
		defined on the same probability space as and independent of $(S_n)_{n \geq 0}$.
		Further, suppose that there is $r \in (0,1)$ such that for all $j \in \N$ and $n \in \N_0$, we have
		$\mathrm{P}_{\!0}(G_j > n) \leq r^n$.
		Then, for all $m \in \N$,
		\begin{equation*}
		\Erm_0\bigg[\bigg(\sum_{j=1}^{\tau_m} G_j\bigg)^{\!\!\kappa}\bigg]
		~\leq~	\frac{r}{|\log r|^{\kappa}} \bigg(2 \Big(\frac{\kappa}{e}\Big)^{\!\kappa}
		+ \frac{\Gamma(\kappa+1)}{|\log r|}\bigg) c(\kappa,\lambda) m^{\kappa} e^{2\kappa\lambda m}.
		\end{equation*}
\end{itemize}
\end{lemma}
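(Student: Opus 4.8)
For part (a), the plan is a first‑step analysis. Put $h(i):=\Erm_i\big[\inf\{k\ge 0:S_k=0\}\big]$ for $i\in\{0,\dots,m\}$, so that $\Erm_0[\tau_m]=1+h(1)$ and $h(0)=0$. Conditioning on the first step and using the reflection at $m$ gives the second‑order linear recursion
\[
h(i)=1+\tfrac{e^{\lambda}}{e^{\lambda}+e^{-\lambda}}\,h(i+1)+\tfrac{e^{-\lambda}}{e^{\lambda}+e^{-\lambda}}\,h(i-1),\qquad 1\le i\le m-1,\qquad h(m)=1+h(m-1).
\]
The homogeneous equation has characteristic roots $1$ and $e^{-2\lambda}$, and $i\mapsto -i\coth\lambda$ is a particular solution, so $h(i)=a+b\,e^{-2\lambda i}-i\coth\lambda$; the two boundary conditions pin down $a,b$, and evaluating $1+h(1)$ and simplifying (using $e^{2\lambda}-1=2e^{\lambda}\sinh\lambda$, $1+\coth\lambda=e^{\lambda}/\sinh\lambda$, etc.) yields $\Erm_0[\tau_m]=2\frac{e^{2\lambda m}-1}{e^{2\lambda}-1}$. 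This is routine. The lower bound in (b) follows at once: $x\mapsto x^{\kappa}$ is convex for $\kappa\ge1$, so Jensen gives $\Erm_0[\tau_m^{\kappa}]\ge(\Erm_0[\tau_m])^{\kappa}$, and $2\frac{e^{2\lambda m}-1}{e^{2\lambda}-1}\ge 2e^{2\lambda(m-1)}$ is equivalent to $e^{2\lambda(m-1)}\ge1$.

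The upper bound in (b) is the heart of the statement. The plan is to use the occupation‑time decomposition $\tau_m=1+\sum_{j=1}^{m}V_j$, where $V_j$ is the number of visits of $S$ to level $j$ before the return to $0$ (so $V_0=1$). By the Markov property, conditionally on level $j$ being visited, $V_j$ is geometric with parameter $1-\rho_j$, where $\rho_j=\mathrm{P}_j(\text{return to }j\text{ before }0)$; a one‑line gambler's‑ruin computation on $\{0,\dots,j\}$ shows $1-\rho_j\ge \frac{(e^{2\lambda}-1)e^{-2\lambda j}}{e^{2\lambda}+1}\ge \frac{(e^{2\lambda}-1)e^{-2\lambda m}}{e^{2\lambda}+1}$ for all $j\le m$. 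Hence, bounding the tail $\mathrm{P}(V_j>k)\le \rho_j^{k}$ and controlling $\Erm[V_j^{\kappa}]\le\sum_{k\ge1}\big((k+1)^{\kappa}-k^{\kappa}\big)\rho_j^{k}$ by comparing the resulting power series with the maximum $\max_{t\ge0}t^{\kappa}\rho_j^{t}=(\kappa/(e|\log\rho_j|))^{\kappa}$ and the integral $\int_0^{\infty}t^{\kappa}\rho_j^{t}\,\mathrm dt=\Gamma(\kappa+1)/|\log\rho_j|^{\kappa+1}$, one gets a bound of the form $\Erm[V_j^{\kappa}]\le 2\big(2(\kappa/e)^{\kappa}+\Gamma(\kappa+1)\big)\big(\tfrac{e^{2\lambda}+1}{e^{2\lambda}-1}\big)^{\kappa}e^{2\kappa\lambda m}$, uniformly in $j\le m$. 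Combining $(a+b)^{\kappa}\le 2^{\kappa-1}(a^{\kappa}+b^{\kappa})$ with the power‑mean inequality $(\sum_{j=1}^m V_j)^{\kappa}\le m^{\kappa-1}\sum_{j=1}^m V_j^{\kappa}$ then gives $\Erm_0[\tau_m^{\kappa}]\le 2^{\kappa-1}\big(1+m^{\kappa-1}\sum_{j=1}^{m}\Erm[V_j^{\kappa}]\big)\le c(\kappa,\lambda)\,m^{\kappa}e^{2\kappa\lambda m}$ with exactly the stated constant. (For intuition one can also use the self‑similar excursion recursion $\tau_m\eqdist 2+\sum_{i=1}^{U-1}\tau_{m-1}^{(i)}$, where $U$ is geometric with parameter $(e^{2\lambda}+1)^{-1}$ independent of the i.i.d.\ copies $\tau_{m-1}^{(i)}$; this reproves (a) instantly and also yields the upper bound by induction on $m$ via a Marcinkiewicz--Zygmund/Rosenthal‑type inequality as in \cite{Gut:2005}.) I expect the main obstacle to be precisely the moment bookkeeping: making the constant bound on $\Erm[V_j^{\kappa}]$ explicit and uniform over all real $\kappa\ge1$, which forces the $\max$/integral comparison for $\sum_k k^{\kappa}\rho^{k}$ rather than any integer‑moment shortcut.

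Finally, for part (c) the plan is to exploit the independence of $(G_j)_{j\ge1}$ from $(S_n)_{n\ge0}$. Conditioning on the walk, $\Erm_0\big[(\sum_{j=1}^{\tau_m}G_j)^{\kappa}\big]=\Erm_0[\Psi(\tau_m)]$ with $\Psi(n):=\Erm\big[(\sum_{j=1}^{n}G_j)^{\kappa}\big]$. Using the power‑mean inequality, $\Psi(n)\le n^{\kappa-1}\sum_{j=1}^{n}\Erm[G_j^{\kappa}]\le n^{\kappa}\sup_{j}\Erm[G_j^{\kappa}]$, and from $\mathrm{P}_0(G_j>n)\le r^{n}$ one bounds $\Erm[G_j^{\kappa}]\le\sum_{n\ge1}\big(n^{\kappa}-(n-1)^{\kappa}\big)r^{\,n-1}$, again controlled by $\max_{t}t^{\kappa}r^{t}$ and $\int_0^{\infty}t^{\kappa}r^{t}\,\mathrm dt$, which is where the factor $\frac{r}{|\log r|^{\kappa}}\big(2(\kappa/e)^{\kappa}+\frac{\Gamma(\kappa+1)}{|\log r|}\big)$ comes from. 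Therefore $\Erm_0[\Psi(\tau_m)]\le \frac{r}{|\log r|^{\kappa}}\big(2(\kappa/e)^{\kappa}+\frac{\Gamma(\kappa+1)}{|\log r|}\big)\,\Erm_0[\tau_m^{\kappa}]$, and inserting the bound from (b) gives the claim. This last step is routine once (a) and (b) are in place.
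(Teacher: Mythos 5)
Your argument is correct, and for parts (b) and (c) it is essentially the paper's own: occupation‑time decomposition $\tau_m=1+\sum_{j=1}^m V_j$, conditional geometric law of each $V_j$ with the gambler's‑ruin bound $1-\rho_j\ge\tfrac{e^{2\lambda}-1}{e^{2\lambda}+1}e^{-2\lambda j}$, the ``maximum plus integral'' estimate of Lemma \ref{Lem:unimodular sum<->integral} to control $\sum_k k^\kappa\rho_j^k$, and power‑mean/Jensen to assemble $\Erm_0[\tau_m^\kappa]$; part (c) likewise conditions on the walk, bounds $\sup_j\Erm[G_j^\kappa]$ by the same appendix lemma, and multiplies by $\Erm_0[\tau_m^\kappa]$. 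The one genuine divergence is part (a): you solve the second‑order boundary‑value recursion for $h(i)=\Erm_i[\text{time to hit }0]$ with characteristic roots $1$ and $e^{-2\lambda}$ and a linear particular solution, whereas the paper establishes the closed form by induction from the self‑similar excursion identity $\tau_m\eqdist 2+\sum_{j=1}^{G}\tau_{m-1}^{(j)}$ ($G$ geometric, $\Erm_0[G]=e^{2\lambda}$) together with Wald's equation — an alternative you also flag. Both are routine; the recursion gives the value directly, while the excursion identity is the natural companion to the occupation‑time bookkeeping in (b). Two small cosmetic points: you replace $e^{2\lambda\kappa j}$ by the uniform $e^{2\lambda\kappa m}$ in the bound on $\Erm[V_j^\kappa]$ one step earlier than the paper, which is harmless since $\sum_{j\le m}e^{2\lambda\kappa j}\le m\,e^{2\lambda\kappa m}$ anyway; and the tail‑sum representation $\Erm[V_j^\kappa]=\sum_{k\ge 0}\bigl((k+1)^\kappa-k^\kappa\bigr)\mathrm{P}(V_j>k)$ should start at $k=0$, though the missing $\mathrm{P}(V_j>0)\le 1$ term is dominated by the rest.
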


Before we give the proof of Lemma \ref{Lem:Etau_m},
we remark that with some more effort, it would be possible to determine the exact order of $\Erm_0[\tau_m^\kappa]$.
However, the estimates in the lemma are precise enough for our purposes.

\begin{proof}
Clearly, $\tau_1 = 2$ and, for $m > 1$, by the strong Markov property,
\begin{equation}	\label{eq:trap recursion}
\tau_m	~\eqdist~	2+ \sum_{j=1}^G \tau_{m-1}^{(j)}	\quad	\text{under } \mathrm{P}_{\!0}
\end{equation}
where $\tau_{m-1}^{(j)}$, $j \in \N$ are i.i.d.~copies of $\tau_{m-1}$
and $G$ is an independent geometrically distributed random variable with
\begin{equation*}
\mathrm{P}_{\!0}(G \geq k) = \big(e^{\lambda}/(e^{-\lambda}+e^{\lambda})\big)^k,	\quad	k \in \N_0.
\end{equation*}
In particular, $\Erm_0 [G] = e^{2\lambda}$.
Using induction, Wald's equation and \eqref{eq:trap recursion}, we conclude (a).\smallskip

\noindent
We turn to assertion (b) and fix $\kappa \geq 1$.
Using Jensen's inequality, we infer
\begin{equation*}	\textstyle
\Erm_0 [\tau_m^{\kappa}]
\geq		\Erm_0 [\tau_m]^{\kappa}
=		\big(2 \sum_{j=0}^{m-1} e^{2 \lambda j}\big)^{\kappa}
\geq		2^{\kappa}  e^{2 \kappa \lambda (m-1)},
\end{equation*}
which is the lower bound.
For the upper bound, fix $m \geq 2$,
and let $V_{i} \defeq \sum_{k=1}^{\tau_m-1} \1_{\{S_k=i\}}$ be the number of visits to the point $i$ before the random walk returns to $0$, $i=1,\ldots,m$.
Then $\tau_m = 1+\sum_{i=1}^m V_i$ and, by Jensen's inequality,
\begin{equation}	\label{eq:Etau_m^kappa bound}
\Erm_0 [\tau_m^{\kappa}]
~=~	\Erm_0 \bigg[\bigg(1+\sum_{i=1}^m V_i\bigg)^{\!\!\kappa}\bigg]
~\leq~	(m+1)^{\kappa-1} \bigg(1 + \Erm_0 \bigg[\sum_{i=1}^m V_i^{\kappa}\bigg] \bigg).
\end{equation}
In order to investigate the $V_i$, $i=1,\ldots,m$, let
\begin{equation*}
\sigma_i \defeq \inf\{k \in \N: S_k = i\}
\quad	\text{and}	\quad
r_i \defeq \Probi(\sigma_i < \sigma_0).
\end{equation*}
Given $S_0=i$, when $S_1 = i+1$, then $\sigma_i < \sigma_0$.
When the walk moves to $i-1$ in its first step,
it starts afresh there and hits $i$ before $0$ with probability $\mathrm{P}_{\!\mathit{i}-1}(\sigma_i < \sigma_0)$.
Determining $\mathrm{P}_{\!\mathit{i}-1}(\sigma_i < \sigma_0)$ is the classical ruin problem, hence
\begin{equation}	\label{eq:r_i}
r_i
~=~
\begin{cases}
\frac{e^{\lambda}}{e^{-\lambda}+e^{\lambda}} + \frac{e^{-\lambda}}{e^{-\lambda}+e^{\lambda}} \big(1- \frac{e^{2\lambda}-1}{1-e^{-2\lambda i}} e^{-2\lambda i}\big)
&	\text{for } i=1,\ldots,m-1;	\\
1- \frac{e^{2\lambda}-1}{1-e^{-2\lambda m}} e^{-2\lambda m}
&	\text{for } i=m.
\end{cases}
\end{equation}
In particular, for $i=1,\ldots,m-1$, $r_i$ does not depend on $m$.
Moreover, we have $r_1 \leq r_2 \leq \ldots \leq r_{m-1}$ and $r_1 \leq r_m \leq r_{m-1}$.
By the strong Markov property, for $k \in \N$,
$\Probnull(V_i = k)
= \Probnull(\sigma_i < \sigma_0) \, r_i^{k-1} (1-r_i)$
and hence
\begin{eqnarray*}
\Erm_0 [V_i^{\kappa}]
~=~
\sum_{k \geq 1} k^{\kappa} \Probnull(V_i = k)
& \leq &	\frac{1-r_i}{r_i} \sum_{k \geq 1} k^{\kappa} r_i^{k}	\\
& \leq &
\frac{1-r_i}{r_i} \frac{1}{|\log r_i|^{\kappa}} \bigg(2 \Big(\frac{\kappa}{e}\Big)^{\!\kappa} + \frac{\Gamma(\kappa+1)}{|\log r_i|}\bigg)
\end{eqnarray*}
where \eqref{eq:kappa moment of geometric} has been used in the last step. 
Further, for $i=1,\ldots,m-1$,
\begin{equation*}
|\log r_i|
\geq 1-r_i
=	\frac{e^{-\lambda}}{e^{-\lambda}+e^{\lambda}} \frac{e^{2\lambda}-1}{1-e^{-2\lambda i}} e^{-2\lambda i}
\geq	\frac{e^{2\lambda}-1}{e^{2\lambda}+1} e^{-2\lambda i}.
\end{equation*}
Notice that the same bound also holds for $i=m$.
Using that $r_i^{-1} \leq r_1^{-1} \leq 2$, we conclude
\begin{align}
\Erm_0 [V_i^{\kappa}]
& \leq
\frac{1-r_i}{r_i(1-r_i)^{\kappa}} \Big(2 \Big(\frac{\kappa}{e}\Big)^{\!\kappa} \! + \frac{\Gamma(\kappa+1)}{1-r_i}\Big)
\leq
\frac{2}{(1-r_i)^{\kappa}} \Big(2 \Big(\frac{\kappa}{e}\Big)^{\!\kappa} \! + \Gamma(\kappa+1)\Big)	\notag	\\
& \leq
2 \Big(2 \Big(\frac{\kappa}{e}\Big)^{\!\kappa} \! + \Gamma(\kappa+1)\Big) \Big(\frac{e^{2\lambda}+1}{e^{2\lambda}-1}\Big)^{\!\kappa}
e^{2\lambda\kappa i}	\label{eq:E V_i^kappa}
\end{align}
for $i=1,\ldots,m$.
The upper bound in (b) now follows from \eqref{eq:Etau_m^kappa bound}, \eqref{eq:E V_i^kappa} and some elementary estimates.\smallskip

\noindent
Finally, regarding assertion (c), notice that by Jensen's inequality
\begin{align*}
\Erm_0 \bigg[\bigg(\sum_{i=1}^{\tau_m} G_i\bigg)^{\!\!\kappa}\bigg]
\leq \Erm_0 \bigg[\tau_m^{\kappa-1} \sum_{i=1}^{\tau_m} G_i^\kappa \bigg]
=	\sum_{n \geq 1} \mathrm{P}_{\!0}(\tau_m=n) n^{\kappa-1} \sum_{i=1}^{n}  \Erm_0 [G_i^{\kappa}]	\\
\leq		\Erm_0 [G^{\kappa}] \Erm_0[\tau_m^{\kappa}]
\leq		\frac{r}{|\log r|^{\kappa}} \bigg(2 \Big(\frac{\kappa}{e}\Big)^{\!\kappa} + \frac{\Gamma(\kappa+1)}{|\log r|}\bigg)
c(\kappa,\lambda) m^{\kappa} e^{2\kappa\lambda m}
\end{align*}
where we have used 
\eqref{eq:kappa moment of geometric} for the last inequality.
\end{proof}

From this lemma, we derive estimates for moments of the time the walk $(Y_n)_{n \geq 0}$ spends in the $i$th trap.
For reasons that will later become transparent,
we work with $\Prob^{\circ}_{\lambda} = \Prmp^{\circ} \times P_{\omega,\lambda}$
where $\Prmp^{\circ}$ is the cycle-stationary percolation law.

\begin{lemma}	\label{Lem:time spent in the ith trap}
Suppose that $0<\kappa < \lambdacrit/\lambda$.
For $i \in \N$, let $T_i$ be the time spent by the walk $Y$ in the $i$th trap.
Then there exist constants $C(p,\kappa,\lambda)$ such that,
for fixed $p$ and $\kappa$, $C(p,\kappa,\lambda)$ is bounded on compact $\lambda$-intervals $\subseteq (0,\lambdacrit/\kappa)$
and
\begin{equation}	\label{eq:time spent in the ith trap}
\E^{\circ}_{\lambda} [T_i^{\kappa}]	\leq	C(p,\kappa,\lambda)	\quad	\text{for all } i \in \N.
\end{equation}
\end{lemma}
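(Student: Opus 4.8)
The plan is to split $T_i$ into the contributions of the successive excursions that $Y$ makes into the $i$-th trap, to control one excursion by Lemma~\ref{Lem:Etau_m}, to dominate the number of excursions by a geometric random variable uniformly in the environment, and finally to average over the length $\ell_i$ of the trap by Lemma~\ref{Lem:trap cost}. Throughout I work under $\Prob^{\circ}_{\lambda}=\Prmp^{\circ}\times P_{\omega,\lambda}$. First I would fix $\omega\in\Omega^{*}$: the dead end of the $i$-th trap is a path of $\ell_i$ edges whose only connection to the rest of the cluster is through one backbone vertex $v=v_i\in\mathcal{B}$, so every sojourn of $Y$ in the dead end happens during an \emph{excursion} from $v$ into the dead end and back to $v$. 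Writing $N_i\in\N_0$ for the (a.s.\ finite, since $X_n\to\infty$) number of such excursions and $D_{i,1},D_{i,2},\dots$ for the numbers of time steps of $Y$ spent in the dead end during them, one has $T_i=\sum_{e=1}^{N_i}D_{i,e}$, and by the strong Markov property, conditionally on $\omega$ the $D_{i,e}$ are i.i.d.\ and independent of $N_i$ (the decisions at $v$ and the motion on $\mathcal{B}$ do not depend on what happens inside the dead end).

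For a single excursion I would identify the dead end together with $v$ with the segment $\{0,\dots,m\}$, $m\defeq\ell_i+1$, with $v\leftrightarrow 0$ and the closed tip $\leftrightarrow m$. Because a trap stretches in the direction of the drift, the agile walk inside the dead end has exactly the transition law of the chain $(S_n)$ in Lemma~\ref{Lem:Etau_m}, the only difference being that from the entrance vertex ($\leftrightarrow 1$) it returns to $v$ through a \emph{vertical} edge, i.e.\ with conditional probability $1/(1+e^{\lambda})$ rather than $e^{-\lambda}/(e^{\lambda}+e^{-\lambda})$, which only shortens the excursion; hence the number of agile steps of one excursion is stochastically dominated by $\tau_m$ of Lemma~\ref{Lem:Etau_m}. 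Moreover the holding probability at every dead-end vertex is at most $r_{\lambda}\defeq(1+e^{\lambda})/(1+e^{\lambda}+e^{-\lambda})<1$ (the maximum being attained at the tip), so the holding times are dominated by an i.i.d.\ sequence $(G_j)$ with $\Prm(G_j>n)\le r_{\lambda}^{\,n}$, independent of the agile path. Combining these facts through a monotone coupling with Lemma~\ref{Lem:Etau_m}(a),(c) gives a constant $c_1(\kappa,\lambda)$, bounded on compact $\lambda$-subsets of $(0,\infty)$, with
\begin{equation*}
E_{\omega,\lambda}\big[D_{i,1}^{\kappa}\big]\ \le\ c_1(\kappa,\lambda)\,(1+\ell_i)^{\kappa\vee 1}\,e^{2\kappa\lambda\ell_i},
\end{equation*}
where for $\kappa<1$ one first applies Jensen's inequality and then only needs Lemma~\ref{Lem:Etau_m}(a).

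The next — and main — step is the bound $P_{\omega,\lambda}(N_i\ge k)\le(1-\delta(\lambda))^{k}$ for all $k\in\N_0$, $i\ge 1$ and $\omega\in\Omega^{*}$, with $\delta(\lambda)>0$ bounded away from $0$ on compact $\lambda$-subsets of $(0,\infty)$. At each visit of the agile walk to $v$, with probability at least $e^{\lambda}/(1+e^{\lambda}+e^{-\lambda})$ the walk next steps to the backbone neighbour of $v$ with $\x$-coordinate $\x(v)+1$ (which exists, as $v$ is forwards communicating while its only neighbour on the dead-end side is not), and from that neighbour the walk never returns to $v$ with probability at least $\inf_{\omega}\inf_{w\in\mathcal{B}}P^{w}_{\omega,\lambda}(X_n\ge\x(w)\text{ for all }n)$; this infimum is positive and locally bounded away from $0$, which I would get from reversibility and an electrical-network comparison exploiting that the conductances grow geometrically in the $\x$-coordinate. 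When both events occur, no further excursion takes place, so $N_i$ is dominated by a geometric variable, and in particular $E_{\omega,\lambda}[N_i^{\kappa}]\le c_2(\kappa,\lambda)$ uniformly in $\omega$ and $i$. Using the conditional independence of $N_i$ and $(D_{i,e})$ — via $\big(\sum_{e\le N}D_e\big)^{\kappa}\le N^{\kappa-1}\sum_{e\le N}D_e^{\kappa}$ when $\kappa\ge 1$ and concavity when $\kappa<1$ — this yields
\begin{equation*}
E_{\omega,\lambda}\big[T_i^{\kappa}\big]\ \le\ c_3(\kappa,\lambda)\,(1+\ell_i)^{\kappa\vee 1}\,e^{2\kappa\lambda\ell_i}.
\end{equation*}

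Finally I would average over the environment. By Lemma~\ref{Lem:trap cost}, together with the Markov property of the environment chain and the construction of $\Prmp^{\circ}$ (cf.\ also Lemma~\ref{Lem:trap probability given vertical edge} and Lemma~\ref{Lem:trap cover}), there is $c_4(p)$ with $\Prmp^{\circ}(\ell_i\ge m)\le c_4(p)\,e^{-2\lambdacrit m}$ for all $i\ge 1$ and $m\in\N$. Hence
\begin{equation*}
\E^{\circ}_{\lambda}\big[T_i^{\kappa}\big]\ =\ \int E_{\omega,\lambda}\big[T_i^{\kappa}\big]\,\Prmp^{\circ}(\domega)\ \le\ c_3(\kappa,\lambda)\,c_4(p)\sum_{m\ge 0}(1+m)^{\kappa\vee 1}e^{-2(\lambdacrit-\kappa\lambda)m}\ =:\ C(p,\kappa,\lambda),
\end{equation*}
and the series converges precisely when $\kappa\lambda<\lambdacrit$, i.e.\ when $\lambda\in(0,\lambdacrit/\kappa)$, with sum bounded on compact subintervals; since $c_3(\kappa,\lambda)$ has the same property, so does $C(p,\kappa,\lambda)$, which is the claim. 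The delicate point of the scheme is the uniform-in-$\omega$ escape estimate of the third step; the uniform-in-$i$ exponential tail of $\ell_i$ under $\Prmp^{\circ}$ is secondary but also needs some care.
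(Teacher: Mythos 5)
Your proposal follows essentially the same strategy as the paper's proof: decompose $T_i$ into i.i.d.\ excursions from the trap entrance, dominate the excursion count by a geometric variable using a uniform (in $\omega$) escape estimate, control a single excursion through Lemma~\ref{Lem:Etau_m}(c) (agile-walk return time times geometric holding times), and average over the trap length using the exponential tail from Lemma~\ref{Lem:trap cost}. The constants and the boundedness on compact $\lambda$-intervals are obtained exactly as in the paper.

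There is, however, one concrete flaw in your escape argument. You claim that after each visit to the trap entrance $v=v_i$, with probability at least $e^{\lambda}/(e^{\lambda}+1+e^{-\lambda})$ the agile walk ``steps to the backbone neighbour of $v$ with $\x$-coordinate $\x(v)+1$ (which exists, as $v$ is forwards communicating while its only neighbour on the dead-end side is not).'' Such a neighbour does not exist. In the trap configuration with trap piece $[a,b)$ and trap entrance $v=(a,1-i)$, the only neighbour of $v$ with $\x$-coordinate $a+1$ is $(a+1,1-i)$, which is precisely the first dead-end vertex, hence \emph{not} in $\mathcal{B}$. The backbone neighbour of $v$ is $(a,i)$, reached through the vertical edge, and has the \emph{same} $\x$-coordinate; the forward escape path from $v$ is vertical first and horizontal afterwards. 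Your inference overlooks the vertical edge. So the event you build the geometric bound on — ``step to the right onto the backbone, then never come back'' — has probability zero at the trap entrance. The patch is easy (step vertically to $(a,i)$ with probability $\geq 1/(e^{\lambda}+1+e^{-\lambda})$, then escape from $(a,i)$), but the cleanest route, and the one the paper takes, is to invoke the uniform escape bound $P^{v}_{\omega,\lambda}(Y_n\neq v \text{ for all } n\in\N)\geq\pesc$ from \eqref{eq:lower bound for escape probability}, which holds for every forwards-communicating $v$ regardless of the local geometry of its neighbourhood and directly gives $P_{\omega,\lambda}(M\geq n)\leq (1-\pesc)^{n-1}$.
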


\auskommentiert{
Before we prove the lemma, we give a second description of the regeneration point $R_1$ and the regeneration time $\tau_1$.
Let $D:V^{\N_0} \to \N_0 \cup \{\infty\}$ denote the time of the first return to the initial state,
that is, $D((y_n)_{n \in \N_0}) \defeq \inf\{n \in \N: y_n = y_0\}$
where, as usual, $\inf \varnothing \defeq \infty$.
Put $F_0 = E_0 = M_0 = 0$ and, for $n \in \N$,
\begin{eqnarray*}	\label{eq:F_n,E_n,M_n}
F_{n}	& \defeq &	\inf\{k \in \N_0:\,Y_k \in \mathcal{R}^{\mathrm{pre}},\, X_k > M_{n-1}\},	\\
E_n	& \defeq &	D((Y_{F_n+k})_{k \geq 0}),	\\
M_n	&\defeq &	\sup\{X_k:\, 0 \leq k < E_n\}
\end{eqnarray*}
where, again, $\inf \varnothing \defeq \infty$.
The $F_n$, $n \in \N$ are called the \emph{fresh times},
$F_1$ is the first time at which the walk visits a pre-regeneration point to the right of $\mathbf{0}$.
The time of the first return to this pre-regeneration point is $E_1$. It may be infinite.
The range on the $\x$-axis that was visited by the walk
before its first return to the pre-regeneration point $Y_{F_1}$ is given by $M_1$.
The $F_n$, $E_n$, $M_n$ for $n \geq 2$ can be interpreted similarly.
Let $K \defeq \inf\{k \in \N: F_k < \infty,\, E_k = \infty\}$.
Then $\tau_1 = F_K$.
}

\begin{proof}[Proof of Lemma \ref{Lem:time spent in the ith trap}]
Suppose that $\kappa < \lambdacrit/\lambda$.
Then, for any $\omega \in \Omega^*$ and any forwards-communicating $v$,
by the same argument that leads to (24) in \cite{Axelson-Fisk+H"aggstr"om:2009b},
\begin{equation}	\label{eq:lower bound for escape probability}	\textstyle
P_{\omega,\lambda}^v(Y_n \not = v \text{ for all } n \in \N)
\geq \frac{(\sum_{k=0}^{\infty} e^{-\lambda k})^{-1}}{e^{\lambda}+1+e^{-\lambda}}
= \frac{1-e^{-\lambda}}{e^{\lambda}+1+e^{-\lambda}}
\eqdef \pesc.
\end{equation}
This bound is uniform in the environment $\omega \in \Omega^*$. Denote by $v_{i}$ the entrance of the $i$th trap.  
By the strong Markov property,
$T_i$ can be decomposed into $M$ i.i.d.~excursions into the trap:
$T_i = T_{i,1}+ \ldots + T_{i,M}$. Since $v_i$ is forwards communicating, \eqref{eq:lower bound for escape probability} implies
that  $P_{\omega,\lambda}(M \geq n) \leq (1-\pesc)^{n-1}$, $n \in \N$.
Moreover, $T_{i,1}, \ldots, T_{i,j}$ are i.i.d.~conditional on $\{M \geq j\}$.
We now derive an upper bound for $E_{\omega,\lambda}[T_{i,j}^{\kappa} | M \geq j]$. To this end, we have to take into account the times the walk stays put.
Each time, the agile walk $(Z_n)_{n \geq 0}$ makes a step in the trap,
this step is preceded by a geometric number of times the lazy walk stays put.
This geometric random variable depends on the position inside the trap,
but is stochastically bounded by a geometric random variable $G$ with
$\mathrm{P}_{\!0}(G \geq k) = \gamma^k$ for $\gamma = (1+e^{\lambda})/(e^{\lambda}+1+e^{-\lambda})$.
Lemma \ref{Lem:Etau_m}(c) then gives
\begin{equation*}	\textstyle
E_{\omega,\lambda}[T_{i,j}^{\kappa} | M \geq j] \leq \frac{\gamma}{|\log \gamma|^{\kappa}} \big(2 \big(\frac{\kappa}{e}\big)^{\kappa}
		+ \frac{\Gamma(\kappa+1)}{|\log \gamma|}\big) c(\kappa,\lambda) L_i^{\kappa} e^{2\kappa\lambda L_i},
\end{equation*}
where $L_{i}$ is the number of steps made inside the $i$th trap.
Consequently, by Jensen's inequality and the strong Markov property,
\begin{align*}
E_{\omega,\lambda}[T_i^{\kappa}]
& =
E_{\omega,\lambda}\bigg[\bigg(\sum_{j=1}^M T_{i,j}\bigg)^{\kappa}\bigg]
\leq	E_{\omega,\lambda}\bigg[M^{(\kappa-1) \vee 0} \sum_{j=1}^M T_{i,j}^{\kappa}\bigg]	\\
&=
\sum_{j \geq 1} E_{\omega,\lambda}[ M^{(\kappa-1) \vee 0} \1_{\{M \geq j\}} T_{i,j}^{\kappa}]	\\
& =
\sum_{j \geq 1} E_{\omega,\lambda}^{v_i}[(j+M)^{(\kappa-1) \vee 0}] P_{\omega,\lambda}(M \geq j) E_{\omega,\lambda}[T_{i,j}^{\kappa} | M \geq j]	\\
& \leq
C(\kappa,\lambda) L_i^{\kappa} e^{2 \kappa \lambda L_i}
\end{align*}
for some constant $0 < C(\kappa,\lambda) < \infty$ which is independent of $\omega$.
For later use, we give an upper bound for the value of $C(\kappa,\lambda)$.
For this bound, by monotonicity, we can assume without loss of generality that $\kappa \geq 2$.
First observe that
\begin{align}
&E_{\omega,\lambda}^{v_i}[M^{\kappa-1}]
\leq
(\kappa-1) \sum_{k \geq 0} (k+1)^{\kappa-1} P_{\omega,\lambda}^{v_i}(M > k)	\notag	\\
&~\leq (\kappa-1) \bigg(1+ 2\sum_{k \geq 1} k^{\kappa-1} (1-\pesc)^k\bigg)	\notag	\\
&~\leq
1+ \frac{4}{|\log (1-\pesc)|^{\kappa-1}} \bigg(\frac{(\kappa-1)^{\kappa}}{e^{\kappa-1}} + \frac{\Gamma(\kappa+1)}{|\log (1-\pesc)|}\bigg)
\label{eq:geometric kappa-1 moment}
\end{align}
by \eqref{eq:kappa moment of geometric}.
Hence, again by \eqref{eq:kappa moment of geometric},
\begin{align*}
&\sum_{j \geq 1} E_{\omega,\lambda}^{v_i} [(j\!+\!M)^{\kappa-1}] P_{\omega,\lambda}(M \! \geq \! j)
\leq
2^{\kappa-2} \! \sum_{j \geq 1} (j^{\kappa-1}\!+\!E_{\omega,\lambda}^{v_i}[M^{\kappa-1}]) P_{\omega,\lambda}(M \! \geq \! j)	\\
&\leq~
2^{\kappa-2} \bigg(\!\!\bigg(\sum_{j \geq 1} j^{\kappa-1}P_{\omega,\lambda}(M \geq j)	\\
&\hphantom{\leq~}
+\bigg(1+ \frac{4}{|\log (1\!-\!\pesc)|^{\kappa-1}}
\bigg(\frac{(\kappa\!-\!1)^{\kappa}}{e^{\kappa-1}} + \frac{\Gamma(\kappa\!+\!1)}{|\log (1\!-\!\pesc)|}\bigg)\!\!\bigg)
\sum_{j \geq 1} P_{\omega,\lambda} (M \geq j) \bigg)	\\
&\leq~
\frac{2^{\kappa-2}}{1\!-\!\pesc} \frac{1}{|\log (1\!-\!\pesc)|^{\kappa-1}} \bigg(2 \Big(\frac{\kappa-1}{e}\Big)^{\kappa-1} + \frac{\Gamma(\kappa)}{|\log (1\!-\!\pesc)|}\bigg)	\\
&\hphantom{\leq~}
+ \frac{1}{\pesc}\bigg(1+ \frac{4}{|\log (1\!-\!\pesc)|^{\kappa-1}}
\bigg(\frac{(\kappa\!-\!1)^{\kappa}}{e^{\kappa-1}} + \frac{\Gamma(\kappa\!+\!1)}{|\log (1\!-\!\pesc)|}\bigg)\!\!\bigg).
\end{align*}
In conclusion,
\begin{align}
C(\kappa,\lambda)
\leq&
\frac{\gamma}{|\log \gamma|^{\kappa}} \bigg(2 \Big(\frac{\kappa}{e}\Big)^{\kappa} + \frac{\Gamma(\kappa\!+\!1)}{|\log \gamma|}\bigg) c(\kappa,\lambda)	\notag	\\
& \cdot
\bigg(\frac{2^{\kappa-2}}{1-\pesc} \frac{1}{|\log (1-\pesc)|^{\kappa-1}}
\bigg(2 \Big(\frac{\kappa-1}{e}\Big)^{\kappa-1} + \frac{\Gamma(\kappa)}{|\log (1-\pesc)|}\bigg)	\notag	\\
&
\hphantom{\cdot \bigg(}+ \frac{1}{\pesc}\bigg(\!1+ \frac{4}{|\log (1\!-\!\pesc)|^{\kappa-1}}
\bigg(\frac{(\kappa\!-\!1)^{\kappa}}{e^{\kappa-1}} + \frac{\Gamma(\kappa\!+\!1)}{|\log (1\!-\!\pesc)|}\bigg)\!\!\bigg)\!\!\bigg).	\label{eq:C(lambda,kappa)}
\end{align}
Since $\pesc = \frac{1-e^{-\lambda}}{e^{\lambda}+1+e^{-\lambda}}$
and $\gamma = \frac{1+e^{\lambda}}{e^{\lambda}+1+e^{-\lambda}}$
take values in $(0,1)$ for $\lambda > 0$, $C(\kappa,\lambda)$ is uniformly bounded on compact $\lambda$-intervals $\subseteq (0,\infty)$.
Taking expectations w.r.t.~$\Prmp^{\circ}$ yields:
\begin{eqnarray*}
\E^{\circ}_{\lambda} [T_i^{\kappa}]
& \leq &
\sum_{m \geq 1} \Prmp^{\circ}(L_i=m) C(\kappa,\lambda) m^{\kappa} e^{2 \kappa \lambda m}	\\
& = &
c(p) C(\kappa,\lambda) \sum_{m \geq 1} m^{\kappa} e^{2\kappa\lambda m} e^{-2\lambdacrit m}
~\eqdef~	C(p,\kappa,\lambda)	~<~	\infty
\end{eqnarray*}
since $\lambda\kappa < \lambdacrit$.
Since $C(\kappa,\lambda)$ is bounded on all compact $\lambda$-intervals $\subseteq (0,\infty)$,
$C(p,\kappa,\lambda)$ remains bounded on all compact $\lambda$-intervals $\subseteq (0,\lambdacrit/\kappa)$
(when $\kappa$ is fixed).
\end{proof}

\subsection{Quenched return probabilities.}

Recall that $Z^{\mathcal{B}} = (Z_0^{\mathcal{B}}, Z_1^{\mathcal{B}},\ldots)$
denotes the agile walk on the backbone $\mathcal{B}$.
For $v \in V$, let $\sigma_v \defeq \inf\{k \in \N: Z_k^{\mathcal{B}} = v\}$ and,
for $m \in \Z$, let $\sigma_m \defeq \sigma_{(m,0)} \wedge \sigma_{(m,1)}$.

\begin{lemma}	\label{Lem:stepping back}
Let $m \in \N$ and $v \in \mathcal{B}$ with $\x(v)=m$.
Then, for any $k > m$,
\begin{equation}	\label{eq:stepping back}
P_{\omega,\lambda}^{v}(\sigma_0 < \sigma_{k})
~\leq~	\frac{2(e^{2\lambda}-1)}{e^{\lambda}-1} \frac{1-e^{-2\lambda(k-m)}}{1-e^{-2 \lambda m}} e^{-2 \lambda m}
\end{equation}
uniformly for all $\omega \in \Omega_{\mathbf{0}}$ with $R_0^{\mathrm{pre}} = \mathbf{0}$.
In particular,
\begin{equation*}
P_{\omega,\lambda}^{v}(\sigma_0 < \infty)	~\leq~	\frac{2(e^{2\lambda}-1)}{e^{\lambda}-1} \frac{1}{e^{2 \lambda m}-1}
~\eqdef~	\frac{C(\lambda)}{e^{2 \lambda m}-1}.
\end{equation*}
\end{lemma}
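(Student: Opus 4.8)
The idea is that $Z^{\B}$ is a reversible random walk for which $e^{-2\lambda\x(\cdot)}$ is the natural harmonic profile. First I would observe that, once the a.s.\ finite excursions of the agile walk into dead ends are collapsed, $Z^{\B}$ is a nearest-neighbour random walk on the (connected) backbone graph with conductances $c(\langle v,w\rangle)=e^{\lambda(\x(v)+\x(w))}$, hence reversible with reversing measure $\mu(v)=\sum_{w\sim v\text{ in }\B}c(\langle v,w\rangle)=\tilde Z_v\,e^{2\lambda\x(v)}$, where $\tilde Z_v:=\sum_{w\sim v\text{ in }\B}e^{\lambda(\x(w)-\x(v))}$. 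Equivalently, $v\mapsto e^{-2\lambda\x(v)}$ satisfies $E^v_{\omega,\lambda}[e^{-2\lambda\x(Z^{\B}_1)}]\le e^{-2\lambda\x(v)}$ at every backbone vertex that has an open edge to the right, and is a strict submartingale only at the finitely-structured vertices where the backbone "turns back" (open left edge, closed right edge); at such a vertex the vertical edge is necessarily open and leads to a vertex with an open edge to the right. Since $\lambda>0$ the walk is transient to $+\infty$, so the Green's functions appearing below are finite.

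Write $a_0:=\mathbf 0$; because $R^{\mathrm{pre}}_0=\mathbf 0$, the vertex $(0,1)$ is isolated, so $a_0$ is the unique backbone vertex with $\x$-coordinate $0$ and $\sigma_0=\sigma_{a_0}$. Denoting by $\mathcal A_k$ the set of backbone vertices with $\x$-coordinate $k$, by $G_k(x,y):=E^x_{\omega,\lambda}[\#\{n<\sigma_k:Z^{\B}_n=y\}]$ the Green's function of $Z^{\B}$ killed on $\mathcal A_k$, and by $\sigma_x^{+}:=\inf\{n\ge 1:Z^{\B}_n=x\}$ the return time, the first-entrance identity $G_k(v,a_0)=P^v_{\omega,\lambda}(\sigma_0<\sigma_k)\,G_k(a_0,a_0)$ together with reversibility ($\mu(v)G_k(v,a_0)=\mu(a_0)G_k(a_0,v)$) gives
\[
P^v_{\omega,\lambda}(\sigma_0<\sigma_k)\;=\;\frac{\mu(a_0)}{\mu(v)}\cdot\frac{G_k(a_0,v)}{G_k(a_0,a_0)}\;=\;\frac{\tilde Z_{a_0}}{\tilde Z_v}\,e^{-2\lambda m}\cdot\frac{G_k(a_0,v)}{G_k(a_0,a_0)}.
\]
This exhibits the factor $e^{-2\lambda m}$, and the ratio $\tilde Z_{a_0}/\tilde Z_v$ is bounded uniformly in $\omega$ and $v$ (one has $\tilde Z_{a_0}\le e^{\lambda}+e^{-\lambda}$, while any forwards-communicating vertex satisfies $\tilde Z_v\ge\min(e^{\lambda},1+e^{-\lambda})$).

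It then remains to bound $G_k(a_0,v)/G_k(a_0,a_0)$ by $C\cdot\frac{1-e^{-2\lambda(k-m)}}{1-e^{-2\lambda m}}$, with $C$ chosen so that the full product matches $\frac{2(e^{2\lambda}-1)}{e^{\lambda}-1}$. Writing $G_k(a_0,v)=P^{a_0}_{\omega,\lambda}(\sigma_v<\sigma_k)\,G_k(v,v)$ and $G_k(x,x)=1/P^x_{\omega,\lambda}(\sigma_k<\sigma_x^{+})$, this reduces to lower bounds on the probabilities of reaching $\x$-coordinate $k$ before returning to $v$, resp.\ to $a_0$, which I would derive from the uniform escape estimate \eqref{eq:lower bound for escape probability} — applicable because a turn-back vertex still escapes through its open vertical edge to a neighbour with an open right edge — combined with an analysis of the $\x$-coordinate process and the transience of $Z^{\B}$. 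The "In particular" statement follows by letting $k\to\infty$, using $\{\sigma_0<\sigma_k\}\uparrow\{\sigma_0<\infty\}$ and $\frac{1-e^{-2\lambda(k-m)}}{1-e^{-2\lambda m}}e^{-2\lambda m}\to(e^{2\lambda m}-1)^{-1}$. The main obstacle is precisely this last step: because of the turn-back vertices the individual Green's functions $G_k(v,v)$ and $G_k(a_0,a_0)$ are not bounded uniformly in $\lambda$ (each is of order $(e^{\lambda}-1)^{-1}$ as $\lambda\downarrow 0$), so one cannot estimate them separately but must control their \emph{ratio}, exploiting that the same turn-back cost appears in numerator and denominator; doing so with the sharp constant $\frac{2(e^{2\lambda}-1)}{e^{\lambda}-1}$ is where the real work lies.
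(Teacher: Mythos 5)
Your reversibility setup is correct as far as it goes (the first-entrance identity $G_k(v,a_0)=P^v_{\omega,\lambda}(\sigma_0<\sigma_k)\,G_k(a_0,a_0)$, the reversibility identity $\mu(v)G_k(v,a_0)=\mu(a_0)G_k(a_0,v)$, and the resulting factorization extracting $e^{-2\lambda m}$), but the argument has a genuine, self-acknowledged gap at the crux: you reduce matters to bounding the ratio $G_k(a_0,v)/G_k(a_0,a_0)$ and then write ``I would derive'' and ``the real work lies'' without actually carrying out that bound. Since the individual Green's functions $G_k(v,v)=1/P^v_{\omega,\lambda}(\sigma_k<\sigma_v^+)$ and $G_k(a_0,a_0)$ are, as you yourself note, not uniformly bounded in $\lambda$ (and worse, they also depend nontrivially on the local backbone geometry near $v$ and $a_0$), there is no obvious way to extract the factor $\frac{1-e^{-2\lambda(k-m)}}{1-e^{-2\lambda m}}$ from them separately, and the proposal contains no mechanism for controlling the \emph{ratio} directly. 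That step is precisely the content of the lemma, so the proof as written is incomplete.

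The paper avoids this difficulty by working with effective resistances rather than Green's functions: it cites the bound $P^v_{\omega,\lambda}(\sigma_0<\sigma_k)\leq \mathcal{R}_{\mathcal{B}}(v\leftrightarrow\{(k,0),(k,1)\})/\mathcal{R}_{\mathcal{B}}(v\leftrightarrow\mathbf{0})$ from Berger--Gantert--Peres, then bounds the numerator from above by the resistance of a single non-backtracking path from $v$ to $\x$-coordinate $k$ (Rayleigh monotonicity plus the series law, giving $\sum_{j=2m}^{2k-1}e^{-j\lambda}$), and bounds the denominator from below via the Nash--Williams inequality applied to the disjoint edge-cutsets $\Pi_j=\{\langle(j-1,i),(j,i)\rangle:i=0,1\}$, $j=1,\dots,m$ (giving $\tfrac12(1-e^{-2\lambda m})/(e^\lambda-e^{-\lambda})$). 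Both bounds are completely explicit and uniform in $\omega$, and multiplying them produces exactly the stated constant $\frac{2(e^{2\lambda}-1)}{e^\lambda-1}$. If you want to complete your Green's-function route, you would essentially have to reprove the resistance bounds anyway, so the electrical-network formulation is the shorter path; I'd recommend switching to it, and in particular looking up the resistance identity for two-point hitting probabilities that makes the numerator/denominator decoupling possible.
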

\begin{proof}
The agile walk
$(Z^{\mathcal{B}}_n)_{n \geq 0}$ can be seen as the Markov chain induced by the (infinite) electric network with conductances
\begin{equation*}
C_{\mathcal{B}}(u,v) ~=~	\begin{cases}
			e^{\lambda (\x(u)+\x(v))}		&	\text{if } u,v \in \mathcal{B} \text{ and } \omega(\langle u,v \rangle) = 1,	\\
			0							&	\text{otherwise.}
			\end{cases}
\end{equation*}
We use Formula (4) of \cite{Berger+Gantert+Peres:2003}:
\begin{equation}
P_{\omega,\lambda}^{v}(\sigma_0 < \sigma_{k})
~\leq~	\frac{\mathcal{R}_{\mathcal{B}}(v \leftrightarrow \{(k,0),(k,1)\})}{\mathcal{R}_{\mathcal{B}}(v \leftrightarrow \mathbf{0})}
\end{equation}
where $\mathcal{R}_{\mathcal{B}}(v \leftrightarrow \mathbf{0})$ denotes the effective resistance between $v$ and $\mathbf{0}$ in the given electrical network
and $\mathcal{R}_{\mathcal{B}}(v\leftrightarrow \{(k,0),(k,1)\})$ is the effective resistance between $v$ and $\{(k,0),(k,1)\}$.
Since $v \in \mathcal{B}$, there is a non-backtracking path connecting $v$ and the set $\{(k,0),(k,1)\}$.
By Raleigh's monotonicity law \cite[Theorem 9.12]{Levin+Peres+Wilmer:2009},
$\mathcal{R}_{\mathcal{B}}(v\leftrightarrow \{(k,0),(k,1)\})$ is bounded from above by the resistance of that path.
By the series law, the latter is at most $\sum_{j=2m}^{2k-1} e^{-j\lambda} = e^{-2 \lambda m}(1-e^{-2 \lambda (k-m)})/(1-e^{-\lambda})$.
A lower bound for $\mathcal{R}_{\mathcal{B}}(v \leftrightarrow \mathbf{0})$
can be obtained from the Nash-Williams inequality \cite[Proposition 9.15]{Levin+Peres+Wilmer:2009}.
The $\Pi_j \defeq \{\langle(j-1,i),(j,i)\rangle: i=0,1\}$, $j=1,\ldots,m$ form disjoint edge-cutsets and hence the cited inequality gives
\begin{equation*}	\textstyle
\mathcal{R}_{\mathcal{B}}(v \leftrightarrow \mathbf{0})
\geq	\sum_{j=1}^m \big(\sum_{e \in \Pi_j} C_{\mathcal{B}}(e)\big)^{-1}
\geq	\sum_{j=1}^m(2 e^{\lambda(2j-1)})^{-1}
=	\frac{1}{2} \frac{1-e^{-2\lambda m}}{e^{\lambda}-e^{-\lambda}}.
\end{equation*}
The two bounds combined give \eqref{eq:stepping back}.
\end{proof}

\subsection{Uniform regeneration estimates.}

We are almost ready to prove Lemma \ref{Lem:moments of rho and tau}.
Before we do so, we derive a uniform upper bound
for the tails of $\rho_1$. In fact, for later use,
we prove an even stronger result.

\begin{lemma}	\label{Lem:uniform exponential regeneration point estimates}
For every compact interval $I = [\lambda_1,\lambda_2] \subseteq (0,\infty)$,
there are finite constants $C=C(I,p)$ and $\varepsilon = \varepsilon(I,p) > 0$
(depending only on $I,p$) such that
\begin{equation}	\label{eq:uniform exponential regeneration point estimates}
\sup_{n \in \N_0} \sup_{\lambda \in I} \Prob_{\lambda}(\rho_{\nu(n)}-X_n \geq k)	~\leq~	C(I,p) e^{-\varepsilon k}	\quad	\text{for all } k \in \N_0.
\end{equation}
The same statement holds true with $\Prob_{\lambda}$ replaced by $\Prob^{\circ}_{\lambda}$.
\end{lemma}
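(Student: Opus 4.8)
The plan is to control $\rho_{\nu(n)}-X_n$ by isolating the amount by which the walk has fallen behind its running maximum. Write $W_n\defeq\max_{s\le n}X_s$. First I would record the elementary facts that $\rho_{k(n)}\le X_n\le\rho_{\nu(n)}$ (the walk stays at $\x$-levels $\ge\rho_{k(n)}$ after the regeneration time $\tau_{k(n)}$, and it reaches $R_{\nu(n)}$ only after time $n$), that $W_n\le\rho_{\nu(n)}$ (a level beyond $W_n$ is first reached after time $n$), and that every pre-regeneration point at $\x$-coordinate $\le W_n$ has already been visited by time $n$ — so $R_{\nu(n)}$ is in fact the \emph{first} pre-regeneration point of $\omega$ at $\x$-coordinate $>W_n$ that the walk visits exactly once. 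Hence $\rho_{\nu(n)}-X_n=(\rho_{\nu(n)}-W_n)+(W_n-X_n)$ with both summands nonnegative, and it suffices to prove a uniform exponential tail bound for each, which I would then combine via $\Prob_\lambda(\rho_{\nu(n)}-X_n\ge k)\le\Prob_\lambda(\rho_{\nu(n)}-W_n\ge k/2)+\Prob_\lambda(W_n-X_n\ge k/2)$; the case $n=0$ is the uniform tail bound for $\rho_1$ mentioned before the statement and is the special instance $W_0=X_0=0$.

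For the first ("forward") summand I would argue purely at the level of the spatial structure. By the strong Markov property, the part of the walk after it first exceeds level $W_n$ is a fresh walk which, being transient to the right, visits every pre-regeneration point of $\omega$ to the right of $W_n$; the consecutive spacings of these pre-regeneration points have exponential tails, uniformly in the environment seen up to level $W_n$, by the finite-state Markov-chain argument behind Lemma~\ref{Lem:pre-regeneration point decomposition}(b). By the uniform lower bound \eqref{eq:lower bound for escape probability} on the escape probability from a forward-communicating vertex, together with the strong Markov property at the successive hitting times of these pre-regeneration points — exactly as in the construction of the regeneration structure (cf.\ Lemma~\ref{Lem:iid regeneration times and points} and \cite{Sznitman+Zerner:1999}) — the number of them that fail to be regeneration points is stochastically dominated, uniformly in $n$, $\omega$ and $\lambda\in I$, by a geometric random variable with parameter $\pesc(\lambda)\ge\pesc(\lambda_1)>0$. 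Combining a large-deviation estimate for the number of pre-regeneration points of $\omega$ inside an interval of length $k/2$ (from the $\lambda$-free exponential spacing tails) with this geometric bound gives $\Prob_\lambda(\rho_{\nu(n)}-W_n\ge k/2)\le Ce^{-\varepsilon k}$ uniformly in $n$ and $\lambda\in I$; the same reasoning applies verbatim under $\Prob^{\circ}_\lambda$.

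For the second ("backtracking") summand the point is that to be at level $W_n-t$ at time $n$ after having reached level $W_n$ the walk must descend $t$ levels. By the Rayleigh/Nash--Williams resistance estimates underlying Lemma~\ref{Lem:stepping back}, the probability that the walk ever descends $t$ levels starting from a backbone vertex is at most $C'(\lambda)(e^{2\lambda t}-1)^{-1}$, uniformly in $\omega$ and in the starting vertex, and this is the only place where the value of $\lambda$ enters, through the drift $2\lambda\ge 2\lambda_1$; when the relevant levels lie inside a dead end one uses in addition the exponential tails on trap lengths (Lemmas~\ref{Lem:trap cost} and~\ref{Lem:trap cover}) to absorb the contribution of that dead end. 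The delicate point — and what I expect to be the main obstacle — is to make this uniform in $n$: the level $W_n$ is random and is not attained at a stopping time, so one cannot simply apply the strong Markov property, and a naïve union over candidate levels produces a spurious factor of order $n$. I would handle this by a last-exit/time-reversal argument, exploiting that the \emph{agile} walk (unlike the lazy walk) is reversible with respect to the conductances $C_{\mathcal B}$ of Lemma~\ref{Lem:stepping back}, so that the event $\{W_n-X_n\ge t\}$ becomes, in reversed time, the event that a path ending at $\mathbf 0$ climbs $t$ levels above its endpoint, whose probability is then bounded by the same resistance estimates without any loss in $n$; this yields $\Prob_\lambda(W_n-X_n\ge t)\le Ce^{-\varepsilon t}$ uniformly in $n$ and $\lambda\in I$.

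Putting the two bounds together gives the claim for $\Prob_\lambda$. The statement for $\Prob^{\circ}_\lambda$ follows from the same arguments and is in fact cleaner: under the cycle-stationary law the regeneration blocks are i.i.d.\ with the generic cycle law (Definition~\ref{def:cycle-stationary percolation law}), so there is no size-biased initial block to treat separately, while the inputs above — the inequalities $\rho_{k(n)}\le X_n\le\rho_{\nu(n)}\ge W_n$, the escape estimate \eqref{eq:lower bound for escape probability}, the spacing tails and the resistance bounds — all hold unchanged. Throughout, the constants $\pesc(\lambda)$, $C(\lambda)$ of Lemma~\ref{Lem:stepping back} and $C'(\lambda)$ are continuous and strictly positive on $(0,\infty)$ and the trap constants do not depend on $\lambda$, which gives the uniformity over the compact interval $I=[\lambda_1,\lambda_2]$.
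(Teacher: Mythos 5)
Your decomposition $\rho_{\nu(n)}-X_n = (\rho_{\nu(n)}-W_n)+(W_n-X_n)$ with $W_n=\max_{s\le n}X_s$ is essentially the one the paper uses: the quantity $M_0(n)$ in the paper's proof is precisely $W_n$, and the paper's estimate for $X_{F_1(n)}-X_n$ is exactly the sum of your backward term $M_0(n)-X_n$ and the first pre-regeneration spacing past $M_0(n)$. Your ``forward'' argument (geometric number of failed escape attempts, exponential pre-regeneration spacings) is the same idea the paper implements, except that the paper packages it more carefully: it looks at the fresh-time increments $X_{F_k(n)}-X_{F_{k-1}(n)}$, shows via \eqref{eq:conditional iid X_F_k(n)} that the increments with $k\ge 2$ are i.i.d.\ under $\Prob^{\circ}_\lambda$ with tail bound \eqref{eq:distances between fresh points}, and then closes with a Chernoff-type argument in \eqref{eq:Prob_lambda(rho_nu(n)-X_n >= k)}. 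Your version asserts ``combine a large-deviation estimate for the number of pre-regeneration points with the geometric bound,'' which elides the fact that the spacings and the success/failure indicators are \emph{not} independent (the walk's trajectory depends on the environment); the fresh-time/Chernoff machinery is precisely how the paper avoids having to justify that independence.

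The genuine gap is in your backward term. You correctly identify the difficulty — $W_n$ is not attained at a stopping time, so a naive strong-Markov/union bound costs a factor of order $n$ — but your proposed fix via time reversal does not close. Quenched reversibility of the agile walk gives, for each target vertex $v$,
\begin{equation*}
P_\omega\bigl(Y_n=v,\ W_n\ge\x(v)+t\bigr)
=\frac{\pi(v)}{\pi(\mathbf{0})}\,P_\omega^{v}\bigl(Y_n=\mathbf{0},\ \max_{s\le n}X_s\ge\x(v)+t\bigr),
\end{equation*}
with $\pi(v)=e^{2\lambda\x(v)}$. The factor $\pi(v)/\pi(\mathbf{0})$ exactly cancels the stepping-back estimate $C(\lambda)e^{-2\lambda(\x(v)+t)}$ for the descent from level $\x(v)+t$ to level $0$, leaving $C(\lambda)e^{-2\lambda t}$ \emph{per vertex} $v$; summing over the (unbounded) set of candidate endpoints $v$ does not yield a finite bound. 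Equivalently, after conditioning on $Y_n=v$ the reversed walk is a bridge from $v$ to $\mathbf{0}$, and it is not at all clear that such a bridge has exponentially small probability of climbing $t$ levels above its start — the conditioning to end far to the left is exactly what makes large upward excursions relatively likely. So the time-reversal step, as sketched, would fail. The paper instead bounds $\Prob_\lambda(M_0(n)-X_n\ge 2m)$ directly, splitting into the event that $[X_n+m,X_n+2m)$ is covered by a trap (Lemma~\ref{Lem:trap cover}) and the event that the walk made a backbone excursion to level $X_n+m$ and returned to level $X_n$ (Lemma~\ref{Lem:stepping back}); this avoids time reversal entirely and is the step you would need to supply.

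Finally, for the $\Prob^\circ_\lambda$ version, your remark that ``there is no size-biased initial block'' is not quite what makes that case cleaner: the walk under $\Prob^\circ_\lambda$ still starts at $\mathbf{0}$ and the block straddling $0$ is still conditioned (it contains a pre-regeneration point at $\mathbf{0}$). What matters, and what the paper uses, is that the identity \eqref{eq:conditional iid X_F_k(n)} is stated directly in terms of $\Prob^\circ_\lambda$, so the same Chernoff estimate applies verbatim.
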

\begin{proof}
Let $D:V^{\N_0} \to \N_0 \cup \{\infty\}$ denote the time of the first return to the initial state,
that is, $D((y_n)_{n \in \N_0}) \defeq \inf\{n \in \N: y_n = y_0\}$
where, as usual, $\inf \varnothing \defeq \infty$.
Further, let $n \in \N_0$ and put $F_0(n) \defeq E_0(n) \defeq n$ and $M_0(n) \defeq \max_{j=0,\ldots,n} X_j$.
For $k \in \N$, define
\begin{eqnarray*}
F_k(n)	&  \defeq  &	\inf\{j \in \N_0:\,Y_j \in \mathcal{R}^{\mathrm{pre}},\, X_j > M_{k-1}(n)\},	\\
E_k(n)	&  \defeq  &	D((Y_{F_k(n)+j})_{j \geq 0}),	\\
M_k(n)	& \defeq  &	\sup\{X_j:\, 0 \leq j < E_k(n)\}
\end{eqnarray*}
where $\inf \varnothing = \infty$.
In particular, $F_{1}(n)$ is the first time after time $n$ that a pre-regeneration point is visited.
We call the $F_{k}(n)$ \emph{fresh times}.
Let $K(n) \defeq \inf\{k \in \N: F_k(n) < \infty, E_k(n) = \infty\}$.
Notice that $F_{K(n)}(n) = \tau_{\nu(n)}$ and, hence, $X_{F_{K(n)}(n)} = \rho_{\nu(n)}$.
Fix an interval $I = [\lambda_1,\lambda_2] \subseteq (0,\infty)$.
By \eqref{eq:lower bound for escape probability},
\begin{equation}	\label{eq:geometric bound for K(n)}	\textstyle
\Prob_{\lambda}(K(n) \geq k)	~\leq~	(1-\pesc)^{k-1},	\quad	k \in \N.
\end{equation}
We define	
\begin{equation*}
\H_{F_k(n)}	~=~	\sigma((F_k(n), Y_{0}, \ldots,Y_{F_k(n)}), \omega_i: \x(R_i^{\mathrm{pre}}) \leq X_{F_k(n)}),	\quad	k \in \N_0.
\end{equation*}
Then, for $k \geq 2$,
\begin{align}
\Prob_{\lambda}(X_{F_k(n)}-X_{F_{k-1}(n)} \in \cdot,E_{k}(n) < \infty \mid \H_{F_{k-1}(n)})	&	\notag	\\
~=~	\Prob^{\circ}_{\lambda}(X_{F} \in \cdot, E < \infty)	&
\text{ on } \{F_{k-1}(n) < \infty\}					\label{eq:conditional iid X_F_k(n)}
\end{align}
where $F \defeq F_1(0)$ and $E \defeq E_1(0)$.
Recall that $T'_{m:2m}$ denotes the event that $[m,2m)$ is contained in a trap piece. Thus, for $m \in \N$,
\begin{equation}	\label{eq:union bound}
\Prob^{\circ}_{\lambda}(X_F \geq 2m, E < \infty)
~\leq~
\Prmp^{\circ}(T'_{m:2m}) + \Prob^{\circ}_{\lambda}(M^{\mathcal{B}} \geq m, E < \infty)
\end{equation}
where $M^{\mathcal{B}} \defeq \sup\{X_k: k < E \text{ and } X_k \in \mathcal{B}\} = \sup\{\x(Z_k^{\mathcal{B}}): k < \sigma_{\mathbf{0}}\}$.
The last probability in \eqref{eq:union bound} can be bounded using Lemma \ref{Lem:stepping back}:
\begin{eqnarray*}
\Prob^{\circ}_{\lambda}(M^{\mathcal{B}} \geq m, E < \infty)
\leq
C(\lambda) e^{-2 \lambda  m}.
\end{eqnarray*}
Using that $\Prmp^{\circ}(T'_{m:2m}) \leq e^{-2\lambdacrit m}$ by Lemma \ref{Lem:trap cover},
we get that
\begin{equation}	\label{eq:distances between fresh points}
\Prob^{\circ}_{\lambda}(X_{F} \geq 2m, E < \infty)
\leq
C(\lambda) e^{-2 \lambda  m} + e^{-2\lambdacrit m}
\leq
C_1 e^{-2 (\lambda_1 \wedge \lambdacrit)  m}
\end{equation}
where $C_1 = 1+\max_{\lambda \in I} C(\lambda)$ depends only on $I$.
Further, for $m \in \N$,
\begin{align}
\Prob_{\lambda}(X_{F_1(n)} - X_n > 5m)
& \leq
\Prob_{\lambda}(M_0(n)-X_n \geq 2m)	\notag	\\
& \hphantom{\leq}
+ \Prob_{\lambda}\bigg(\min_{\substack{v \in \mathcal{R}^{\mathrm{pre}}:\\ \x(v)>M_0(n)}} \x(v) - M_0(n) > 3m \bigg).	\label{eq:Prob(X_F_1(n) large)}
\end{align}
Regarding the first probability on the right-hand side, notice that $M_0(n)-X_n \geq 2m$
requires an excursion of $(Y_k)_{k \geq 0}$ on the backbone at least to $\x$-coordinate $X_n+m$
and afterwards a return to $\x$-coordinate $X_n$ or the presence of a trap piece covering $[m,2m)$.
According to Lemma \ref{Lem:stepping back}, the probability of the first event is bounded by $C(\lambda)/(e^{2 \lambda m}-1)$,
while the probability of the second event is bounded by $e^{-2 \lambdacrit m}$ according to Lemma \ref{Lem:trap cover}.
Hence, $\Prob_{\lambda}(M_0(n)-X_n \geq 2m) \leq C(\lambda)/(e^{2 \lambda m}-1) + e^{-2 \lambdacrit m}$.
For the second probability, a standard geometric trials argument for the Markov chain $(({\tt T}_i,\eta_i))_{i \in \Z} = (({\tt T}_i,\omega(E^{i-1,>} \cap E^{i+1,<})))_{i \in \Z}$
from the proof of Lemma \ref{Lem:pre-regeneration point decomposition}
shows that
\begin{equation*}
\Prob_{\lambda}\Big(\min_{v \in \mathcal{R}^{\mathrm{pre}}: \x(v)>M_0(n)} \x(v) > m \Big)
~\leq~ c^m
\end{equation*}
for a suitable constant $c = c(p) \in (0,1)$, which depends only on $p$.
Hence,
\begin{eqnarray}
\Prob_{\lambda}(X_{F_1(n)} - X_n > 5m)
& \leq &
C(\lambda)/(e^{2 \lambda m}-1) + e^{-2 \lambdacrit m} + c^{3m}	\notag	\\
& \leq &
C_2 e^{-\varepsilon_1 m}	\label{eq:Prob(X_F_1(n) large) II}
\end{eqnarray}
where $C_2 < \infty$ and $\varepsilon_1 > 0$ are constants depending only on $I$ and $p$.
After these preparations, we are ready to estimate
$\Prob_{\lambda}(\rho_{\nu(n)}-X_n \geq k)$ uniformly in $\lambda \in I = [\lambda_1,\lambda_2]$ and $n \in \N_0$.
For $r>0$, using \eqref{eq:conditional iid X_F_k(n)}, we have
\begin{align}	\label{eq:Prob_lambda(rho_nu(n)-X_n >= k)}
\Prob_{\lambda}(\rho_{\nu(n)}-X_n \geq k)
~\leq~
\Prob_{\lambda}(K(n) > k/r) + \Prob_{\lambda}(\xi_1+\ldots+\xi_{\lfloor k/r \rfloor} \geq k)
\end{align}
where $\xi_1, \ldots, \xi_{\lfloor k/r \rfloor}$ are independent random variables with
$\xi_1$ having the same distribution as $X_{F_1(n)} - X_n$
and $\xi_2, \ldots, \xi_{\lfloor k/r \rfloor}$ having the same distribution as $X_{F} \1_{\{E < \infty\}}$ under $\Prob^{\circ}_{\lambda}$.
According to \eqref{eq:geometric bound for K(n)},
the first probability on the right-hand side of \eqref{eq:Prob_lambda(rho_nu(n)-X_n >= k)}
is bounded above by $(1-\pesc)^{\lfloor k/r \rfloor}$.
By Markov's inequality, for any $u>0$, the second probability is bounded by
\begin{align*}
\Prob_{\lambda}(\xi_1+\ldots+\xi_{\lfloor k/r \rfloor} \geq k)
~&\leq~
e^{-uk} \E_{\lambda}[\exp(u(\xi_1+\ldots+\xi_{\lfloor k/r \rfloor}))]	\\
~&\leq~
e^{-uk} \E_{\lambda}[e^{u\xi_1}] \E_{\lambda}[e^{u\xi_2}]^{k/r}.
\end{align*} 
By \eqref{eq:Prob(X_F_1(n) large) II},
\begin{eqnarray}
\E_{\lambda}[e^{u\xi_1}]
& = &
1+(e^u-1) \sum_{k \geq 0} e^{uk} \Prob_{\lambda}(\xi_1 > k)	\notag	\\
& \leq &
1+(e^u-1) 5e^{4u} \sum_{k \geq 0} e^{5uk} \Prob_{\lambda}(\xi_1 > 5k)	\notag	\\
& \leq &
1+5 C_2 e^{4u} (e^u-1) \sum_{k \geq 0} e^{5uk} e^{-\varepsilon_1 k}
~\eqdef~	C_3(u)	\label{eq:bound on E_lambda[e^u xi_1]}
\end{eqnarray}
where $C_3(u)$ is a positive constant depending only on $p, I$ and $u$.
Further, $C_3(u)$ is finite for all sufficiently small $u$.
Analogously, using \eqref{eq:Prob(X_F_1(n) large)} we find
\begin{equation}\label{eq:bound on E_lambda[e^u xi_2]}	\textstyle
\E_{\lambda}[e^{u\xi_2}]
\leq \exp\big(2 C_1 \frac{e^u(e^u-1)}{1-e^{2(u-(\lambda_1 \wedge \lambdacrit))}}\big)		
\end{equation}
for $u < \lambda_1 \wedge \lambdacrit$.
Now fix $u < \lambda_1 \wedge \lambdacrit$ so small that $C_3(u) < \infty$
and choose $r$ so large that
\begin{equation*}	\textstyle
\frac{2 C_1 e^u(e^u-1)}{r(1-e^{2(u-(\lambda_1 \wedge \lambdacrit))})} - u \eqdef -\varepsilon_1	~<~	0.
\end{equation*}
Then
\begin{equation*}	\textstyle
\Prob_{\lambda}(\xi_1+\ldots+\xi_{\lfloor k/r \rfloor} \geq k)
\leq
C_3(u) e^{-uk} \exp\Big(\frac{2 C_1 e^u(e^u-1)}{1-e^{2(u-(\lambda_1 \wedge \lambdacrit))}} \frac{k}{r}\Big)
=
C_3(u) e^{-\varepsilon_1 k}.
\end{equation*}
We use this estimate together with \eqref{eq:geometric bound for K(n)} in \eqref{eq:Prob_lambda(rho_nu(n)-X_n >= k)}
to conclude that
\begin{align*}
\Prob_{\lambda}(\rho_{\nu(n)}-X_n \geq k)
~&\leq~
(1-\pesc)^{\lfloor \frac kr \rfloor} + C_3(u) e^{-\varepsilon_1 k}
\end{align*}
for all $\lambda \in [\lambda_1,\lambda_2]$, $n \in \N_0$.
This implies \eqref{eq:uniform exponential regeneration point estimates} after some minor manipulations.

It remains to point out that the exact same argument works when $\Prob_{\lambda}$ is replaced by $\Prob^{\circ}_{\lambda}$.
\end{proof}

\subsection{Moments of regeneration points and times.}

We are now ready for the proof of Lemma \ref{Lem:moments of rho and tau}.

\begin{proof}[Proof of Lemma \ref{Lem:moments of rho and tau}]
In view of Lemma \ref{Lem:iid regeneration times and points},
we need to show that
\begin{equation}	\label{eq:epsilon-exponential moment}
\E^{\circ}_{\lambda}[\exp(\varepsilon \rho_1) \mid Y_k \neq \mathbf{0} \text{ for all } k \geq 1]	~<~	\infty
\end{equation}
for some $\varepsilon > 0$ and that
\begin{equation}	\label{eq:kappa moment}
\E^{\circ}_{\lambda}[\tau_1^{\kappa} \mid Y_k \neq \mathbf{0} \text{ for all } k \geq 1]	~<~	\infty
\quad	\text{iff}	\quad
\kappa < \lambdacrit/\lambda.
\end{equation}
From \eqref{eq:lower bound for escape probability}, we get
\begin{align*}
\E^{\circ}_{\lambda}[\exp&(\varepsilon \rho_1) \mid Y_k \neq \mathbf{0} \text{ for all } k \geq 1]	\\
& ~=~
\Ermp^\circ \big[E_{\omega,\lambda} [\exp(\varepsilon \rho_1) \mid Y_k \neq \mathbf{0} \text{ for all } k \geq 1]	\big]\\
& ~\leq~
\pesc^{-1} \Ermp^\circ \big[E_{\omega,\lambda} [\exp(\varepsilon \rho_1) \1_{\{Y_k \neq \mathbf{0} \text{ for all } k \geq 1\}}]\big]
~\leq~
\pesc^{-1} \E^{\circ}_{\lambda} [\exp(\varepsilon \rho_1)],
\end{align*}
and analogously
\begin{equation}	\label{eq:trivial bound E[tau_2-tau_1]}
\E^{\circ}_{\lambda}[\tau_1^{\kappa} \mid Y_k \neq \mathbf{0} \text{ for all } k \geq 1]
~\leq~	\pesc^{-1} \E^{\circ}_{\lambda}[\tau_1^{\kappa} \1_{\{Y_k \neq \mathbf{0} \text{ for all } k \geq 1\}}].
\end{equation}
Assertion (a) now follows from Lemma \ref{Lem:uniform exponential regeneration point estimates} with $I = \{\lambda\}$ and $n=0$.

The fact that $\E_{\lambda}[\tau_2-\tau_1] = \infty$ for $\kappa \geq \lambda$
follows from the lower bound in Lemma \ref{Lem:uniform lower and upper tail bounds} below.

Now assume that $\lambda < \lambdacrit/\kappa$.
We decompose
\begin{equation}	\label{eq:tau_1 decomposed}
\tau_1 ~=~ \tau_1^{\mathcal{B}} + \tau_1^{\mathrm{traps}}
\end{equation}
where $\tau_1^{\mathcal{B}} \defeq \#\{0 \leq k < \tau_1: Y_k \in \mathcal{B}\}$
and $\tau_1^{\mathrm{traps}} = \tau_1-\tau_1^{\mathcal{B}}$ is the time spent by the walk in the traps, that is, in $\Cluster_{\infty} \setminus \mathcal{B}$.
We proceed with a lemma that provides an estimate for $\tau_1^{\mathcal{B}}$:

\begin{lemma}	\label{Lem:speed of Y in backbone}
$\E^{\circ}_{\lambda}[(\tau_1^{\mathcal{B}})^{\gamma}\1_{\{Y_k \neq \mathbf{0} \text{ for all } k \geq 1\}}] < \infty$ for all $\gamma > 0$.
\end{lemma}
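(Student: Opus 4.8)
The plan is to peel off the lazy (stay-put) steps of $Y$, reducing the claim to a moment bound on the number of backbone vertices visited before $\tau_1$, and then to control that number by combining the uniform escape estimate \eqref{eq:lower bound for escape probability} with the exponential tail of $\rho_1$ provided by Lemma~\ref{Lem:uniform exponential regeneration point estimates}.

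First I would separate the lazy steps. Each maximal run of consecutive times that $Y$ spends at a fixed vertex $v\in\mathcal{B}$ has length stochastically dominated, uniformly in $\omega$ and in $v\in\mathcal{B}$, by $1+G$ with $G$ geometric: since $v$ lies on the connected infinite backbone it has an open incident edge, so $p_{\omega,\lambda}(v,v)\le 1-\tfrac{e^{-\lambda}}{e^{\lambda}+1+e^{-\lambda}}\eqdef\gamma_0<1$. Let $N^{\mathcal{B}}$ be the number of such backbone runs of $Y$ occurring strictly before time $\tau_1$ and $L_1,\dots,L_{N^{\mathcal{B}}}$ their lengths, so that $\tau_1^{\mathcal{B}}\le\sum_{j=1}^{N^{\mathcal{B}}}L_j$. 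One checks that $N^{\mathcal{B}}$ is a measurable function of the agile (skeleton) walk, and that, conditionally on that walk, the $L_j$ are independent and each dominated by $1+G$. For $\gamma\ge1$, Jensen's inequality gives $\big(\sum_j L_j\big)^{\gamma}\le(N^{\mathcal{B}})^{\gamma-1}\sum_j L_j^{\gamma}$, and conditioning on the agile walk yields
\[
\E^{\circ}_{\lambda}\big[(\tau_1^{\mathcal{B}})^{\gamma}\1_{\{Y_k\neq\mathbf{0}\,\forall k\ge1\}}\big]\ \le\ c_{\gamma}\,\E^{\circ}_{\lambda}\big[(N^{\mathcal{B}})^{\gamma}\1_{\{Y_k\neq\mathbf{0}\,\forall k\ge1\}}\big],\qquad c_{\gamma}\defeq\E\big[(1+G)^{\gamma}\big]<\infty.
\]
As it suffices to treat $\gamma\ge1$, the proof reduces to a $\gamma$-th moment bound on $N^{\mathcal{B}}$.

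I would then bound $N^{\mathcal{B}}$ site by site. For $v\in\mathcal{B}$ let $U_v$ denote the number of visits of $Y$ to $v$; by \eqref{eq:lower bound for escape probability}, at each visit the walk never returns to $v$ with probability at least $\pesc$, uniformly in $\omega$, so $P_{\omega,\lambda}(U_v\ge k)\le(1-\pesc)^{k-1}$ and hence $C_q\defeq\sup_{v\in\mathcal{B}}\|U_v\|_{q}<\infty$ for every $q\ge1$, uniformly in $\omega\in\Omega^*$. On the event $\{Y_k\neq\mathbf{0}\,\forall k\ge1\}$ the walk stays in $\x$-coordinates $\ge0$, and before time $\tau_1$ it cannot reach $\x$-coordinate $\rho_1$ except at $R_1$ itself (the companion vertex of a pre-regeneration point being isolated); hence only backbone vertices with $\x$-coordinate $\le\rho_1$ are visited before $\tau_1$, and, with at most two backbone vertices per level,
\[
N^{\mathcal{B}}\,\1_{\{Y_k\neq\mathbf{0}\,\forall k\ge1\}}\ \le\ \sum_{m\ge0}\1_{\{\rho_1\ge m\}}\big(U_{(m,0)}\1_{\{(m,0)\in\mathcal{B}\}}+U_{(m,1)}\1_{\{(m,1)\in\mathcal{B}\}}\big).
\]
For $\gamma\ge1$, Minkowski's inequality followed by H\"older's inequality (conjugate exponents $2\gamma$ and $2\gamma$ in each term) gives
\[
\big\|N^{\mathcal{B}}\1_{\{Y_k\neq\mathbf{0}\,\forall k\ge1\}}\big\|_{\gamma}\ \le\ 2C_{2\gamma}\sum_{m\ge0}\Prob^{\circ}_{\lambda}(\rho_1\ge m)^{1/(2\gamma)},
\]
and Lemma~\ref{Lem:uniform exponential regeneration point estimates}, applied with $n=0$ so that $\rho_{\nu(0)}-X_0=\rho_1$, gives $\Prob^{\circ}_{\lambda}(\rho_1\ge m)\le Ce^{-\varepsilon m}$, whence the series converges. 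Thus $\E^{\circ}_{\lambda}[(N^{\mathcal{B}})^{\gamma}\1_{\{Y_k\neq\mathbf{0}\,\forall k\ge1\}}]<\infty$ for every $\gamma\ge1$, and together with the first step the lemma follows.

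The lazy-to-agile reduction and the uniform geometric bound on $U_v$ are routine. The point that really needs care is that the visit counts $U_{(m,0)},U_{(m,1)}$ and the event $\{\rho_1\ge m\}$ are not independent; this is why I would split the expectation with H\"older's inequality and lean on the \emph{exponential} (rather than merely polynomial) tail of $\rho_1$. A second, minor point to spell out in the full write-up is the confinement of the walk to the slab $0\le\x\le\rho_1$ before $\tau_1$ on the conditioning event, which rests on the structure of a pre-regeneration level (one backbone vertex, one isolated vertex).
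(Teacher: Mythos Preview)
Your overall strategy---confining the walk to $\x$-coordinates in $[0,\rho_1)$ on the event $\{Y_k\neq\mathbf{0}\text{ for all }k\ge1\}$, bounding via visit counts with the uniform escape probability $\pesc$, and decoupling from $\{\rho_1\ge m\}$ by H\"older so as to exploit the exponential tail of $\rho_1$---is exactly the route the paper takes. Your Step~2 is essentially the paper's argument, with Minkowski/H\"older in place of the paper's Jensen/Cauchy--Schwarz.

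There is, however, a genuine gap in Step~1. The claim that $N^{\mathcal{B}}$ is measurable with respect to the agile walk is false. Recall that $R_1$ is by definition a pre-regeneration point visited by $Y$ \emph{exactly once}; if $Y$ stays put at a pre-regeneration point $v$ (which happens with positive probability, since $p_{\omega,\lambda}(v,v)=\tfrac{1}{e^{\lambda}+1+e^{-\lambda}}>0$ at such $v$), then $v$ is not a regeneration point even if the agile walk $Z$ passes through $v$ only once. Thus $\tau_1$, and hence $N^{\mathcal{B}}$, depends on the stay-put information and is not a function of $Z$ alone. Once $N^{\mathcal{B}}$ and the $L_j$ are entangled in this way, you cannot condition on the agile walk and treat the $L_j$ as independent geometrics summed up to $N^{\mathcal{B}}$.

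The fix is simply to drop Step~1: it is unnecessary. The escape bound \eqref{eq:lower bound for escape probability} already controls the lazy steps, since $\{Y_n\neq v\text{ for all }n\ge1\}$ forces $Y_1\neq v$. Hence your $U_v$ (total visits of $Y$ to $v$, stay-put times included) obeys $P_{\omega,\lambda}(U_v\ge k)\le(1-\pesc)^{k-1}$ for every $v\in\mathcal{B}$, and one has directly
\[
\tau_1^{\mathcal{B}}\,\1_{\{Y_k\neq\mathbf{0}\,\forall k\ge1\}}\ \le\ \sum_{m\ge0}\1_{\{\rho_1> m\}}\big(U_{(m,0)}\1_{\{(m,0)\in\mathcal{B}\}}+U_{(m,1)}\1_{\{(m,1)\in\mathcal{B}\}}\big).
\]
Now run your Step~2 on this sum (rather than on $N^{\mathcal{B}}$) and the proof goes through. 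This is precisely what the paper does.
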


The proof of the lemma is postponed.
Taking its assertion for granted, it remains to prove that
$\E^{\circ}_{\lambda}[(\tau_1^{\mathrm{traps}})^{\kappa}\1_{\{Y_k \neq \mathbf{0} \text{ for all } k \geq 1\}}] < \infty$.
To this end, fix $r,s>1$ such that $\kappa \lambda s < \lambdacrit$ and $1/r+1/s=1$. Then
\begin{eqnarray*}
\E^{\circ}_{\lambda}[(\tau_1^{\mathrm{traps}})^{\kappa}\1_{\{Y_k \neq \mathbf{0} \text{ for all } k \geq 1\}}]
& \leq &
\E^{\circ}_{\lambda}\bigg[ \sum_{n \geq 1} \1_{\{\rho_1=n\}}\bigg(\sum_{i=1}^{n-1} T_i\bigg)^{\kappa}\bigg]	\\
& \leq &
\E^{\circ}_{\lambda}\bigg[ \sum_{n \geq 1} \1_{\{\rho_1=n\}} n^{\kappa-1} \sum_{i=1}^{n-1} T_i^{\kappa}\bigg]	\\
& \leq &
\sum_{n \geq 1} n^{\kappa-1} \Prob^{\circ}_{\lambda}(\rho_1=n)^{1/r} \bigg[ \sum_{i=1}^{n-1} \E^{\circ}_{\lambda}[ T_i^{\kappa s}]^{1/s}\bigg]
\end{eqnarray*}
where H\"older's inequality has been used in the last step.
From \eqref{eq:time spent in the ith trap} we infer
\begin{equation}	\label{eq:upper bound E[tau_1^traps^kappa]}	\textstyle
\E^{\circ}_{\lambda}[(\tau_1^{\mathrm{traps}})^{\kappa}]
\leq C(p,\kappa s, \lambda)^{1/s} \sum_{n \geq 1} n^{\kappa} \Prob^{\circ}_{\lambda}(\rho_1=n)^{1/r}.
\end{equation}
The latter sum is finite due to Lemma \ref{Lem:moments of rho and tau}.
\end{proof}

\begin{proof}[Proof of Lemma \ref{Lem:speed of Y in backbone}]
Fix $\gamma > 1$.
For every $v \in V$, let $N(v) \defeq \#\{k \geq 0: Y_k = v\}$ be the number of visits of $Y$ to $v$.
Then
\begin{align}
\E^{\circ}_{\lambda} & [(\tau_1^{\mathcal{B}})^{\gamma}\1_{\{Y_k \neq \mathbf{0} \text{ for all } k \geq 1\}}]
~\leq~
\E^{\circ}_{\lambda} \bigg[\bigg(\sum_{v \in \mathcal{B}, \, 0 \leq \x(v) < \rho_1} N(v)\bigg)^{\gamma}\bigg]		\notag	\\
& =~
\E^{\circ}_{\lambda} \bigg[\sum_{n \geq 1} \1_{\{\rho_1=n\}} \bigg(\sum_{v \in \mathcal{B}, \, 0 \leq \x(v) < n} N(v)\bigg)^{\gamma}\bigg]		\notag	\\
& \leq~
\sum_{n \geq 1} \E^{\circ}_{\lambda} \bigg[\1_{\{\rho_1=n\}} (2n)^{\gamma-1} \bigg(\sum_{v \in \mathcal{B}, \, 0 \leq \x(v) < n} N(v)^{\gamma} \bigg) \bigg]		\notag	\\
& \leq~
\sum_{n \geq 1} \E^{\circ}_{\lambda} [\1_{\{\rho_1=n\}} (2n)^{2(\gamma-1)}]^{1/2}  \bigg(\sum_{0 \leq \x(v) < n} \E^{\circ}_{\lambda}  [\1_{\{v \in \mathcal{B}\}} N(v)^{2\gamma}]^{1/2} \bigg)	\label{eq:estimate tau_1 backbone}
\end{align}
where the last inequality is a consequence of the Cauchy-Schwarz inequality.
Now arguing as in the paragraph following \eqref{eq:lower bound for escape probability},
one infers that, for $v \in \mathcal{B}$,
$P_{\omega,\lambda}(N(v) \geq k) \leq (1-\pesc)^{k-1}$ where $\pesc$ is as defined in \eqref{eq:lower bound for escape probability}.
Therefore,
\begin{eqnarray*}
\E^{\circ}_{\lambda}[\1_{\{v \in \mathcal{B}\}} N(v)^{2\gamma}]
& = &
\Ermp^\circ[ \1_{\{v \in \mathcal{B}\}} E_{\omega,\lambda}[N(v)^{2\gamma}]]	\\
& \leq &
2\gamma \Ermp^\circ\bigg[ \1_{\{v \in \mathcal{B}\}} \sum_{k \geq 1} k^{2\gamma-1}P_{\omega,\lambda}(N(v) \geq k) \bigg]	\\
& \leq &
2\gamma \sum_{k \geq 1} k^{2\gamma-1}(1-\pesc)^{k-1}	~\eqdef~	C^\mathcal{B}(\gamma,\lambda)^2 ~<~ \infty.
\end{eqnarray*}
Using this and Lemma \ref{Lem:moments of rho and tau}(a) in \eqref{eq:estimate tau_1 backbone} leads to:
\begin{equation}	\label{eq:upper bound E[tau_1^B^kappa]}	\textstyle
\E^{\circ}_{\lambda} [(\tau_1^{\mathcal{B}})^{\gamma}\1_{\{Y_k \neq \mathbf{0} \text{ for all } k \geq 1\}}]
\leq
C^\mathcal{B}(\gamma,\lambda) \sum_{n \geq 1} \E^{\circ}_{\lambda} [\1_{\{\rho_1=n\}} (2n)^{2\gamma}]^{1/2}
<	\infty.
\end{equation}
\end{proof}


\subsection{Further uniform regeneration estimates.}

In several proofs involving simultaneous limits in $\lambda$ and $n$,
we need uniform regeneration estimates.

For the next result, recall that $\nu(n) = \inf\{k \in \N: \tau_k > n\}$ for $n \in \N_0$.

\begin{lemma}	\label{Lem:uniform regeneration estimates}
\begin{itemize}
	\item[(a)]
		The functions
		$\lambda \mapsto \sup_{n \in \N_0} \E_{\lambda}[\rho_{\nu(n)}-X_n]$,
		$\lambda \mapsto \E_{\lambda}[\rho_1]$ and $\lambda \mapsto \E_{\lambda}^\circ[\rho_1]$
		are locally bounded on $(0,\infty)$.
	\item[(b)]
		The function $\lambda \mapsto \E_{\lambda}[\tau_1]$ is locally bounded on $(0,\lambdacrit)$.
	\item[(c)]
		The function $\lambda \mapsto \sup_{n \in \N_0} \E_{\lambda}^{\circ}[\tau_{\nu(n)}- n \mid Y_k \neq \mathbf{0} \text{ for all } k \geq 1]$
		is locally bounded on $(0,\lambdacrit/2)$.
		For every interval $I = [\lambda_1,\lambda_2] \subseteq (0,\lambdacrit)$ and every $1 < r < \frac{\lambdacrit}{\lambda_2} \wedge 2$,
		\begin{equation*}	\textstyle
		n^{-1/r} \sup_{\lambda \in I} \E_{\lambda}^{\circ}[\tau_{\nu(n)}- n \mid Y_k \neq \mathbf{0} \text{ for all } k \geq 1] \to 0
		\quad	\text{as } n \to \infty.
		\end{equation*}
\end{itemize}
\end{lemma}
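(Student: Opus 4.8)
Throughout, set $A:=\{Y_k\neq\mathbf 0\text{ for all }k\ge1\}$. Part~(a) follows by integrating the uniform exponential tail bound \eqref{eq:uniform exponential regeneration point estimates}: on a compact interval $I\subseteq(0,\infty)$, for all $n\in\N_0$ and $\lambda\in I$,
\[
\E_\lambda[\rho_{\nu(n)}-X_n]=\sum_{k\ge1}\Prob_\lambda(\rho_{\nu(n)}-X_n\ge k)\le\sum_{k\ge1}C(I,p)\,e^{-\varepsilon k},
\]
and the right-hand side is independent of $n$ and $\lambda$. Taking $n=0$ (so $\nu(0)=1$, $X_0=0$, hence $\rho_{\nu(0)}-X_0=\rho_1$) bounds $\E_\lambda[\rho_1]$, and the identical computation with $\Prob_\lambda$ replaced by $\Prob^\circ_\lambda$, which Lemma~\ref{Lem:uniform exponential regeneration point estimates} also covers, bounds $\E^\circ_\lambda[\rho_1]$.

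For part~(b) the plan is to revisit the proof of Lemma~\ref{Lem:moments of rho and tau}: the splitting $\tau_1=\tau_1^{\mathcal B}+\tau_1^{\mathrm{traps}}$ together with \eqref{eq:upper bound E[tau_1^B^kappa]} and \eqref{eq:upper bound E[tau_1^traps^kappa]} already give $\E_\lambda[\tau_1]<\infty$ for $\lambda<\lambdacrit$; the point is to check that every $\lambda$-dependent constant there is \emph{locally} bounded. These are $\pesc=\pesc(\lambda)$ and $\gamma(\lambda)$ (continuous and $(0,1)$-valued), the constant $C(p,\kappa,\lambda)$ of Lemma~\ref{Lem:time spent in the ith trap} (locally bounded on $(0,\lambdacrit/\kappa)$ by that lemma), the backbone constant $C^{\mathcal B}(\gamma,\lambda)$, and the constants of Lemma~\ref{Lem:uniform exponential regeneration point estimates}; moreover $\Prob^\circ_\lambda(\rho_1=n)^{1/r}\le\bigl(C(I,p)e^{-\varepsilon n}\bigr)^{1/r}$ provides a summable majorant uniform in $\lambda\in I$. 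Given a compact $I=[\lambda_1,\lambda_2]\subseteq(0,\lambdacrit)$, one chooses the H\"older exponent $s>1$ in \eqref{eq:upper bound E[tau_1^traps^kappa]} so close to $1$ that $\lambda_2 s<\lambdacrit$; the estimates then apply with $\kappa=1$ uniformly over $I$, giving $\sup_{\lambda\in I}\E^\circ_\lambda[\tau_1\1_A]<\infty$, hence, via $\E^\circ_\lambda[f\mid A]\le\pesc(\lambda)^{-1}\E^\circ_\lambda[f\1_A]$ from \eqref{eq:lower bound for escape probability}, also $\sup_{\lambda\in I}\E^\circ_\lambda[\tau_1\mid A]<\infty$. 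Passing from $\Prob^\circ_\lambda$ to $\Prob_\lambda$ changes only the cycle straddling the origin, whose length (and the length of any trap inside it) obeys a size-biased bound as in Lemma~\ref{Lem:trap cost}(b) with a constant depending only on $p$, so the same estimates yield $\sup_{\lambda\in I}\E_\lambda[\tau_1]<\infty$. Running the argument with a general $\kappa\ge1$ shows in addition that $\lambda\mapsto\E^\circ_\lambda[\tau_1^\kappa\mid A]$ is locally bounded on $(0,\lambdacrit/\kappa)$; this is the input for~(c).

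For part~(c) I would dominate $\tau_{\nu(n)}-n$ by the length $L_n:=\tau_{\nu(n)}-\tau_{\nu(n)-1}$ of the regeneration block straddling time $n$ and estimate it by the inspection-paradox computation. Under $\Prob^\circ_\lambda(\,\cdot\mid A)$ the increments $\tau_j-\tau_{j-1}$, $j\ge1$, are i.i.d.\ with the common law of $\tau_1$ (the analogue of Lemma~\ref{Lem:iid regeneration times and points} for the cycle-i.i.d.\ law $\Prmp^\circ$; a differing first increment would be absorbed by a Minkowski-inequality argument as in the proof of Theorem~\ref{Thm:joint CLT}). Writing $\sigma$ for a generic increment, conditioning each increment on the independent preceding partial sum, and using that the $\tau_j$ are strictly increasing so that $W(a):=\sum_{j\ge1}\Prob^\circ_\lambda(\tau_j=a\mid A)\le1$, one obtains
\[
\E^\circ_\lambda[\tau_{\nu(n)}-n\mid A]\le\E^\circ_\lambda[L_n\mid A]\le\E^\circ_\lambda[\tau_1\1_{\{\tau_1>n\}}\mid A]+\sum_{b=0}^{n}\E^\circ_\lambda[\sigma\1_{\{\sigma>b\}}\mid A].
\]
Letting $n\to\infty$ in the sum gives $\sum_{b\ge0}\E^\circ_\lambda[\sigma\1_{\{\sigma>b\}}\mid A]=\E^\circ_\lambda[\sigma^2\mid A]=\E^\circ_\lambda[\tau_1^2\mid A]$, so $\sup_n\E^\circ_\lambda[\tau_{\nu(n)}-n\mid A]\le\E^\circ_\lambda[\tau_1\mid A]+\E^\circ_\lambda[\tau_1^2\mid A]$, which by the $\kappa=1$ and $\kappa=2$ cases of~(b) is locally bounded on $(0,\lambdacrit/2)$; this is the first claim. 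For the second, fix $I=[\lambda_1,\lambda_2]\subseteq(0,\lambdacrit)$ and $1<r<\tfrac{\lambdacrit}{\lambda_2}\wedge2$, so that $\sup_{\lambda\in I}\E^\circ_\lambda[\tau_1^r\mid A]<\infty$ by~(b). Keeping the truncation, using $\E^\circ_\lambda[\sigma\1_{\{\sigma>b\}}\mid A]\le b^{1-r}\E^\circ_\lambda[\tau_1^r\mid A]$ and $\E^\circ_\lambda[\tau_1\1_{\{\tau_1>n\}}\mid A]\le n^{1-r}\E^\circ_\lambda[\tau_1^r\mid A]$ together with $\sum_{b=1}^n b^{1-r}\le C_r n^{2-r}$ (valid since $r<2$), one arrives at
\[
n^{-1/r}\sup_{\lambda\in I}\E^\circ_\lambda[\tau_{\nu(n)}-n\mid A]\le\bigl(n^{1-r-1/r}+C_r\,n^{2-r-1/r}+n^{-1/r}\bigr)\sup_{\lambda\in I}\bigl(\E^\circ_\lambda[\tau_1^r\mid A]+\E^\circ_\lambda[\tau_1\mid A]\bigr),
\]
and since all three exponents are negative for $r\in(1,2)$, the left-hand side tends to $0$.

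The main obstacle is part~(b): one must trace the full chain of estimates in the proofs of Lemmas~\ref{Lem:moments of rho and tau}, \ref{Lem:time spent in the ith trap} and~\ref{Lem:speed of Y in backbone} and verify, term by term, that no constant degenerates on compact $\lambda$-intervals, in particular arranging the H\"older split for $\tau_1^{\mathrm{traps}}$ uniformly on an arbitrary compact $I\subseteq(0,\lambdacrit)$ and carrying the $\Prmp$-versus-$\Prmp^\circ$ distinction through unchanged. Once these uniform moment bounds are available, parts~(a) and~(c) are short.
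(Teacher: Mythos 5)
Your parts (a) and (b) essentially reproduce the paper's argument: (a) integrates the uniform exponential tail bound from Lemma \ref{Lem:uniform exponential regeneration point estimates}, and (b) traces the $\lambda$-dependence of the constants in \eqref{eq:upper bound E[tau_1^traps^kappa]}, \eqref{eq:upper bound E[tau_1^B^kappa]}, Lemmas \ref{Lem:time spent in the ith trap}, \ref{Lem:speed of Y in backbone}, \ref{Lem:uniform exponential regeneration point estimates} and handles the cycle straddling the origin. For part (c), however, your route is genuinely different from the paper's. The paper first establishes a \emph{lower} tail bound $\underline{C}(I,p)k^{-\lambdacrit/\lambda_1}\le\Prob_\lambda(\tau_2-\tau_1\ge k)$ (Lemma \ref{Lem:uniform lower and upper tail bounds}), uses it to build a single dominating renewal process with renewal measure $\mathrm{U}$ satisfying $\mathbb{U}_\lambda(\{0,\ldots,k\})\le\mathrm{U}(\{0,\ldots,k\})\le k\,\mathrm{U}(\{0\})$ uniformly over $\lambda\in I$, and then integrates the upper tail bound against $\mathrm{U}$. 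You bypass the lower bound and the dominating process entirely: since the $\tau_j$ are strictly increasing, the crude bound $\mathbb{U}_\lambda(\{a\})=\sum_{j\ge1}\Prob^\circ_\lambda(\tau_j=a\mid A)\le1$ holds pointwise and \emph{uniformly} in $\lambda$, and the inspection-paradox decomposition $\tau_{\nu(n)}-n\le L_n$ then yields $\sup_n\E^\circ_\lambda[\tau_{\nu(n)}-n\mid A]\le\E^\circ_\lambda[\tau_1\mid A]+\E^\circ_\lambda[\tau_1^2\mid A]$ directly, and the weighted truncation argument gives the $n^{-1/r}$ decay. What this buys: your argument is shorter and more elementary, and makes it transparent that only the \emph{upper} moment bounds (i.e., the uniform-in-$\lambda$ version of Lemma \ref{Lem:moments of rho and tau}(b), which is exactly the finiteness statement $\sup_{\lambda\in I}\E_\lambda[(\tau_2-\tau_1)^\kappa]<\infty$ established inside the paper's proof of Lemma \ref{Lem:uniform lower and upper tail bounds}) are needed for this lemma; the paper's lower bound is superfluous here, though the paper reuses it elsewhere (to show $\E_\lambda[\tau_2-\tau_1]=\infty$ for $\kappa\ge\lambdacrit/\lambda$). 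One small remark: under $\Prob^\circ_\lambda(\cdot\mid A)$ the increment $\tau_1-\tau_0$ already has the same law as $\tau_{j+1}-\tau_j$ for $j\ge1$, because $\mathbf{0}$ is a pre-regeneration point $\Prmp^\circ$-a.s.\ and conditioning on $A$ makes it a genuine regeneration point, so your Minkowski-caveat about a differing first increment is unnecessary; this only makes your argument cleaner.
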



We postpone the proof.
Lemma \ref{Lem:uniform regeneration estimates} allows us to finish the proof of Theorem \ref{Thm:continuity of the speed}:

\begin{proof}[Proof of Theorem \ref{Thm:continuity of the speed}]
Let $\lambda^* \in (0,\lambdacrit)$ and $1 < r < \frac{\lambdacrit}{\lambda^*} \wedge 2$.
As a consequence of Lemma \ref{Lem:uniform bound X_n-n vel},
we have
\begin{equation*}
\lim_{\substack{\lambda \to \lambda^*,\\ (\lambda-\lambda^*)^{r-1}n \to \infty}}
\bigg[\frac{\vel(\lambda)-\vel(\lambda^*)}{(\lambda-\lambda^*)^{r-1}} - \frac{\E_{\lambda}[X_n]-\E_{\lambda^*}[X_n]}{(\lambda-\lambda^*)^{r-1}n}\bigg]
~=~ 0.
\end{equation*}
Therefore, for arbitrary $\alpha > 0$,
\begin{equation*}
\lim_{\lambda \to \lambda^*}
\frac{\vel(\lambda)-\vel(\lambda^*)}{(\lambda-\lambda^*)^{r-1}}
=  \lim_{\substack{\lambda \to \lambda^*,\\ (\lambda-\lambda^*)^{r}n \to \alpha}} \frac{\E_{\lambda}[X_n]-\E_{\lambda^*}[X_n]}{(\lambda-\lambda^*)^{r-1}n}
= 0
\end{equation*}
by Proposition \ref{Prop:3rd step again}.

It remains to show that $\vel(\lambda)$ is continuous at $\lambda = \lambdacrit$,
that is, $\lim_{\lambda \uparrow \lambdacrit} \vel(\lambda)=0$.
By \eqref{eq:velocity}, we have
$\vel(\lambda) = \E_{\lambda}[\rho_2-\rho_1]/\E_{\lambda}[\tau_2-\tau_1]$.
Here,
\begin{equation*}	\textstyle
\E_{\lambda}[\rho_2-\rho_1] = \E_{\lambda}^\circ[\rho_1 \mid Y_k \neq \mathbf{0} \text{ for all } k \geq 1]
\leq \pesc^{-1} \E_{\lambda}^\circ [\rho_1]
\end{equation*}
where $\pesc$ is the escape probability bound defined in \eqref{eq:lower bound for escape probability},
see the beginning of the proof of Lemma \ref{Lem:moments of rho and tau} for details on this estimate.
The function $\lambda \mapsto \E_{\lambda}^\circ [\rho_1]$ is locally bounded on $(0,\infty)$
according to Lemma \ref{Lem:uniform regeneration estimates}.
Now let $\lambda < \lambdacrit$.
The probability under $\Prmp^\circ$ that there is a trap of length $m$
with trap entrance at $(1,0)$ is given by $\epsilon(p) e^{-2 \lambdacrit m}$
for a constant $\epsilon(p) > 0$ which depends only on $p$.
The walk steps into that trap immediately with probability $e^{2\lambda}/(e^{\lambda}+1+e^{-\lambda})^2$,
hence we obtain from Lemma \ref{Lem:Etau_m}(b) and the Markov property of $Y$ under $P_{\omega,\lambda}$
that
\begin{align*}
\E^{\circ}_{\lambda} [\tau_1 \mid Y_k \neq \mathbf{0} \text{ for all } k \geq 1]
&\geq \frac{e^{2\lambda}}{(e^{\lambda}+1+e^{-\lambda})^2}
\cdot \sum_{m=1}^\infty \epsilon(p) e^{-2 \lambdacrit m} 2 e^{2 \lambda (m-1)}	\\
&=\frac{2 \epsilon(p) e^{2(\lambda-\lambdacrit)}}{(e^{\lambda}+1+e^{-\lambda})^2}
\cdot \frac{1}{1-e^{-(\lambdacrit-\lambda)}}.
\end{align*}
This bound is of the order $(\lambdacrit-\lambda)^{-1}$ as $\lambda \to \lambdacrit$.
The proof is complete.
\end{proof}

\begin{lemma}	\label{Lem:uniform lower and upper tail bounds}
Let $p \in (0,1)$ be fixed. Then for every compact interval $I = [\lambda_1,\lambda_2] \subseteq (0,\infty)$
and every $\lambda^* > \lambda_2$,
there are positive and finite constants $\underline{C}(I,p)$ depending only on $p$ and $I$
and $\overline{C}(I,\lambda^*,p)$ depending only on $I,p,\lambda^*$ such that
\begin{equation}	\label{eq:lower and upper bounds tails of tau}
\underline{C}(I,p) k^{-\lambdacrit/\lambda_1}
\leq
\Prob_{\lambda}(\tau_2 -\tau_1 \geq k)
\leq
\overline{C}(I,p,\lambda^*) k^{-\lambdacrit/\lambda^*}
\end{equation}
for all $k \in \N$.
\end{lemma}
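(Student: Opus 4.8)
The plan is to prove the two inequalities separately, after first transferring the problem to the cycle-stationary picture. By Lemma~\ref{Lem:iid regeneration times and points}, $\tau_2-\tau_1$ under $\Prob_{\lambda}$ has the law of $\tau_1$ under $\Prob^{\circ}_{\lambda}(\,\cdot \mid Y_k\neq\mathbf 0\text{ for all }k\geq 1)$, and since $\mathbf 0$ is a pre-regeneration point (hence forwards communicating) under $\Prmp^{\circ}$, estimate \eqref{eq:lower bound for escape probability} gives $\Prob^{\circ}_{\lambda}(Y_k\neq\mathbf 0\text{ for all }k\geq1)\geq\pesc$, where $\pesc=\pesc(\lambda)$ is bounded away from $0$ on the compact interval $I$. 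Thus it suffices to bound $\Prob^{\circ}_{\lambda}(\tau_1\geq k)$ from above and $\Prob^{\circ}_{\lambda}(\tau_1\geq k,\, Y_k\neq\mathbf 0\ \forall k\geq1)$ from below, in both cases uniformly in $\lambda\in I$.

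For the upper bound I would combine the decomposition $\tau_1=\tau_1^{\mathcal B}+\tau_1^{\mathrm{traps}}$ from the proof of Lemma~\ref{Lem:moments of rho and tau} with a moment estimate and Markov's inequality. Put $\kappa=\lambdacrit/\lambda^{*}$; since $\lambda_2<\lambda^{*}$ we have $\kappa\lambda_2<\lambdacrit$, so we may fix $r,s>1$ with $1/r+1/s=1$ and $\kappa s\lambda_2<\lambdacrit$. Reading the estimate leading to \eqref{eq:upper bound E[tau_1^traps^kappa]} and invoking Lemma~\ref{Lem:time spent in the ith trap} (which bounds the constant $C(p,\kappa s,\lambda)$ uniformly on compact subintervals of $(0,\lambdacrit/(\kappa s))$, hence on $I$) together with Lemma~\ref{Lem:uniform exponential regeneration point estimates} (uniform exponential tails for $\rho_1$ under $\Prob^{\circ}_{\lambda}$), one obtains $\sup_{\lambda\in I}\E^{\circ}_{\lambda}[(\tau_1^{\mathrm{traps}})^{\kappa}]<\infty$. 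Tracking the $\lambda$-dependence of Lemma~\ref{Lem:speed of Y in backbone} through $\pesc$, and using the same exponential tail bound for $\rho_1$, gives $\sup_{\lambda\in I}\E^{\circ}_{\lambda}[(\tau_1^{\mathcal B})^{\kappa}]<\infty$ (read without the escape indicator). Markov's inequality applied to $\{\tau_1^{\mathcal B}\geq k/2\}$ and $\{\tau_1^{\mathrm{traps}}\geq k/2\}$ then yields $\Prob^{\circ}_{\lambda}(\tau_1\geq k)\leq Ck^{-\lambdacrit/\lambda^{*}}$ with $C$ depending only on $I,p,\lambda^{*}$, and dividing by $\pesc$ produces the claimed upper bound.

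For the lower bound I would construct, on a local event near the origin, enough room for the walk to linger inside a single trap. As in the proof of Theorem~\ref{Thm:continuity of the speed} (and Lemma~\ref{Lem:trap cost}(a)), with $\Prmp^{\circ}$-probability at least $\epsilon(p)e^{-2\lambdacrit m}$ the configuration near $\mathbf 0$ contains a trap of length $m$ whose entrance sits at a fixed bounded $\x$-coordinate and is immediately followed by a pre-regeneration point $R^{*}$ (the only cluster vertex at its $\x$-coordinate); I denote this event $\mathcal A_m$. On $\mathcal A_m$, the strong Markov property decomposes the event $\{\tau_1\geq k,\ Y_k\neq\mathbf 0\ \forall k\geq1\}$ into four consecutive pieces, each of probability bounded below uniformly in $\lambda\in I$ provided $k$ is not too large: the walk enters the trap; during its first sojourn in the trap it visits the tip at least $k$ times; after exiting it reaches $R^{*}$ before $\mathbf 0$ (a biased-ruin estimate, using $\lambda\geq\lambda_1>0$ and transience to the right); and from $R^{*}$ it never returns to $R^{*}$ (by \eqref{eq:lower bound for escape probability}). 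Because $(\x(R^{*}),1)$ is isolated, not returning to $R^{*}$ forces the walk never to return to $\mathbf 0$, so on this event both $\tau_1>k$ (the first pre-regeneration point to the right of $\mathbf 0$ lies beyond the trap) and the escape event hold.

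The key new ingredient is a moment-free lower bound for the time spent in a trap, which does not follow from the moment estimates in Lemma~\ref{Lem:Etau_m}. Let $V_m$ be the number of visits of the agile walk on $\{0,\dots,m\}$ to the tip $m$ before its return to $0$. From the explicit ruin probabilities, $\Probnull(\sigma_m<\sigma_0)\geq 1-e^{-2\lambda}\geq 1-e^{-2\lambda_1}$ and, as in \eqref{eq:r_i}, $1-r_m\leq e^{-2\lambda(m-1)}$, so for $m\geq m_0(I)$ and $k\leq 1+e^{2\lambda(m-1)}$ one has $\Probnull(V_m\geq k)=\Probnull(\sigma_m<\sigma_0)\,r_m^{k-1}\geq c(I)>0$; since $\tau_m\geq V_m$ and the lazy steps of $Y$ only add time, the walk spends at least $k$ steps in the trap with probability $\geq c(I)$ whenever $k\leq 1+e^{2\lambda(m-1)}$. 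Choosing $m=\lceil m_0(I)+\tfrac{1}{2\lambda_1}\log k\rceil$ makes $1+e^{2\lambda(m-1)}\geq k$ for every $\lambda\geq\lambda_1$ while $e^{-2\lambdacrit m}\asymp k^{-\lambdacrit/\lambda_1}$, and multiplying the (uniformly positive) probabilities of $\mathcal A_m$ and the four pieces above gives $\Prob^{\circ}_{\lambda}(\tau_1\geq k,\ Y_k\neq\mathbf 0\ \forall k\geq1)\geq\underline C(I,p)k^{-\lambdacrit/\lambda_1}$ for all large $k$; the finitely many small values of $k$ are immediate since the left-hand side is bounded below by a positive constant there. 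I expect the lower bound to be the main obstacle: besides the tip-visit estimate, the delicate point is the bookkeeping needed to make the trap sojourn compatible with the conditioning $\{Y_k\neq\mathbf 0\ \forall k\geq1\}$ while keeping every constant uniform over $\lambda\in I$, whereas the upper bound is essentially an assembly of uniform moment estimates already established.
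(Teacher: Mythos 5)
Your proof is correct and follows essentially the same strategy as the paper's: for the upper bound, Markov's inequality applied with $\kappa=\lambdacrit/\lambda^*$ and the uniform moment estimates coming from the decomposition $\tau_1=\tau_1^{\mathcal B}+\tau_1^{\mathrm{traps}}$, Lemma~\ref{Lem:time spent in the ith trap}, Lemma~\ref{Lem:speed of Y in backbone} and Lemma~\ref{Lem:uniform exponential regeneration point estimates}; and for the lower bound, conditioning on a trap of length $m\asymp\log k$ adjacent to $\mathbf 0$ and using the geometric sojourn there. Your $V_m$ (visits to the tip) is the same geometric quantity as the paper's $N$ (returns from the bottom), with the identical failure parameter $r_m=\frac{1-e^{-2\lambda(m-1)}}{1-e^{-2\lambda m}}$, and the remaining differences are purely bookkeeping: you apply Markov's inequality separately to the two halves of $\tau_1$ while the paper uses $(x+y)^\kappa\le(2^{\kappa-1}\vee1)(x^\kappa+y^\kappa)$, you fix $m$ in terms of $\lambda_1$ so that a single choice works for all $\lambda\in I$ while the paper picks $m$ depending on $\lambda$ and then argues uniformity of each factor, and you impose the extra (and unnecessary, though harmless) requirement that a pre-regeneration point $R^*$ sits immediately after the trap, whereas the paper only needs an escape-without-backtracking event bounded below by $\pesc$.
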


\begin{remark}	\label{Rem:uniform lower and upper tail bounds}
If one chooses $\lambda_1=\lambda_2=\lambda > 0$ in the above lemma,
then, with $\alpha = \lambdacrit/\lambda$ and arbitrary $\kappa<\alpha$,
the lemma gives that $\Prob_{\lambda}(\tau_2 -\tau_1 \geq k)$ is bounded below by
a constant times $k^{-\alpha}$ and bounded above by a constant times $k^{-\kappa}$.
The correct order is in fact $k^{-\alpha}$.
We refrain from proving this as we do not require this precision.
\end{remark}

\begin{proof}
Let $I = [\lambda_1,\lambda_2]$ be as in the lemma and $\lambda^* > \lambda_2$.

\noindent
We begin with the proof of the lower bound.
Under $\Prmp^\circ$, the cluster has a pre-regeneration point at $\mathbf{0}$ a.\,s.
Let $I_m$ denote the event that immediately to the right of the pre-regeneration point at $\mathbf{0}$
there is a trap of length $m$ with trap entrance at $(1,0)$.
Then $\Prmp^\circ(I_m) = \epsilon(p) e^{-2 \lambdacrit m}$
where $\epsilon(p)$ is a positive constant depending only on $p$.
For every $\omega \in I_m$, $m \in \N$,
the probability that the walk $(Y_n)_{n \geq 0}$ steps into the first trap
and then first hits the bottom of the trap before
returning to the trap entrance is given by
\begin{equation*}	\textstyle
\frac{e^{2\lambda}}{(e^{\lambda}+1+e^{-\lambda})^2} \cdot \frac{e^{-2\lambda}-e^{-2 \lambda m}}{1-e^{-2 \lambda m}},
\end{equation*}
where we have used the Gambler's ruin probabilities.
Once the walk hits the bottom of the trap,
it will make several attempts to return to the trap entrance
until it finally hits the trap entrance.
The probability that the walk then escapes without ever backtracking to the trap entrance
(and in particular to the origin) is bounded below by $\pesc$.
Denote the number of attempts to return to the trap entrance by $N$.
(More precisely, $N$ is the number of times the walk moves from the bottom of the trap
one step to the left).
Again using the Gambler's ruin probabilities,
we conclude that starting from the bottom of the trap,
the number of unsuccessful attempts to return to the trap entrance is $\geq k$
with probability
\begin{equation*}	\textstyle
\big(\frac{1-e^{-2\lambda(m-1)}}{1-e^{-2 \lambda m}}\big)^{k-1}.
\end{equation*}
Therefore, on $I_m$, we have
\begin{align*}
&P_{\omega,\lambda}(\tau_1 \geq k, Y_k \neq \mathbf{0} \text{ for all } k \geq 1)	\\
&~\geq \frac{e^{2\lambda}}{(e^{\lambda}+1+e^{-\lambda})^2} \cdot \frac{e^{-2\lambda}-e^{-2 \lambda m}}{1-e^{-2 \lambda m}} 
\cdot \Big(\frac{1-e^{-2\lambda(m-1)}}{1-e^{-2 \lambda m}}\Big)^{k-1} \cdot \pesc.
\end{align*}
Consequently, for every $m \in \N$, we have
\begin{align*}
&\Prob_{\lambda}^\circ(\tau_1 \geq k \mid Y_k \neq \mathbf{0} \text{ for all } k \geq 1)	\\
&\geq  \epsilon(p) \pesc \frac{e^{2\lambda}}{(e^{\lambda}+1+e^{-\lambda})^2}
\cdot \frac{e^{-2\lambda}-e^{-2 \lambda m}}{1-e^{-2 \lambda m}} e^{-2 \lambdacrit m} \Big(\frac{1-e^{-2\lambda(m-1)}}{1-e^{-2 \lambda m}}\Big)^{k-1}.
\end{align*}
The first three factors are clearly bounded away from $0$ as $\lambda$ varies in $[\lambda_1,\lambda_2]$.
The last three factors depend on $m$ and $k$. We may choose $m$ arbitrarily,
so we choose $m = \lceil \log k / (2 \lambda) \rceil \vee 2$.
The forth factor is increasing in $m$
and hence bounded below by $(e^{-2\lambda}-e^{-4 \lambda})/(1-e^{-4 \lambda})$,
which, in turn, is bounded away from $0$ for $\lambda \in [\lambda_1,\lambda_2]$.
The penultimate factor is decreasing in $m$ and thus bounded below by
\begin{equation*}	\textstyle
e^{-2 \lambdacrit (2+\log k / (2 \lambda))} = e^{-4 \lambdacrit} \cdot k^{-\lambdacrit/\lambda}.
\end{equation*}
If $k \geq k_0 \defeq \lfloor e^{2\lambda_2}\rfloor + 1$,
then we can bound the last factor from below by
\begin{equation*}	\textstyle
\big(\frac{1-e^{2\lambda} e^{-\log k }}{1-e^{-\log k}}\big)^{k-1}
\geq \big(\frac{1-e^{2\lambda}/k}{1-1/k}\big)^{k}
\geq e \big(1-\frac{e^{2\lambda}}{k_0}\big)^{k_0},
\end{equation*}
where we have used that, for $a \geq 1$, $(1-a/k)^k$ increases to $e^{-a}$ as $k \to \infty$.
The last term is again bounded away from $0$ for $\lambda \in [\lambda_1,\lambda_2]$.
Consequently, we infer that
\begin{equation*}
\Prob_{\lambda}(\tau_2 -\tau_1 \geq k) \geq  \underline{C}(I,p) k^{-\lambdacrit/\lambda_1}
\end{equation*}
for all $k \geq k_0$ and some $\underline{C}(I,p)$.
By replacing $\underline{C}(I,p)$ by a smaller positive constant if necessary,
we get the above estimate for all $k \geq 0$ from monotonicity arguments.

We now turn to the upper bound.
Let $k \geq 1$, $\lambda \in [\lambda_1,\lambda_2]$ and $\lambda^* > \lambda_2$.
Define $\kappa \defeq \lambdacrit/\lambda^*$.
From Markov's inequality, we get
\begin{equation*}	\textstyle
\Prob_{\lambda}(\tau_2-\tau_1 \geq k) \leq k^{-\kappa} \E_\lambda[(\tau_2-\tau_1)^{\kappa}].
\end{equation*}
It thus suffices to prove that
$\overline{C}(I,p,\lambda^*) \defeq \sup_{\lambda \in [\lambda_1,\lambda_2]} \E_\lambda[(\tau_2-\tau_1)^{\kappa}] < \infty$.
From \eqref{eq:trivial bound E[tau_2-tau_1]} and \eqref{eq:tau_1 decomposed}, we infer
\begin{equation*}	\textstyle
\E_\lambda[(\tau_2-\tau_1)^{\kappa}] \leq \pesc^{-1} \big(
\E^{\circ}_{\lambda}[(\tau_1^\mathcal{B}+\tau_1^{\mathrm{traps}})^{\kappa} \1_{\{Y_k \neq \mathbf{0} \text{ f.\,a.\ } k \geq 1\}}]
\big).
\end{equation*}
From the inequality $(x+y)^\kappa \leq (2^{\kappa-1} \vee 1) (x^\kappa+y^\kappa)$ for $x,y > 0$,
we conclude that it suffices to check that
\begin{equation}	\label{eq:uniform moment estimate tau_1^backbone}	\textstyle
\sup_{\lambda \in [\lambda_1,\lambda_2]} \E_\lambda[(\tau_1^\mathcal{B})^{\kappa} \1_{\{Y_k \neq \mathbf{0} \text{ f.\,a.\ } k \geq 1\}}] < \infty.
\end{equation}
and
\begin{equation}	\label{eq:uniform moment estimate tau_1^traps}	\textstyle
\sup_{\lambda \in [\lambda_1,\lambda_2]} \E_\lambda[(\tau_1^{\mathrm{traps}})^{\kappa} \1_{\{Y_k \neq \mathbf{0} \text{ f.\,a.\ } k \geq 1\}}] < \infty.
\end{equation}
Now notice that \eqref{eq:uniform moment estimate tau_1^backbone} follows from \eqref{eq:upper bound E[tau_1^B^kappa]}
in combination with Lemma \ref{Lem:uniform exponential regeneration point estimates},
while \eqref{eq:uniform moment estimate tau_1^traps} follows from \eqref{eq:upper bound E[tau_1^traps^kappa]}
in combination with Lemma \ref{Lem:time spent in the ith trap} and again Lemma \ref{Lem:uniform exponential regeneration point estimates}.
\end{proof}

\begin{proof}[Proof of Lemma \ref{Lem:uniform regeneration estimates}]
Part (a) is an immediate consequence of Lemma \ref{Lem:uniform exponential regeneration point estimates}.
We turn to part (b).
The local boundedness of $\lambda \mapsto \E_{\lambda}[\tau_1 \1_{\{Y_k \neq \mathbf{0} \text{ f.\,a. } k > 0\}}]$
follows from
\eqref{eq:upper bound E[tau_1^traps^kappa]}, \eqref{eq:upper bound E[tau_1^B^kappa]},
Lemma \ref{Lem:time spent in the ith trap}
and Lemma \ref{Lem:uniform exponential regeneration point estimates}
as below \eqref{eq:uniform moment estimate tau_1^traps}.
In fact, this argument yields the local boundedness in $\lambda$ of the expected time
spent to the right of the origin until the first regeneration time.
The time spent on the negative halfline can be estimated similarly
using the fact that backtracking to the left is (uniformly in $\lambda$)
exponentially unlikely due to two facts.
First an excursion on the backbone is short because of the drift to the right,
see Lemma \ref{Lem:stepping back}.
Backtracking to the left in a trap requires prior backtracking on the backbone
unless the origin is in a trap.
The probability of the event that this happens is exponentially small and independent of $\lambda$,
see Lemma \ref{Lem:trap cost}.
We refrain from providing more details and directly tend towards the more complicated assertion (c).
Fix an interval $I = [\lambda_1,\lambda_2] \subseteq (0,\lambdacrit)$.
Let $\lambda^*  > \lambda_2$.
By Lemma \ref{Lem:uniform lower and upper tail bounds},
there are constants
$\underline{C}(I,p), \overline{C}(I,p,\lambda^*) > 0$
such that
$\underline{C}(I,p) k^{-\lambdacrit/\lambda_1}
\leq
\Prob_{\lambda}(\tau_2 -\tau_1 \geq k)
\leq
\overline{C}(I,p,\lambda^*) k^{-\lambdacrit/\lambda^*}$
for every $\lambda \in I$.
Now let $(\xi_n)_{n \in \N}$ be i.i.d.\ nonnegative random variables
and $\eta$ be a nonnegative random variable with respect to a probability measure $\Prm$
with distributions given via the identities
\begin{equation*}	\textstyle
\Prm(\xi_1 \geq k) \defeq 1 \wedge (\underline{C}(I,p) k^{-\lambdacrit/\lambda_1}) \leq \Prob_{\lambda}(\tau_2 -\tau_1 \geq k),	\quad k \in \N
\end{equation*}
and
\begin{equation*}	\textstyle
\Prm(\eta \geq k) \defeq 1 \wedge (\overline{C}(I,p,\lambda^*) k^{-\lambdacrit/\lambda^*}) \geq \Prob_{\lambda}(\tau_2 -\tau_1 \geq k),	\quad k \in \N.
\end{equation*}
Let $S_n \defeq \xi_1+\ldots+\xi_n$, $n \in \N_0$
and denote by $\mathrm{U}$ the renewal measure of $(S_n)_{n \in \N_0}$ under $\Prm$.
As $S_n \to \infty$ a.\,s.\ under $\Prm$, the renewal measure $\mathrm{U}$ is locally bounded:
$\mathrm{U}(\{k\}) \leq \mathrm{U}(\{0\}) < \infty$ for every $k \in \N_0$.
Moreover, by stochastic domination,
$\mathrm{U}(\{k\})$ dominates $\mathbb{U}_{\lambda}$,
the renewal measure of $(\tau_j)_{j \geq 0}$ under $\Prob_{\lambda}^\circ(\cdot \mid Y_k \neq \mathbf{0} \text{ f.\,a. } k \geq 1)$,
for every $\lambda \in I$. Consequently,
\begin{equation*}	\textstyle
\mathbb{U}_{\lambda}(\{0,\ldots,k\}) \leq \mathrm{U}(\{0,\ldots,k\}) \leq k \mathrm{U}(\{0\})	\text{ for all } k \in \N_0.
\end{equation*}
Using this estimate, we infer for every $\lambda \in I$ and every $k \in \N$,
\begin{align}
\Prob_{\lambda}^\circ&(\tau_{\nu(n)} - n \geq k \mid Y_i \neq \mathbf{0} \text{ for all } i \geq 1)	\notag	\\
&= \sum_{j \geq 0} \Prob_{\lambda}^\circ(\tau_j \leq n, \tau_{j+1} \geq n+k\mid Y_i \neq \mathbf{0} \text{ for all } i \geq 1)	\notag	\\
&= \int_{\{0,\ldots,n\}} \Prob_{\lambda}(\tau_2-\tau_1 \geq n+k-i) \, \mathbb{U}_{\lambda}(\mathrm{d}i)	\notag	\\
& \leq \int_{\{0,\ldots,n\}} \Prm(\eta \geq n+k-i) \, \mathrm{U}(\mathrm{d}i)	\notag	\\
& \leq \mathrm{U}(\{0\}) \sum_{i=0}^n \Prm(\eta \geq k+i).	\label{eq:uniform upper bound}
\end{align}
Now first suppose $\lambda_2 < \lambdacrit/2$.
Then we can choose $\lambda^* \in(\lambda_2, \lambdacrit/2)$.
Since $\Prm(\eta \geq j) \leq \overline{C}(I,p,\lambda^*) j^{-\lambdacrit/\lambda^*}$,
the sum in \eqref{eq:uniform upper bound} is bounded by
\begin{equation*}	\textstyle
\sum_{j \geq k} \Prm(\eta \geq j)
\leq
\overline{C}(I,p,\lambda^*) \sum_{j \geq k} j^{-\lambdacrit/\lambda^*}
\leq \overline{C}(I,p,\lambda^*) \frac{\lambda^*}{\lambdacrit-\lambda^*} (k-1)^{-\lambdacrit/\lambda^*+1},
\end{equation*}
for $k \geq 2$.
Summing over all $k \geq 0$ (using trivial bounds for $k=0,1$),
and using $\lambda^* < \lambdacrit/2$ yields the first assertion in (c).
Next suppose that $\lambda_2 < \lambdacrit$ and $1 < r < \frac{\lambdacrit}{\lambda_2} \wedge 2$.
Choose $\lambda^*\in(\lambda_2, \lambdacrit)$ such that $r < \lambdacrit/\lambda^* < 2$.
Then we infer from \eqref{eq:uniform upper bound}
\begin{align*}
n^{-1/r} & \sup_{\lambda \in I} \E_{\lambda}^{\circ}[\tau_{\nu(n)}- n \mid Y_k \neq \mathbf{0} \text{ for all } k \geq 1]	\\
& \leq n^{-1/r} \sum_{k \geq 0} \mathrm{U}(\{0\}) \sum_{i=0}^n \Prm(\eta \geq k+i)	\\
& \leq \mathrm{U}(\{0\}) n^{-1/r} \bigg(3\Erm[\eta] + \sum_{i=3}^n \sum_{k \geq i} \overline{C}(I,p,\lambda^*) k^{-\lambdacrit/\lambda^*}\bigg)	\\
& \sim \mathrm{U}(\{0\}) \overline{C}(I,p,\lambda^*) n^{-1/r} \sum_{i=3}^n \sum_{k \geq i} k^{-\lambdacrit/\lambda^*}.
\end{align*}
Here,
\begin{align*}
n^{-1/r} \sum_{i=3}^n \sum_{k \geq i} k^{-\lambdacrit/\lambda^*}
&\leq n^{-1/r} \sum_{i=2}^{n-1} \frac{i^{-\lambdacrit/\lambda^*+1}}{\lambdacrit/\lambda^*-1}	\\
&\leq \frac{1}{(\lambdacrit/\lambda^*-1)(2-\lambdacrit/\lambda^*)} n^{-\lambdacrit/\lambda^*+2 - 1/r}
\to 0
\end{align*}
by the choice of $\lambda^*$.
\end{proof}

\section{acknowledgements}
The research of M.\;Meiners was supported by DFG SFB 878 ``Geometry, Groups and Actions''
and by short visit grant 5329 from the European Science Foundation (ESF)
for the activity entitled `Random Geometry of Large Interacting Systems and Statistical Physics'.
The research was partly carried out during visits of M.\;Meiners to Tech\-nische Universit\"at Graz and to Aix-Marseille Universit\'e,
during visits of M.\;Meiners and S.\;M\"uller to Tech\-nische Universit\"at M\"unchen,
and during visits of N.\;Gantert to Technische Universit\"at Darmstadt.
Grateful acknowledgement is made for hospitality to all four universities.

\begin{appendix}
\section{Auxiliary results}  

Throughout the paper, we repeatedly estimate the expectation of the $\kappa$th power of a geometric random variable.
For convenience, we provide this estimate in the following lemma.

\begin{lemma}	\label{Lem:unimodular sum<->integral}
Suppose that $f: [0,\infty) \to [0,\infty)$ is unimodal with maximizer $x^* \geq 0$.
Then
\begin{equation}	\label{eq:unimodular sum<->integral}
\sum_{k \geq 0} f(k)	~\leq~	2f(x^*) + \int_0^{\infty} f(x) \, \dx.
\end{equation}
In particular, for any $r \in (0,1)$ and $\kappa > 0$
\begin{equation}	\label{eq:kappa moment of geometric}
\sum_{k \geq 0} k^{\kappa} r^k	~\leq~	\frac{1}{|\log r|^{\kappa}} \bigg(2 \Big(\frac{\kappa}{e}\Big)^{\!\kappa} + \frac{\Gamma(\kappa+1)}{|\log r|}\bigg).
\end{equation}
\end{lemma}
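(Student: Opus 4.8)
The plan is to prove the integral comparison \eqref{eq:unimodular sum<->integral} by the textbook device of comparing each summand $f(k)$ with an adjacent unit integral, being careful only near the mode, and then to deduce \eqref{eq:kappa moment of geometric} by specializing $f$ to $x \mapsto x^\kappa r^x$. Throughout I read ``unimodal with maximizer $x^*$'' as: $f$ is non-decreasing on $[0,x^*]$ and non-increasing on $[x^*,\infty)$.

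First I would set $m \defeq \lfloor x^* \rfloor$, so that $x^* \in [m,m+1)$, and split
$\sum_{k \geq 0} f(k) = \sum_{k=0}^{m-1} f(k) + f(m) + f(m+1) + \sum_{k \geq m+2} f(k)$,
with the convention that empty sums vanish (this covers the degenerate case $x^* \in [0,1)$). Since $f$ is non-decreasing on $[0,x^*] \supseteq [k,k+1]$ for $0 \leq k \leq m-1$, we have $f(k) \leq \int_k^{k+1} f(x)\,dx$, hence $\sum_{k=0}^{m-1} f(k) \leq \int_0^m f(x)\,dx$. Symmetrically, for $k \geq m+2$ the interval $[k-1,k]$ lies in $[m+1,\infty) \subseteq [x^*,\infty)$, where $f$ is non-increasing, so $f(k) \leq \int_{k-1}^k f(x)\,dx$ and thus $\sum_{k \geq m+2} f(k) \leq \int_{m+1}^\infty f(x)\,dx$. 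For the two middle terms, $f(m) \leq f(x^*)$ because $x^*$ is the maximizer, and $f(m+1) \leq f(x^*)$ because $m+1 > x^*$. Adding these four estimates and dropping the missing non-negative integral over $[m,m+1]$ gives $\sum_{k \geq 0} f(k) \leq 2 f(x^*) + \int_0^\infty f(x)\,dx$, which is \eqref{eq:unimodular sum<->integral}.

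For \eqref{eq:kappa moment of geometric} I would apply this to $f(x) = x^\kappa r^x = x^\kappa e^{-x|\log r|}$, which is non-negative on $[0,\infty)$, vanishes at $0$, and has derivative with the sign of $\kappa - |\log r|\,x$; hence it is unimodal with $x^* = \kappa/|\log r|$ and $f(x^*) = (\kappa/e)^\kappa |\log r|^{-\kappa}$. The substitution $u = |\log r|\,x$ yields $\int_0^\infty x^\kappa e^{-x|\log r|}\,dx = |\log r|^{-(\kappa+1)} \Gamma(\kappa+1)$. Substituting both into \eqref{eq:unimodular sum<->integral} and factoring out $|\log r|^{-\kappa}$ produces exactly the stated bound.

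I do not expect a genuine obstacle here; the only thing requiring attention is the bookkeeping around the mode together with the degenerate case $x^* \in [0,1)$, and the splitting above handles both uniformly.
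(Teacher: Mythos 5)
Your proof is correct and follows essentially the same route as the paper: you split the sum at $m=\lfloor x^*\rfloor$, compare each tail to a unit integral using the monotonicity on the two sides of $x^*$, bound $f(m)+f(m+1)$ by $2f(x^*)$, and then specialize to $f(x)=x^\kappa r^x$ with $x^*=\kappa/|\log r|$ exactly as the paper does. The only cosmetic difference is that you spell out the degenerate case $x^*\in[0,1)$ and the unit-interval comparisons in a bit more detail.
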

\begin{proof}
Since $f$ is increasing on $[0,x^*]$ and decreasing on $[x^*,\infty)$, we have
\begin{equation*}
\sum_{k=0}^{\lfloor x^* \rfloor-1} f(k) ~\leq~ \int_0^{\lfloor x^* \rfloor} f(x) \, \dx
\quad	\text{and}	\quad
\sum_{k=\lfloor x^* \rfloor+2} f(k) ~\leq~ \int_{\lfloor x^* \rfloor+1} f(x) \, \dx
\end{equation*}
The estimate \eqref{eq:unimodular sum<->integral} now follows
from the fact that $f(\lfloor x^* \rfloor)+f(\lfloor x^* \rfloor+1) \leq 2f(x^*)$.\smallskip

\noindent
In order to show \eqref{eq:kappa moment of geometric}, set $f(x) \defeq x^{\kappa} r^x$, $x \geq 0$
and observe that $f$ assumes its maximum at $x^* = \kappa/|\log r|$.
The result now follows from the identities
\begin{equation*}
\int_0^{\infty} \!\! f(x) \, \dx = \frac{\Gamma(\kappa+1)}{|\log r|^{\kappa+1}}
\quad	\text{and}	\quad
f(x^*) = \Big(\frac{\kappa}{|\log r|}\Big)^{\!\kappa} e^{-\kappa}.
\end{equation*}
\end{proof}

\end{appendix}


\bibliographystyle{spmpsci}
\bibliography{RWRE}

\begin{thebibliography}{10}
\providecommand{\url}[1]{{#1}}
\providecommand{\urlprefix}{URL }
\expandafter\ifx\csname urlstyle\endcsname\relax
  \providecommand{\doi}[1]{DOI~\discretionary{}{}{}#1}\else
  \providecommand{\doi}{DOI~\discretionary{}{}{}\begingroup
  \urlstyle{rm}\Url}\fi

\bibitem{Aidekon:2014}
A{\"{\i}}d{\'e}kon, E.: Speed of the biased random walk on a {G}alton-{W}atson
  tree.
\newblock Probab. Theory Related Fields \textbf{159}(3-4), 597--617 (2014).
\newblock \doi{10.1007/s00440-013-0515-y}.
\newblock \urlprefix\url{http://dx.doi.org/10.1007/s00440-013-0515-y}

\bibitem{Athreya+Ney:2004}
Athreya, K.B., Ney, P.E.: Branching processes.
\newblock Dover Publications, Inc., Mineola, NY (2004).
\newblock Reprint of the 1972 original [Springer, New York; MR0373040]

\bibitem{Axelson-Fisk+H"aggstr"om:2009b}
Axelson-Fisk, M., H{\"a}ggstr{\"o}m, O.: Biased random walk in a
  one-dimensional percolation model.
\newblock Stochastic Process. Appl. \textbf{119}(10), 3395--3415 (2009).
\newblock \doi{10.1016/j.spa.2009.06.004}.
\newblock \urlprefix\url{http://dx.doi.org/10.1016/j.spa.2009.06.004}

\bibitem{Axelson-Fisk+H"aggstr"om:2009}
Axelson-Fisk, M., H{\"a}ggstr{\"o}m, O.: Conditional percolation on
  one-dimensional lattices.
\newblock Adv. in Appl. Probab. \textbf{41}(4), 1102--1122 (2009).
\newblock \doi{10.1239/aap/1261669588}.
\newblock \urlprefix\url{http://dx.doi.org/10.1239/aap/1261669588}

\bibitem{Barma+Dhar:1983}
Barma, M., Dhar, D.: Directed diffusion in a percolation network.
\newblock Journal of Physics C: Solid State Physics \textbf{16}(8), 1451
  (1983).
\newblock \urlprefix\url{http://stacks.iop.org/0022-3719/16/i=8/a=014}

\bibitem{Bednorz+Latuszynski+Latala:2008}
Bednorz, W., {\L}atuszy{\'n}ski, K., Lata{\l}a, R.: A regeneration proof of the
  central limit theorem for uniformly ergodic {M}arkov chains.
\newblock Electron. Commun. Probab. \textbf{13}, 85--98 (2008).
\newblock \doi{10.1214/ECP.v13-1354}.
\newblock \urlprefix\url{http://dx.doi.org/10.1214/ECP.v13-1354}

\bibitem{BenArous+al:2014}
Ben~Arous, G., Fribergh, A., Sidoravicius, V.: Lyons-{P}emantle-{P}eres
  monotonicity problem for high biases.
\newblock Comm. Pure Appl. Math. \textbf{67}(4), 519--530 (2014).
\newblock \doi{10.1002/cpa.21505}.
\newblock \urlprefix\url{http://dx.doi.org/10.1002/cpa.21505}

\bibitem{BenArous+Hu+Olla+Zeitouni:2013}
Ben~Arous, G., Hu, Y., Olla, S., Zeitouni, O.: Einstein relation for biased
  random walk on {G}alton-{W}atson trees.
\newblock Ann. Inst. Henri Poincar\'e Probab. Stat. \textbf{49}(3), 698--721
  (2013).
\newblock \doi{10.1214/12-AIHP486}.
\newblock \urlprefix\url{http://dx.doi.org/10.1214/12-AIHP486}

\bibitem{Berger+Biskup:2007}
Berger, N., Biskup, M.: Quenched invariance principle for simple random walk on
  percolation clusters.
\newblock Probab. Theory Related Fields \textbf{137}(1-2), 83--120 (2007).
\newblock \doi{10.1007/s00440-006-0498-z}.
\newblock \urlprefix\url{http://dx.doi.org/10.1007/s00440-006-0498-z}

\bibitem{Berger+Gantert+Peres:2003}
Berger, N., Gantert, N., Peres, Y.: The speed of biased random walk on
  percolation clusters.
\newblock Probab. Theory Related Fields \textbf{126}(2), 221--242 (2003).
\newblock \doi{10.1007/s00440-003-0258-2}.
\newblock \urlprefix\url{http://dx.doi.org/10.1007/s00440-003-0258-2}

\bibitem{Billingsley:1999}
Billingsley, P.: Convergence of probability measures, second edn.
\newblock Wiley Series in Probability and Statistics: Probability and
  Statistics. John Wiley \& Sons Inc., New York (1999).
\newblock \doi{10.1002/9780470316962}.
\newblock \urlprefix\url{http://dx.doi.org/10.1002/9780470316962}.
\newblock A Wiley-Interscience Publication

\bibitem{DeMasi+al:1989}
De~Masi, A., Ferrari, P.A., Goldstein, S., Wick, W.D.: An invariance principle
  for reversible {M}arkov processes. {A}pplications to random motions in random
  environments.
\newblock J. Statist. Phys. \textbf{55}(3-4), 787--855 (1989).
\newblock \doi{10.1007/BF01041608}.
\newblock \urlprefix\url{http://dx.doi.org/10.1007/BF01041608}

\bibitem{Deijfen+H"aggstr"om:2010}
Deijfen, M., H{\"a}ggstr{\"o}m, O.: On the speed of biased random walk in
  translation invariant percolation.
\newblock ALEA Lat. Am. J. Probab. Math. Stat. \textbf{7}, 19--40 (2010)

\bibitem{Fribergh+Hammond:2014}
Fribergh, A., Hammond, A.: Phase transition for the speed of the biased random
  walk on the supercritical percolation cluster.
\newblock Comm. Pure Appl. Math. \textbf{67}(2), 173--245 (2014).
\newblock \doi{10.1002/cpa.21491}.
\newblock \urlprefix\url{http://dx.doi.org/10.1002/cpa.21491}

\bibitem{Gantert+al:2012}
Gantert, N., Mathieu, P., Piatnitski, A.: Einstein relation for reversible
  diffusions in a random environment.
\newblock Comm. Pure Appl. Math. \textbf{65}(2), 187--228 (2012).
\newblock \doi{10.1002/cpa.20389}.
\newblock \urlprefix\url{http://dx.doi.org/10.1002/cpa.20389}

\bibitem{Gut:2005}
Gut, A.: Probability: a graduate course.
\newblock Springer Texts in Statistics. Springer, New York (2005)

\bibitem{Gut:2009}
Gut, A.: Stopped random walks, second edn.
\newblock Springer Series in Operations Research and Financial Engineering.
  Springer, New York (2009).
\newblock Limit theorems and applications

\bibitem{Levin+Peres+Wilmer:2009}
Levin, D.A., Peres, Y., Wilmer, E.L.: Markov chains and mixing times.
\newblock American Mathematical Society, Providence, RI (2009).
\newblock With a chapter by James G. Propp and David B. Wilson

\bibitem{Lyons+Pemantle+Peres:1996}
Lyons, R., Pemantle, R., Peres, Y.: Biased random walks on {G}alton-{W}atson
  trees.
\newblock Probab. Theory Related Fields \textbf{106}(2), 249--264 (1996).
\newblock \doi{10.1007/s004400050064}.
\newblock \urlprefix\url{http://dx.doi.org/10.1007/s004400050064}

\bibitem{Mathieu:2015}
Mathieu, P.: Differentiating the entropy of random walks on hyperbolic groups.
\newblock Ann. Probab. \textbf{43}(1), 166--187 (2015).
\newblock \doi{10.1214/13-AOP901}.
\newblock \urlprefix\url{http://dx.doi.org/10.1214/13-AOP901}

\bibitem{Mathieu+Piatnitski:2007}
Mathieu, P., Piatnitski, A.: Quenched invariance principles for random walks on
  percolation clusters.
\newblock Proc. R. Soc. Lond. Ser. A Math. Phys. Eng. Sci. \textbf{463}(2085),
  2287--2307 (2007).
\newblock \doi{10.1098/rspa.2007.1876}.
\newblock \urlprefix\url{http://dx.doi.org/10.1098/rspa.2007.1876}

\bibitem{Peres+Zeitouni:2008}
Peres, Y., Zeitouni, O.: A central limit theorem for biased random walks on
  {G}alton-{W}atson trees.
\newblock Probab. Theory Related Fields \textbf{140}(3-4), 595--629 (2008).
\newblock \doi{10.1007/s00440-007-0077-y}.
\newblock \urlprefix\url{http://dx.doi.org/10.1007/s00440-007-0077-y}

\bibitem{Rassoul-Agha+Seppalainen:2006}
Rassoul-Agha, F., Sepp{\"a}l{\"a}inen, T.: Ballistic random walk in a random
  environment with a forbidden direction.
\newblock ALEA Lat. Am. J. Probab. Math. Stat. \textbf{1}, 111--147 (2006)

\bibitem{Sidoravicius+Sznitman:2004}
Sidoravicius, V., Sznitman, A.S.: Quenched invariance principles for walks on
  clusters of percolation or among random conductances.
\newblock Probab. Theory Related Fields \textbf{129}(2), 219--244 (2004).
\newblock \doi{10.1007/s00440-004-0336-0}.
\newblock \urlprefix\url{http://dx.doi.org/10.1007/s00440-004-0336-0}

\bibitem{Sznitman:2000}
Sznitman, A.S.: Slowdown estimates and central limit theorem for random walks
  in random environment.
\newblock J. Eur. Math. Soc. (JEMS) \textbf{2}(2), 93--143 (2000).
\newblock \doi{10.1007/s100970050001}.
\newblock \urlprefix\url{http://dx.doi.org/10.1007/s100970050001}

\bibitem{Sznitman:2003}
Sznitman, A.S.: On the anisotropic walk on the supercritical percolation
  cluster.
\newblock Comm. Math. Phys. \textbf{240}(1-2), 123--148 (2003).
\newblock \doi{10.1007/s00220-003-0896-3}.
\newblock \urlprefix\url{http://dx.doi.org/10.1007/s00220-003-0896-3}

\bibitem{Sznitman+Zerner:1999}
Sznitman, A.S., Zerner, M.: A law of large numbers for random walks in random
  environment.
\newblock Ann. Probab. \textbf{27}(4), 1851--1869 (1999).
\newblock \doi{10.1214/aop/1022874818}.
\newblock \urlprefix\url{http://dx.doi.org/10.1214/aop/1022874818}

\bibitem{Thorisson:2000}
Thorisson, H.: Coupling, stationarity, and regeneration.
\newblock Probability and its Applications (New York). Springer-Verlag, New
  York (2000).
\newblock \doi{10.1007/978-1-4612-1236-2}.
\newblock \urlprefix\url{http://dx.doi.org/10.1007/978-1-4612-1236-2}

\bibitem{Williams:1991}
Williams, D.: Probability with martingales.
\newblock Cambridge Mathematical Textbooks. Cambridge University Press,
  Cambridge (1991)

\end{thebibliography}

\end{document}